\tikzset{>=latex}
\tikzstyle{vthick}=[line width=2.5pt]
\renewcommand{\d}{\mathrm{d}}
\newcommand{\D}{\mathrm{D}}
\newcommand{\e}{\mathrm{e}}
\newtheorem{Thm}{Theorem}[section]
\newtheorem{Lem}[Thm]{Lemma}
\newtheorem{Prop}[Thm]{Proposition}
\newtheorem{Cor}[Thm]{Corollary}
\newtheorem{Rem}[Thm]{Remark}
\newtheorem{Def}[Thm]{Definition}
\newtheorem{Con}[Thm]{Conjecture}
\newtheorem{Ex}[Thm]{Example}
\newtheoremstyle{named}{}{}{\itshape}{}{\bfseries}{.}{.5em}{#1 #3}
\theoremstyle{named}
\def\R{\mathbb{R}}
\def\Q{\mathbb{Q}}
\def\N{\mathbb{N}}
\def\C{\mathbb{C}}
\def\Z{\mathbb{Z}}
\def\fD{\mathfrak{D}}
\def\fR{\mathfrak{R}}
\def\fn{\mathfrak{n}}
\def\fb{\mathfrak{b}}
\def\g{\mathfrak{g}}
\def\fh{\mathfrak{h}}
\def\sl{\mathfrak{sl}}
\def\cA{\mathcal{A}}
\def\cD{\mathcal{D}}
\def\cE{\mathcal{E}}
\def\cF{\mathcal{F}}
\def\cK{\mathcal{K}}
\def\cM{\mathcal{M}}
\def\cO{\mathcal{O}}
\def\cP{\mathcal{P}}
\def\cR{\mathcal{R}}
\def\cS{\mathcal{S}}
\def\cT{\mathcal{T}}
\def\cU{\mathcal{U}}
\def\cX{\mathcal{X}}
\def\cZ{\mathcal{Z}}
\def\a{\alpha}
\def\b{\beta}
\def\c{\gamma}
\def\D{\Delta}
\def\d{\delta}
\def\e{\epsilon}
\def\ze{\zeta}
\def\h{\theta}
\def\l{\lambda}
\def\L{\Lambda}
\def\s{\sigma}
\def\be{\mathbf{e}}
\def\bf{\mathbf{f}}
\def\bi{\mathbf{i}}
\def\bo{\mathbf{o}}
\def\bT{\mathbf{T}}
\def\ex{\mathbf{ex}}
\def\=>{\Longrightarrow}
\def\inj{\hookrightarrow}
\def\corr{\longleftrightarrow}
\def\to{\longrightarrow}
\def\ox{\otimes}
\def\o+{\oplus}
\def\bo+{\bigoplus}
\def\x{\times}
\def\<{\langle}
\def\>{\rangle}
\def\({\left(}
\def\){\right)}
\def\oo{\infty}
\def\^{\wedge}
\def\+{\dagger}
\def\inv{^{-1}}
\def\half{\frac{1}{2}}
\def\dis{\displaystyle}
\def\dd[#1,#2]{\frac{d#1}{d#2}}
\def\del[#1,#2]{\frac{\partial #1}{\partial #2}}
\def\v[#1,#2]{(\substack{#1\\#2})}
\def\vv[#1,#2,#3]{(\substack{#1\\#2\\#3})}
\def\tab{\;\;\;\;\;\;}
\newcommand{\til}[1]{\widetilde{#1}}
\newcommand{\what}[1]{\widehat{#1}}
\newcommand{\xto}[1]{\xrightarrow{#1}}
\newcommand{\ov}[1]{\overline{#1}}
\newcommand{\ve}[1]{\overrightarrow{#1}}
\newcommand{\case}[2][llllllllllllllllllllllllllllllllllllllllllll]{\left\{\begin{array}{#1}#2 \\ \end{array}\right.}
\newcommand{\Eq}[1]{\begin{align}#1\end{align}}
\newcommand{\Eqn}[1]{\begin{align*}#1\end{align*}}
\newcommand\drawpath[2]{%
  \foreach \too [count=\c from 1] in {#1}
  {
  \ifthenelse{\c=1}
  {\xdef\from{\too}}
  {\path (\from) edge [->, #2] (\too);
    \xdef\from{\too}}
  };
}
\begin{document}
\title{Cluster Realization of $\cU_q(\g)$ and \\Factorizations of the Universal $R$-Matrix}

\author{  Ivan C.H. Ip\footnote{
         	   Center for the Promotion of Interdisciplinary Education and Research/\newline
     	  Department of Mathematics, Graduate School of Science, Kyoto University, Japan
		\newline
		Email: ivan.ip@math.kyoto-u.ac.jp
          }
}

\date{\today}

\numberwithin{equation}{section}
\maketitle
\begin{abstract}
For each simple Lie algebra $\g$, we construct an algebra embedding of the quantum group $\cU_q(\g)$ into certain quantum torus algebra $\cD_\g$ via the positive representations of split real quantum group. The quivers corresponding to $\cD_\g$ is obtained from amalgamation of two basic quivers, where each of them is mutation equivalent to the cluster structure of the moduli space of framed $G$-local system on a disk with 3 marked points when $G$ is of classical type. We derive a factorization of the universal $R$-matrix into quantum dilogarithms of cluster monomials, and show that conjugation by the $R$-matrix corresponds to a sequence of quiver mutations which produces the half-Dehn twist rotating one puncture about the other in a twice punctured disk.
\end{abstract}

{\small {\textbf{Keywords.} Quantum groups, positive representations, cluster algebra, $R$-matrix }

\vspace{5mm}

{\small {\textbf {2010 Mathematics Subject Classification.} Primary 	17B37, 13F60}}

\tableofcontents

\section{Introduction}\label{sec:intro}
For any finite dimensional complex simple Lie algebra $\g$, Drinfeld \cite{D} and Jimbo \cite{J} associated to it a remarkable Hopf algebra $\cU_q(\g)$ known as \emph{quantum group}, which is certain deformation of the universal enveloping algebra. To better understand the structure of $\cU_q(\g)$, a very natural problem is to find certain embeddings into simpler algebras. In \cite{GKL, GKLO}, through the generalization of Gelfand-Tsetlin representations, embeddings of the whole quantum group $\cU_q(\g)$ into certain \emph{rational functions} $\C(\bT_q)$ of quantum torus have been found. Another well-known result is provided by \emph{Feigin's homomorphism} \cite{Ber, Rup} which embeds the positive Borel part $\cU_q(\fb_+)$ of $\cU_q(\g)$ directly into a \emph{quantum torus algebra} $\C[\bT_q]$. However, the explicit extension of Feigin's map to the whole quantum group, i.e. given by \emph{polynomial} embeddings of $\cU_q(\g)$ into certain quantum torus algebra, appears to be much more subtle. While the case for $\cU_q(\sl_n)$ is known previously \cite{KV}, the case for general types has only been solved recently with the introduction of \emph{positive representations} of split real quantum groups.

\subsection{Quantum group embeddings via positive representations}

The notion of \emph{positive representations} was introduced in \cite{FI} as a new research program devoted to the representation theory of split real quantum groups $\cU_{q}(\g_\R)$ and its modular double $\cU_{q\til{q}}(\g_\R)$ introduced in \cite{Fa1, Fa2}, in the regime where $|q|=1$. It is motivated by the simplest case $\cU_{q\til{q}}(\sl(2,\R))$ which has been studied extensively by Teschner \textit{et al.} \cite{BT, PT1, PT2} from the point of view of non-compact conformal field theory. Explicit construction of the positive representations $\cP_\l$ of $\cU_{q\til{q}}(\g_\R)$ associated to a simple Lie algebra $\g$ has been obtained for the simply-laced case in \cite{FI, IpTh, Ip2} and non-simply-laced case in \cite{Ip3}, where the generators of the quantum groups are realized by \emph{positive essentially self-adjoint operators} acting on certain Hilbert spaces. 

As a consequence of the construction, if one \emph{forgets} the real structure of such representations, one can express the generators in terms of Laurent polynomials of certain $q$-commuting variables, and we obtain a full embedding of quantum groups
\Eq{\cU_q(\g)\inj \C[\bT_q]}
into certain quantum torus algebra, thus solving the long-standing problem of generalizing the Feigin's homomorphism. 

The construction of the positive representations of $\cU_q(\g_\R)$ relies heavily on Lusztig's total positivity of reductive groups and is closely related to the structure of the quantum principal affine space $\cO_q[G/N]$. Its harmonic analysis on $L^2(G_{q\til{q}}^+(\R))$ through the Gauss-Lusztig decomposition \cite{IpRIMS, Ip5} also involves the structure of the coordinate ring $\cO_q[G]$ and the double Bruhat cell $\cO_q[G^{w_0,w_0}]$. Therefore the theory of positive representations is long considered to have a strong connection to the theory of quantum cluster algebra \cite{BZ} in which these objects represent \cite{GLS, GY}. In particular both theories share a similar \emph{positivity phenomenon} under some mutation operations, where for example the generators of $\cU_q(\g_\R)$ are always represented as Laurent polynomials of positive operators with positive $q$-integral coefficients, thus naturally acting on $\cP_\l$ as positive self-adjoint operators.

In a recent work of \cite{SS2}, Schrader and Shapiro found explicitly an embedding of $\cU_q(\sl_n)$ into certain quantum torus algebra $\cD_{\sl_n}$, which arises from quantizing the Fock and Goncharov's construction of the cluster coordinates on the moduli spaces of framed $PGL_n$-local systems on the punctured disk with two marked points, where the structure can be nicely summarized into certain quiver diagrams given by \emph{$n$-triangulations} \cite{FG1}. It turns out that their construction fit nicely into the framework of positive representations, and one can carry over the explicit constructions of $\cP_\l$ and obtain a new quantum torus algebra embedding for arbitrary type of $\cU_q(\g)$.

Our \textbf{first main result} (Theorem \ref{mainThm}) states that there is an embedding of algebra
\Eq{\label{emb}
\cU_q(\g)\inj \cD_\g/\sim
}
into a quantum torus algebra (modulo some central elements), which can be represented by some quiver diagrams associated to $\cD_\g$. The embeddings of the generators of $\cU_q(\g)$ can then be expressed explicitly by certain paths on the quiver. In particular, the previously rather \textit{ad hoc} explicit expressions, especially in the exceptional types, can now be visualized in a very simple manner (see Figure \ref{fig-An} - Figure \ref{fig-G2}).

Furthermore, a change of words of the reduced expression of the longest element $w_0\in W$ of the Weyl group, which induces unitary equivalences of the positive representations, correspond to certain quiver mutations and hence quantum cluster mutations of $\cD_\g$. This makes the connection between positive representations and the theory of (quantum) cluster algebra much more explicit. It strongly suggests that in fact we have an embedding into the global functions on the corresponding cluster $\cX$-variety
\Eq{
\cU_q(\g)\inj\bigcap_\bi \cD_\g^\bi/\sim
} associated to all the seed equivalence class of $\cD_\g$, where the generators of $\cU_q(\g)$ stay polynomial in any cluster. However this requires a separate proof and will be considered in future works.

Finally, the proof of injectivity of the embedding in type $A_n$ by \cite{SS2} involves explicitly some combinatorial \emph{hive-type} conditions related to the work of Knutson-Tao \cite{KTao}. It will be interesting to see the analogues of such combinatorics coming from other types of quantum groups from our construction using positive representations.

\subsection{Basic quivers and framed $G$-local systems}
The quiver corresponding to $\cD_\g$ is naturally associated to the triangulation of a punctured disk with two marked points. It can be constructed by gluing (amalgamating) two copies of \emph{``basic quivers"} $Q$ associated to a triangle. It turns out that the basic quiver is mutation equivalent to the quiver giving a (classical) cluster algebra structure on the moduli space of framed $G$-local system, or the configuration space $Conf_3\cA_G$ of triples of principal flags, recently discovered for classical types \cite{Le1} and type $G_2$ \cite{Le2}. Both constructions require the use of \emph{elementary quivers} associated to simple reflections of the longest element $w_0$.

In particular, by providing a different construction than the ones in \cite{Le1, Le2}, the description of $Q$ in this paper may allow us to construct quantum higher Teichm\"uller theory in full generality in a representation theoretical setting of quantum groups, where the quiver describes the coordinates of the framed $G$-local system and their Poisson structure, and hence also the quantization of these coordinates. The uniqueness of $Q$ can also potentially be used to solve the series of conjectures proposed in \cite{Le2}. We also expect that such geometric description of the basic quivers will let us better understand the geometric construction of another quantum group embedding via the Grothendieck-Springer resolution proposed by \cite{SS1}, which turns out to be quite hard to write down explicitly. 

The basic quiver plays an important role in the description of the universal $R$-matrix realized as half-Dehn twist, which we will described next.

\subsection{Universal $R$-matrix as half-Dehn twist}
Using the quantum cluster embedding \eqref{emb}, our \textbf{second main result} (Theorem \ref{main2} and Corollary \ref{CorR1234}) of the paper gives the factorization of the reduced $R$ matrix into products of quantum dilogarithms such that the arguments are given by monomials of the quantum cluster variables $X_i\in \cD_\g$ associated to the chosen reduced expression of $w_0$, and the factorization is invariant under the change of reduced expression. 

This result generalizes the factorization of \cite{SS2} in type $A_n$ for a specific choice of $w_0$, and the earlier well-known result for $\cU_q(\sl_2)$ by Faddeev \cite{Fa2}. It is different from the usual multiplicative formula discovered independently by Kirillov-Reshetikhin \cite{KR} and Levendorskii-Soibelman \cite{LS1,LS2}, which is further extended to the superalgebra case in \cite{KT}. Since each factor is expressed in terms of quantum cluster variables, in fact it can be viewed as a sequence of quiver mutations on two copies of the $\cD_\g$-quiver associated to a disk of two punctures and two marked points. 

Our \textbf{final main result} of the paper (Theorem \ref{main3}) shows that the conjugation by the universal $R$-matrix corresponds to a sequence of quiver mutations which produces the \emph{half-Dehn twist} rotating one puncture about the other in the twice punctured disk. This factorization can also be split into 4 blocks such that each block corresponds to a \emph{flip} of triangulations of the twice punctured disk, where the basic quiver $Q$ associated to each triangle is being mutated to a different configuration. 

In the case of $\cU_q(\sl_2)$, such identification of the factorization of the $R$-matrix appears in quantum Teichm\"uller theory \cite{Ka} as an element of the mapping class group, and the corresponding factorization is also used to re-derive Kashaev's knot invariant \cite{HI}. For general Hopf algebra $\cA$, Kashaev has constructed an embedding $\phi:\cD(\cA)\to H(A)\ox H(A)^{op}$ of the Drinfeld's double $\cD(\cA)$ into a tensor square of the Heisenberg double $H(\cA)$, and the image of the universal $R$-matrix can be similarly decomposed into a product of 4 variants of the \emph{$S$-tensors} \cite{Ka97}:
\Eq{
\phi^{\ox2}(\cR)=S_{14}''S_{13}\til{S}_{24}S_{23}'\in (H(A)\ox H(A)^{op})^{\ox2}.
}
This has been utilized for example to construct new quantum invariant for ``colored triangulations" of topological spaces recently \cite{Su}. Although the two factorizations are realized on different tensor spaces, we believe there is a strong connection between the two different factorizations, where $\cA$ is identified with the Borel part of $\cU_q(\g)$, and it will be interesting to find an explicit relationship between them. We hope that the factorization in this paper opens up a new class of invariants which can be explicitly constructed.

\subsection{Generalization to the split real setting}
The embeddings of quantum groups as well as the factorization of $R$-matrix in this paper is treated in a formal algebraic setting. However, as the construction comes from the positive representations of split real quantum groups, it is natural to conclude that the theory developed in this paper can be generalized to the split real setting. For example, the monomials of the embedding constructed out of the quantum cluster variables $X_i\in \cD_\g$ are all manifestly positive self-adjoint if we put back in the split real form. 

In particular, throughout the paper, we use the correspondence (see Remark \ref{qd} for more details):
 \Eq{
 \Psi^q(x)\sim g_b^*(x)} to identify both the \emph{compact} and \emph{non-compact} quantum dilogarithm functions. This suggests that in fact all the quiver mutations and $R$-matrix decomposition work in the split real setting. In this case the non-compact version is well-defined as the quantum cluster variables are positive self-adjoint, therefore the formal power series manipulations can be replaced by actions of unitary operators. 
 
Furthermore, Faddeev's modular double can be easily recovered by applying the \emph{transcendental relations} \cite{Fa2,FI} to the quantum cluster variables:
\Eq{
\til{X_i}:=X_i^{\frac{1}{b_i}},
}
 and the simple analytic version of the Langlands duality \cite{Ip3} interchanging the long and short roots can then be easily recovered as well (this is made more explicit in the quiver diagrams of type $B_n, C_n$ and $G_2$). The perspectives of the applications of such phenomenon in the split real case look very promising, and will be explored elsewhere.

\subsection{Outline of the paper}
The paper is organized as follows. In Section \ref{sec:def}, we fix the convention used throughout the paper and recall the definition of quantum group $\cU_q(\g)$. In Section \ref{sec:tori}, we recall the definition and properties of quantum torus algebra, the associated quivers, and their cluster structure. In Section \ref{sec:pos}, we recall the construction of the positive representations of split real quantum groups, and define the new quantum torus algebra $\cD_\g$ in which $\cU_q(\g)$ embeds. In Section \ref{sec:quiver} we construct explicitly the $\cD_\g$-quiver associated to the algebra $\cD_\g$ using the \emph{elementary quivers}, and in Section \ref{sec:embedding} we give an explicit embedding of $\cU_q(\g)$ for all each simple types of $\g$, where the generators are represented by certain paths on the quivers. 

In Section \ref{sec:mutation} we discuss the quiver mutations associated to a change of reduced expression of the longest element $w_0\in W$ of the Weyl group, and we use this to show in Section \ref{sec:basicquiver} that the \emph{basic quiver} associated to a triangle of a triangulation is uniquely defined. 

In Section \ref{sec:R} we recall the definition of universal $R$-matrix, and using the quantum group embedding, we give a factorization formula of $R$, which is proved in Section \ref{sec:Thm}. Finally in Section \ref{sec:Dehn}, we show that the factorization of $R$ can be realized as half-Dehn twist of a twice punctured disk with two marked points, where the basic quiver associated to each triangle is mutated to certain new configurations, and we give explicitly its sequence of mutations.
\section{Notations and definitions of $\cU_q(\g)$}\label{sec:def}

In order to fix the convention we use throughout the paper, we follow the notations used in \cite{Ip3, Ip4} for the root systems and recall the definition of the quantum group $\cU_q(\g)$, where $\g$ is of general type \cite{CP}, as well as the Drinfeld's double $\fD_\g$ of the Borel part.

\begin{Def}
Let $I$ denote the set of nodes of the Dynkin diagram of $\g$ where 
\Eq{
|I|=n=rank(\g).}
Let $w_0\in W$ be the longest element of the Weyl group of $\g$, and let
\Eq{
N:=l(w_0)=\dim \fn_-}
be its length, which is also the dimension of the unipotent subgroup $\fn_-$ of $\g$. 

We call a sequence $\bi=(i_1,...,i_N)\in I^N$ a reduced word of $w_0$ if $w_0=s_{i_1}...s_{i_N}$ is a reduced expression, where $s_{i_k}$ are the simple reflections of the root space\footnote{We will sometimes omit the commas in $\bi$ for typesetting purpose.}. We denote by $\fR$ the set of all reduced words of $w_0$. 

We let $n_i^\bi \in \Z_{>0}$ be the number of letters $i$ appearing in $\bi$. We will write $n_i:=n_i^\bi$ if no confusion arises. 
If we have another reduced word $\bi'\in\fR$, we will sometimes write $\bi'=(i_1',...,i_N')$ and $n_i':=n_i^{\bi'}$. 

Clearly we have
\Eq{
\sum_{i=1}^n n_i=N.
}

\end{Def}

\begin{Def}\label{Dynkin}
We index the nodes of the Dynkin diagrams as follow, where black nodes correspond to short roots, and white nodes correspond to long roots.

\begin{itemize}
\item Type $A_n$: 
\begin{center}
  \begin{tikzpicture}[scale=.4]
    \draw[xshift=0 cm,thick] (0 cm, 0) circle (.3 cm);
    \foreach \x in {1,...,5}
    \draw[xshift=\x cm,thick] (\x cm,0) circle (.3cm);
    \draw[dotted,thick] (8.3 cm,0) -- +(1.4 cm,0);
    \foreach \y in {0.15,...,3.15}
    \draw[xshift=\y cm,thick] (\y cm,0) -- +(1.4 cm,0);
    \foreach \z in {1,...,5}
    \node at (2*\z-2,-1) {$\z$};
\node at (10,-1){$n$};
  \end{tikzpicture}
\end{center}
\item Type $B_n$: 
\begin{center}
  \begin{tikzpicture}[scale=.4]
    \draw[xshift=0 cm,thick,fill=black] (0 cm, 0) circle (.3 cm);
    \foreach \x in {1,...,5}
    \draw[xshift=\x cm,thick] (\x cm,0) circle (.3cm);
    \draw[dotted,thick] (8.3 cm,0) -- +(1.4 cm,0);
    \foreach \y in {1.15,...,3.15}
    \draw[xshift=\y cm,thick] (\y cm,0) -- +(1.4 cm,0);
    \draw[thick] (0.3 cm, .1 cm) -- +(1.4 cm,0);
    \draw[thick] (0.3 cm, -.1 cm) -- +(1.4 cm,0);
    \foreach \z in {1,...,5}
    \node at (2*\z-2,-1) {$\z$};
\node at (10,-1){$n$};
  \end{tikzpicture}
\end{center}
\item Type $C_n$: 
\begin{center}
  \begin{tikzpicture}[scale=.4]
    \draw[xshift=0 cm,thick] (0 cm, 0) circle (.3 cm);
    \foreach \x in {1,...,5}
    \draw[xshift=\x cm,thick,fill=black] (\x cm,0) circle (.3cm);
    \draw[dotted,thick] (8.3 cm,0) -- +(1.4 cm,0);
    \foreach \y in {1.15,...,3.15}
    \draw[xshift=\y cm,thick] (\y cm,0) -- +(1.4 cm,0);
    \draw[thick] (0.3 cm, .1 cm) -- +(1.4 cm,0);
    \draw[thick] (0.3 cm, -.1 cm) -- +(1.4 cm,0);
    \foreach \z in {1,...,5}
    \node at (2*\z-2,-1) {$\z$};
\node at (10,-1){$n$};
  \end{tikzpicture}
\end{center}
\item Type $D_n$: 
\begin{center}
  \begin{tikzpicture}[scale=.4]
    \draw[xshift=0 cm,thick] (0 cm, 1) circle (.3 cm);
    \draw[xshift=0 cm,thick] (0 cm, -1) circle (.3 cm);
    \foreach \x in {1,...,5}
    \draw[xshift=\x cm,thick] (\x cm,0) circle (.3cm);
    \draw[dotted,thick] (8.3 cm,0) -- +(1.4 cm,0);
   \draw[xshift=0.25 cm] (0 cm,1) -- +(1.4 cm,-1);
   \draw[xshift=0.25 cm] (0 cm,-1) -- +(1.4 cm,1);   
 \foreach \y in {1.15,...,3.15}
    \draw[xshift=\y cm,thick] (\y cm,0) -- +(1.4 cm,0);
    \foreach \z in {2,...,5}
    \node at (2*\z-2,-1) {$\z$};
\node at (10,-1){$n-1$};
\node at (-1,-1){$0$};
\node at (-1,1){$1$};
  \end{tikzpicture}
\end{center}
\item Type $E_n$:
\begin{center}
  \begin{tikzpicture}[scale=.4]
    \draw[xshift=0 cm,thick] (0 cm, 0) circle (.3 cm);
    \foreach \x in {1,...,4}
    \draw[xshift=\x cm,thick] (\x cm,0) circle (.3cm);
    \foreach \y in {0.15,...,2.15}
    \draw[xshift=\y cm,thick] (\y cm,0) -- +(1.4 cm,0);
    \draw[xshift=3.15cm, dotted, thick] (3.15cm,0) --+(1.4 cm,0);
    \foreach \z in {1,...,4}
    \node at (2*\z-2,1) {$\z$};
    \node at (8,1) {$n-1$};
\draw[xshift=0 cm,thick] (4 cm, -2) circle (.3 cm);
  \draw[xshift=0 cm] (4 cm,-0.25) -- +(0 cm,-1.5);
\node at (4,-3){$0$};
  \end{tikzpicture}
\end{center}
\item Type $F_4$:
 \begin{center}
  \begin{tikzpicture}[scale=.4]
    \draw[thick] (-2 cm ,0) circle (.3 cm);
	\node at (-2,-1) {$1$};
    \draw[thick] (0 ,0) circle (.3 cm);
	\node at (0,-1) {$2$};
    \draw[thick,fill=black] (2 cm,0) circle (.3 cm);
	\node at (2,-1) {$3$};
    \draw[thick,fill=black] (4 cm,0) circle (.3 cm);
	\node at (4,-1) {$4$};
    \draw[thick] (15: 3mm) -- +(1.5 cm, 0);
    \draw[xshift=-2 cm,thick] (0: 3 mm) -- +(1.4 cm, 0);
    \draw[thick] (-15: 3 mm) -- +(1.5 cm, 0);
    \draw[xshift=2 cm,thick] (0: 3 mm) -- +(1.4 cm, 0);
  \end{tikzpicture}
\end{center}
\item Type $G_2$: 
\begin{center}
  \begin{tikzpicture}[scale=.4]
    \draw[thick] (0 ,0) circle (.3 cm);
	\node at (0,-1) {$1$};
    \draw[thick,fill=black] (2 cm,0) circle (.3 cm);
	\node at (2,-1) {$2$};
    \draw[thick] (30: 3mm) -- +(1.5 cm, 0);
    \draw[thick] (0: 3 mm) -- +(1.5 cm, 0);
    \draw[thick] (-30: 3 mm) -- +(1.5 cm, 0);
  \end{tikzpicture}
\end{center}
\end{itemize}
\end{Def}

\begin{Def} \label{qi} Let $q$ be a formal parameter. Let $\{\a_i\}_{i\in I}$ be the set of positive simple roots. Let $(-,-)$ be the inner product of the root lattice, and we define
\Eq{
a_{ij}:=\frac{2(\a_i,\a_j)}{(\a_i,\a_i)},
}
such that $A:=(a_{ij})$ is the \emph{Cartan matrix}. 

We normalize $(-,-)$ as follows: we choose the symmetrization factors (also called the \emph{multipliers})
\Eq{d_i:=\frac{1}{2}(\a_i,\a_i)=\case{1&\mbox{$i$ is long root or in the simply-laced case,}\\\frac{1}{2}&\mbox{$i$ is short root in type $B,C,F$,}\\\frac{1}{3}&\mbox{$i$ is short root in type $G_2$,}}}and $(\a_i,\a_j)=-1$ when $i,j$ are adjacent in the Dynkin diagram, such that
$$d_ia_{ij}=d_ja_{ji}.$$ We then define
\Eq{
q_i:=q^{d_i},
} which we will also write as
\Eq{
q_l&:=q,\\ 
q_s&:=\case{q^{\frac12}&\mbox{$\g$ is of type $B_n, C_n, F_4$},\\q^{\frac13}&\mbox{$\g$ is of type $G_2$},}
}
for the $q$ parameters corresponding to long and short roots respectively.
\end{Def}

\begin{Def} Let $A=(a_{ij})$ denote the Cartan matrix. We define $\fD_\g$ to be the $\C(q_s)$-algebra generated by the elements
$$\{E_i, F_i,K_i^{\pm1}, K_i'^{\pm1}| i\in I\}$$ 
subject to the following relations (we will omit the relations involving $K_i^{-1}, {K_i'}^{-1}$ below for simplicity):
\Eq{
K_iE_j&=q_i^{a_{ij}}E_jK_i, &K_iF_j&=q_i^{-a_{ij}}F_jK_i,\\
K_i'E_j&=q_i^{-a_{ij}}E_jK_i', &K_i'F_j&=q_i^{a_{ij}}F_jK_i',\\
K_iK_j&=K_jK_i, &K_i'K_j'&=K_j'K_i', &K_iK_j' = K_j'K_i,\\
&&[E_i,F_j]&= \d_{ij}\frac{K_i-K_i'}{q_i-q_i\inv},\label{EFFE}
}
together with the \emph{Serre relations} for $i\neq j$:
\begin{eqnarray}
\sum_{k=0}^{1-a_{ij}}(-1)^k\frac{[1-a_{ij}]_{q_i}!}{[1-a_{ij}-k]_{q_i}![k]_{q_i}!}E_i^{k}E_jE_i^{1-a_{ij}-k}&=&0,\label{SerreE}\\
\sum_{k=0}^{1-a_{ij}}(-1)^k\frac{[1-a_{ij}]_{q_i}!}{[1-a_{ij}-k]_{q_i}![k]_{q_i}!}F_i^{k}F_jF_i^{1-a_{ij}-k}&=&0,\label{SerreF}
\end{eqnarray}
where $[k]_q:=\frac{q^k-q^{-k}}{q-q\inv}$ is the $q$-number and $[n]_q!:=\prod_{k=1}^n [k]_q$ the $q$-factorial.
\end{Def}

The algebra $\fD_\g$ is a Hopf algebra with comultiplication
\Eq{
\D(E_i)=&1\ox E_i+E_i\ox K_i,&\D(K_i)&=K_i\ox K_i,\\
\D(F_i)=&F_i\ox 1+K_i'\ox F_i,&\D(K_i')&=K_i'\ox K_i',
}
the counit
\Eq{
\e(E_i)&=\e(F_i)=0, & \e(K_i)&=\e(K_i')=1,\\
}
and antipode
\Eq{
S(E_i)&=-K_i\inv E_i,  &S(K_i)&=K_i\inv,\\
S(F_i)&=-F_iK_i, &S(K_i')&=(K_i')\inv.
}

\begin{Def}
The quantum group $\cU_q(\g)$ is defined as the quotient
\Eq{
\cU_g(\g):=\fD_\g/\<K_iK_i'=1|i\in I\>,
}
and it inherits a well-defined Hopf algebra structure from $\fD_\g$. 
\end{Def}
\begin{Rem} $\fD_\g$ is the Drinfeld's double of the quantum Borel subalgebra $\cU_q(\fb)$ generated by $E_i$ and $K_i$.
\end{Rem}
\begin{Def}We define the rescaled generators
\Eq{
\be_i :=\left(\frac{\sqrt{-1}}{q_i-q_i\inv}\right)\inv E_i,&&\bf_i :=\left(\frac{\sqrt{-1}}{q_i-q_i\inv}\right)\inv F_i.\label{rescaleFF}
}
By abuse of notation, we will also denote by $\fD_\g$ the $\C(q_s)$-algebra generated by $$\{\be_i, \bf_i, K_i, K_i'|i\in I\}$$ and the corresponding quotient by $\cU_q(\g)$. The generators satisfy all the defining relations above except \eqref{EFFE} which is modified to be 
\Eq{
[\be_i, \bf_j]=\d_{ij} (q_i-q_i\inv)(K_i'-K_i).
}
\end{Def}

\section{Quantum cluster $\cX$-tori}\label{sec:tori}
We recall the definition of the quantum cluster $\cX$-tori following \cite{FG1, SS2} and their properties that are needed, as well as some notations and modification that fit the needs of this paper.
\subsection{Quantum torus algebra and quivers}
\begin{Def}[Quantum torus algebra]
A seed $\bi$ is a triple $(I, I_0, B, D)$ where $I$ is a finite set, $I_0\subset I$ is a subset called the \emph{frozen subset}, $B=(b_{ij})_{i,j\in I}$ a skew-symmetrizable $\Q$-valued matrix called the \emph{exchange matrix}, and $D=diag(d_i)_{i\in I}$ is a diagonal matrix such that $DB=B^TD$ is skew-symmetric.

Let $q$ be a formal parameter. We define the \emph{quantum torus algebra} $\cX_{\bi}$ associated to the seed $\bi$ to be an associative algebra over $\C(q^d)$, where $d=\min_{i\in I}(d_i)$, defined by generators $X_i^{\pm1}, i\in I$ subject to the relations
\Eq{
X_iX_j=q_i^{-2b_{ij}}X_jX_i,\tab i,j\in I
}
where $q_i:=q^{d_i}$. The generators $X_i$ are called the \emph{quantum cluster variables}, and they are said to be \emph{frozen} if $i\in I_0$. We call $d_i$ the \emph{multipliers} of the variables $X_i$. We denote by $\bT_\bi$ the non-commutative field of fraction of $\cX_\bi$.
\end{Def}

The structure of the quantum torus algebra $\cX_{\bi}$ associated to a seed $\bi$ can be conveniently encoded in a quiver:
\begin{Def}[Quiver associated to $\bi$]\label{quiver} We associate to each seed $\bi$ a generalized quiver $Q^\bi=(Q_0,w)$ with vertices $Q_0$ labeled by $I$, and for each pair $i,j\in Q_0$ a weight 
\Eq{
w_{ij}:=d_ib_{ij}=-w_{ji}.
} We will draw arrows from $i\xto{w_{ij}} j$ if $w_{ij}>0$. 
We will call an isomorphism $\pi:S\simeq Q_0$ from a finite set $S$ an \emph{external label} of the quiver $Q$.
\end{Def}

We will use squares to denote frozen nodes $i\in I_0$ and circles otherwise. In the sequel, when $q_i=q_s$ or $q_l$ given by Definition \ref{qi}, we will distinguish the arrows by thick or thin arrows instead of writing the weights. We will also use dashed lines to denote arrows with half the weights, which only occurs between frozen nodes.

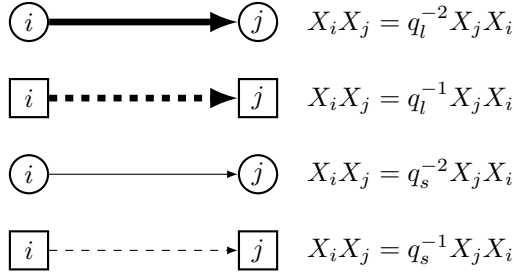
\begin{figure}[H]
\centering
  \begin{tikzpicture}[every node/.style={inner sep=0, minimum size=0.5cm, thick}, x=1cm, y=1cm]
\node[draw,circle] (i1) at(0,4) {$i$};
  \node[draw,circle] (j1) at (3,4) {$j$};
   \draw[vthick, ->](i1) to (j1);
\node at (5,4) {$X_iX_j = q_l^{-2} X_j X_i$}; 
\node[draw] (i2) at(0,3) {$i$};
  \node[draw] (j2) at (3,3) {$j$};
   \draw[vthick, dashed, ->](i2) to (j2);
\node at (5,3) {$X_iX_j = q_l^{-1} X_j X_i$};
\node[draw,circle] (i3) at(0,2) {$i$};
  \node[draw,circle] (j3) at (3,2) {$j$};
   \draw[thin, ->](i3) to (j3);
\node at (5,2) {$X_iX_j = q_s^{-2} X_j X_i$};
\node[draw] (i4) at(0,1) {$i$};
  \node[draw] (j4) at (3,1) {$j$};
   \draw[dashed, ->](i4) to (j4);
\node at (5,1) {$X_iX_j = q_s^{-1} X_j X_i$};
  \end{tikzpicture}
  \caption{Arrows between nodes and their algebraic meaning.}
  \end{figure}

We introduce the following notations which will be useful throughout the paper:
\begin{Def}\label{Xnotation} We denote by
\Eq{
X_{i_1^{m_1},...,i_n^{m_n}}:=q^C X_{i_1}^{m_1}... X_{i_n}^{m_n},\label{Xnotation1}
}
where $C$ is the unique rational number such that
$$q^C X_{i_1}^{m_1}... X_{i_n}^{m_n}=q^{-C} X_{i_n}^{m_n}... X_{i_1}^{m_1}.$$
Explicitly, if $X_iX_j=q^{c_{ij}}X_jX_i$, then
\Eq{
C=-\half \sum_{p>q} m_p m_q c_{i_p}c_{i_q}.
}
If we introduce a $*$-structure such that $q^*=q\inv$ and $X_i^*=X_i$ (and positive), then the expression $X_{i_1^{m_1},...,i_n^{m_n}}$ is also (positive) self-adjoint.

We also denote by
\Eq{
X(i_1,..., i_n):=\sum_{k=1}^n X_{i_1,..., i_k}.\label{Xnotation2}
}
\end{Def}
\begin{Def} A permutation of a seed $\s: \bi\to \bi'$ is a bijection $\s:I\to I'$ such that
\Eqn{
\s(I_0)&=I_0',\\
b'_{ij}&= b_{\s(i)\s(j)},\\
d'_i&=d_{\s(i)}.
}
It induces an isomorphism $\s^*:\bT_{\bi'}\to \bT_{\bi}$ by
\Eqn{
\s^*(\what{X}_{\s(i)}):=X_i,
}
where $\what{X}_{\s(i)}$ denotes the quantum cluster variables of $\bT_{\bi'}$.
\end{Def}
\subsection{Quantum cluster mutation}
Next we define the cluster mutations of a seed and its quiver, and the quantum cluster mutations for the algebra.
\begin{Def}[Cluster mutation] Given a pair of seeds $\bi=(I,I_0,B, D)$, $\bi'=(I',I_0',B',D')$ and an element $k\in I\setminus I_0$, a \emph{cluster mutation in direction $k$} is an isomorphism $\mu_k:\bi\to \bi'$ such that $\mu_k(I_0)=I_0'$, 
\Eq{
b'_{\mu_k(i),\mu_k(j)} &= \case{-b_{ij}&\mbox{if $i=k$ or $j=k$},\\ b_{ij}+\frac{b_{ik}|b_{kj}|+|b_{ik}|b_{kj}}{2}&\mbox{otherwise},}\\
d'_{\mu_k(i)}&=d_i.
}

Then the quiver mutation $Q^\bi\to Q^{\bi'}$ corresponding to the mutation $\mu_k$ can be performed by:
\begin{itemize}
\item[(1)] reverse all the arrows incident to the vertex $k$;
\item[(2)] for each pair of arrows $i\xto{w_{ik}} k$ and $k\xto{w_{kj}}j$ associate the arrow $i\xto{w_{ij}+\frac{w_{ik}w_{kj}}{d_k}}j$.
\item[(3)] delete any arrows with weight $w_{ij}=0$.
\end{itemize}
\end{Def}
\begin{Def}[Quantum cluster mutation]
The cluster mutation in direction $k$, $\mu_k:\bi\to \bi'$, induces an isomorphism $\mu_k^q:\bT_{\bi'}\to \bT_{\bi}$ called the \emph{quantum cluster mutation}, defined by
\Eq{
\mu_k^q(\what{X}_i)=\case{X_k\inv&\mbox{if $i=k$},\\ \dis X_i\prod_{r=1}^{|b_{ki}|}(1+q_i^{2r-1}X_k)&\mbox{if $i\neq k$ and $b_{ki}\leq 0$},\\\dis X_i\prod_{r=1}^{b_{ki}}(1+q_i^{2r-1}X_k\inv)\inv&\mbox{if $i\neq k$ and $b_{ki}\geq 0$},}
}
where we denote by $\what{X}_i$ the quantum cluster variables corresponding to $\cX_{\bi'}$ with exchange matrix $B'$ such that $b'_{ki}=-b_{ki}$ for every $i\in I$. 

The quantum cluster mutation $\mu_k^q$ can be written as a composition of two homomorphisms
\Eq{
\mu_k^q=\mu_k^\#\circ \mu_k',
}
where $\mu_k':\bT_{\bi'}\to \bT_\bi$ is a monomial transformation defined by
\Eq{
\mu_k'(\what{X}_i):=\case{X_k\inv&\mbox{if $i=k$},\\ X_i&\mbox{if $i\neq k$ and $b_{ki}\leq 0$},\\ q_i^{b_{ik}b_{ki}}X_iX_k^{b_{ik}}&\mbox{if $i\neq k$ and $b_{ki}\geq 0$},}
}
and $\mu_k^\#:\bT_\bi\to\bT_\bi$ is a conjugation by the \emph{quantum dilogarithm function}
\Eq{
\mu_k^\#:=Ad_{\Psi^{q_k}(X_k)}, 
}where
\Eq{
\Psi^{q}:=\prod_{r=0}^\oo (1+q^{2r+1}x)\inv.
}
\end{Def}

In the remaining of the paper, however, we will use the notation
\Eq{
g_{b_k}(x)&:=\Psi^{q_k}(x)\inv,\\
g_{b_k}^*(x)&:=g_{b_k}\inv(x)=\Psi^{q_k}(x)
}
instead, in accordance to the universal $R$-operator formula given in \cite{Ip4}. The various identities of $g_b(x)$ that are needed in this paper are summarized in Appendix \ref{sec:dilog}.

\begin{Rem}\label{qd}
We remark that $g_b(x)$, where $q=e^{\pi \sqrt{-1}b^2}$, is the notation for the non-compact quantum dilogarithm, which plays a central role in the theory of positive representation, various quantum Teichm\"uller theories \cite{FG1, Ka} and non-rational conformal field theories \cite{BT, PT1, PT2}. It is composed by two commuting copies, associated to the so-called \emph{Faddeev's modular double}, of the compact quantum dilogarithm $\Psi^q(x)$ \cite{Fa2, FKa}, and it is a unitary operator when $x$ is positive self-adjoint.

In this paper however, we are only interested in the formal algebraic theory, hence one may consider only the compact part and think of the correspondence
\Eq{
g_b(x)\sim \Psi^q(x)\inv=\prod_{r=0}^\oo (1+q_k^{2r+1}x) = Exp_{q^{-2}}\left(-\frac{u}{q-q\inv}\right),
}
where 
\Eq{
Exp_q(x)&:=\sum_{k\geq 0} \frac{x^k}{(k)_q!},\\
(k)_q&:=\frac{1-q^k}{1-q}.
}

The use of the notation $g_b(x)$ suggests that the theory of the current paper can be naturally applied to the case of the non-compact split real setting, where all the algebraic relations are satisfied, and naturally the positivity and self-adjointness of the operators are automatically taken care into account, which makes the choice extremely natural.
\end{Rem}

The following version of the useful Lemma from \cite[Lemma 1.1]{SS2} is rewritten in the notation of the current paper:
\begin{Lem}\label{useful} Let $\mu_{i_1}, ... ,\mu_{i_k}$ be a sequence of mutation, and denote the intermediate seeds by $\bi_j:=\mu_{i_j}...\mu_{i_1}(\bi)$. Then the induced quantum cluster mutation $\mu_{i_1}^q ...\mu_{i_k}^q: \bT_{\bi_k}\to \bT_{\bi}$ can be written as 
\Eq{
\mu_{i_1}^q ...\mu_{i_k}^q=\Phi_k\circ M_k,
}
where $M_k:\bT_{\bi_k}\to \bT_{\bi}$ and $\Phi_k:\bT_\bi\to\bT_\bi$ with
\Eq{
M_k&:= \mu_{i_1}'\mu_{i_2}'... \mu_{i_k}',\\
\Phi_k&:= Ad_{g_{b_{i_1}}^*(X_{i_1})}Ad_{g_{b_{i_2}}^*(\mu_{i_1}'(X_{i_2}^{\bi_1}))}... Ad_{g_{b_{i_k}}^*(\mu_{i_1}'...\mu_{i_{k-1}}'(X_{i_k}^{\bi_{k-1}}))},
}
and $X^{\bi}_i$ denotes the corresponding quantum cluster variables of the algebra $\cX_\bi$. 
\end{Lem}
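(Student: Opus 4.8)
The plan is to prove this by induction on $k$, using the single-mutation decomposition $\mu_k^q=\mu_k^\#\circ\mu_k'$ recalled above, and then repeatedly commuting the monomial maps $\mu'$ past the conjugation operators so as to sort all monomial transformations to the right and all conjugations to the left. Writing the intermediate seeds as $\bi_0=\bi,\bi_1,\dots,\bi_k$, each individual mutation reads $\mu_{i_j}^q=Ad_{g_{b_{i_j}}^*(X_{i_j}^{\bi_{j-1}})}\circ\mu_{i_j}'$ as a map $\bT_{\bi_j}\to\bT_{\bi_{j-1}}$, since the conjugation part $\mu_{i_j}^\#=Ad_{\Psi^{q_{i_j}}(X_{i_j})}=Ad_{g_{b_{i_j}}^*(X_{i_j}^{\bi_{j-1}})}$ is by the dilogarithm of the cluster variable living in the \emph{target} seed $\bi_{j-1}$. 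Composing all of these gives
\Eqn{
\mu_{i_1}^q\circ\cdots\circ\mu_{i_k}^q
= Ad_{g_{b_{i_1}}^*(X_{i_1})}\,\mu_{i_1}'\,Ad_{g_{b_{i_2}}^*(X_{i_2}^{\bi_1})}\,\mu_{i_2}'\cdots Ad_{g_{b_{i_k}}^*(X_{i_k}^{\bi_{k-1}})}\,\mu_{i_k}',
}
and the whole argument reduces to rearranging this alternating product.

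Two elementary facts drive the rearrangement. First, for any algebra isomorphism $\phi$ and any element $g$ in its source, $\phi\circ Ad_g=Ad_{\phi(g)}\circ\phi$, immediate from $\phi(gag\inv)=\phi(g)\phi(a)\phi(g)\inv$. Second, since $\phi$ is a $\C(q^d)$-algebra homomorphism it fixes $q$ and hence commutes with evaluating the (formal) power series, so $\phi(g_{b}^*(x))=g_{b}^*(\phi(x))$ with the same subscript $b$. Combining these for a monomial map $\phi=\mu'$ yields $\mu'\circ Ad_{g_{b}^*(x)}=Ad_{g_{b}^*(\mu'(x))}\circ\mu'$, which is exactly the substitution turning the argument of each dilogarithm into its image under the accumulated monomial transformation.

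With these in hand the induction is mechanical. The base case $k=1$ is just $\mu_{i_1}^q=\Phi_1\circ M_1$ with $\Phi_1=Ad_{g_{b_{i_1}}^*(X_{i_1})}$ and $M_1=\mu_{i_1}'$. Assuming $\mu_{i_1}^q\circ\cdots\circ\mu_{i_{k-1}}^q=\Phi_{k-1}\circ M_{k-1}$ with $M_{k-1}=\mu_{i_1}'\cdots\mu_{i_{k-1}}'$, I compose on the right with $\mu_{i_k}^q=Ad_{g_{b_{i_k}}^*(X_{i_k}^{\bi_{k-1}})}\circ\mu_{i_k}'$ and slide $M_{k-1}$ past the new conjugation:
\Eqn{
M_{k-1}\circ Ad_{g_{b_{i_k}}^*(X_{i_k}^{\bi_{k-1}})}
= Ad_{g_{b_{i_k}}^*(\,\mu_{i_1}'\cdots\mu_{i_{k-1}}'(X_{i_k}^{\bi_{k-1}})\,)}\circ M_{k-1}.
}
The new conjugation is precisely the last factor of $\Phi_k$, appearing to the left of $\Phi_{k-1}$'s factors, while what remains on the right is $M_{k-1}\circ\mu_{i_k}'=M_k$, completing the inductive step and hence the proof.

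The only point requiring care is the bookkeeping: one must consistently track that the cluster variable in the $j$-th factor is $X_{i_j}^{\bi_{j-1}}$, sitting in the source of that factor's conjugation, and that applying $M_{j-1}$ transports its dilogarithm argument through all preceding monomial maps in the correct order. The identity $\phi(g_b^*(x))=g_b^*(\phi(x))$ is used purely formally, consistent with the formal-algebraic viewpoint of Remark \ref{qd}, so no analytic justification of the infinite product is needed; in the split real setting it is simply the statement that conjugation by unitaries intertwines with the functional calculus.
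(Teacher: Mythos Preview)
Your proof is correct and follows the standard inductive argument; the paper itself does not supply a proof for this lemma but simply cites \cite[Lemma 1.1]{SS2}, so there is nothing to compare against on the paper's side. One minor slip: in your inductive step you write that the new conjugation factor appears ``to the left of $\Phi_{k-1}$'s factors'', but in fact it appears to the \emph{right}, since the computation gives $\Phi_{k-1}\circ Ad_{g_{b_{i_k}}^*(M_{k-1}(X_{i_k}^{\bi_{k-1}}))}\circ M_k$, which is exactly $\Phi_k\circ M_k$ by the definition of $\Phi_k$. This does not affect the validity of the argument.
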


\subsection{Amalgamation}
We also recall the procedure of \emph{amalgamation} of two quivers \cite{FG2}:
\begin{Def}
Let $Q_1:=Q^{\bi_1}$ and $Q_2:=Q^{\bi_2}$ be a pair of quivers associated to the seed $\bi_1=(I^1,I_0^1,B^1,D^1), \bi_2=(I^2,I_0^2,B^2,D^2)$ and with edge weights $w^1, w^2$ respectively, and let $J_1\subset I_0^1, J_2\subset I_0^2$ be certain subsets of the frozen nodes of $Q_1$ and $Q_2$ respectively. Assume there exists a bijection $\phi:J_1\to J_2$ such that $d_{\phi(i)}=d_i$ for $i\in J_1$. Then the \emph{amalgamation} of $Q_1$ and $Q_2$ along $\phi$ is a new quiver $Q$ constructed as follows:
\begin{itemize}
\item[(1)] The vertices of $Q$ are given by $Q_1\cup_\phi Q_2$ by identifying vertices $i\in Q_1$ and $\phi(i)\in Q_2$ and assigned with the same weight $d_i$,
\item[(2)] The frozen nodes of $Q$ are given by $(I_0^1\setminus J_1)\sqcup (I_0^2\setminus J_2)$, i.e. we ``defroze" the vertices that are glued.
\item[(3)] The weights $w$ of the edges of $Q$ are given by
$$w_{ij}=\case{0&\mbox{if $i\in I^k\setminus J_k$ and $j\in I^{2-k}\setminus J_{2-k}$ for $k=1,2$},\\
w^k_{ij}&\mbox{if $i\in I^k\setminus J_k$ or $j\in I^k\setminus J_k$ for $k=1,2$},\\
w^1_{ij}+w^2_{\phi(i)\phi(j)}&\mbox{if $i,j\in J_1$}.}$$
\end{itemize}
Amalgamation of a pair of quiver induces an embedding $\cX\to \cX_1\ox \cX_2$ of the corresponding quantum cluster $\cX$-tori by
\Eq{
X_i\mapsto \case{
X_i\ox 1&\mbox{if $i\in Q_1\setminus J_1$},\\
1\ox X_i&\mbox{if $i\in Q_2\setminus J_2$},\\
X_i\ox X_{\phi(i)}&\mbox{otherwise}.
}
}
\end{Def}
Visually it is just gluing two quivers together along the chosen frozen nodes, such that the weights of the corresponding arrows among those nodes are added.
\section{From positive representations to quantum group embedding}\label{sec:pos}
In this section, we introduce the notion of positive representations, and show that one can construct from it the quantum group embedding into certain quantum torus algebra.
\subsection{Positive representations $\cP_\l$ of $\cU_q(\g_\R)$}\label{sec:pos:pos}
In \cite{FI, Ip2, Ip3}, a special class of representations called the \emph{positive representations} is constructed for split real quantum groups $\cU_q(\g_\R)$ (and its modular double, which is not needed in this paper). Here $\cU_q(\g_\R)$ is defined to be $\cU_q(\g)$ endowed with the star structure
\Eq{
\be_i^*=\be_i, \tab \bf_i^*=\bf_i, \tab K_i^*=K_i,
}
and necessarily $|q_i|=1$ for every $i\in I$, whence we let $q_i=e^{\pi \sqrt{-1} b_i^2}\in\C$ for $b_i\in \R$. We assume the $q_i$'s are not root of unity for simplicity.

\begin{Thm} [Positive representations]
There exists a family of irreducible representations $\cP_{\l}$ of $\cU_{q}(\g_\R)$ parametrized by the $\R_+$-span of the cone of positive weights $\l\in P_\R^+\subset \fh_\R^*$, or equivalently by $\l:=(\l_1,...,\l_n)\in \R_+^n$ where $n=rank(\g)$, such that 
\begin{itemize}
\item For each reduced word $\bi\in \fR$, the generators $\be_i,\bf_i,K_i$ are represented by positive essentially self-adjoint operators acting on $L^2(\R^N)$,
\item Each generators can be represented by monomials generated by the positive operators
$$\{e^{\pm\pi b_i x_i}, e^{\pm2\pi b_i p_i}\}_{i=1,..., N},$$
where $p_i=\frac{1}{2\pi \sqrt{-1}}\del[,x_i]$ are the momentum operators such that $[p_i,x_i]=\frac{1}{2\pi \sqrt{-1}}$, and each monomials are positive essentially self-adjoint.
\item There exists a unitary equivalence $\Phi$ between positive representations corresponding to different reduced words, hence the representation does not depend on the choice of reduced expression of $w_0$.
\end{itemize}
\end{Thm}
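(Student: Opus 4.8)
The plan is to prove the theorem by explicit construction: for each reduced word one writes down candidate operators, verifies the defining relations, checks positivity and irreducibility, and finally builds unitary intertwiners between different words. First I would treat the rank-one case $\g=\sl_2$. On $L^2(\R)$ with self-adjoint position and momentum $x,p$ satisfying $[p,x]=\frac{1}{2\pi\sqrt{-1}}$, set $q=e^{\pi\sqrt{-1}b^2}$ and define $K,K'$ and the rescaled generators $\be,\bf$ as Laurent monomials in the positive operators $e^{\pm\pi b x}$ and $e^{\pm2\pi b p}$ with coefficients depending on $\l$. Using the Weyl relation $e^{\pi b x}e^{2\pi b p}=q\,e^{2\pi b p}e^{\pi b x}$ one verifies directly the $K$-relations and $[\be,\bf]=(q-q\inv)(K'-K)$. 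Positivity and essential self-adjointness of each generator follow from the spectral theorem for $x$ and $p$ together with the symmetric ordering of the monomials, and irreducibility holds because the generated algebra already contains the full Weyl algebra on $L^2(\R)$, whose commutant is scalar.

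For general $\g$ and a fixed reduced word $\bi=(i_1,\dots,i_N)$ I would work on $L^2(\R^N)$, assigning a pair $(x_k,p_k)$ to each letter of $\bi$. Each $K_i$ is defined as a product of exponentials $e^{\pm\pi b_i x_k}$ over the positions labelled by $i$, while $\be_i,\bf_i$ are defined as sums of monomials in $e^{\pm\pi b_i x_k}, e^{\pm2\pi b_i p_k}$, arranged in the symmetric normalization of Definition \ref{Xnotation} so that along each $\sl_2$-string in $\bi$ they reduce to the rank-one formula. The exponents are dictated by Lusztig's parametrization of the totally positive part of $G$, equivalently by the PBW/Gauss decomposition attached to $\bi$, so that positivity and essential self-adjointness of every monomial are automatic.

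\textbf{The main obstacle} is to verify that these operators satisfy all the defining relations of $\cU_q(\g_\R)$, in particular the $q$-Serre relations \eqref{SerreE}--\eqref{SerreF}. The $K$-relations and the mixed relation $[\be_i,\bf_j]=\d_{ij}(q_i-q_i\inv)(K_i'-K_i)$ reduce to the Weyl relations and are relatively direct. The Serre relations, however, demand delicate cancellations among many monomials governed by $q$-binomial identities, and become genuinely intricate in the non-simply-laced and exceptional types. I expect to handle them type-by-type, reducing every instance to the rank-two cases $A_2,B_2,G_2$, which are in turn checked using the $q$-exponential and quantum dilogarithm identities collected in Appendix \ref{sec:dilog}.

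Finally, irreducibility in general rank follows as in the rank-one case once the generated algebra is seen to contain the full Weyl algebra on $L^2(\R^N)$. For independence of the reduced word I would invoke Matsumoto's theorem: any two words of $w_0$ are joined by a sequence of elementary braid moves, so it suffices to produce, for each braid move, an explicit unitary on $L^2(\R^N)$ — built from quantum dilogarithms $g_b(\cdot)$ applied to the relevant cluster monomials — intertwining the two candidate representations. Composing these intertwiners gives the unitary equivalence $\Phi$, and its well-definedness, i.e.\ independence of the chosen sequence of moves, follows from the consistency relations satisfied by the dilogarithm intertwiners, which match the coherence in Matsumoto's theorem. This simultaneously shows that the representations attached to different words are all unitarily equivalent, completing the construction.
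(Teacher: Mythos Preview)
The paper does not actually prove this theorem; it is stated as a summary of the constructions carried out in the author's earlier works \cite{FI, Ip2, Ip3, IpTh}, and is used as a black box throughout. So there is no ``paper's own proof'' to compare against here.

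Your outline is broadly consistent with how the construction proceeds in those references: one builds the operators explicitly from Lusztig's total positivity data attached to a reduced word, checks the relations (with the Serre relations handled by reduction to rank two), and constructs the unitary intertwiners $\Phi$ between different reduced words out of quantum dilogarithms, one braid move at a time via Matsumoto's theorem. Two points worth flagging, though. First, your claim that irreducibility follows because the generated algebra ``contains the full Weyl algebra on $L^2(\R^N)$'' is too quick: the image of $\cU_q(\g_\R)$ is a proper subalgebra of the Weyl algebra (it depends on $\l$, for one), and irreducibility in the cited works is argued differently, essentially via the structure of the principal series and the analytic properties of the operators. Second, the ``well-definedness'' of $\Phi$ (independence of the sequence of braid moves) is not a formality: the required coherence relations among the dilogarithm intertwiners are nontrivial identities (pentagon-type and higher analogues), and in the non-simply-laced cases these are genuinely delicate computations rather than consequences of Matsumoto's theorem alone.
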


In the theory of positive representations of split real quantum groups, the representation carries a real structure and the operators are represented by unbounded positive operators on certain Hilbert spaces. However in this paper, we will only be dealing with the representation formally, so all the generators and relations are treated on the algebraic level only. Hence if we define formally
\Eq{
X_i^{\pm1}=e^{\pm\pi b_i x_i},\tab Y_i^{\pm1} = e^{\pm2\pi b_i p_i},\label{XY}
}
then algebraically we have for $i=1,..., N$:
\Eq{
X_iY_i&=q_i Y_i X_i,\label{qtorus}\\
X_iY_j&=Y_jX_i,\tab i\neq j.\nonumber
}

As a corollary, if we just consider the quantum torus algebra $\C[\bT_q]$ generated by the elements $\<X_i^{\pm1}, Y_i^{\pm1}\>_{i=1,...,N}$ subjected to \eqref{qtorus}, we obtain 
\begin{Cor} The positive representations give an embedding of $\cU_q(\g)$ into $\C[\bT_q]$, generalizing the Feigin's homomorphism $\cU_q(\fb)\to \C[\bT_q]$.
\end{Cor}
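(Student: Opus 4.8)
The statement asserts both that the prescribed assignment of generators is an algebra homomorphism and that it is injective; the plan is to treat these separately, the first being formal and the second carrying all the content. For the homomorphism property I would argue that, by the Positive Representations Theorem, the operators realizing $\be_i,\bf_i,K_i$ are monomials in the positive operators $e^{\pm\pi b_i x_i}$ and $e^{\pm2\pi b_i p_i}$, and that since $\cP_\l$ is a genuine representation of $\cU_q(\g_\R)$ these operators satisfy every defining relation of $\cU_q(\g)$: the $K$-commutation relations, the bracket \eqref{EFFE} in the rescaled form dictated by \eqref{rescaleFF}, and the Serre relations \eqref{SerreE}--\eqref{SerreF}. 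Setting $X_i:=e^{\pi b_i x_i}$ and $Y_i:=e^{2\pi b_i p_i}$ and passing to the formal quantum torus $\C[\bT_q]$ with relations \eqref{qtorus}, each generator becomes a concrete element of $\C[\bT_q]$; each defining relation then persists as an identity there, because the Heisenberg relation $[p_i,x_i]=\frac{1}{2\pi\sqrt{-1}}$ is nondegenerate and hence $\C[\bT_q]$ acts faithfully on a dense domain of $L^2(\R^N)$, so no relation among the images can fail that does not already fail at the operator level. This yields a well-defined homomorphism $\iota\colon\cU_q(\g)\to\C[\bT_q]$.

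The substance is injectivity, for which I would use the triangular decomposition $\cU_q(\g)\cong\cU_q(\fn_-)\ox\cU_q(\fh)\ox\cU_q(\fn_+)$ together with a grading argument. The map $\iota$ is graded for the root grading, since each $K_i$ is sent to a monomial in the $X_j$ implementing the grading, so a weight-$\mu$ element is carried to a combination of torus monomials of a single $X$-weight $\mu$; it therefore suffices to prove injectivity on each weight space. On the Cartan and central part this is immediate, as the $K_i$ and the central elements map to genuine non-scalar monomials of $\C[\bT_q]$ separated by their weights --- this is precisely the point that allows a family of non-faithful irreducibles to assemble into one faithful formal map, since the central monomials of $\C[\bT_q]$ are not the scalars by which the center acts on any single $\cP_\l$. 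It then remains to show that the images of the PBW monomials (an ordered product of negative root vectors $\bf$, times a Cartan monomial, times an ordered product of positive root vectors $\be$, read off from the reduced word $\bi$) are linearly independent in $\C[\bT_q]$.

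For this linear independence I would fix a term order on the monomials of $\C[\bT_q]$ and compare leading terms. Restricted to $\cU_q(\fb_+)=\langle\be_i,K_i\rangle$ the map $\iota$ recovers Feigin's homomorphism, whose injectivity is known, and symmetrically for $\cU_q(\fn_-)$; the genuine content is therefore the non-collision of the $\be$-block against the $\bf$-block, namely that distinct full PBW monomials have distinct leading monomials. I would extract these leading monomials from the explicit path-on-the-quiver expressions for $\be_i$ and $\bf_i$ furnished by the construction, reducing the claim to a combinatorial statement about the supports of these monomials on the two amalgamated copies of the basic quiver. I expect this to be the main obstacle and the type-sensitive part of the argument: in type $A_n$ it is exactly where Schrader--Shapiro invoke hive-type conditions in the spirit of Knutson--Tao \cite{SS2,KTao}, and obtaining a uniform control of the interaction between the positive and negative parts through the quiver geometry, valid across all Dynkin types, is where the real work lies.
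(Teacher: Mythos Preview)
Your homomorphism argument is fine and matches the paper's implicit reasoning: the relations hold as operator identities on $L^2(\R^N)$, and since the quantum torus acts faithfully there the same identities hold in $\C[\bT_q]$.

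For injectivity, however, you take a genuinely different and much harder route than the paper. You propose to separate PBW monomials by leading terms in $\C[\bT_q]$, and you correctly identify the cross-term interaction between the $\be$-block and the $\bf$-block as the obstacle, anticipating a type-dependent combinatorial argument. The paper sidesteps this entirely with a representation-theoretic trick: by specializing the continuous parameters to the discrete locus $2\sqrt{-1}\l_i\in Q_i+b_i\N$, the positive representation $\cP_\l$ restricts to an arbitrary finite-dimensional highest-weight irreducible representation of the compact $\cU_q(\g)$; since these separate the PBW basis, no nonzero element of $\cU_q(\g)$ can map to zero in $\C[\bT_q]$. This is a one-line argument, uniform in type, and avoids any leading-term analysis. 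Your approach would give more explicit combinatorial information (and in type $A_n$ is exactly what Schrader--Shapiro do via hive conditions), but the paper's specialization argument is what makes the statement an honest Corollary rather than a theorem requiring case-by-case work.

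One small correction: your claim that the restriction to $\cU_q(\fb_+)=\langle\be_i,K_i\rangle$ ``recovers Feigin's homomorphism, whose injectivity is known'' is not quite right. In this construction Feigin's map is the half of $\bf_i$ given by $F_i^{1,+}+\cdots+F_i^{n_i,+}$ together with $K_i$, so it is a quotient of the restriction to $\cU_q(\fb_-)$; the expressions for the $\be_i$ are new and do not reduce to anything previously known, so you cannot import injectivity on $\fb_+$ from the literature.
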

\begin{Rem} In \cite{Ip2, Ip3}, we showed that one can shift the generators $\be_i, \bf_i$ by some appropriate $K_i$ factors such that the modified quantum group embeds into the ``true'' quantum torus algebra $\C\<X_i^{\pm1}, Y_i^{\pm1}\>$ with the relations $X_i Y_i=q_i^2 Y_i X_i$ instead.
\end{Rem}

\subsection{Explicit construction of $\cP_\l$}\label{sec:pos:con}
The positive representations $\cP_\l$ was computed explicitly for all types of $\g$. Let us first recall some notations used in \cite{Ip2, Ip3}. 

\begin{Def}\label{standardform}We denote by $p_u=\frac{1}{2\pi \sqrt{-1}}\del[,u]$ and 
\Eq{e(u)&:=e^{\pi bu},\tab [u]:=q^\half e(u)+q^{-\half}e(-u),}
so that whenever $[p,u]=\frac{1}{2\pi \sqrt{-1}}$,
\Eqn{[u]e(-2p)&:=(q^\half e^{\pi bu}+q^{-\half}e^{-\pi bu})e^{-2\pi bp} \\
&= e^{\pi bu-2\pi b p}+e^{-2\pi b u-2\pi b p}\\
&= e(u-2p)+e(-u-2p)
}
is self-adjoint.
\end{Def}

\begin{Def}\label{notation}By abuse of notation, we denote by
\Eq{[u_s+u_l]e(-2p_s-2p_l):=e^{\pi b_s(-u_s-2p_s)+\pi b_l(-u_l-2p_l)}+e^{\pi b_s(u_s-2p_s)+\pi b_l(u_l-2p_l)},}
where $u_s$ (resp. $u_l$) is a linear combination of the variables corresponding to short roots (resp. long roots). The parameters $\l_i$ are also considered in both cases. Similarly $p_s$ (resp. $p_l$) are linear combinations of the momentum variables corresponding to the short roots (resp. long roots). This applies to all simple $\g$, with the convention given in Definition \ref{qi}.
\end{Def}

\begin{Def}[Notation]\label{variable} Let $\bi=(i_1,...,i_N)\in\fR$ be a reduced word for $w_0$. We associate to $\bi$ a set of $N$ variables indexed in two ways:
\begin{itemize}\item $u_{i}^k$ denotes the $k$-th variables from the left\footnote{This differs from the previous notation used in \cite{Ip2, Ip3} where the variables read from the right. This version will be more convenient in this paper.} in $\bi$ corresponding to the root index $i$.
\item $v_j$ denotes the $j$-th variable from the left in $\bi$, i.e. corresponding to $i_j$, and $i_j$ is the root index corresponding to $v_j$.
\item We denote the corresponding momentum operators as $p_i^k$ and $p_j$ respectively if no confusion arises.
\item $v(i,k)$ denotes the index such that $u_i^k= v_{v(i,k)}$.
\end{itemize}
\end{Def}
\begin{Ex} For type $A_3$, let $\bi=(1,2,1,3,2,1)$. Then the 6 variables are ordered as:
$$(u_1^1, u_2^1, u_1^2, u_3^1, u_2^2, u_1^3) = (v_1,v_2,v_3,v_4,v_5,v_6).$$
\end{Ex}

Now we can summarize the construction of the positive representations as follows:
\begin{Thm}\label{pos2} Given a reduced word $\bi\in \fR$, the positive representation $\cP_\l\simeq L^2(\R^N)$ of $\cU_q(\g_\R)$ is parametrized by $\l=(\l_i)\in \R_{\geq 0}^{n}$, and the generators are represented in the form
\Eq{
\bf_i &= F_i^1+F_i^2+...+F_i^{n_i},\\
K_i &= e(-2\l_i-\sum_{j=1}^N a_{i_j,i}v_j),
}
where 
\Eq{
F_i^k &= \left[-\sum_{j=1}^{v(i,k)}a_{i_j,i}v_j+u_i^k -2\l_1\right]e(2p_i^k)\label{Fik}\\
&= e\left(-\sum_{j=1}^{v(i,k)}a_{i_j,i}v_j+u_i^k -2\l_1+2p_i^k\right)+e\left(\sum_{j=1}^{v(i,k)}a_{i_j,i}v_j-u_i^k -2\l_1+2p_i^k\right)\nonumber\\
&=:F_i^{k,-}+F_i^{k,+}
}
splitting according to Definition \ref{notation}. 

The representation of $\be_j$ is explicitly written case by case. In general, if $j=i_N$, then
\Eq{
\be_{j} = [v_N]e(-p_N)\label{be},
}
Otherwise 
\Eq{
\be_j=\Phi\circ [v_N]e(-2p_N)\circ \Phi\inv\label{unitrans},
}
where we recall $\Phi$ is the unitary transformation, expressed in terms of quantum dilogarithms, that relates $\cP_\l$ to another representations corresponding to a reduced word $\bi'\in\fR$ with $i_N'=j$.
\end{Thm}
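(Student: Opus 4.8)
The statement collects the explicit formulas of \cite{FI, Ip2, Ip3}, so the plan is to take the displayed operators as the definitions of the generators and verify that they satisfy the defining relations of $\cU_q(\g_\R)$ on a dense domain of $L^2(\R^N)$, working purely at the formal algebraic level as the excerpt permits; the remaining assertions (essential self-adjointness, irreducibility, and independence of the reduced word) then follow from the monomial structure together with the unitary equivalence $\Phi$ supplied by the preceding theorem. The verification splits into the $K$-relations, the commutator $[\be_i,\bf_j]$, and the Serre relations, with $\be_j$ for a non-final index handled by conjugation through $\Phi$ as in \eqref{unitrans}.

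First I would dispatch the $K$-relations. Since $K_i=e(-2\l_i-\sum_j a_{i_j,i}v_j)$ is a pure position exponential while every $F_i^k=[\cdots]e(2p_i^k)$ and every $\be_j$ are shift-type operators in the momenta, the relations $K_i\bf_j=q_i^{-a_{ij}}\bf_j K_i$, $K_i\be_j=q_i^{a_{ij}}\be_j K_i$, and $K_iK_j=K_jK_i$ all reduce to the canonical commutation $[p,u]=\frac{1}{2\pi\sqrt{-1}}$ applied exponential by exponential, and $K_iK_i'=1$ in the quotient is immediate. The coefficients $a_{i_j,i}$ in $K_i$ are arranged precisely so that commuting $K_i$ past the shift $e(2p_i^k)$ produces the power $q_i^{\mp a_{ij}}$.

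The heart of the matter is $[\be_i,\bf_j]=\d_{ij}(q_i-q_i^{-1})(K_i'-K_i)$. I would first treat the base case $j=i_N$, where $\be_j=[v_N]e(-p_N)$ is given outright. For $i\neq i_N$ one checks that neither $v_N$ nor $p_N$ occurs in any $F_i^k$ (the equality $v(i,k)=N$ would force the root label of $v_N$ to equal $i$), so $\be_{i_N}$ commutes termwise with $\bf_i$ and the bracket vanishes. For $i=i_N$, only the last summand $F_i^{n_i}$, which carries $u_i^{n_i}=v_N$ and $p_i^{n_i}=p_N$, fails to commute; expanding the two pieces $F_i^{n_i,\pm}$ and pushing the shift operators through via the $q$-commutation of exponentials, the net momentum shift cancels and the surviving position exponentials assemble into $(q_i-q_i^{-1})(K_i'-K_i)$. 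A general index $j$ is then reduced to this case: choosing $\bi'\in\fR$ with $i_N'=j$ and defining $\be_j$ through $\Phi$, the relation transports along the intertwiner because it already holds in the representation attached to $\bi'$.

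The last and most delicate ingredient is the Serre relation \eqref{SerreF} for the $\bf_i$ (that for $\be_i$ following by transport through $\Phi$). Here I would expand $\bf_i=\sum_k F_i^k$, note that each $F_i^k$ is a sum of two exponentials, and carry out the bookkeeping of the $q$-powers generated as shift operators are commuted past position exponentials, arranged so that the $q$-binomial coefficients $\tfrac{[1-a_{ij}]_{q_i}!}{[1-a_{ij}-k]_{q_i}![k]_{q_i}!}$ materialize and the alternating sum cancels in pairs. This is the step I expect to be the main obstacle, since the cancellation is genuinely type-dependent: the multipliers $d_i$, and in the non-simply-laced and exceptional cases the Langlands-type interplay of long and short roots through $q_s,q_l$, must all conspire correctly, which is why \cite{Ip2, Ip3} treat the classical and exceptional families separately. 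Once the relations hold, positivity and essential self-adjointness are read off from the monomial form, each summand being a single positive exponential and each symmetric combination $[u]e(-2p)$ being self-adjoint by Definition \ref{standardform}, and irreducibility follows exactly as in the cited works.
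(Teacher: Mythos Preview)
The paper does not prove this theorem at all: it is introduced with ``Now we can summarize the construction of the positive representations as follows'' and is simply a restatement, in the notation of the present paper, of results established in the author's earlier works \cite{Ip2, Ip3}. There is nothing to compare your argument against.

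That said, your outline is a reasonable sketch of how one would verify the relations directly, and it correctly identifies the Serre relations as the nontrivial part requiring case-by-case treatment. One minor point: your handling of $[\be_{i_N},\bf_i]$ for $i\neq i_N$ is slightly too quick. Even if $v_N$ does not appear among the $u_i^k$, the sum $\sum_{j=1}^{v(i,k)} a_{i_j,i} v_j$ inside $F_i^k$ can contain $v_N$ whenever $v(i,k)>N$ is impossible but $v(i,k)$ could equal $N$ only if $i=i_N$; however for $k$ with $v(i,k)<N$ the variable $v_N$ genuinely does not appear, so your claim is fine once you note that $v(i,k)\leq N$ with equality only when $i=i_N$. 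The transport argument via $\Phi$ for general $j$ is exactly how the cited papers proceed.
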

\begin{Thm}\label{initial} Each generator $\be_j$ is expressed as a polynomial in $X_i^{\pm1}$ and $Y_i^{\pm1}$ (cf. \eqref{XY}), with a unique \emph{initial term} of the form $[u_i^k]e(-2p_i^k +... )$ for some index $i,k$. One determines this initial term by applying the transformation \eqref{unitrans} and tracing the changes of the corresponding initial term by the following rules:
\begin{itemize}
\item if $j=i_N$, then from \eqref{be} the initial term is $[v_{i_N}]$ by definition.
\item If we have a change of word $(...i,j,i,...)\corr (...,j,i,j,...)$, inducing a change of variables
$$(..., u_i^{k}, u_j^{l}, u_i^{k+1},...) \corr (..., u_j^{l}, u_i^{k}, u_j^{l+1},... ),$$ 
then the initial term changes from $[u_i^k]\corr [u_j^{l+1}]$.
\item If we have a change of word $(...,i,j,i,j,...)\corr(...,j,i,j,i,...)$, inducing a change of variables
$$(..., u_i^{k}, u_j^{l}, u_i^{k+1}, u_j^{l+1}...) \corr (..., u_j^{l}, u_i^{k}, u_j^{l+1}, u_i^{k+1}... ),$$ 
then the initial term changes from $[u_j^l]\corr [u_j^{l+1}]$.
\item In type $G_2$, for the change of word $(2,1,2,1,2,1)\corr (1,2,1,2,1,2)$, the initial term for $\be_1$ is $[u_1^3]\corr [u_1^1]$ and initial term for $\be_2$ is $[u_2^1]\corr [u_2^3]$.
\end{itemize}
\end{Thm}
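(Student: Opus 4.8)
The plan is to reduce the statement to a single braid move at a time. By the Matsumoto--Tits theorem any two reduced words of $w_0$ are connected by a sequence of elementary changes of word, and by Theorem \ref{pos2} the intertwiner $\Phi$ realizing \eqref{unitrans} factors as the corresponding composition of the elementary intertwiners attached to these moves, each of which (by \cite{Ip2, Ip3}) is a conjugation by an explicit product of quantum dilogarithms $g_b$ in the variables of Theorem \ref{pos2}. The base case is immediate: when $j=i_N$, formula \eqref{be} already displays $\be_j=[v_N]e(-p_N)$ whose only bracketed summand is $[v_N]=[u_j^{n_j}]$, which is the first bullet. It therefore suffices to track, under one braid move, how the leading bracket of $\be_j$ is transported between two adjacent words.

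First I would fix the meaning of \emph{initial term}: writing $\be_j$ as a Laurent polynomial in the $X_i^{\pm1}, Y_i^{\pm1}$ of \eqref{XY}, it is the summand of the shape $[u_i^k]e(-2p_i^k+\cdots)$ carrying a full undeformed bracket $[u_i^k]$ and leading momentum $-2p_i^k$. The single assertion driving both the uniqueness and the transport rules is the local claim that conjugating one such bracketed term by the elementary intertwiner of a single braid move produces again exactly one bracketed term, with its index $(i,k)$ relabelled, together with strictly lower-order corrections (the $(1+q^{2r-1}X)$-type factors generated by $g_b$, cf. Lemma \ref{useful}) that never reach leading order. Iterating this from the terminal word of the first bullet then shows simultaneously that the initial term is unique in every word and determines how it moves between adjacent words. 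The local claim itself I would prove from the basic $Ad_{g_b(X)}$ identities and the quantum pentagon relation collected in Appendix \ref{sec:dilog}.

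Running this for the rank-two simply-laced move $(\dots i,j,i,\dots)\corr(\dots,j,i,j,\dots)$, with the induced change of variables $(\dots,u_i^k,u_j^l,u_i^{k+1},\dots)\corr(\dots,u_j^l,u_i^k,u_j^{l+1},\dots)$ of Theorem \ref{pos2} fed into the dilogarithm functional equation, yields the transport $[u_i^k]\corr[u_j^{l+1}]$ of the second bullet; the doubly-laced $B, C, F$ move, with the longer change of variables $(\dots,u_i^k,u_j^l,u_i^{k+1},u_j^{l+1},\dots)\corr(\dots,u_j^l,u_i^k,u_j^{l+1},u_i^{k+1},\dots)$, gives $[u_j^l]\corr[u_j^{l+1}]$ of the third bullet by the same template. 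The only genuine content in these two cases is to check that precisely one bracket survives at leading order and that the index shift is exactly the one recorded.

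The main obstacle I expect is the $G_2$ case. There the relevant move $(2,1,2,1,2,1)\corr(1,2,1,2,1,2)$ has length six, the intertwiner is a longer product of quantum dilogarithms with an asymmetry between the short and long roots, and the claimed transports $[u_1^3]\corr[u_1^1]$ for $\be_1$ and $[u_2^1]\corr[u_2^3]$ for $\be_2$ are not simple positional shifts. Establishing them requires the full $G_2$ intertwiner from \cite{Ip3} and an order-by-order tracking of the leading bracket through the entire six-fold braid relation rather than a single clean dilogarithm identity; this bookkeeping is the technical heart of the proof, with the simply-laced and doubly-laced cases serving as the model.
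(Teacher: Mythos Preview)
Your plan is correct and matches the paper's own treatment. The paper does not give a self-contained proof of this theorem; it is stated as a summary of facts established in \cite{Ip2, Ip3, IpTh}, where the intertwiner $\Phi$ is built exactly as you describe---as a composition of elementary dilogarithm conjugations attached to single braid moves---and the transport of the initial bracket is read off move by move. Your identification of the $G_2$ case as the technical heart is also accurate: the eleven-dilogarithm intertwiner from \cite{Ip3} is what one must trace through. The paper does add one remark you might note: at the start of Section \ref{sec:mutation} it observes that the quiver-mutation description of the change of words (Sections \ref{sec:mutation:simply}--\ref{sec:mutation:G2}) furnishes an alternate proof of the same rules, since the node $e_j^0$ attached to the initial term is carried along by the mutation sequence in a way that can be read off directly from the quiver pictures. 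This is not a different argument in substance---it is the same dilogarithm bookkeeping repackaged---but it does make the simply-laced and doubly-laced rules visually transparent without writing out the $Ad_{g_b}$ identities.
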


From the explicit expression \eqref{Fik}, we have
\begin{Prop}\label{Fi}
If we write $\bf_i$ as 
$$\bf_i=F_i^{n_i,-}+F_i^{n_i-1,-}... F_i^{1,-}+ F_i^{1,+}+ F_i^{2,+}+...+F_i^{n_i,+},$$
then each term $q_i^{-2}$-commute with all the terms on the right, and each term $q_i^{-2}$-commute with $K_i\inv$.
\end{Prop}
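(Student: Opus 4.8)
The plan is to realize each summand $F_i^{k,\pm}$ as the exponential of a linear form in the Heisenberg generators and then read off the commutation constants from the Weyl relation $e^{\mathcal L}e^{\mathcal M}=e^{[\mathcal L,\mathcal M]}e^{\mathcal M}e^{\mathcal L}$, valid whenever $[\mathcal L,\mathcal M]$ is a scalar. By \eqref{Fik} and Definition \ref{notation} we may write
\Eq{
F_i^{k,\pm}=e^{\mathcal L^{k,\pm}},\qquad \mathcal L^{k,\pm}=2\pi b_i p_i^k\pm\Big(\sum_{j=1}^{v(i,k)}a_{i_j,i}\,\pi b_{i_j}v_j-\pi b_i u_i^k\Big)+c^\pm,
}
where $c^\pm$ collects the $\l$-dependent scalars and is therefore central. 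The decisive structural point, which I would establish first, is that the \emph{only} momentum occurring in $F_i^{k,\pm}$ is $p_i^k$, conjugate to the root-$i$ variable $u_i^k$; consequently the central commutator of any two such exponents receives contributions only from pairing one such momentum with a root-$i$ position variable $u_i^m=v_{v(i,m)}$ appearing in the partner. Every variable $v_j$ with $i_j\ne i$ is inert, since no $F_i^{\bullet}$ contains its conjugate momentum $p_j$.

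Next I would normalize the single scalar needed. From \eqref{qtorus}, $e^{\pi b_i u}e^{2\pi b_i p}=q_i\,e^{2\pi b_i p}e^{\pi b_i u}$, so pairing a momentum $2\pi b_i p_i^m$ against a term $c\,\pi b_i u_i^m$ of the partner contributes a factor $q_i^{\mp c}$, with sign $-$ if the momentum-bearing factor sits on the left and $+$ if it sits on the right. The relevant coefficients $c$ are read directly off $\mathcal L^{k,\pm}$: if $m<k$ then $u_i^m$ lies strictly inside the summation range $j\le v(i,k)$ and $a_{ii}=2$ gives coefficient $\pm2$, whereas the ``own'' variable $u_i^k$ has coefficient $\pm(a_{ii}-1)=\pm1$. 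Because $v(i,\cdot)$ is strictly increasing, the position range of $F_i^{l,\pm}$ is exactly $\{1,\dots,v(i,l)\}$, which dictates, for each ordered pair in the list, which of the two possible pairings actually survives.

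The proof then reduces to the case analysis along the prescribed order $F_i^{n_i,-},\dots,F_i^{1,-},F_i^{1,+},\dots,F_i^{n_i,+}$. For two $-$ terms $F_i^{k,-}$ (left) and $F_i^{l,-}$ (right) with $k>l$, only the pairing of the right momentum $p_i^l$ with the left variable $u_i^l$ survives, of coefficient $-2$, giving $q_i^{-2}$; symmetrically, two $+$ terms with $k<l$ pair the left momentum $p_i^k$ with the right variable $u_i^k$ of coefficient $+2$, again $q_i^{-2}$. For a $-$ term left of a $+$ term with $k\ne l$ exactly one pairing survives and again produces $q_i^{-2}$; while for $k=l$ \emph{both} pairings survive, each contributing the own-variable factor $q_i^{-1}$, which multiply to $q_i^{-2}$. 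Finally, since $K_i$ of Theorem \ref{pos2} is a pure position operator, $F_i^{k,\pm}K_i\inv=q_i^{-2}K_i\inv F_i^{k,\pm}$ follows by the same pairing of $p_i^k$ against the coefficient $a_{ii}=2$ of $u_i^k$ in $K_i\inv$.

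I expect the only real obstacle to be the sign and coefficient bookkeeping: one must keep straight which of the two cross-commutators survives in each ordered pair, verify that the surviving coefficient is always $\pm2$ (collapsing to a single $q_i^{-2}$) except in the diagonal $k=l$ case where two $\pm1$ contributions combine, and confirm that the order of the product matches the claimed relation $T_{\mathrm{left}}T_{\mathrm{right}}=q_i^{-2}T_{\mathrm{right}}T_{\mathrm{left}}$ rather than its inverse. Once the reduction ``only the root-$i$ conjugate pair contributes'' is in place, there is no conceptual difficulty.
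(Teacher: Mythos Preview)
Your proof is correct and is precisely the direct computation the paper intends: the paper offers no argument beyond the sentence ``From the explicit expression \eqref{Fik}, we have'', so your Weyl-relation bookkeeping (only the momentum $p_i^k$ in $F_i^{k,\pm}$ can contribute, pairing against the $u_i^m$ coefficients $\pm2$ for $m<k$ and $\pm1$ for $m=k$) simply fills in the omitted details. There is no alternative approach to compare.
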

\begin{Rem} Feigin's homomorphism $U_q(\fb_-)\to \C[\bT_q]$ is given by the expression of $K_i$ and half of $\bf_i$:$$\bf_i':= F_i^{1,+}+ F_i^{2,+}+...+F_i^{n_i,+}$$ only, so the expression of Theorem \ref{pos2} is really a ``double" of Feigin's homomorphism.
\end{Rem}

\subsection{Embedding of $\cU_q(\g)$ into quantum torus algebra $\cD_\g$}\label{sec:pos:embed}

Now we are ready to construct the quantum torus algebra $\cD_\g$ in the favor of \cite{SS2} that will provide a clear description of the embedding of the generators of the quantum group $\cU_q(\g)$.

\begin{Def}\label{clusterX} Define $2N+2n$ variables indexed by 
$$S=\{f_i^{-n_i},..., f_i^{n_i}\}_{i\in I}\cup\{e_i^0\}_{i\in I}\simeq \{1,..., 2N+2n\}$$ as follows: For each $i\in I$, we take the consecutive ``ratio'' of the monomial terms of $\bf_i$ as:
\Eq{\label{XfromF}
X_{f_i^k}=\case{ F_i^{n_i,-}& k=-n_i,\\ 
q_iF_i^{k,-} (F_i^{k-1,-})\inv &n<0,\\ 
q_i F_i^{1,+}(F_i^{1,-})\inv&k=0,\\
q_i  F_i^{k+1,+}(F_i^{k,+})\inv&k>0,\\
q_i K_i\inv (F_i^{n_i,+})\inv& k=n_i.
}
}
Let the initial term of $\be_i$ be 
\Eqn{
[v_n]e(-2p_n)&=e(v_n-2p_n)+e(-v_n-2p_n)\\
&=:E_i^-+E_i^+
}
as in Theorem \ref{initial}. Then we define
\Eq{
X_{e_i^0}=q_i E_i^+(E_i^-)\inv  = e(-2v_n).
}
\end{Def}

We note that each $X_k$ are self-adjoint. Moreover, since all $X_k$ are expressed formally as a monomial, we have
\Eq{
\label{XjXk}X_jX_k=q_j^{-2b_{jk}}X_kX_j
}
for some skew-symmetrizable exchange matrix $B=(b_{jk})$ and $q_j:=q_i$ if $j=f_i^k$ or $e_i^0$. By abuse of notation, we will use the same variables for the definition below:
\begin{Def} We define the quantum torus algebra $\cD_\g$ to be the algebra generated by the elements
$$X_{f_i^{-n_i}},..., X_{f_i^{n_i}}, X_{e_i^0},\tab i=1,..., n$$
subject to the relations \eqref{XjXk}. 

The corresponding $\cD_\g$-quiver is associated to the seed $(S, S_0, B, D)$ where $D=diag(d_i)$ and the frozen nodes are $S_0=\{f_i^{-n_i}\}_{i\in I}\cup\{f_i^{n_i}\}_{i\in I}$.
\end{Def}

Now we can state our first main result of the paper.
\begin{Thm}\label{mainThm} We have an embedding of algebra
\Eq{
\iota:\fD_\g\inj \cD_\g,}
which induces an embedding of the quantum group into a quotient of $\cD_\g$
\Eq{
\cU_q(\g)\inj \cD_\g/\<\iota(K_i)\iota(K_i)'=1\>.
}
\end{Thm}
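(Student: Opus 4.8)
The plan is to build $\iota$ directly from the positive-representation formulas of Theorems~\ref{pos2} and~\ref{initial}, re-read through the cluster variables of Definition~\ref{clusterX}, so that the defining relations of $\fD_\g$ are inherited from the fact that $\cP_\l$ is an honest representation, and the only genuinely new assertion is injectivity. Concretely, the images $\iota(\be_i),\iota(\bf_i),\iota(K_i)$ are the operators already written down in the positive representation; since these are operators in the quantum torus generated by the $e(u_j),e(p_j)$, every relation of $\cU_q(\g_\R)$ not involving $K_i'$ — in particular the Serre relations \eqref{SerreE}, \eqref{SerreF} and the commutations with $K_i$ — holds as an operator identity, hence as an identity in the subalgebra $\cD_\g$.

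First I would check that these operators actually land in $\cD_\g$. For the negative part this is immediate from Definition~\ref{clusterX}: the $X_{f_i^k}$ are the consecutive ratios of the monomials $F_i^{j,\pm}$ of $\bf_i$, and by Proposition~\ref{Fi} these monomials pairwise $q_i^{-2}$-commute, so each $F_i^{j,\pm}$ is recovered as an ordered product of cluster variables. Telescoping up to the penultimate index gives $\iota(\bf_i)=X(f_i^{-n_i},\dots,f_i^{n_i-1})$ in the notation of Definition~\ref{Xnotation}, while the full ordered product $X_{f_i^{-n_i},\dots,f_i^{n_i}}$ collapses to $\iota(K_i)\inv$; hence $\iota(\bf_i),\iota(K_i)^{\pm1}\in\cD_\g$. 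The positive part is the crux. For $j=i_N$ the generator is the single term \eqref{be}, governed directly by $X_{e_j^0}$; for $j\neq i_N$ one must pass through the unitary transformation $\Phi$ of \eqref{unitrans}. Here I would use Lemma~\ref{useful} to write the relevant composite of quantum cluster mutations as $\Phi_k\circ M_k$ with $M_k$ monomial, follow the initial term by the explicit rules of Theorem~\ref{initial}, and verify that every monomial summand of $\be_j$ is again a Laurent monomial in the $X_{f_i^k},X_{e_i^0}$. This verification is type-dependent and is where essentially all of the computation lives; I expect it to be the main obstacle.

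Next I would define $K_i'$ and close up the remaining relations. Computing $[\iota(\be_i),\iota(\bf_i)]$ inside $\cD_\g$ should produce an expression of the form $(q_i-q_i\inv)(M_i'-\iota(K_i))$ with $M_i'$ a single monomial, and I set $\iota(K_i'):=M_i'$. The relations $K_i'\be_j=q_i^{-a_{ij}}\be_jK_i'$ and $K_i'\bf_j=q_i^{a_{ij}}\bf_jK_i'$ then reduce to monomial commutation read off from the exchange matrix $B$, and $[\iota(\be_i),\iota(\bf_j)]=0$ for $i\neq j$ is inherited from $\cP_\l$. A short check shows that $Z_i:=\iota(K_i)\iota(K_i')$ is a central monomial of $\cD_\g$, mirroring the centrality of $K_iK_i'$ in $\fD_\g$, and that the $Z_i$ are algebraically independent; these are exactly the central elements hidden in the ``$/\sim$'' of the statement.

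Finally, for injectivity I would use the triangular decomposition $\fD_\g\cong\cU_q(\fn_-)\ox\C[K_i^{\pm1},{K_i'}^{\pm1}]\ox\cU_q(\fn_+)$ together with a leading-term argument in the quantum torus: ordering the monomials of $\cD_\g$ by their exponent lattice, the leading term of $\iota(\bf_i)$ is the frozen monomial $X_{f_i^{-n_i}}$ and that of $\iota(\be_i)$ a monomial in $X_{e_i^0}$, the weight grading separates the $\be$- and $\bf$-parts, and the known injectivity of Feigin's homomorphism handles each Borel half; thus distinct PBW monomials $\bf^{\bm}K^{\ba}{K'}^{\bb}\be^{\bn}$ have distinct leading monomials and $\iota$ is injective. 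Because the $Z_i$ are central and algebraically independent, passing to the quotients by $Z_i=1$ on the right and $K_iK_i'=1$ on the left preserves injectivity, yielding the embedding $\cU_q(\g)=\fD_\g/\<K_iK_i'=1\>\inj\cD_\g/\<\iota(K_i)\iota(K_i')=1\>$, as claimed.
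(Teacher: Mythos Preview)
Your construction of the homomorphism matches the paper's almost step for step: both read $\iota(\bf_i)$ and one Cartan generator directly off the telescoping definition \eqref{XfromF}, handle $\iota(\be_i)$ first for $i=i_N$ by hand and then for other $i$ via the dilogarithm conjugations $\Phi$, and extract the remaining Cartan generator from the commutator $[\be_i,\bf_i]$. (You have swapped the labels $K_i\leftrightarrow K_i'$ relative to the paper --- there it is $K_i'=X_{f_i^{-n_i},\dots,f_i^{n_i}}$ that comes from the $\bf_i$-side and $K_i$ that is read off the end of the $\be_i$-path --- but this is harmless.)

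The injectivity argument is where you genuinely diverge. You propose a leading-term PBW argument, separating the two Borel halves by weight and invoking the known injectivity of Feigin's map on each. The paper instead specializes the continuous parameter $\l$ of $\cP_\l$ to the discrete values \eqref{discreteL}, under which $\cP_\l$ restricts to arbitrary finite-dimensional highest-weight representations of the compact $\cU_q(\g)$; since no nonzero PBW element can vanish on all of these, $\iota$ is injective. This route is short and type-independent. Your approach is plausible, but the paper explicitly flags (in the Remark immediately after the proof) that the direct PBW check, carried out in \cite{SS2} for type $A_n$, already requires Knutson--Tao hive-type combinatorics; so the claim that ``distinct PBW monomials have distinct leading monomials'' is likely more delicate than your sketch suggests, particularly on the $\be$-side where the cluster expressions of Section~\ref{sec:embedding} are far from the simple telescoping form that makes Feigin's injectivity transparent for $\bf_i$.
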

\begin{proof} By construction from \eqref{XfromF}, we can write (cf. Definition \ref{Xnotation})
\Eqn{
\bf_i&=X_{f_i^{-n_i}}+q_i X_{f_i^{-n_i}}X_{f_i^{-n_i+1}}+... + q_i^{n_i-1}X_{f_i^{-n_i}}...X_{f_i^{n_i-1}}\\
&=X(f_i^{-n_i}, f_i^{-n_i+1},..., f_i^{n_i-1})\\
K_i' &= X_{f_i^{-n_i},..., f_i^{n_i}}.
}
Given a reduced word $\bi=(i_1,...,i_N)\in\fR$, if $i=i_N$, then one computes explicitly
\Eqn{
\be_i &= [u_i^1]e(-2p_i^1) \\
&= e^{\pi b_i u_i^1-2\pi b_i p_i^1}+e^{\pi b_i u_i^1-2\pi b_i p_i^1}\\
&=X_{f_i^{n_i}}+q_iX_{f_i^{n_i}}X_{e_i^0}\\
&=X(f_i^{n_i}, e_i^0),\\
K_i &=X_{f_i^{n_i}, e_i^0, f_i^{-n_i}}.
}
Otherwise, from the construction of positive representation, each mutation of the reduced expression of $w_0$ correspond to a unitary transformation $\Phi$ given by the quantum dilogarithm function with an argument given by a consecutive difference of the $F_i^n$'s in the corresponding mutated quiver. (This is described in detail in Section \ref{sec:mutation}.) Hence $\be_i$ will be expressed as a sum of monomials, each of which is expressed as a product of $X_{f_i^n}$ and the ratios between the initial term, which is given by $X_{e_i^0}$. The explicit expression is given in the next section. 

Furthermore, the unitary transformation $\Phi$ has the properties that for any reduced word $\bi\in \fR$, if $\be_j$ is expressed as 
$$\be_i=X(i_1,..., i_k)=X_{i_1}+... + X_{i_1,..., i_k},$$
then the leading term $X_{i_1}=X_{f_i^{n_i}}$ and the ending term satisfies 
$$X_{i_1,...,i_k}X_{f_i^{-n_i}}=q_i^{-2}X_{f_i^{-n_i}}X_{i_1,...,i_k}.$$

The unitary transformation $\Phi$, while inducing a change of variables given by a linear transformation, will keep $K_i$ as a monomial. Hence from the relation $$[\be_i, \bf_i]=(q_i-q_i\inv)(K_i'-K_i),$$we see that the term $ X_{i_1,..., i_k, f_i^{-n_i}}$ does not vanish. Since we already have $K_i'=X_{f_i^{-n_i},..., f_i^{n_i}}$, we must have
$$K_i = X_{i_1,..., i_k, f_i^{-n_i}},$$
hence giving the desired homomorphism of $\fD_\g$ into $\cD_\g$.

Since the positive representation is a faithful irreducible representation coming from the quantization of the induced representation of the left regular representation, the homomorphism $\iota$ is an embedding. More explicitly, by choosing the parameters $\l$ such that
\Eq{\label{discreteL}
2\sqrt{-1}\l_i\in Q_i+b_i\N, \tab Q_i:=b_i+b_i\inv,
}
we recover every finite dimensional highest weight irreducible representation for the \emph{compact} quantum group $\cU_q(\g)$. This fact has been utilized for example to calculate the eigenvalues of the positive Casimir operators \cite{Ip6}. In particular all the PBW basis cannot be identically zero in the representation, hence the homomorphism $\iota$ is indeed an embedding.

Finally, we note that the algebra $\cD_\g$ has a center generated by $\iota(K_i)\iota(K_i')$. Hence taking the quotient with $\iota(K_i)\iota(K_i')=1, i\in I$ we obtain the desired embedding of $\cU_q(\g)$ as well.
\end{proof}
\begin{Rem}\label{iw} Note that by the Cartan involution, one can also define another embedding
\Eqn{
\iota^w: \fD_\g&\to \cD_\g,\\
\be_i &\mapsto \iota(\bf_i),\\
\bf_i &\mapsto \iota(\be_i),\\
K_i &\mapsto \iota(K_i'),\\
K_i' &\mapsto \iota(K_i).
}
This interchanges the expressions of the explicit embeddings of $\be_i$ and $\bf_i$ in the quantum torus algebra $\cD_\g$.
\end{Rem}

\begin{Rem}In \cite{SS2}, the proof of the injectivity of $\iota$ is explicitly checked on the PBW basis. The expression relating the PBW exponents to those of the $q$-tori generators turns out to involve some combinatorial \emph{hive-type} conditions from the work of Knutson-Tao \cite{KTao}. It will be interesting to see explicitly analogues of such combinatorics in other types.
\end{Rem}
\section{Construction of the $\cD_\g$-quiver}\label{sec:quiver}
Let us now describe the explicit construction of the quiver associated to the $\cD_\g$ algebra in more details. 
\subsection{Relation among cluster variables}\label{sec:quiver:rel}
First we have the obvious relations.

\begin{Lem} $X_{f_i^0}$ and $X_{e_i^0}$ mutually commute with each other.
\end{Lem}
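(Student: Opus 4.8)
The plan is to show that both $X_{f_i^0}$ and $X_{e_i^0}$ are, after simplification, monomials in the \emph{position} exponentials $e(v_j)$ alone, with no dependence on the momentum exponentials $e(2p_j)$. Since all the position exponentials mutually commute, the two elements will then commute automatically.

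First I would recall from \eqref{XY}--\eqref{qtorus} that $\cD_\g$ is realized inside the Heisenberg-type algebra generated by the position exponentials $e(v_j)=e^{\pi b_{i_j}v_j}$ and the momentum exponentials $e(2p_j)=e^{2\pi b_{i_j}p_j}$, whose only nontrivial relations pair a position exponential with the momentum exponential of the \emph{same} index. In particular all the $e(v_j)$ mutually commute, so any two monomials built purely out of the $e(v_j)$ commute. It therefore suffices to check that neither $X_{f_i^0}$ nor $X_{e_i^0}$ carries a momentum factor.

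For $X_{e_i^0}$ this is already recorded in Definition \ref{clusterX}: there $X_{e_i^0}=q_iE_i^+(E_i^-)^{-1}=e(-2v_n)$ is manifestly a pure position monomial. For $X_{f_i^0}$ I would use \eqref{XfromF} together with \eqref{Fik}, noting that $X_{f_i^0}$ is the ratio $q_iF_i^{1,+}(F_i^{1,-})^{-1}$ of the two summands of the \emph{single} standard-form operator $F_i^1=[\,\cdots\,]e(2p_i^1)$. By construction $F_i^{1,-}$ and $F_i^{1,+}$ carry the identical momentum factor $e(2p_i^1)$ and the identical scalar $e(-2\l_1)$, and differ only by the sign of the position part $-\sum_{j=1}^{v(i,1)}a_{i_j,i}v_j+u_i^1$. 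Hence in the ratio the momentum contributions cancel, and after absorbing the resulting central scalar into the normalizing prefactor $q_i$ one is left with
\[
X_{f_i^0}=e\left(2\sum_{j=1}^{v(i,1)}a_{i_j,i}v_j-2u_i^1\right),
\]
again a pure position monomial. I would then conclude that $X_{f_i^0}$ and $X_{e_i^0}$ both lie in the commutative subalgebra generated by the $e(v_j)$, so they commute; equivalently, the exchange matrix entry of \eqref{XjXk} satisfies $b_{f_i^0,e_i^0}=0$.

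The structural reason behind the statement — and the only point needing a little care — is that both $X_{f_i^0}$ and $X_{e_i^0}$ arise as the ratio of the two halves (the $+$ and $-$ summands) of one operator of the standard form $[w]e(\pm2p)$, and in such a ratio the shared momentum factor always cancels. This is special to $k=0$ and to $e_i^0$: for $k\neq 0$ the variable $X_{f_i^k}$ is a ratio of summands coming from two \emph{different} terms with distinct momenta $p_i^k\neq p_i^{k\pm1}$, so there the momentum does not drop out. I do not anticipate any genuine obstacle beyond the routine bookkeeping of the central $q$-powers produced when reordering exponentials via Baker--Campbell--Hausdorff, and these scalars are irrelevant to the commutation relation.
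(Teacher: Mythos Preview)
Your proposal is correct and is exactly the paper's argument, spelled out in more detail: both $X_{f_i^0}$ and $X_{e_i^0}$ are ratios of the $\pm$ halves of a single standard-form operator, so the momentum factor $e(2p)$ cancels and one is left with pure position monomials, which commute. One harmless bookkeeping slip: in the ratio $q_iF_i^{1,+}(F_i^{1,-})^{-1}$ the $\lambda$-part does \emph{not} cancel (the sign of $\lambda_i$ flips between the two halves, as you can see in the $A_2$ example where $X_{f_1^0}=e(2u+4\lambda_2)$), so your displayed formula for $X_{f_i^0}$ is missing a $+4\lambda_i$; since $\lambda_i$ is a scalar parameter this does not affect the commutation, exactly as you anticipated.
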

\begin{proof} By Definition \ref{clusterX}, the formal expression of all the $X_{f_i^0}$'s and $X_{e_i^0}$'s do not contain any momentum operators $e(2p)$, hence they commute with each other.
\end{proof}

Next we have the following observation:
\begin{Lem}\label{FikLem} Recall that $F_i^{k,\pm}$ is defined in \eqref{Fik}. We have
\Eq{
F_i^{k,\pm} F_j^{l,\pm} = q_i^{\mp a_{ij}}F_j^{l,\pm}F_i^{k,\pm}
}
if $v(i,k) < v(j,l)$.
\end{Lem}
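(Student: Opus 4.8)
The plan is to treat each factor $F_i^{k,\pm}$ as a single exponential of a linear form in the Heisenberg generators and apply the Weyl (Baker--Campbell--Hausdorff) relation. Writing out the $e(\cdot)$-notation of Definition \ref{notation}, formula \eqref{Fik} exhibits each factor in the shape $F_i^{k,\pm}=\exp(\Xi_i^{k,\pm})$, where $\Xi_i^{k,\pm}$ is a linear combination of the variables $v_m$ and momenta $p_m$ together with a central constant in the parameters $\l$. Since the $v_m$ commute among themselves, the $p_m$ commute among themselves, and $[p_m,v_m]=\tfrac{1}{2\pi\sqrt{-1}}$ is a scalar, the bracket $[\Xi_i^{k,\pm},\Xi_j^{l,\pm}]$ is central, and hence $F_i^{k,\pm}F_j^{l,\pm}=e^{[\Xi_i^{k,\pm},\Xi_j^{l,\pm}]}F_j^{l,\pm}F_i^{k,\pm}$. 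It therefore suffices to compute this single scalar and to check that it equals $q_i^{\mp a_{ij}}$.

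I would then split each exponent into its position part $P$ (the $v_m$-terms) and its momentum part $M$ (the $p_m$-terms), so that $[\Xi_i^{k,\pm},\Xi_j^{l,\pm}]=[P_i,M_j]+[M_i,P_j]$, the like-with-like brackets vanishing. Here the hypothesis $v(i,k)<v(j,l)$ does the essential work: setting $a:=v(i,k)$ and $b:=v(j,l)$, the momentum part of $F_i^{k,\pm}$ is $2\pi b_ip_a$ and that of $F_j^{l,\pm}$ is $2\pi b_jp_b$, while by \eqref{Fik} the position part $P_i$ involves only $v_1,\dots,v_a$. Because $a<b$, the momentum $p_b$ occurring in $M_j$ commutes with every variable of $P_i$, so $[P_i,M_j]=0$ and only $[M_i,P_j]$ survives; this asymmetry is exactly why the ordering condition is required.

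It then remains to read off the coefficient of $v_a$ inside $P_j$. By \eqref{Fik} the position part of $F_j^{l,\pm}$ equals $\pm\pi\bigl(\sum_{m=1}^{b}a_{i_m,j}b_{i_m}v_m-b_jv_b\bigr)$, the sign tracking the $\pm$ superscript, so the coefficient of $v_a$ is $\pm\pi a_{ij}b_i$: the variable $v_a=u_i^k$ occurs at $m=a$ (which lies in the summation range since $a<b$) with $a_{i_a,j}=a_{ij}$, and since $v_a$ carries root index $i$, the short/long convention of Definition \ref{notation} weights it by $b_{i_a}=b_i$. Pairing this against $M_i=2\pi b_ip_a$ through $[p_a,v_a]=\tfrac{1}{2\pi\sqrt{-1}}$ and using $\tfrac{1}{\sqrt{-1}}=-\sqrt{-1}$ gives $[\Xi_i^{k,\pm},\Xi_j^{l,\pm}]=\mp\pi\sqrt{-1}\,a_{ij}b_i^2$, whence $e^{[\,\cdot\,]}=q_i^{\mp a_{ij}}$ since $q_i=e^{\pi\sqrt{-1}b_i^2}$. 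This is precisely the asserted relation.

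The delicate point, and the step I expect to demand the most care, is the bookkeeping of the $b$-weights prescribed by Definition \ref{notation}: it is essential that the variable $v_{v(i,k)}$ be weighted by $b_i$, the factor attached to its own root index, rather than by $b_j$, since this is what forces $b_i^2$---and hence an integral power of $q_i=e^{\pi\sqrt{-1}b_i^2}$---to appear instead of a mixed product $b_ib_j$. As consistency checks, the symmetry $d_ia_{ij}=d_ja_{ji}$ gives $q_i^{a_{ij}}=q_j^{a_{ji}}$, so the outcome does not depend on which index is treated as the momentum side, and specializing $j=i$ recovers the $q_i^{-2}$-commutation of consecutive terms recorded in Proposition \ref{Fi}.
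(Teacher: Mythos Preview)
Your proof is correct and follows essentially the same approach as the paper: both arguments use that the momentum factor $e(2p_j^l)$ of $F_j^{l,\pm}$ commutes with all of $F_i^{k,\pm}$ (since no $v_b$ appears there), leaving only the pairing of $e(2p_i^k)$ against the coefficient $a_{ij}$ of $u_i^k$ inside $F_j^{l,\pm}$ to produce $q_i^{\mp a_{ij}}$. Your write-up is simply more explicit about the Weyl/BCH bookkeeping and the $b_i$-weighting from Definition~\ref{notation}, and your consistency checks ($q_i^{a_{ij}}=q_j^{a_{ji}}$ and the $i=j$ specialization) are a nice addition.
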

\begin{proof} Let us consider the $+$ case, while the $-$ case is similar. By definition, if $v(i,k)<v(j,l)$, then there are no terms of $u_j^l$ appearing in $F_i^{k,+}$, hence $e(2p_j^l)$ in $F_j^{l,+}$ commutes with everything in $F_i^{k,+}$, while $e(2p_i^k)$ from $F_i^{k,+}$ $q$-commutes with $e(...a_{ij}u_i^k+...)$ from $F_j^{l,+}$ giving the factor $q_i^{-a_{ij}}$.
\end{proof}

Thus one can derive the commutation relation directly between the variables $X_{f_i^k}$ and $X_{f_j^l}$. First, by definition we have
\Eq{
X_{f_i^k}X_{f_i^l}=q_i^{-2} X_{f_i^l}X_{f_i^k} 
}
whenever $l=k+1$ and commute otherwise.
\begin{Cor}\label{FikCor} Assume $i\neq j, k,l\geq0$ and $v(i,k)<v(j,l)$, we have:
\Eq{
X_{f_i^k}X_{f_j^l} = q_i^C X_{f_j^l}X_{f_i^k},\label{FikCorCom}
}
where 
$$C=\case{2a_{ij}& v(i,k)<v(j,l)<v(i,k+1)<v(j,l+1),\\ 
0 & v(i,k)<v(j,l)<v(j,l+1)<v(i,k+1),\\
0& v(i,k)<v(i,k+1)<v(j,l)<v(j,l+1),\\
a_{ij}& \mbox{$k=n_i$ and $l=n_j$},}$$
where in the inequalities we let the boundaries be $v(i,0)=-\oo$ and $v(i,n_i):=+\oo$.
\end{Cor}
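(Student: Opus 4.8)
The plan is to compute the commutation factor $C$ directly from the monomial expressions for $X_{f_i^k}$ and $X_{f_j^l}$ given in Definition \ref{clusterX}, reducing everything to the relation in Lemma \ref{FikLem}. Recall that for $k \geq 0$ we have $X_{f_i^k} = q_i F_i^{k+1,+}(F_i^{k,+})\inv$ (with the understanding that $F_i^{0,+}$ stands for $F_i^{1,-}$ when $k=0$, and the top case $k=n_i$ brings in $K_i\inv$ instead of $F_i^{n_i+1,+}$). The idea is that each $X_{f_i^k}$ is a ratio of two consecutive $F$-terms, so its commutation with $X_{f_j^l}$ is governed entirely by how the four terms $F_i^{k,+}, F_i^{k+1,+}, F_j^{l,+}, F_j^{l+1,+}$ pairwise $q$-commute according to the ordering of their indices $v(i,k), v(i,k+1), v(j,l), v(j,l+1)$ on the line. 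By Lemma \ref{FikLem}, two such $+$-terms contribute a factor $q_i^{-a_{ij}}$ precisely when the $i$-indexed term sits to the left of the $j$-indexed one, and commute otherwise (same-index terms being handled by the $q_i^{-2}$ relation already displayed).

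First I would set up the bookkeeping: write $X_{f_i^k}X_{f_j^l}$ as a product of the four factors $F_i^{k+1,+}, (F_i^{k,+})\inv, F_j^{l+1,+}, (F_j^{l,+})\inv$ (times powers of $q_i,q_j$ that are symmetric and hence irrelevant to the commutation factor), and compute the total power of $q$ accumulated when moving the $j$-block past the $i$-block. Each of the four cross-pairings contributes $\pm a_{ij}$ (with the inverse flipping the sign, and the left/right ordering determining the sign via Lemma \ref{FikLem}), so $C$ is an integer combination of $a_{ij}$ whose coefficient depends only on the relative order of the four indices $v(i,k)<v(i,k+1)$ and $v(j,l)<v(j,l+1)$. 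Then I would simply enumerate the interleavings consistent with the hypotheses $v(i,k)<v(j,l)$, $k,l\geq 0$: the three generic orderings listed give coefficients $+2, 0, 0$ respectively after the signs cancel or reinforce, and the boundary convention $v(i,n_i)=+\infty$, $v(i,0)=-\infty$ is what produces the special top-row case $k=n_i, l=n_j$ with coefficient $a_{ij}$ (there one of the four terms is replaced by $K_i\inv$, whose commutation with the $F_j$'s must be read off from the representation of $K_i$ in Theorem \ref{pos2}).

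The main obstacle I expect is the careful sign and coefficient accounting across the four pairings, together with the edge effects at the boundary cases where $F_i^{0,+}$ is really $F_i^{1,-}$ (a $-$-term rather than a $+$-term) and where $F_i^{n_i+1,+}$ is replaced by $K_i\inv$. These substitutions change which sign Lemma \ref{FikLem} supplies, so I would treat the top case $k=n_i, l=n_j$ separately, using $K_i = e(-2\lambda_i - \sum_j a_{i_j,i}v_j)$ to get its commutation with $F_j^{l,\pm}$ explicitly, and verify that exactly one uncancelled factor of $q_i^{a_{ij}}$ survives. The interior cases are routine once the ordering table is fixed, so the real work is confirming that the three displayed generic orderings exhaust the possibilities under $v(i,k)<v(j,l)$ and that each yields the stated $C$; I would double-check at least one case (say the first, giving $2a_{ij}$) by direct substitution of the exponential expressions to make sure no factor is miscounted.
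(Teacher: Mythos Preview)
Your proposal is correct and is exactly the (implicit) approach of the paper: the Corollary is stated without proof as a direct consequence of Lemma \ref{FikLem}, obtained by writing each $X_{f_i^k}$ as a ratio of consecutive $F_i^{\bullet,+}$ terms and tallying the four cross-commutations. Your identification of the boundary subtleties at $k=0$ (where $F_i^{1,-}$ enters) and at $k=n_i$ (where $K_i^{-1}$ replaces the nonexistent $F_i^{n_i+1,+}$) is precisely what one needs to fill in, and the paper leaves this entirely to the reader.
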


Next, we observe that by construction, the cluster variables $X_{f_i^k}$ and $X_{f_j^l}$ with $k,l\leq0$ have exactly the commutation relations opposite to \eqref{FikCorCom}, while if $k,l\neq 0$ have different signs they commute with each other.

Finally, the cluster variables $X_{e_i^0}$ is given by $e(-2u_j^k)$ where $[u_j^k]$ is the initial term of the positive representation of $\be_i$ which is determined explicitly. Then we have
\Eqn{
X_{e_i^0}X_{f_j^k}&=q_i^2X_{f_j^k}X_{e_i^0},\\
X_{e_i^0}X_{f_j^{k-1}}&=q_i^{-2}X_{f_j^{k-1}}X_{e_i^0},\tab k\neq 1.
}
Combining the above relations among $X_k$, this completes the description of the $\cD_\g$-quiver.

\subsection{Elementary quiver associated to simple reflections}\label{sec:quiver:elementary}
With the above observations, a more conceptual way of constructing the $\cD_\g$ quiver motivated by \cite{Le2} is as follows. We define the following quiver:

\begin{Def}The \emph{elementary quiver} $Q_i^k$ associated to the $v(i,k)$-th simple reflection $s_i$ in $w_0$, i.e. corresponding to the variable $u_i^k$, is constructed by the frozen nodes
\begin{center}
  \begin{tikzpicture}[every node/.style={inner sep=0, minimum size=0.6cm, thick}, x=0.5cm, y=0.5cm]
\node[draw] (i) at(0,0) {$f_i^{k-1}$};
  \node[draw] (j) at (6,0) {$f_i^k$};
   \draw[thick, ->](i) to (j);
   \node at (3,0.3) {$d_i$};
\end{tikzpicture}
\end{center}
and for every $j$ connected to $i$ in the Dynkin diagram we have in addition
\begin{center}
  \begin{tikzpicture}[every node/.style={inner sep=0, minimum size=0.6cm, thick}, x=0.6cm, y=0.6cm]
\node[draw] (i) at(0,0) {$f_i^{k-1}$};
  \node[draw] (j) at (6,0) {$f_i^k$};
    \node[draw] (k) at (3,3) {$f_j^l$};
   \draw[thick, ->](j) to (k);
      \draw[thick, ->](k) to (i);
      \node at (5.5,1.5) {$\frac{|d_ia_{ij}|}{2}$};
            \node at (0.5,1.5) {$\frac{|d_ia_{ij}|}{2}$};
\end{tikzpicture}
\end{center}
for the unique $l$ with $v(j,l)<v(i,k)<v(j,l+1)$, and the nodes $f_i^a$ have weight $d_i$.
\end{Def}

In particular, according to the convention from Definition \ref{quiver}, in the last figure if either $i$ or $j$ is long, 
$X_{f_i^k}X_{f_j^l}=q\inv X_{f_j^l}X_{f_i^k}$ and we will denote both arrows by (thick) dashed arrows, while if both $i$ and $j$ are short, $X_{f_i^k}X_{f_j^l}=q_s\inv X_{f_j^l}X_{f_i^k}$ and we will denote both arrows by thin dashed arrows.

For example, in type $A_3$, an elementary quiver associated to $s_2$ is drawn as in Figure \ref{elemA3}, where we indicate the corresponding simple reflection, and indicate with dashed arrows the weights $\frac{|d_2a_{2k}|}{2}=\frac12$.
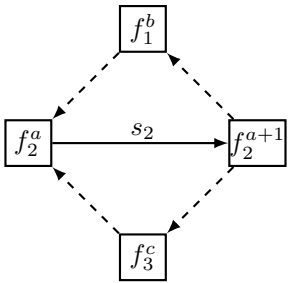
\begin{figure}[H]
\centering
\begin{tikzpicture}[every node/.style={inner sep=0, minimum size=0.6cm, thick}, x=0.5cm, y=0.5cm]
\node (1) at (0,0) [draw] {$f_2^a$};
\node (2) at (6,0) [draw]{$f_2^{a+1}$};
\node (3) at (3, 3) [draw]{$f_1^b$};
\node (4) at (3,-3) [draw]{$f_3^c$};
\path (1) edge[->, thick] (2);
\node at (3,0.3) {$s_2$};
\path (3) edge[->, thick, dashed] (1);
\path (4) edge[->, thick, dashed] (1);
\path (2) edge[->, thick, dashed] (3);
\path (2) edge[->, thick, dashed] (4);
\end{tikzpicture}
\caption{Elementary quiver in type $A_3$.}\label{elemA3}
\end{figure}

\begin{Prop} The subquiver of the $\cD_\g$-quiver generated by $f_i^n$ with $n\geq 0$ corresponding to the reduced word $(i_1,...,i_N)\in\fR$ is given by amalgamation of the elementary quivers $Q_{s_i}^k$ along vertices with the same indices, modulo the arrows between $f_i^0$'s.

The subquiver of the $\cD_\g$-quiver generated by $f_i^n$ for every $n$ corresponding to the reduced word $(i_1,...,i_N)\in\fR$ is obtained by amalgamation of the elementary quivers $Q_{s_i}^{\pm k}$ along vertices with the same indices, where $Q_{s_i}^{-k}$ are the elementary quivers corresponding to the opposite words $(i_N,...,i_1)$ of the opposite nodes $f_i^{-n}$.
\end{Prop}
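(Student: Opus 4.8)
The plan is to verify that the two quivers coincide edge by edge. On one side, the edges of the $\cD_\g$-quiver are read off from the commutation relations of the monomials $X_{f_i^k}$ recorded in \eqref{XjXk}, which via \eqref{XfromF} reduce to the elementary brackets of Lemma \ref{FikLem} and are organized by the relative orderings in Corollary \ref{FikCor}; on the other side, by the definition of amalgamation, the edges of the amalgamated quiver are the \emph{signed sums} of the weights contributed by the individual elementary quivers $Q_i^k$ that contain the two vertices in question. So I will fix a pair of $f$-nodes, list all elementary quivers touching both, add their contributions with the orientations fixed in the definition of $Q_i^k$, and check that the total matches the weight $w=d_i b$ dictated by the bracket. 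Throughout I use that adjacent nodes satisfy $(\alpha_i,\alpha_j)=-1$, so $d_ia_{ij}=d_ja_{ji}=-1$ and every dashed arrow of an elementary quiver has weight $\tfrac12$, independent of type.

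For two vertices $f_i^{k-1},f_i^k$ of the same root the only elementary quiver carrying a horizontal $i$-arrow between them is $Q_i^k$, contributing the single weight-$d_i$ arrow $f_i^{k-1}\to f_i^k$; this matches $X_{f_i^{k-1}}X_{f_i^k}=q_i^{-2}X_{f_i^k}X_{f_i^{k-1}}$. Two same-root vertices at distance $\geq 2$ are never both horizontal nodes of a single elementary quiver, and cannot both be top nodes of one $Q_j^m$ (which carries one top node per neighbour), so they receive no arrow, in agreement with their commuting. This disposes of the intra-root edges at once.

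The inter-root edges are the crux. Fix adjacent $i\neq j$ and vertices $f_i^k,f_j^l$ with $k,l\geq0$. The only elementary quivers able to connect them are $Q_i^k,Q_i^{k+1}$ (which have $f_i^k$ as a horizontal node) and $Q_j^l,Q_j^{l+1}$ (which have $f_j^l$ as a horizontal node); each contributes a dashed half-arrow precisely when its top node for the opposite root lands on the other vertex. Writing $m_1,m_2,m_3,m_4$ for the top-node indices determined by the defining inequalities of $Q_i^k,Q_i^{k+1},Q_j^l,Q_j^{l+1}$, and reading the fixed orientations $f_i^{k}\to f_j^{m}$ and $f_j^{m}\to f_i^{k-1}$ off the elementary quiver, the net weight from $f_i^k$ to $f_j^l$ is $\tfrac12\big[\mathbf 1(l=m_1)-\mathbf 1(l=m_2)-\mathbf 1(k=m_3)+\mathbf 1(k=m_4)\big]$. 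I will then evaluate this against the relative order of $v(i,k),v(i,k+1),v(j,l),v(j,l+1)$ along $w_0$, exactly the cases of Corollary \ref{FikCor}: in the interleaved case two half-arrows align into a full arrow (matching the $q_i^{\pm2}$ relation), in the nested and the disjoint cases the two surviving half-arrows cancel (matching commutativity), and at the boundary $k=n_i,\ l=n_j$ exactly one of $Q_i^{n_i},Q_j^{n_j}$ has its top node on the other outermost vertex, leaving a single dashed half-arrow (matching the $q_i^{\pm1}$ relation between the frozen boundary nodes). Here the boundary conventions $v(i,0)=-\infty$, $v(i,n_i+1)=+\infty$ make the end cases uniform.

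Finally I will account for the two exceptional features in the statement. The variable $X_{f_i^0}=q_iF_i^{1,+}(F_i^{1,-})^{-1}$ is momentum-free, since the $e(2p_i^1)$ factors of $F_i^{1,\pm}$ cancel; hence all $X_{f_i^0}$ commute with one another, and any $f_i^0$–$f_j^0$ arrow produced by the naive sum of the $Q_i^1,Q_j^1$ contributions is spurious and must be deleted, which is precisely the clause ``modulo the arrows between $f_i^0$'s.'' For the full quiver (all $n$), the negative-index variables $X_{f_i^{-k}}$ are built from the $F^{-}$ terms, whose brackets are the $\mp$ version of Lemma \ref{FikLem} and hence opposite to the positive ones; the same edge-counting therefore reproduces them from the elementary quivers $Q_i^{-k}$ attached to the reversed word $(i_N,\dots,i_1)$, and the positive and negative halves share exactly the vertices $f_i^0$, along which the amalgamation glues them. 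The main obstacle will be the signed bookkeeping of the inter-root step: keeping the four half-arrow contributions, their fixed orientations, and the short/long weight distinction consistent across all orderings and all Dynkin types, including the triple bond of $G_2$, where one still has $(\alpha_i,\alpha_j)=-1$ and weight-$\tfrac12$ dashed arrows but the special initial-term rules of Theorem \ref{initial} must be respected.
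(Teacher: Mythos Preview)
Your approach is correct and is essentially the same as the paper's: the paper's own proof is the single line ``This follows directly from Corollary \ref{FikCor},'' and what you have written is a careful unpacking of exactly that---computing, via Lemma \ref{FikLem} and the case analysis of Corollary \ref{FikCor}, the weight between each pair of $f$-nodes and matching it against the signed sum of half-arrows contributed by the elementary quivers sharing those nodes.

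One small point: your final remark that ``the special initial-term rules of Theorem \ref{initial} must be respected'' is out of place here. Theorem \ref{initial} governs the location of the $e_i^0$ nodes (the initial terms of the $\be_i$ generators), whereas the present Proposition concerns only the subquiver on the $f_i^k$ nodes; the $e_i^0$ vertices do not enter at all, in type $G_2$ or otherwise. You can simply drop that clause.
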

\begin{proof}
This follows directly from Corollary \ref{FikCor}.
\end{proof}

The $\cD_\g$ quiver is then obtained by the above amalgamation together with the quivers connecting the nodes $e_i^0$.

\section{Explicit embedding of $\cU_q(\g)$}\label{sec:embedding}
In the previous section, we constructed the $\cD_\g$-quivers as amalgamation of the elementary quivers $Q_i^k$ together with the arrows joining the nodes $e_i^0$. In particular, they can be presented in a way that is symmetric along a vertical axis, where the arrows are flipped over. It turns out that this can be expressed as an amalgamation of a pair of \emph{basic quivers} associated to $\g$, and that these basic quivers are mutation equivalent to the cluster structure of framed $G$-local system associated to the disk with 3 marked points, recently discovered by \cite{Le1, Le2}. We will determine and describe the basic quivers in Section \ref{sec:basicquiver}.

By Theorem \ref{mainThm}, the action of $K_i$ (resp. $K_i'$) are obtained by multiplying $X_{f_i^{-n_i}}$ (resp. $X_{f_i^{n_i}})$ to the last term of $\be_i$ (resp. $\bf_i$), hence we will omit it from the description below for simplicity.

\begin{Def}[$E_i$ and $F_i$-path]
Since $\bf_i=X(f_i^{-n_i},..., f_i^{n_i-1})$, we will call the path of the quiver given by the nodes 
$$f_i^{-n_i}\to f_i^{-n_i+1}\to ...\to  f_i^{n_i-1}\to  f_i^{n_i}$$
an \emph{$F_i$-path}.

On the other hand, if $\be_i=X(m_1,m_2,..., m_k)$ (or similar variants in type $C_n$, $E_8$ and $F_4$), we call the path of the quiver given by the nodes
$$m_1\to m_2,...\to  m_k\to  f_i^{-n_i}$$ an \emph{$E_i$-path}. From the relations $[\be_i,\bf_i]=(q_i-q_i\inv)(K_i'-K_i)$, one can derive the fact that the path always begin with $m_1 = f_i^{n_i}$.
\end{Def}
\begin{Rem}In \cite{SS2}, the $E_i$-paths and $F_i$-paths are also known as the $V_i$-paths and $\Lambda_i$-paths respectively in type $A_n$, which describe the corresponding shapes of the paths, see Figure \ref{fig-An}.
\end{Rem}
\subsection{Toy example: Type $A_2$}\label{sec:embedding:A2}
In this section, we illustrate the method of recovering the $\cD_\g$-quiver. Let $\g$ be of type $A_2$, and recall the notation from Definition \ref{standardform}. For simplicity we label the variables $(u_1^1,u_2^1,u_1^2)$ below by $(u,v,w)$.

\begin{Prop} \cite{Ip2}\label{typeA2} The positive representation $\cP_\l$ of $\cU_{q\til{q}}(\sl(3,\R))$ with parameters $\l=(\l_1,\l_2)\in\R_+^2$, corresponding to the reduced word $\bi=(1,2,1)$ acting on $f(u,v,w)\in L^2(\R^3)$, is given by
\Eqn{
\be_1=&[w]e(-2p_w),\\
\be_2=&[u]e(-2p_u-2p_v+2p_w)+[v-w]e(-2p_v),\\
\bf_1=&[-u-2\l_2]e(2p_u)+[-2u+v-w-2\l_2]e(2p_w),\\
\bf_2=&[u-v-2\l_1]e(2p_v),\\
K_1=&e(-2u+v-2w-2\l_1),\\
K_2=&e(u-2v+w-2\l_2).
}
\end{Prop}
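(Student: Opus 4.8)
The plan is to verify the explicit formulae by directly checking that the listed operators satisfy the defining relations of $\fD_\g$ (the modified \eqref{EFFE}, the commutation with the $K$'s, and the Serre relations), together with the structural properties guaranteed by Theorem \ref{pos2}. Since Theorem \ref{pos2} already asserts that for a fixed reduced word $\bi$ the generators $\bf_i$ and $K_i$ have the prescribed form \eqref{Fik}, and that $\be_{i_N}$ has the form \eqref{be}, the only nontrivial input for the word $\bi=(1,2,1)$ is the expression for $\be_2$, which is \emph{not} at the right end and must be obtained through the unitary transformation \eqref{unitrans}.

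First I would write down $\bf_1,\bf_2,K_1,K_2$ directly from \eqref{Fik} with $\bi=(1,2,1)$, $(u_1^1,u_2^1,u_1^2)=(u,v,w)$, Cartan matrix $a_{11}=a_{22}=2$, $a_{12}=a_{21}=-1$, using Definition \ref{standardform} to convert each $[\,\cdot\,]e(-2p)$ into its explicit exponential form. Since $i_N=1$ here, $\be_1=[w]e(-2p_w)$ follows immediately from \eqref{be}. For $\be_2$ I would apply the braid move $(1,2,1)\corr(2,1,2)$: in the word $\bi'=(2,1,2)$ the rightmost letter is $2$, so $\be_2$ is given there by the simple formula $[v_3']e(-2p_3')$, and I would conjugate it back to $\bi$ by the unitary $\Phi$, which by the construction recalled in Section~\ref{sec:pos:pos} is a product of quantum dilogarithms. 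Concretely, one computes $\Phi\circ[v_3']e(-2p_3')\circ\Phi^{-1}$ and shows it equals the stated two-term sum $[u]e(-2p_u-2p_v+2p_w)+[v-w]e(-2p_v)$.

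The main obstacle is precisely this last computation: evaluating the adjoint action of the quantum dilogarithm $\Phi$ on the momentum-shifted operator and verifying that it collapses to exactly two monomial terms with the claimed arguments. Rather than grinding through the dilogarithm conjugation, I would instead verify the answer indirectly and more cheaply by checking the defining relations: (i) confirm $[\be_2,\bf_1]=0=[\be_1,\bf_2]$ and $[\be_2,\bf_2]=(q-q^{-1})(K_2'-K_2)$, $[\be_1,\bf_1]=(q-q^{-1})(K_1'-K_1)$ using the Weyl-type commutation $e^{a}e^{b}=e^{[a,b]/2}e^{a+b}$ from the Heisenberg relations $[p_u,u]=[p_v,v]=[p_w,w]=\tfrac1{2\pi\sqrt{-1}}$; (ii) verify the $K$-conjugation relations $K_i\be_j=q_i^{a_{ij}}\be_j K_i$, which are immediate once each $\be_j$ is a sum of monomials in the $e(\cdot)$; and (iii) confirm the single Serre relation $\be_1^2\be_2-(q+q^{-1})\be_1\be_2\be_1+\be_2\be_1^2=0$, and its $\bf$-analogue. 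The positivity/self-adjointness claims then follow from Definition \ref{standardform}, since each displayed operator is manifestly of the self-adjoint form $[\cdot]e(-2p)$.

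Because the formula for $\be_2$ is the only piece requiring the braid move, I expect essentially all of the work to concentrate there; once its two-term form is established (or verified via the commutation relations), the remaining checks are routine applications of the Heisenberg commutation identity and the definitions. I would therefore organize the argument so that the hard dilogarithm conjugation is either carried out once for $\be_2$ or, preferably, bypassed entirely by the relation-checking strategy above, which determines $\be_2$ uniquely from $\be_1$ via the known $\fD_\g$ relations and the structural constraints of Theorem~\ref{pos2}.
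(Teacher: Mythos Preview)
The paper does not prove this proposition at all: it is quoted verbatim from \cite{Ip2} and used only as explicit input data for the toy example in Section~\ref{sec:embedding:A2}. So there is no ``paper's own proof'' to compare against.

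That said, your verification strategy is sound and is essentially what one does in practice to check such a representation. A couple of remarks. First, your plan to read off $\bf_1,\bf_2,K_1,K_2$ directly from \eqref{Fik} will work, but note that the $\lambda$-labelling in the proposition is not the naive one (e.g.\ $\bf_1$ carries $\lambda_2$ and $\bf_2$ carries $\lambda_1$); this comes from the specific parametrization conventions in \cite{Ip2}, so do not be alarmed if a literal reading of Theorem~\ref{pos2} seems to give the indices the other way. Second, your ``bypass'' route for $\be_2$ --- fixing it by the relations $[\be_2,\bf_1]=0$, $[\be_2,\bf_2]=(q-q^{-1})(K_2'-K_2)$, $K_i$-weights, and Serre --- does pin down $\be_2$ uniquely once you impose the structural ansatz from Theorem~\ref{initial} (a sum of monomials with the correct initial term), and this is indeed cleaner than grinding through the dilogarithm conjugation. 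The constructive route via $\Phi$ is what \cite{Ip2} actually does; your relation-checking route is the standard a posteriori verification and is perfectly adequate here.
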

In the expanded form, we have
\Eqn{
\be_1=&e(w-2p_w)+e(-w-2p_2),\\
\be_2=&e(u-2p_u-2p_v+2p_w)+e(v-w-2p_v)+e(-v+w-2p_v)+e(-u-2p_u-2p_v+2p_w),\\
\bf_1=&e(-2u+v-w-2\l_2+2p_w)+e(-u-2\l_2+2p_u)+e(u+2\l_2+2p_u)+e(2u-v+w+2\l_2+2p_w),\\
\bf_2=&e(u-v-2\l_1+2p_v)+e(-u+v+2\l_1+2p_v).
}

We recover the following cluster variables following Definition \ref{clusterX} by taking successive ratios of the $\bf_i$ generators:
\Eqn{
X_{f_1^{-2}}&=e(-2u+v-w-2\l_2+2p_w),\\
X_{f_1^{-1}}&=e(u-v+w+2p_u-2p_w),\\
X_{f_1^0}&=e(2u+4\l_2),\\
X_{f_1^1}&=e(u-v+w-2p_u+2p_w),\\
X_{f_1^2}&=e(w-2p_w),\\
X_{f_2^{-1}}&=e(u-v-2\l_1+2p_v),\\
X_{f_2^0}&=e(-2u+2v+4\l_1),\\
X_{f_2^1}&=e(v-w-2p_v),
}
and taking the ratio of the initial terms of the $\be_i$ generators (i.e. the first and last terms in the expanded form) we have
\Eqn{
X_{e_1^0}&=e(-2w),\\
X_{e_2^0}&=e(-2u).
}
Hence treating the operators as formal algebraic variables, from their commutation relations we recover the quiver describing the quantum torus algebra $D_{\sl_3}$ in Figure \ref{A2quiver}.

\begin{figure}[!htb]
\centering
\begin{tikzpicture}[every node/.style={inner sep=0, minimum size=0.5cm, thick}, x=1.732cm, y=1cm]

\node (1) at (-2,4)[draw]{\tiny $f_1^{-2}$};
\node (2) at (-1,3)[draw,circle]{\tiny $f_1^{-1}$};
\node (3) at (0,2)[draw,circle]{\tiny $f_1^0$};
\node (4) at (1,3)[draw,circle]{\tiny $f_1^1$};
\node (5) at (2,4)[draw]{\tiny $f_1^2$};
\node (6) at (-2,2)[draw]{\tiny $f_2^{-1}$};
\node (7) at (0,0)[draw,circle]{\tiny $f_2^0$};
\node (8) at (2,2)[draw]{\tiny $f_2^1$};
\node (9) at (0,6)[draw,circle]{\tiny $e_1^0$};
\node (10) at (0,4)[draw,circle]{\tiny $e_2^0$};
\drawpath{1,...,5,9,1}{thick}
\drawpath{7,8,4,10,2,6,7,2,9,4,7}{thick}
\drawpath{5,9,1}{thick,red}
\drawpath{8,4,10,2,6}{thick,red}
 \path(6) edge[->, dashed, thick] (1);
 \path(5) edge[->, dashed, thick] (8);
\end{tikzpicture}
\caption{$A_2$-quiver, with the $E_i$-paths colored in red.}\label{A2quiver}
\end{figure}
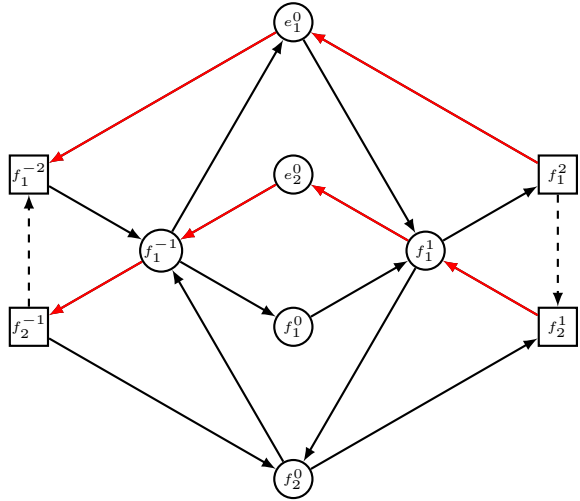

Using the notation from Definition \ref{Xnotation}, we see that the $F_i$-path is expressed as $V$-shaped paths in the quiver diagram.
\Eqn{
\bf_1&=X_{f_1^{-2}}+X_{f_1^{-2},f_1^{-1}}+X_{f_1^{-2},f_1^{-1},f_1^0}+X_{f_1^{-2},f_1^{-1},f_1^0,f_1^2}\\
&=X(f_1^{-2}, f_1^{-1}, f_1^0, f_1^1),\\
\bf_2&=X_{f_2^{-1}}+X_{f_2^{-1},f_2^0}\\
&=X(f_2^{-1}, f_2^0),\\
K_1'&=X_{f_1^{-2}, f_1^{-1}, f_1^0, f_1^1, f_1^2},\\
K_2'&=X_{f_2^{-1},f_2^0, f_2^1}.
}

Since the exponents of the variables $\{X_{f_i^{k}}\}_{k\neq 0}$ and $X_{e_i^0}$ for $i=1,2$ forms a complete basis of the linear space spanned by $\<u,v,w,p_u, p_v, p_w,\l_1,\l_2\>$, one can solve for the $\be_i$ action in terms of these cluster variables. As a result, we obtain
\Eqn{
\be_1&=X_{f_1^2}+X_{f_1^2, e_1^0}\\
&=X(f_1^2, e_1^0),\\
\be_2&=X_{f_2^1}+X_{f_2^1,f_1^1}+X_{f_2^1,f_1^1,e_2^0}+X_{f_2^1,f_1^1,e_2^0,f_1^{-1}}\\
&=X(f_2^1,f_1^1,e_2^0,f_1^{-1}),\\
K_1&=X_{f_1^2,e_1^0,f_1^{-2}},\\
K_2&=X_{f_2^1,f_1^1,e_2^0,f_1^{-2},f_2^{-1}},
}
which gives the $E_i$-path (highlighted in red) as $\L$-shaped paths in the quiver diagram as desired.

Let us now turn to the general cases.
\subsection{Type $A_n$}\label{sec:embedding:An}
The quiver associated to type $A_n$ and the quantum group embedding $\cU_q(\sl_{n+1})$ is fully described in \cite{SS2}. 
Let us choose the reduced word $$\bi = (1\; 21\; 321\;4321... n\; (n-1)\;...1).$$ Then $n_i=n+1-i$. Using the explicit expression of the positive representations from \cite{Ip2} in type $A_n$, we have
\Eqn{
\bf_i&=X(f_i^{-n+i-1},..., f_i^{n-i}),\\
\be_i&=X(f_i^{n-i+1}, f_{i-1}^{n-i+1},..., f_1^{n-i+1}, e_i^0, f_1^{-n+i-1},..., f_i^{-n+i-1}).
}
Here the initial terms are given by
$$X_{e_i^0} = e(-2u_1^{n+1-i}).$$

The quiver is shown in Figure \ref{fig-An}. We see that the $F_i$-path follows a $\L$-shaped path, while $E_i$-path follows a $V$-shaped path in the quiver (highlighted in red). The quiver can obviously be generalized to arbitrary rank.

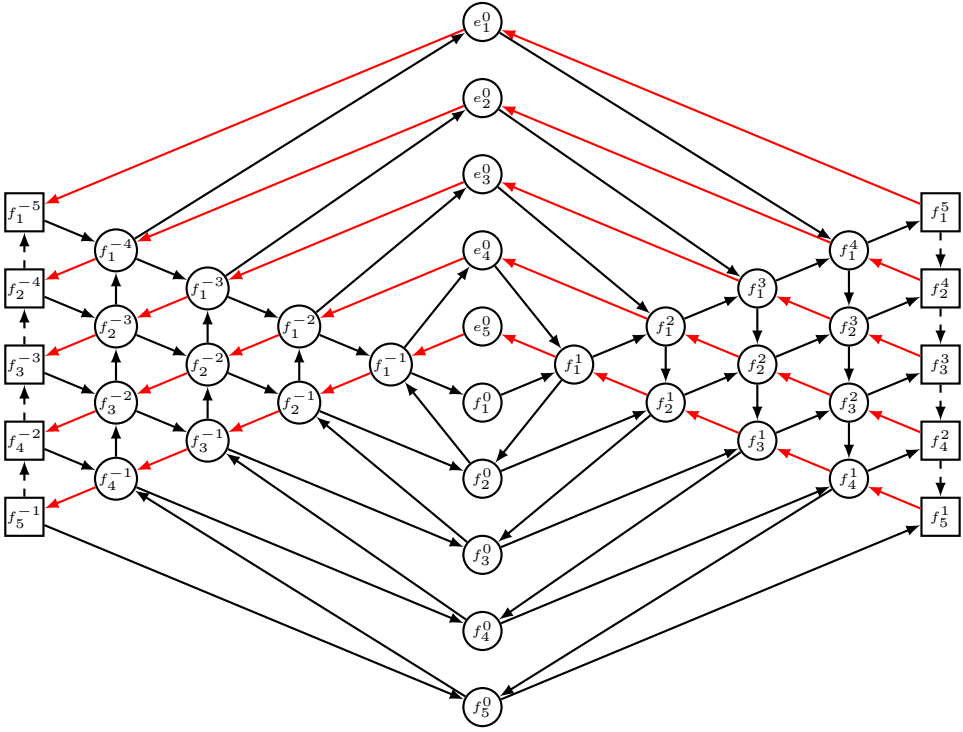
\begin{figure}[!htb]
\centering
\begin{tikzpicture}[every node/.style={inner sep=0, minimum size=0.5cm, thick}, x=1.2cm, y=0.5cm]
\foreach \y [count=\c from 1] in{33,28,21,12,1}
\foreach \x in{1,...,\c}{
	\pgfmathtruncatemacro{\ind}{\x+\y-1}
	\pgfmathtruncatemacro{\indop}{\y+2*\c-\x+1}
	\pgfmathtruncatemacro{\r}{6-\c}
	\pgfmathtruncatemacro{\ex}{\x-1-\c}
	\pgfmathtruncatemacro{\exop}{\c+1-\x}
	\ifthenelse{\x=1}{
		\node(\ind) at (\x-1,2*\c-1-\x)[draw] {\tiny$f_\r^{\ex}$};
		\node(\indop) at (11-\x,2*\c-1-\x)[draw] {\tiny$f_\r^{\exop}$};
	}{
		\node(\ind) at (\x-1,2*\c-1-\x)[draw, circle]{\tiny$f_\r^{\ex}$};
		\node(\indop) at (11-\x,2*\c-1-\x)[draw, circle]{\tiny$f_\r^{\exop}$};
	}
}
\foreach \x[count=\c from 1] in{6,16,24,30,34}{
\node(\x) at (5, 5-\c*2)[draw, circle]{\tiny$f_\c^0$};
}
\foreach \x[count=\c from 1] in{36,37,38,39,40}{
\node(\x) at (5, 15-\c*2)[draw, circle]{\tiny$e_\c^0$};
}

\foreach \x[count=\c from 1]in{33,28,21,12,1}{
\pgfmathtruncatemacro{\ee}{\x+2*\c}{
	\drawpath{\x,...,\ee}{thick}
}
}
\drawpath{11,36,1}{thick, red}
\drawpath{20,10,37,2,12}{thick, red}
\drawpath{27,19,9,38,3,13,21}{thick, red}
\drawpath{32,26,18,8,39,4,14,22,28}{thick, red}
\drawpath{35,31,25,17,7,40,5,15,23,29,33}{thick, red}
\drawpath{34,29,22,13,2,36,10,19,26,31,34}{thick}
\drawpath{30,23,14,3,37,9,18,25,30}{thick}
\drawpath{24,15,4,38,8,17,24}{thick}
\drawpath{16,5,39,7,16}{thick}
\drawpath{33,28,21,12,1}{dashed, thick}
\drawpath{11,20,27,32,35}{dashed, thick}

\end{tikzpicture}
\caption{$A_5$-quiver, with the $E_i$-paths colored in red.}
\label{fig-An}
\end{figure}

\subsection{Type $B_n$}\label{sec:embedding:Bn}
Using the explicit expression of the positive representations from \cite{Ip3}, we choose the reduced word 
\Eq{\label{Bni}
\bi = (1212\; 32123\; 4321234... n\; (n-1)\;...1...\;(n-1)\; n).
}
Here recall that $1$ is short and all other roots are long.
Then $n_1=n$ and $n_i = 2n+2-2i$.
\Eqn{
\bf_1&=X(f_{1}^{-n},..., f_1^{n-1}),\\
\bf_i&=X(f_i^{-2n+2i-2},..., f_i^{2n-2i+1})& i\geq 2,\\
\be_1&=X(f_1^n, f_2^{2n-3}, f_1^{n-1}, f_2^{2n-5},..., f_2^1, f_1^1, e_1^0, f_1^{-1}, f_2^{-1},..., f_2^{-2n+3}),\\
\be_i&=X(f_i^{2n-2i+2}, f_{i+1}^{2n-2i-1}, f_i^{2n-2i}, f_{i+1}^{2n-2i-3},..., f_{i+1}^1, f_i^2, e_i^0, f_i^{-2}, f_{i+1}^{-1},..., f_{i+1}^{-2n+2i+1})& i\geq 2.
}
The initial terms are given by
$$X_{e_i^0}=\case{e(-2u_1^1)&i=1,\\ e(-2u_i^2)&i>1.}$$
The quiver is shown in Figure \ref{fig-Bn}. Both the $E_i$-path and $F_i$-path follow a zig-zag shaped path in the quiver. Moreover, the quiver can naturally be generalized to arbitrary rank.
\begin{figure}[!htb]
\centering
\begin{tikzpicture}[every node/.style={inner sep=0, minimum size=0.5cm, thick}, x=1.2cm, y=1.2cm]
\foreach \x in{1,...,11}{
\pgfmathtruncatemacro{\ee}{\x-6}
\pgfmathtruncatemacro{\y}{6-abs(\x-6)}
\ifthenelse{\x=1 \OR \x=11}
{\node (\x) at (\x, \y)[draw] {\tiny $f_1^{\ee}$};}
{\node (\x) at (\x, \y)[draw, circle] {\tiny $f_1^{\ee}$};}
}
\foreach \y[count=\c from 1] in{51,42,29,12}{
\foreach \x in {1,...,\c}{
	\pgfmathtruncatemacro{\ind}{2*\x+\y-2}
	\pgfmathtruncatemacro{\indd}{2*\x+\y-1}
	\pgfmathtruncatemacro{\indop}{\y+4*\c-2*\x+2}
	\pgfmathtruncatemacro{\indopp}{\y+4*\c-2*\x+1}
	\pgfmathtruncatemacro{\r}{6-\c}
	\pgfmathtruncatemacro{\ex}{2*\x-2*\c-2}
	\pgfmathtruncatemacro{\exx}{2*\x-2*\c-1}
	\pgfmathtruncatemacro{\exop}{2*\c-2*\x+2}
	\pgfmathtruncatemacro{\exopp}{2*\c-2*\x+1}
	\ifthenelse{\x=1}
	{\node(\ind) at (5-\c+\x,\c-5+\x)[draw] {\tiny $f_\r^{\ex}$};
	\node(\indop) at (7+\c-\x,\c-5+\x)[draw] {\tiny $f_\r^{\exop}$};
	}
	{\node(\ind) at (5-\c+\x,\c-5+\x)[draw,circle]{\tiny $f_\r^{\ex}$};
	\node(\indop) at (7+\c-\x,\c-5+\x)[draw,circle]{\tiny $f_\r^{\exop}$};
	}
	\node(\indd) at (5-\c+\x,\c-4+\x)[draw,circle]{\tiny $f_\r^{\exx}$};
	\node(\indopp) at (7+\c-\x,\c-4+\x)[draw,circle]{\tiny $f_\r^{\exopp}$};
	}
}
\foreach \x[count=\c from 2] in {20,35,46,53}
\node(\x) at (6,8-2*\c)[draw,circle]{\tiny $f_\c^0$};
\foreach \x[count=\c from 1] in {56,...,60}
\node(\x) at (6,7-2*\c)[draw,circle]{\tiny $e_\c^0$};
\drawpath{1,12,29,42,51}{dashed,vthick}
\drawpath{55,50,41,28,11}{dashed,vthick}
\drawpath{1,...,11}{thin}
\drawpath{12,...,28}{vthick}
\drawpath{29,...,41}{vthick}
\drawpath{42,...,50}{vthick}
\drawpath{51,...,55}{vthick}
\drawpath{52,60,54,48,53,44,45,59,47,37,46,33,34,58,36,22,35,18,19,57,21,7,20,5}{vthick}
\drawpath{11,27,10,25,9,23,8,21,7}{red, vthick}
\drawpath{7,56,5}{red, thin}
\drawpath{5,19,4,17,3,15,2,13,1}{red, vthick}
\drawpath{28,40,26,38,24,36,22,57,18,34,16,32,14,30,12}{red, vthick}
\drawpath{41,49,39,47,37,58,33,45,31,43,29}{red, vthick}
\drawpath{50,54,48,59,44,52,42}{red, vthick}
\drawpath{55,60,51}{red, vthick}
\end{tikzpicture}
\caption{$B_5$-quiver, with the $E_i$-paths colored in red.}
\label{fig-Bn}
\end{figure}
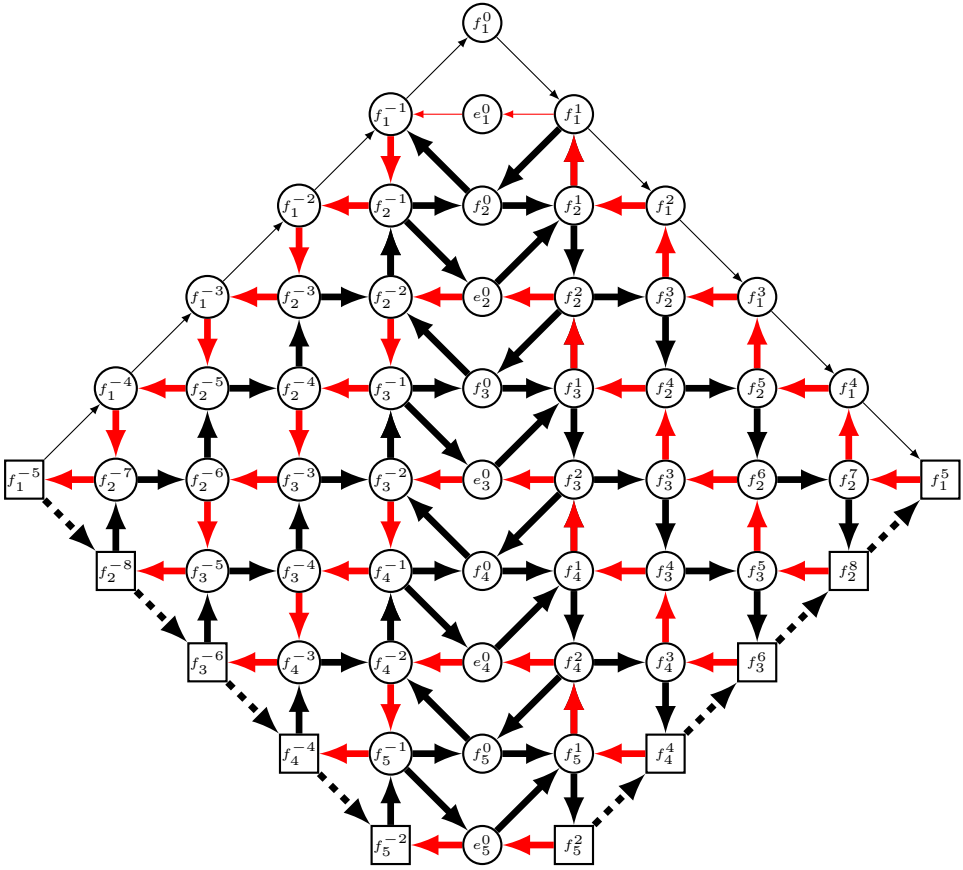
\subsection{Type $C_n$}\label{sec:embedding:Cn}

We choose the same word as type $B_n$:
$$\bi = (1212\; 32123\; 4321234... n\; (n-1)\;...1...\;(n-1)\; n).$$

Here $1$ is long and all other roots are short. Then the expression for $\bf_i$ is the same\footnote{Although the algebraic expressions are the same, the $q$-commuting relations are not due to different long and short roots.} as type $B_n$, while $\be_i$ are the same for $i\geq 2$, but modification is made to $\be_1$:
\Eqn{
\be_1&=X(f_1^n, *f_2^{2n-3}, f_1^{n-1}, *f_2^{2n-5},...,*f_2^1, f_1^1, e_1^0, f_1^{-1}, *f_2^{-1},..., *f_2^{-2n+3}),
}
where $X(...,a, *b,...)$ means adding the extra factors as follows:
\Eq{
&...+X_{...}+X_{...,a}+[2]_{q_s} X_{...,a,b}+X_{...,a,b^2}+X_{...,a,b^2,...}+...\label{stops}\\
&=...+X_{...}+(X_{...,a}^{\half}+X_{...,a,b^2}^{\half})^2+X_{...,a,b^2,...}+...\;.\nonumber
}
The initial terms are same as type $B_n$:
$$X_{e_i^0}=\case{e(-2u_1^1)&i=1,\\ e(-2u_i^2)&i>1.}$$

The quiver is shown in Figure \ref{fig-Cn}. We see that the quiver is exactly the same as type $B_n$ case, except that the weights of the arrows are modified. Furthermore, the $E_i$-path for $\be_1$ now ``stops'' at certain vertices (corresponding to \eqref{stops}), which we highlighted in red.

\begin{figure}[!htb]
\centering
\begin{tikzpicture}[every node/.style={inner sep=0, minimum size=0.5cm, thick}, x=1.2cm, y=1.2cm]
\foreach \x in{1,...,11}{
\pgfmathtruncatemacro{\ee}{\x-6}
\pgfmathtruncatemacro{\y}{6-abs(\x-6)}
\ifthenelse{\x=1 \OR \x=11}
{\node (\x) at (\x, \y)[draw] {\tiny $f_1^{\ee}$};}
{\node (\x) at (\x, \y)[draw, circle] {\tiny $f_1^{\ee}$};}
}
\foreach \y[count=\c from 1] in{51,42,29,12}{
\foreach \x in {1,...,\c}{
	\pgfmathtruncatemacro{\ind}{2*\x+\y-2}
	\pgfmathtruncatemacro{\indd}{2*\x+\y-1}
	\pgfmathtruncatemacro{\indop}{\y+4*\c-2*\x+2}
	\pgfmathtruncatemacro{\indopp}{\y+4*\c-2*\x+1}
	\pgfmathtruncatemacro{\r}{6-\c}
	\pgfmathtruncatemacro{\ex}{2*\x-2*\c-2}
	\pgfmathtruncatemacro{\exx}{2*\x-2*\c-1}
	\pgfmathtruncatemacro{\exop}{2*\c-2*\x+2}
	\pgfmathtruncatemacro{\exopp}{2*\c-2*\x+1}
	\ifthenelse{\x=1}
	{
	\node(\ind) at (5-\c+\x,\c-5+\x)[draw] {\tiny $f_\r^{\ex}$};
	\node(\indop) at (7+\c-\x,\c-5+\x)[draw] {\tiny $f_\r^{\exop}$};
	}
	{
	\node(\ind) at (5-\c+\x,\c-5+\x)[draw,circle]{\tiny $f_\r^{\ex}$};
	\node(\indop) at (7+\c-\x,\c-5+\x)[draw,circle]{\tiny $f_\r^{\exop}$};
	}
	\ifthenelse{\r=2}{
	\node(\indd) at (5-\c+\x,\c-4+\x)[draw,circle, red, thick]{\tiny $f_\r^{\exx}$};
	\node(\indopp) at (7+\c-\x,\c-4+\x)[draw,circle,red, thick ]{\tiny $f_\r^{\exopp}$};
	}
	{
	\node(\indd) at (5-\c+\x,\c-4+\x)[draw,circle]{\tiny $f_\r^{\exx}$};
	\node(\indopp) at (7+\c-\x,\c-4+\x)[draw,circle]{\tiny $f_\r^{\exopp}$};
	}
	}
}
\foreach \x[count=\c from 2] in {20,35,46,53}
\node(\x) at (6,8-2*\c)[draw,circle]{\tiny $f_\c^0$};
\foreach \x[count=\c from 1] in {56,...,60}
\node(\x) at (6,7-2*\c)[draw,circle]{\tiny $e_\c^0$};
\drawpath{1,12}{dashed,vthick}
\drawpath{12,29,42,51}{dashed,thin}
\drawpath{55,50,41,28}{dashed,thin}
\drawpath{28,11}{dashed, vthick}
\drawpath{1,...,11}{vthick}
\drawpath{12,...,28}{thin}
\drawpath{29,...,41}{thin}
\drawpath{42,...,50}{thin}
\drawpath{51,...,55}{thin}
\drawpath{52,60,54,48,53,44,45,59,47,37,46,33,34,58,36,22,35,18,19,57,21}{thin}
\drawpath{7,20,5}{vthick}
\drawpath{11,27,10,25,9,23,8,21,7}{red, vthick}
\drawpath{7,56,5}{red, vthick}
\drawpath{5,19,4,17,3,15,2,13,1}{red, vthick}
\drawpath{28,40,26,38,24,36,22,57,18,34,16,32,14,30,12}{red, thin}
\drawpath{41,49,39,47,37,58,33,45,31,43,29}{red, thin}
\drawpath{50,54,48,59,44,52,42}{red, thin}
\drawpath{55,60,51}{red, thin}
\end{tikzpicture}
\caption{$C_5$-quiver, with the $E_i$-paths and the repeated nodes of $\be_1$ colored in red.}
\label{fig-Cn}
\end{figure}
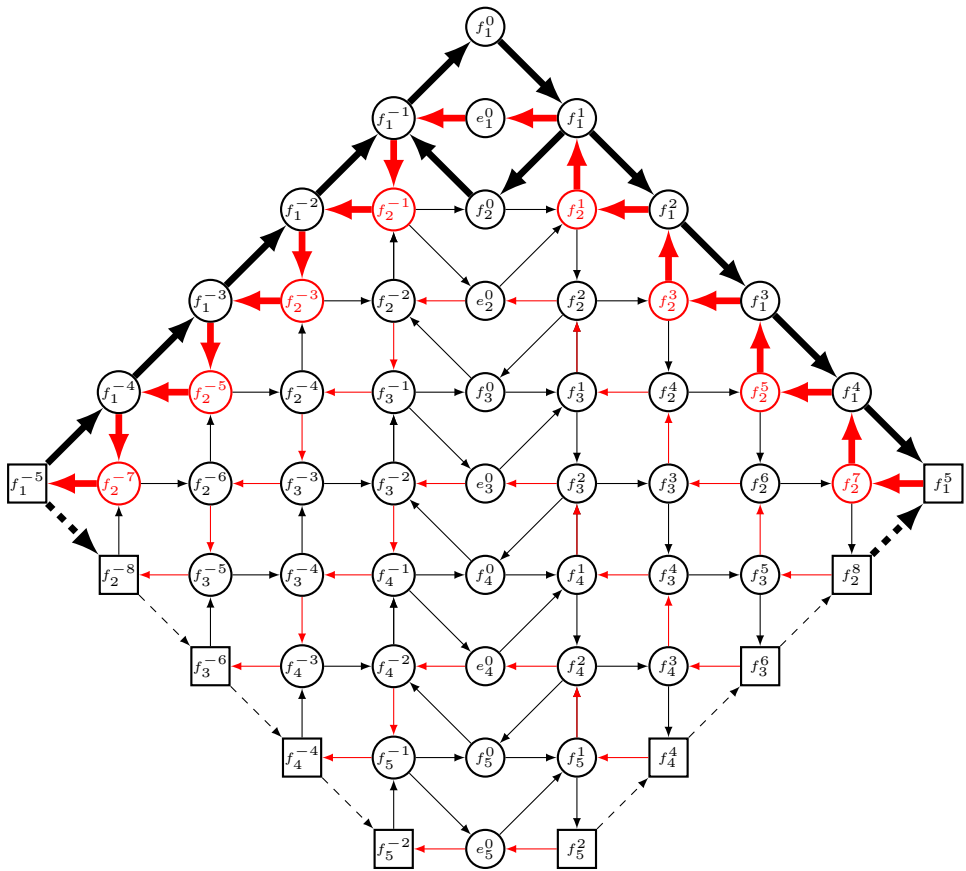
\subsection{Type $D_n$}\label{sec:embedding:Dn}

We choose the word corresponding to splitting of type $B_{n-1}$:
\Eq{\label{Dni}
\bi=(012012\;320123\;43201234... (n-1)\;...2012...\;(n-1)),
}
where 0 and 1 are the splitting nodes that are paired.

Then $n_0=n_1=n-1$ and $n_i=2n-2i$ for $i\geq 2$:
\Eqn{
\bf_0&=X(f_0^{-n+1},..., f_0^{n-2}),\\
\bf_1&=X(f_1^{-n+1},..., f_1^{n-2}),\\
\bf_i&=X(f_i^{-2n+2i},..., f_i^{2n-2i-1}) \tab i\geq 2,
}
and
\Eqn{
\be_0&=X(f_0^{n-1}, f_2^{2n-5}, f_1^{n-2}, f_2^{2n-7}, f_0^{n-3}, f_2^{2n-9}, f_1^{n-4}, f_2^{2n-11},..., f_2^1, f_{\e}^1, e_0^0, f_{\e}^{-1}, f_2^{-1},..., f_2^{-2n+5}),\\
\be_1&=X(f_1^{n-1}, f_2^{2n-5}, f_0^{n-2}, f_2^{2n-7}, f_1^{n-3}, f_2^{2n-9}, f_0^{n-4}, f_2^{2n-11},..., f_2^1, f_{1-\e}^1, e_1^0, f_{1-\e}^{-1}, f_2^{-1},..., f_2^{-2n+3}),\\
\be_i&=X(f_i^{2n-2i}, f_{i+1}^{2n-2i-3}, f_i^{2n-2i-2}, f_{i+1}^{2n-2i-5},..., f_{i+1}^1, f_i^2, e_i^0, f_i^{-2}, f_{i+1}^{-1},..., f_{i+1}^{-2n+2i-1})\tab i\geq 2,
}
where $\e=n \mbox{(mod 2)}\in\{0,1\}$.

The initial terms are given by
$$X_{e_i^0}=\case{e(-2u_i^1)&i=0,1,n\mbox{ is even,}\\e(-2u_{1-i}^1)&i=0,1,n\mbox{ is odd,}\\ e(-2u_i^2)&i>1.}$$

The quiver is shown in Figure \ref{fig-Dn}. Note that the action of $E_i$ and $F_i$ are the same as type $B_{n-1}$ for $i\neq 0,1$. Furthermore, it follows naturally that the $B_{n-1}$-quiver comes from a folding of the $D_n$-quiver, with the weights of the arrows appropriately adjusted. In the quiver, we highlight the $E_0$-path in red, where we see that it alternates between root 0 and root 1, while the $E_1$-path interchanges 0 and 1.

\begin{figure}[!htb]
\centering
\begin{tikzpicture}[every node/.style={inner sep=0, minimum size=0.5cm, thin}, x=1.3cm, y=1.3cm]
\foreach \x in{1,...,11}{
\pgfmathtruncatemacro{\ee}{\x-6}
\pgfmathtruncatemacro{\y}{6-abs(\x-6)}
\ifthenelse{\x=1 \OR \x=11}{
	\node (\x) at (\x, \y)[draw] {\tiny $f_1^{\ee}$};
	\node (a-\x) at (\x, \y+0.8)[draw] {\tiny $f_0^{\ee}$};
	}
	{
	\node (\x) at (\x, \y)[draw, circle] {\tiny $f_1^{\ee}$};
	\ifthenelse{\x<6}{
	\node (a-\x) at (\x-0.25, \y+0.55)[draw, circle] {\tiny $f_0^{\ee}$};
	}{
	\ifthenelse{\x>6}{
	\node (a-\x) at (\x+0.25, \y+0.55)[draw, circle] {\tiny $f_0^{\ee}$};
	}
	{
	\node (a-\x) at (\x, \y+0.8)[draw, circle] {\tiny $f_0^{\ee}$};
	}
	}
	}
	}
\foreach \y[count=\c from 1] in{51,42,29,12}{
\foreach \x in {1,...,\c}{
	\pgfmathtruncatemacro{\ind}{2*\x+\y-2}
	\pgfmathtruncatemacro{\indd}{2*\x+\y-1}
	\pgfmathtruncatemacro{\indop}{\y+4*\c-2*\x+2}
	\pgfmathtruncatemacro{\indopp}{\y+4*\c-2*\x+1}
	\pgfmathtruncatemacro{\r}{6-\c}
	\pgfmathtruncatemacro{\ex}{2*\x-2*\c-2}
	\pgfmathtruncatemacro{\exx}{2*\x-2*\c-1}
	\pgfmathtruncatemacro{\exop}{2*\c-2*\x+2}
	\pgfmathtruncatemacro{\exopp}{2*\c-2*\x+1}
	\ifthenelse{\x=1}
	{\node(\ind) at (5-\c+\x,\c-5+\x)[draw] {\tiny $f_\r^{\ex}$};
	\node(\indop) at (7+\c-\x,\c-5+\x)[draw] {\tiny $f_\r^{\exop}$};
	}
	{\node(\ind) at (5-\c+\x,\c-5+\x)[draw,circle]{\tiny $f_\r^{\ex}$};
	\node(\indop) at (7+\c-\x,\c-5+\x)[draw,circle]{\tiny $f_\r^{\exop}$};
	}
	\node(\indd) at (5-\c+\x,\c-4+\x)[draw,circle]{\tiny $f_\r^{\exx}$};
	\node(\indopp) at (7+\c-\x,\c-4+\x)[draw,circle]{\tiny $f_\r^{\exopp}$};
	}
}
\foreach \x[count=\c from 2] in {20,35,46,53}
\node(\x) at (6,8-2*\c)[draw,circle]{\tiny $f_\c^0$};
\foreach \x[count=\c from 2] in {57,...,60}
\node(\x) at (6,7-2*\c)[draw,circle]{\tiny $e_\c^0$};
\node(56) at (6,4.75)[draw,circle]{\tiny $e_1^0$};
\node(a-56) at (6,5.25)[draw,circle]{\tiny $e_0^0$};
\drawpath{1,12,29,42,51}{dashed,thin}
\drawpath{a-1,12}{dashed,thin}
\drawpath{55,50,41,28,11}{dashed,thin}
\drawpath{28,a-11}{dashed,thin}
\drawpath{1,...,11}{thin}
\drawpath{a-1,a-2,a-3,a-4,a-5,a-6,a-7,a-8,a-9,a-10,a-11}{thin}
\drawpath{12,...,28}{thin}
\drawpath{29,...,41}{thin}
\drawpath{42,...,50}{thin}
\drawpath{51,...,55}{thin}
\drawpath{52,60,54,48,53,44,45,59,47,37,46,33,34,58,36,22,35,18,19,57,21,7,20,5}{thin}
\drawpath{11,27,10,25,9,23,8,21,7,56,5,19,4,17,3,15,2,13,1}{thin}
\drawpath{a-11,27,a-10,25,a-9,23,a-8,21,a-7,a-56,a-5,19,a-4,17,a-3,15,a-2,13,a-1}{thin}
\drawpath{28,40,26,38,24,36,22,57,18,34,16,32,14,30,12}{thin}
\drawpath{41,49,39,47,37,58,33,45,31,43,29}{thin}
\drawpath{50,54,48,59,44,52,42}{thin}
\drawpath{55,60,51}{thin}
\drawpath{a-7,20,a-5}{thin}
\drawpath{a-11,27,10,25,a-9,23,8,21,a-7,a-56,a-5,19,4,17,a-3,15,2,13,a-1}{red, thick}
\end{tikzpicture}
\caption{$D_6$-quiver, with the $E_i$-path of $\be_0$ colored in red.}
\label{fig-Dn}
\end{figure}
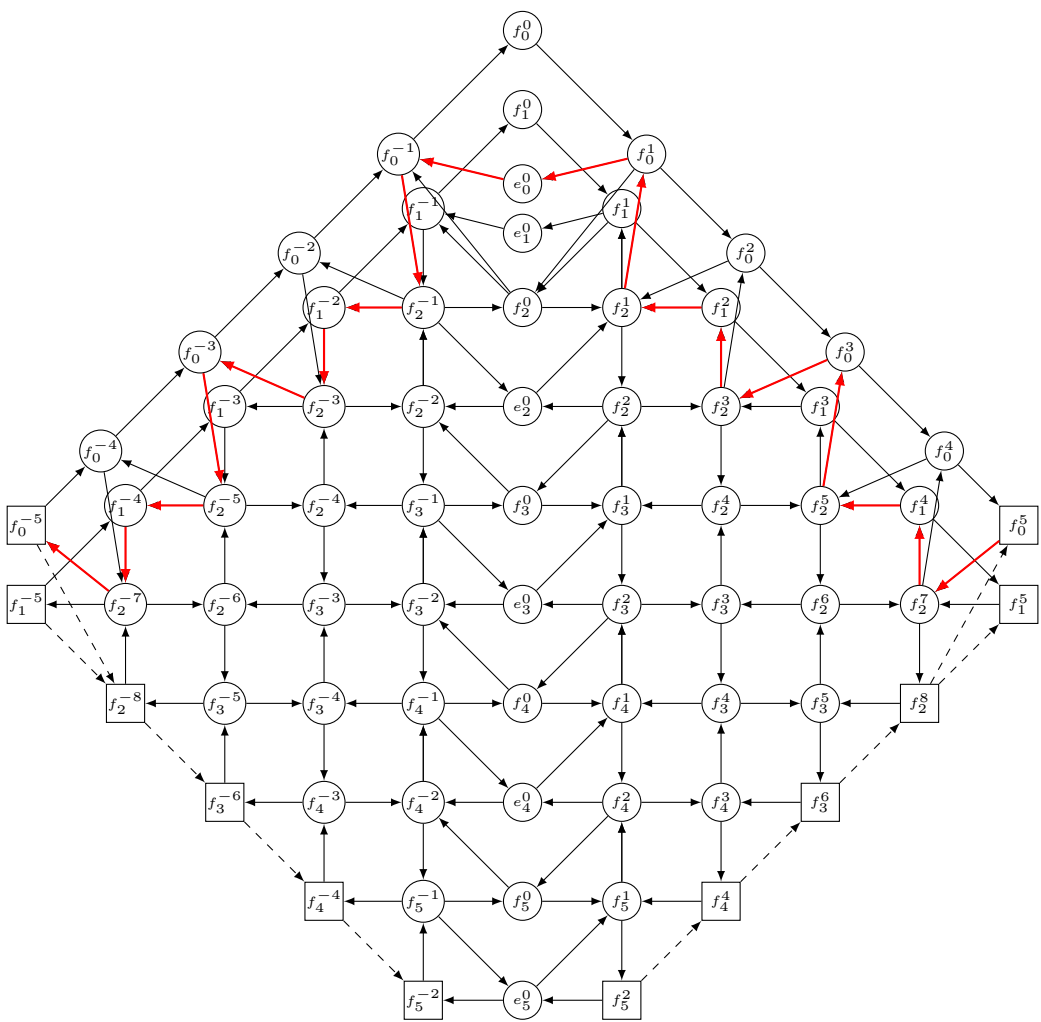

\subsection{Type $E_n$}\label{sec:embedding:En}
For type $E_n$, we let $0$ be the extra node (cf. Definition \ref{Dynkin}). The explicit expression of the positive representations for the generators $\bf_i$ and $K_i'$ is given by Theorem \ref{pos2} and Theorem \ref{mainThm}, while the expression for the generators $\be_i$ is given in the Appendix of the author's Ph.D. Thesis \cite{IpTh} \footnote{Here we choose $\bi$ to begin with 343 instead of 434 for technical convenience.}. The explicit expression is, however, rather ad hoc. 

Using the procedure describe in the beginning of this section, we can solve for the cluster variables $X_i$ to rewrite the expression as certain $E$-paths on some quiver diagrams, which is a lot easier to visualize. Interestingly, we see that unlike type $A$ to $D$, most of the actions of $\be_i$ actually pass through rows corresponding to other roots throughout the whole quiver.

As before, we only list the representations of $\be_i$ and $\bf_i$, while again the representation of the $K_i$ and $K_i'$ variables are expressed as the product of the last term of $\be_i$ and $\bf_i$ with $X_{f_i^{-n_i}}$ and $X_{f_i^{n_i}}$ respectively.


\subsubsection{Type $E_6$}\label{sec:embedding:E6}
Following \cite{IpTh}, we choose the longest word to be
$$\bi= (3\;43\;034\; 230432\;12340321\;5432103243054321),$$ 
which comes from the embedding of Dynkin diagram $$A_1\subset A_2\subset A_3\subset D_4\subset D_5\subset E_6$$ by successively adding the nodes 3,4,0,2,1,5 to the diagram.

Then the $\bf_i$ variables are expressed as
\Eqn{
\bf_1&=X(f_1^{-4},...,f_1^3),\\
\bf_2&=X(f_2^{-7},...,f_2^6),\\
\bf_3&=X(f_3^{-11},..., f_3^{10}),\\
\bf_4&=X(f_4^{-7},..., f_4^6),\\
\bf_5&=X(f_5^{-2},f_5^{-1},f_5^0,f_5^1),\\
\bf_0&=X(f_0^{-5},..., f_0^{4}),
}
while the $\be_i$ variables are expressed as certain paths on the quiver:
\Eqn{
\be_1&=X(f_1^4,e_1^0),\\
\be_2&=X(f_2^7, f_1^{3}, f_2^{5}, f_3^{8}, f_0^{3}, f_3^{6}, f_4^{3}, 
 f_3^{4}, f_0^{1}, f_3^{2}, e_2^0, f_3^{-2}, 
 f_0^{-1}, f_3^{-4}, f_4^{-3}, f_3^{-6}, f_0^{-3}, f_3^{-8}, f_2^{-5},
  f_1^{-3}),\\
\be_3&=X(f_3^{11}, f_2^{6}, f_3^{9}, f_4^{5}, f_3^{7}, f_2^{3}, f_3^{5}, f_2^{1}, f_3^{3}, f_4^{1}, f_3^{1}, e_3^0, f_3^{-1}, f_4^{-1}, f_3^{-3}, f_2^{-1}, f_3^{-5}, f_2^{-3}, f_3^{-7}, f_4^{-5}, f_3^{-9}, f_2^{-6})\\
\be_4&=X(f_4^7, f_3^{10}, f_0^{4}, f_3^{8}, f_2^{4}, f_1^{1}, f_2^{2}, 
e_4^0, f_2^{-2}, f_1^{-1}, f_2^{-4}, f_3^{-8}, f_0^{-4}, f_3^{-10}),\\
\be_5&=X(f_5^2, f_4^{6}, f_3^{9}, f_2^{5}, f_1^{2}, e_5^0, f_1^{-2}, 
 f_2^{-5}, f_3^{-9}, f_4^{-6}),\\
\be_0&=X(f_0^{5}, f_3^{10}, f_4^{6}, f_5^{1}, f_4^{4}, f_3^{6}, f_0^{2}, 
 f_3^{4}, f_4^{2}, e_0^0, f_4^{-2}, f_3^{-4}, f_0^{-2}, f_3^{-6}, 
 f_4^{-4}, f_5^{-1}, f_4^{-6}, f_3^{-10}).
}

The initial terms are given by
\Eqn{
&X_{e_1^0}=e(-2u_1^4),&& X_{e_2^0}=e(-2u_3^2),&& X_{e_3^0}=e(-2u_3^1),\\
&X_{e_4^0}=e(-2u_2^2),&& X_{e_5^0}=e(-2u_1^2),&& X_{e_0^0}=e(-2u_4^2).
}

The quiver is shown in Figure \ref{fig-E6}, where the labeling of each row is given by $f_i^{-n_i},..., f_i^{n_i}$, hence each $F_i$-path is represented as a horizontal path. The different $E_i$-paths, starting from $f_i^{n_i}$ and ending at $f_i^{-n_i}$, are shown in different colors.

\begin{landscape}
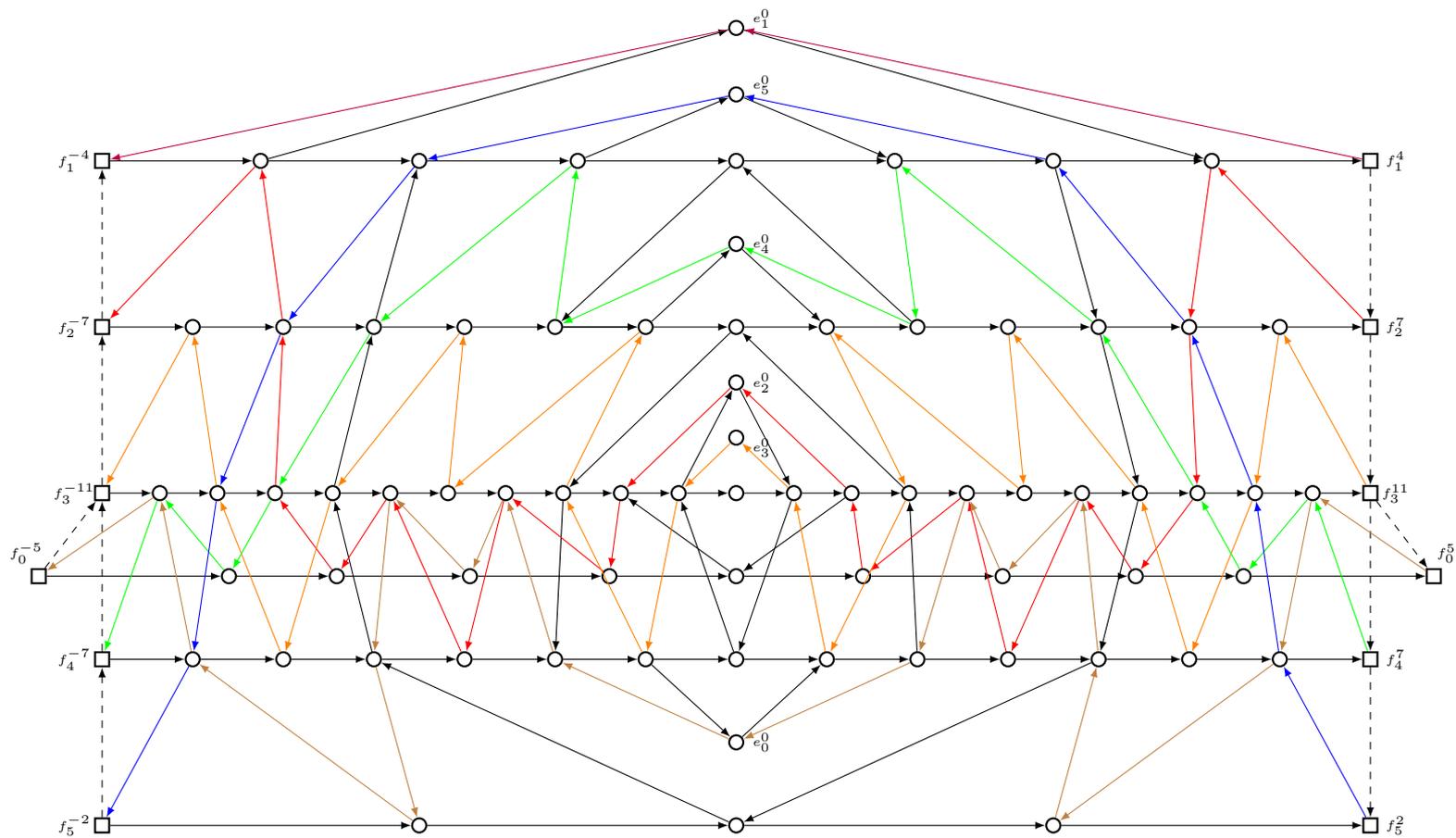
\begin{figure}[!htb]
\centering
\begin{tikzpicture}[every node/.style={inner sep=0, minimum size=0.2cm, thick}, x=0.18cm, y=0.47cm]
\xdef\c{0}
\foreach \y[count=\d from 0] in {9,15,23,15,5,11}{
	\foreach \x in {1,..., \y}{
		\pgfmathtruncatemacro{\ind}{\x+\c}
		\pgfmathsetmacro{\xx}{100*(\x-1)/(\y-1)}
		\ifthenelse{\d=5}{
			\ifthenelse{\x=1 \OR \x=11}{
				\node(\ind) at (\x-6+\xx, 7.5)[draw]{};
				}
				{\ifthenelse{\x=3 \OR \x=4 \OR \x=8\OR \x=9}{
				\node(\ind) at (0.5*\x-3+\xx, 7.5)[draw,circle]{};
				}{
				\node(\ind) at (\xx, 7.5)[draw, circle]{};
				}
				}
			}
		{	\ifthenelse{\x=1 \OR \x=\y}
		{
		\node(\ind) at (\xx, 20-5*\d)[draw]{};
		}
		{
		\node(\ind) at (\xx, 20-5*\d)[draw, circle]{};
		}
		}
	}
	\xdef\c{\c+\y}
}
\foreach \y [count=\d from 79] in {24,13.33,11.67,17.5,22,2.5}{
\node(\d) at (50, \y)[draw, circle]{};
}
\drawpath{63,48,25,10,1}{dashed,thin}
\drawpath{68,25}{dashed,thin}
\drawpath{9,24,47,62,67}{dashed,thin}
\drawpath{47,78}{dashed, thin}
\xdef\cc{1}
\foreach \y in {9,24,47,62,67,78}{
	\drawpath{\cc,...,\y}	{}
	\pgfmathtruncatemacro{\ind}{\y+1}
	\xdef\cc{\ind}
}
\drawpath{2,79,8}{thin}
\drawpath{4,83,6}{thin}
\drawpath{65,51,29,13,3}{thin}
\drawpath{7,21,43,59,65}{thin}
\drawpath{19,5,15,16,82,18}{thin}
\drawpath{57,39,17,33,53}{thin}
\drawpath{38,73,34}{thin}
\drawpath{54,84,56}{thin}
\drawpath{35,80,37,55,35}{thin}
\drawpath{9,79,1}{purple,thin}
\drawpath{24,8,22,44,76,42,58,40,74,38,80,34,72,32,52,30,70,28,12,2,10}{red,thin}
\drawpath{47,23,45,60,43,20,41,18,39,56,37,81,35,54,33,16,31,14,29,50,27,11,25}{orange,thin}
\drawpath{62,46,77,44,21,6,19,82,15,4,13,28,69,26,48}{green,thin}
\drawpath{67,61,45,22,7,83,3,12,27,49,63}{blue,thin}
\drawpath{78,46,61,66,59,42,75,40,57,84,53,32,71,30,51,64,49,26,68}{brown,thin}
\foreach \y[count=\c from 1] in {4,7,11,7,2}{
\node at (-2.2, 25-5*\c){\tiny$f_\c^{-\y}$};
\node at (102, 25-5*\c){\tiny$f_\c^{\y}$};
}
\node at (-6, 8.2){\tiny$f_0^{-5}$};
\node at (106, 8.2){\tiny$f_0^{5}$};
\foreach \y[count=\c from 0] in {2.5, 24.3,13.33,11.37,17.5,22.3}{
\node at (52, \y){\tiny$e_\c^0$};
}
\end{tikzpicture}
\caption{$E_6$-quiver, with the $E_i$-paths colored in different colors.}
\label{fig-E6}
\end{figure}
\end{landscape}
\subsubsection{Type $E_7$}\label{sec:embedding:E7}
Following \cite{IpTh}, we choose the longest word to be
$$\bi=(3\;43\;034\;230432\;12340321\;5432103243054321\;654320345612345034230123456),$$
which comes from the embedding of Dynkin diagram $$A_1\subset A_2\subset A_3\subset D_4\subset D_5\subset E_6\subset E_7$$ by successively adding the nodes 3,4,0,2,1,5,6 to the diagram.

Then the $\bf_i$ variables are expressed as
\Eqn{
\bf_1&=X(f_1^{-6},..., f_1^5),\\
\bf_2&=X(f_2^{-11},..., f_2^{10}),\\
\bf_3&=X(f_3^{-17},..., f_3^{16}),\\
\bf_4&=X(f_4^{-12},..., f_4^{11}),\\
\bf_5&=X(f_5^{-6},..., f_5^5),\\
\bf_6&=X(f_6^{-3},..., f_6^2),\\
\bf_0&=X(f_0^{-8},..., f_0^7),
}
while the $\be_i$ variables are expressed as certain paths on the quiver:
\Eqn{
\be_1=&X(f_1^{7}, f_2^{10}, f_3^{15}, f_4^{10}, f_5^{4}, f_6^{1}, f_5^{2}, 
 f_4^{6}, f_3^{9}, f_2^{5}, f_1^{3}, e_1^0, f_1^{-1}, f_2^{-5}, 
 f_3^{-9}, f_4^{-6}, f_5^{-2}, f_6^{-1}, f_5^{-4}, f_4^{-10}, 
 f_3^{-15}, f_2^{-10}),\\
\be_2=&X(f_2^{11}, f_3^{16}, f_0^{7}, f_3^{14}, f_4^{9}, f_5^{3}, f_4^{7}, 
 f_3^{10}, f_0^{4}, f_3^{8}, f_2^{4}, f_1^{2}, f_2^{2}, e_2^0, \\
&\tab f_2^{-2}, f_1^{0}, f_2^{-4}, f_3^{-8}, f_0^{-4}, f_3^{-10}, f_4^{-7}, 
 f_5^{-3}, f_4^{-9}, f_3^{-14}, f_0^{-7}, f_3^{-16}),\\
\be_3=&X(f_3^{17}, f_4^{11}, f_3^{15}, f_2^{9}, f_3^{13}, f_4^{8}, f_3^{11}, 
 f_2^{6}, f_3^{9}, f_4^{5}, f_3^{7}, f_2^{3}, f_3^{5}, f_2^{1}, 
 f_3^{3}, f_4^{1}, f_3^{1}, e_3^0, \\
&\tab f_3^{-1}, f_4^{-1}, f_3^{-3}, f_2^{-1}, f_3^{-5}, 
 f_2^{-3}, f_3^{-7}, f_4^{-5}, f_3^{-9}, f_2^{-6}, f_3^{-11}, 
 f_4^{-8}, f_3^{-13}, f_2^{-9}, f_3^{-15}, f_4^{-11}),\\
\be_4=&X(f_4^{12}, f_5^{5}, f_4^{10}, f_3^{14}, f_0^{6}, f_3^{12}, f_2^{7}, 
 f_1^{4}, f_2^{5}, f_3^{8}, f_0^{3}, f_3^{6}, f_4^{3}, f_3^{4}, 
 f_0^{1}, f_3^{2}, e_4^0, \\
&\tab  f_3^{-2}, f_0^{-1}, 
 f_3^{-4}, f_4^{-3}, f_3^{-6}, f_0^{-3}, f_3^{-8}, f_2^{-5}, f_1^{-2},
  f_2^{-7}, f_3^{-12}, f_0^{-6}, f_3^{-14}, f_4^{-10}, f_5^{-5}),\\
\be_5=&X(f_5^{6}, f_6^{2}, f_5^{4}, f_4^{9}, f_3^{13}, f_2^{8}, f_1^{4}, 
e_5^0, f_1^{-4}, f_2^{-8}, f_3^{-13}, f_4^{-9}, f_5^{-4}, 
 f_6^{-2}),\\
\be_6=&X(f_6^3, e_6^0),\\
\be_0=&X(f_0^{8}, f_3^{16}, f_2^{10}, f_1^{6}, f_2^{8}, f_3^{12}, f_0^{5}, 
 f_3^{10}, f_4^{6}, f_5^{1}, f_4^{4}, f_3^{6}, f_0^{2}, f_3^{4}, 
 f_4^{2}, e_0^0,\\
&\tab  f_4^{-2}, f_3^{-4}, f_0^{-2}, f_3^{-6}, f_4^{-4}, 
 f_5^{-1}, f_4^{-6}, f_3^{-10}, f_0^{-5}, f_3^{-12}, f_2^{-8}, 
 f_1^{-4}, f_2^{-10}, f_3^{-16}).
}

The initial terms are given by
\Eqn{
&X_{e_1^0}=e(-2u_1^2),&& X_{e_2^0}=e(-2u_2^2),&& X_{e_3^0}=e(-2u_3^1),&&X_{e_4^0}=e(-2u_3^2),\\
& X_{e_5^0}=e(-2u_1^4),&& X_{e_6^0}=e(-2u_6^3), && X_{e_0^0}=e(-2u_4^2).
}

The quiver is shown in Figure \ref{fig-E7}, again the labeling of each row is given by $f_i^{-n_i},..., f_i^{n_i}$, hence each $F_i$-path is represented as a horizontal path. The different $E_i$-paths, starting from $f_i^{n_i}$ and ending at $f_i^{-n_i}$, are shown in different colors.
\begin{landscape}
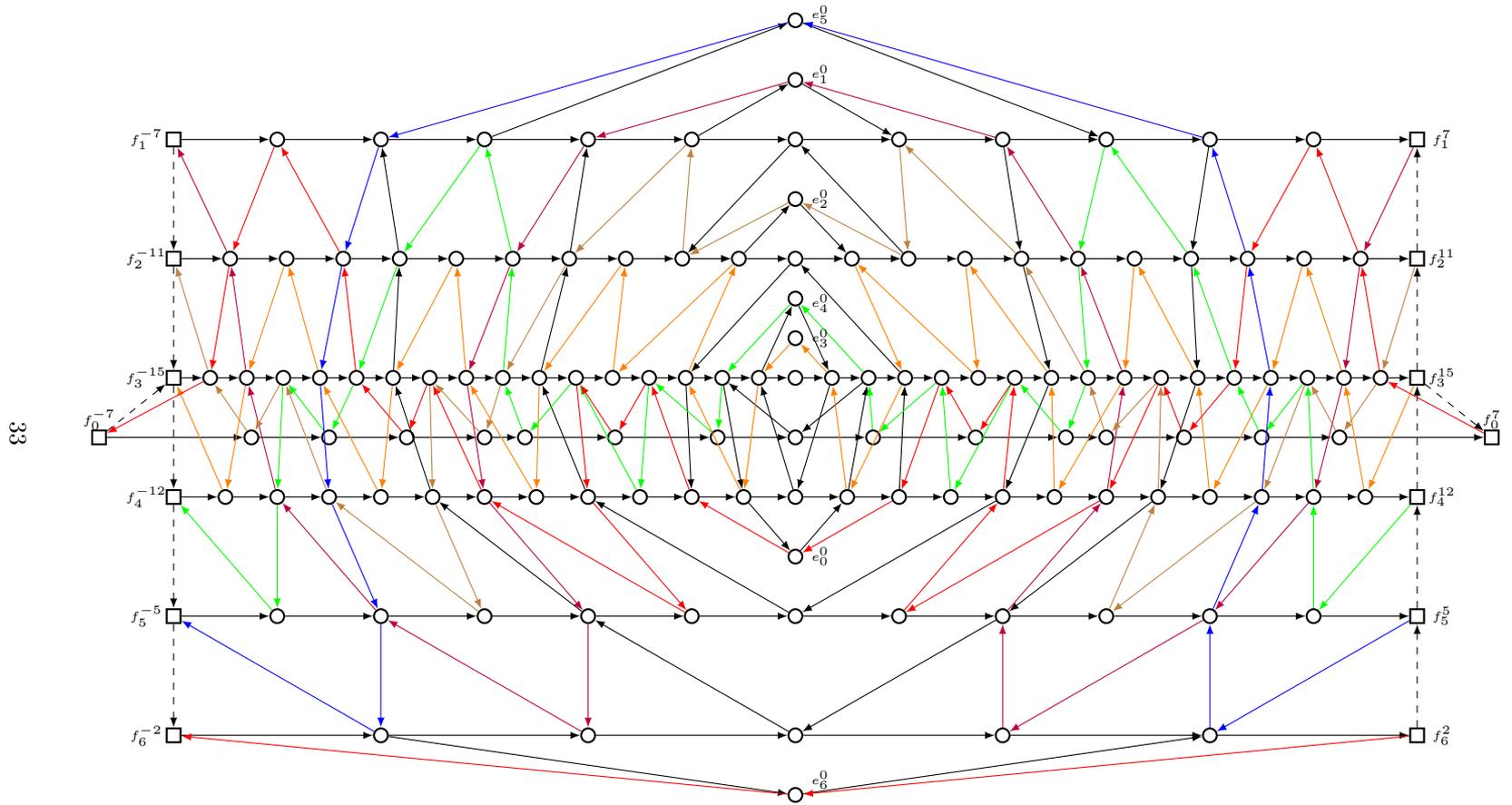
\begin{figure}[!htb]
\centering
\begin{tikzpicture}[every node/.style={inner sep=0, minimum size=0.2cm, thick}, x=0.18cm, y=0.43cm]
\xdef\c{0}
\foreach \y[count=\d from 0] in {13,23,35,25,13,7,17}{
	\foreach \x in {1,..., \y}{
		\pgfmathtruncatemacro{\ind}{\x+\c}
		\pgfmathsetmacro{\xx}{100*(\x-1)/(\y-1)}
		\ifthenelse{\d=6}{
			\ifthenelse{\x=1 \OR \x=17}{
				\node(\ind) at (0.75*\x-0.75*9+\xx, 10)[draw]{};
				}
				{\ifthenelse{\x=6 \OR \x=7 \OR \x=11\OR\x=12}{
				\node(\ind) at (\x-9+\xx, 10)[draw,circle]{};
				}{
				\node(\ind) at (\xx, 10)[draw, circle]{};
				}
				}
			}
		{	\ifthenelse{\x=1 \OR \x=\y}
		{
		\node(\ind) at (\xx, 20-4*\d)[draw]{};
		}
		{
		\node(\ind) at (\xx, 20-4*\d)[draw, circle]{};
		}
		}
	}
	\xdef\c{\c+\y}
}
\foreach \y [count=\d from 134] in {22,18,13.33,14.66,24,-2,6}{
\node(\d) at (50, \y)[draw, circle]{};
}
\drawpath{1, 14, 37, 72, 97, 110}{dashed,thin}
\drawpath{117,37}{dashed,thin}
\drawpath{116, 109, 96, 71, 36, 13}{dashed,thin}
\drawpath{71, 133}{dashed, thin}
\xdef\cc{1}
\foreach \y in {13, 36, 71, 96, 109, 116, 133}{
	\drawpath{\cc,...,\y}	{}
	\pgfmathtruncatemacro{\ind}{\y+1}
	\xdef\cc{\ind}
}
\drawpath{4,138,10}{thin}
\drawpath{6,134,8}{thin}
\drawpath{11, 32, 65, 91, 105, 113, 101, 77, 43, 18, 3}{thin}
\drawpath{9, 29, 61, 88, 103, 80, 47, 21, 5}{thin}
\drawpath{7,23,24,135,26,27,7}{thin}
\drawpath{86, 57, 25, 51, 82}{thin}
\drawpath{84,53, 137, 55, 84}{thin}
\drawpath{125, 52, 83, 140, 85, 56, 125}{thin}
\drawpath{111,139,115}{thin}
\drawpath{13, 35, 69, 94, 107, 114, 105, 90, 63, 30, 9, 134, 5, 20, 45, 78,101, 112, 99, 74, 39, 15, 1}{purple,thin}
\drawpath{36, 70, 132, 68, 93, 106, 91, 64, 129, 62, 29, 8, 27, 135, 23, 6, 21, 46, 121, 44, 77, 100, 75, 40, 118, 38, 14}{brown,thin}
\drawpath{71, 95, 69, 34, 67, 92, 65, 31, 63, 89, 61, 28, 59, 26, 57, 85, 55,136,53, 83, 51, 24, 49, 22, 47, 79, 45, 19, 43, 76, 41, 16, 39, 73, 37}{orange,thin}
\drawpath{96, 108, 94, 68, 131, 66, 32, 10, 30, 62, 128, 60, 87, 58, 126, 56, 137, 52, 124, 50, 81, 48, 122, 46, 20, 4, 18, 42, 119, 40, 74, 98, 72}{green,thin}
\drawpath{109, 115, 107, 93, 67, 33, 11, 138, 3, 17, 41, 75, 99, 111, 97}{blue,thin}
\drawpath{116,139,110}{red,thin}
\drawpath{133, 70, 35, 12, 33, 66, 130, 64, 90, 104, 88, 60, 127, 58, 86, 140, 82, 50, 123, 48, 80, 102, 78, 44, 120, 42, 17, 2, 15, 38, 117}{red,thin}
\foreach \y[count=\c from 1] in {7,11,15,12,5,2}{
\node at (-2.2, 24-4*\c){\tiny$f_\c^{-\y}$};
\node at (102, 24-4*\c){\tiny$f_\c^{\y}$};
}
\node at (-6, 10.6){\tiny$f_0^{-7}$};
\node at (106, 10.6){\tiny$f_0^{7}$};
\foreach \y[count=\c from 0] in {6,22.2,18,13.33,14.5,24.2,-1.5}{
\node at (52, \y){\tiny$e_\c^0$};
}
\end{tikzpicture}
\caption{$E_7$-quiver, with the $E_i$-paths colored in different colors.}
\label{fig-E7}
\end{figure}
\end{landscape}

\subsubsection{Type $E_8$}\label{sec:embedding:E8}
Following \cite{IpTh}, we choose the longest word to be
\Eqn{
\bi=&(3\;43\;034\;230432\;12340321\;5432103243054321\;654320345612345034230123456\\
&765432103243546503423012345676543203456123450342301234567),
}
which comes from the embedding of Dynkin diagram $$A_1\subset A_2\subset A_3\subset D_4\subset D_5\subset E_6\subset E_7\subset E_8$$ by successively adding the nodes 3,4,0,2,1,5,6,7 to the diagram.

Then the $\bf_i$ variables are expressed as
\Eqn{
\bf_1&=X(f_1^{-10},..., f_1^9),\\
\bf_2&=X(f_2^{-19},..., f_2^{18}),\\
\bf_3&=X(f_3^{-29},..., f_3^{28}),\\
\bf_4&=X(f_4^{-22},..., f_4^{21}),\\
\bf_5&=X(f_5^{-14},..., f_5^{13}),\\
\bf_6&=X(f_6^{-9},..., f_6^8),\\
\bf_7&=X(f_7^{-3},..., f_7^2),\\
\bf_0&=X(f_0^{-14},..., f_0^{13}),
}
while the $\be_i$ variables are expressed as certain paths on the quiver:
\begin{tiny}
\Eqn{
\be_1=&X(f_1^{10}, f_2^{18}, f_3^{27}, f_4^{20}, f_5^{12}, f_6^{7}, f_5^{10}, 
 f_6^{5}, f_5^{8}, f_4^{14}, f_3^{19}, f_2^{12}, f_1^{6}, f_2^{10}, 
 f_3^{15}, f_4^{10}, f_5^{4}, f_6^{1}, f_5^{2}, f_4^{6}, f_3^{9}, 
 f_2^{5}, f_1^{2}, e_1^0, \\
 &\tab f_1^{-2}, f_2^{-5}, f_3^{-9}, f_4^{-6}, 
 f_5^{-2}, f_6^{-1}, f_5^{-4}, f_4^{-10}, f_3^{-15}, f_2^{-10}, 
 f_1^{-6}, f_2^{-12}, f_3^{-19}, f_4^{-14}, f_5^{-8}, f_6^{-5}, 
 f_5^{-10}, f_6^{-7}, f_5^{-12}, \\
 &\tab f_4^{-20}, f_3^{-27}, f_2^{-18}),\\
\be_2=&X(f_2^{19}, f_3^{28}, f_0^{13}, f_3^{26}, f_4^{19}, f_5^{11}, f_4^{17},
  f_5^{9}, f_4^{15}, f_3^{20}, f_0^{9}, f_3^{18}, f_2^{11}, f_3^{16}, 
 f_0^{7}, f_3^{14}, f_4^{9}, f_5^{3}, f_4^{7}, f_3^{10}, f_0^{4}, 
 f_3^{8}, f_2^{4}, f_1^{1}, f_2^{2}, e_2^0,\\
 &\tab  f_2^{-2}, f_1^{-1}, f_2^{-4}, f_3^{-8}, f_0^{-4}, f_3^{-10}, f_4^{-7}, f_5^{-3}, 
 f_4^{-9}, f_3^{-14}, f_0^{-7}, f_3^{-16}, f_2^{-11}, f_3^{-18}, 
 f_0^{-9}, f_3^{-20}, f_4^{-15}, f_5^{-9}, f_4^{-17}, f_5^{-11},\\ 
&\tab  f_4^{-19}, f_3^{-26}, f_0^{-13}, f_3^{-28}),\\
\be_3=&X(f_3^{29}, f_4^{21}, f_3^{27}, f_2^{17}, f_3^{25}, f_4^{18}, f_3^{23},
  f_4^{16}, f_3^{21}, f_2^{13}, f_3^{19}, f_4^{13}, f_3^{17}, 
 f_4^{11}, f_3^{15}, f_2^{9}, f_3^{13}, f_4^{8}, f_3^{11}, f_2^{6}, 
 f_3^{9}, f_4^{5}, f_3^{7}, f_2^{3}, f_3^{5}, f_2^{1}, f_3^{3}, 
 f_4^{1}, f_3^{1}, e_3^0,\\
&\tab f_3^{-1}, f_4^{-1}, f_3^{-3}, f_2^{-1}, f_3^{-5}, f_2^{-3}, 
 f_3^{-7}, f_4^{-5}, f_3^{-9}, f_2^{-6}, f_3^{-11}, f_4^{-8}, 
 f_3^{-13}, f_2^{-9}, f_3^{-15}, f_4^{-11}, f_3^{-17}, f_4^{-13}, 
 f_3^{-19}, f_2^{-13}, f_3^{-21}, f_4^{-16}, \\
&\tab f_3^{-23},f_4^{-18}, f_3^{-25}, f_2^{-17}, f_3^{-27}, f_4^{-21}),\\
\be_4=&X(f_4^{22}, f_5^{13}, f_4^{20}, f_3^{26}, f_0^{12}, f_3^{24}, f_2^{15},
  f_3^{22}, f_0^{10}, f_3^{20}, f_4^{14}, f_5^{7}, f_4^{12}, f_5^{5}, 
 f_4^{10}, f_3^{14}, f_0^{6}, f_3^{12}, f_2^{7}, f_1^{3}, f_2^{5}, 
 f_3^{8}, f_0^{3}, f_3^{6}, f_4^{3}, f_3^{4}, f_0^{1}, f_3^{2}, 
 e_4^0, \\
&\tab  f_3^{-2}, f_0^{-1}, f_3^{-4}, f_4^{-3}, 
 f_3^{-6}, f_0^{-3}, f_3^{-8}, f_2^{-5}, f_1^{-3}, f_2^{-7}, 
 f_3^{-12}, f_0^{-6}, f_3^{-14}, f_4^{-10}, f_5^{-5}, f_4^{-12}, 
 f_5^{-7}, f_4^{-14}, f_3^{-20}, f_0^{-10}, f_3^{-22}, \\
&\tab f_2^{-15}, f_3^{-24}, f_0^{-12}, f_3^{-26}, f_4^{-20}, f_5^{-13}),\\
\be_5=&X(f_5^{14}, f_6^{8}, f_5^{12}, f_4^{19}, f_3^{25}, f_2^{16}, f_1^{8}, 
 f_2^{14}, f_3^{21}, f_4^{15}, f_5^{8}, f_6^{4}, f_5^{6}, f_6^{2}, 
 f_5^{4}, f_4^{9}, f_3^{13}, f_2^{8}, f_1^{4}, e_5^0,\\
&\tab  f_1^{-4}, 
 f_2^{-8}, f_3^{-13}, f_4^{-9}, f_5^{-4}, f_6^{-2}, f_5^{-6}, 
 f_6^{-4}, f_5^{-8}, f_4^{-15}, f_3^{-21}, f_2^{-14}, f_1^{-8}, 
 f_2^{-16}, f_3^{-25}, f_4^{-19}, f_5^{-12}, f_6^{-8}),\\
\be_6=&X(f_6^{9}, f_7^{2}, f_6^{6}, f_6^{7}, f_5^{10}, f_5^{11}, f_4^{17}, 
 f_4^{18}, f_3^{23}, f_3^{24}, [f_0^{11}, f_2^{15}, f_0^{11}], f_3^{22}, f_3^{23},
  f_4^{16}, f_4^{17}, f_5^{9}, f_5^{10}, f_6^{5}, f_6^{6}, f_7^{1}, 
 f_6^{3}, e_6^0, \\
&\tab f_6^{-3}, f_7^{-1}, f_6^{-6}, f_6^{-5}, f_5^{-10}, 
 f_5^{-9}, f_4^{-17}, f_4^{-16}, f_3^{-23}, f_3^{-22}, [f_0^{-11}, f_2^{-15}, 
 f_0^{-11}], f_3^{-24}, f_3^{-23}, f_4^{-18}, f_4^{-17}, \\
&\tab f_5^{-11},  f_5^{-10}, f_6^{-7}, f_6^{-6}, f_7^{-2}),\\
\be_7=&X(f_7^3,e_7^0),\\
\be_0=&X(f_0^{14}, f_3^{28}, f_2^{18}, f_1^{9}, f_2^{16}, f_3^{24}, f_0^{11}, 
 f_3^{22}, f_2^{14}, f_1^{7}, f_2^{12}, f_3^{18}, f_0^{8}, f_3^{16}, 
 f_2^{10}, f_1^{5}, f_2^{8}, f_3^{12}, f_0^{5}, f_3^{10}, f_4^{6}, 
 f_5^{1}, f_4^{4}, f_3^{6}, f_0^{2}, f_3^{4}, f_4^{2}, e_0^0,\\
&\tab  
 f_4^{-2}, f_3^{-4}, f_0^{-2}, f_3^{-6}, f_4^{-4}, f_5^{-1}, f_4^{-6},
  f_3^{-10}, f_0^{-5}, f_3^{-12}, f_2^{-8}, f_1^{-5}, f_2^{-10}, 
 f_3^{-16}, f_0^{-8}, f_3^{-18}, f_2^{-12}, f_1^{-7}, f_2^{-14}, 
 f_3^{-22}, f_0^{-11},\\
&\tab  f_3^{-24}, f_2^{-16}, f_1^{-9}, f_2^{-18}, f_3^{-28}).
}
\end{tiny}
Here for the action of $\be_6$, the path corresponding to $...[A,B,A]...$ is split as:
$$...+X_{...}+ X_{..., A}+X_{..., B}+X_{..., A,B} + X_{..., A, B, ...}+...\;.$$
We see that the path for $\be_6$ is special in the sense that it revisited certain nodes twice. The same phenomenon also appear in type $F_4$ below.

Finally the initial terms are given by
\Eqn{
&X_{e_1^0}=e(-2u_1^2),&& X_{e_2^0}=e(-2u_2^2),&& X_{e_3^0}=e(-2u_3^1),&&X_{e_4^0}=e(-2u_3^2),\\
& X_{e_5^0}=e(-2u_1^4),&& X_{e_6^0}=e(-2u_6^3), && X_{e_7^0}=e(-2u_7^3), &&X_{e_0^0}=e(-2u_4^2).
}

The $E_8$-quiver is shown in Figure \ref{fig-E8}, where we have highlighted the different $E_i$-paths of the $\be_i$ generators except $\be_6$. For the special case of $\be_6$, we highlight it separately in Figure \ref{fig-E86}.
\begin{landscape}
\begin{subfigures}
\begin{figure}[!htb]
\centering
\begin{tikzpicture}[every node/.style={inner sep=0, minimum size=0.1cm, thick}, x=0.18cm, y=0.43cm]
\xdef\c{0}
\foreach \y[count=\d from 0] in {21,39,59,45,29,19,7,29}{
	\foreach \x in {1,..., \y}{
		\pgfmathtruncatemacro{\ind}{\x+\c}
		\pgfmathsetmacro{\xx}{100*(\x-1)/(\y-1)}
		\ifthenelse{\d=7}{
			\ifthenelse{\x=1 \OR \x=29}{
				\node(\ind) at (0.3*\x-0.3*15+\xx, 12.25)[draw]{};
				}
				{\ifthenelse{\x=13 \OR \x=17}{
				\node(\ind) at (0.4*\x-0.4*15+\xx, 12.25)[draw,circle]{};
				}{\ifthenelse{\x=9 \OR \x=12 \OR \x=18 \OR \x=21}
				{\node(\ind) at (0.1*\x-0.1*15+\xx, 12.25)[draw,circle]{};
				}{
				\node(\ind) at (\xx, 12.25)[draw, circle]{};
				}
				}
				}
			}
		{	\ifthenelse{\x=1 \OR \x=\y}
		{
		\node(\ind) at (\xx, 21-3.5*\d)[draw]{};
		}
		{
		\node(\ind) at (\xx, 21-3.5*\d)[draw, circle]{};
		}
		}
	}
	\xdef\c{\c+\y}
}
\foreach \y [count=\d from 249] in {22.75,19.25,15.167,16.333,24.5,1.75,-1.75,8.75}{
\node(\d) at (50, \y)[draw, circle]{};
}
\drawpath{1, 22, 61, 120, 165, 194, 213}{dashed,thin}
\drawpath{220,61}{dashed,thin}
\drawpath{219, 212, 193, 164, 119, 60, 21}{dashed,thin}
\drawpath{119,248}{dashed, thin}
\xdef\cc{1}
\foreach \y in {21,60,119,164,193,212,219,248}{
	\drawpath{\cc,...,\y}	{}
	\pgfmathtruncatemacro{\ind}{\y+1}
	\xdef\cc{\ind}
}

\foreach \y[count=\c from 1] in {10,19,29,22,14,9,3}{
\node at (-2.2, 24.5-3.5*\c){\tiny$f_\c^{-\y}$};
\node at (102, 24.5-3.5*\c){\tiny$f_\c^{\y}$};
}
\node at (-6, 12.5){\tiny$f_0^{-14}$};
\node at (106, 12.5){\tiny$f_0^{14}$};
\foreach \y[count=\c from 0] in {8.5,23,19.50,15.167,16.333,24.75,2.5,-1.25}{
\node at (51, \y){\tiny$e_\c^0$};
}

\drawpath{8,253,14}{thin}
\drawpath{10,249,12}{thin}
\drawpath{15, 48, 101, 149, 181, 203, 177, 135, 79, 34, 7}{thin}
\drawpath{13, 45, 97, 146, 179, 138, 83, 37, 9}{thin}
\drawpath{11,39,40,250,42,43,11}{thin}
\drawpath{11, 39, 40, 250, 42, 43, 11}{thin}
\drawpath{234, 88, 141, 256, 143, 92, 234}{thin}
\drawpath{142, 89, 252, 91, 142}{thin}
\drawpath{214,255,218}{thin}
\drawpath{201,254,205,206,216,200}{thin}
\drawpath{144,93,41,87,140}{thin}
\drawpath{212, 218, 209, 217, 206, 254, 200, 215, 197, 214, 194}{thin}
\drawpath{21, 59, 117, 162, 191, 210, 189, 208, 187, 156, 109, 53, 17, 51, 105, 152, 183, 204, 181, 148, 99, 46, 13, 249, 9, 36, 81, 136, 177, 202, 175, 132, 75, 31, 5, 29, 71, 128, 171, 198, 169, 196, 167, 122, 63, 23, 1}{red,thin}
\drawpath{60, 118, 247, 116, 161, 190, 159, 188, 157, 110, 243, 108, 52, 106, 241, 104, 151, 182, 149, 100, 238, 98, 45, 12, 43, 250, 39, 10, 37, 82, 230, 80, 135, 176, 133, 76, 227, 74, 30, 72, 225, 70, 127, 170, 125, 168, 123, 64, 221, 62, 22}{orange,thin}
\drawpath{119, 163, 117, 58, 115, 160, 113, 158, 111, 54, 109, 155, 107, 153, 105, 50, 103, 150, 101, 47, 99, 147, 97, 44, 95, 42, 93, 143, 91, 251, 89,141, 87, 40, 85, 38, 83, 137, 81, 35, 79, 134, 77, 32, 75, 131, 73, 129, 71, 28, 69, 126, 67, 124, 65, 24, 63, 121, 61}{brown,thin}
\drawpath{164, 192, 162, 116, 246, 114, 56, 112, 244, 110, 156, 186, 154, 184, 152, 104, 240, 102, 48, 14, 46, 98, 237, 96, 145, 94, 235, 92, 252, 88, 233, 86, 139, 84, 231, 82, 36, 8, 34, 78, 228, 76, 132, 174, 130, 172, 128, 70, 224, 68, 26, 66, 222, 64, 122, 166, 120}{green,thin}
\drawpath{193, 211, 191, 161, 115, 57, 19, 55, 111, 157, 187, 207, 185, 205, 183, 151, 103, 49, 15, 253, 7, 33, 77, 133, 175, 201, 173, 199, 171, 127, 69, 27, 3, 25, 65, 123, 167, 195, 165}{blue,thin}
\drawpath{219,255,213}{green,thin}
\drawpath{248, 118, 59, 20, 57, 114, 245, 112, 55, 18, 53, 108, 242, 106, 51,16, 49, 102, 239, 100, 148, 180, 146, 96, 236, 94, 144, 256, 140, 86, 232, 84, 138, 178, 136, 80, 229, 78, 33, 6, 31, 74, 226, 72, 29, 4, 27, 68, 223, 66, 25, 2, 23, 62, 220}{purple,thin}
\end{tikzpicture}
\caption{$E_8$-quiver, with the $E_i$-paths (except $E_6$) colored in different colors.}
\label{fig-E8}
\end{figure}
\begin{figure}[!htb]
\centering
\begin{tikzpicture}[every node/.style={inner sep=0, minimum size=0.1cm, thick}, x=0.18cm, y=0.43cm]
\xdef\c{0}
\foreach \y[count=\d from 0] in {21,39,59,45,29,19,7,29}{
	\foreach \x in {1,..., \y}{
		\pgfmathtruncatemacro{\ind}{\x+\c}
		\pgfmathsetmacro{\xx}{100*(\x-1)/(\y-1)}
		\ifthenelse{\d=7}{
			\ifthenelse{\x=1 \OR \x=29}{
				\node(\ind) at (0.3*\x-0.3*15+\xx, 12.25)[draw]{};
				}
				{\ifthenelse{\x=13 \OR \x=17}{
				\node(\ind) at (0.4*\x-0.4*15+\xx, 12.25)[draw,circle]{};
				}{\ifthenelse{\x=9 \OR \x=12 \OR \x=18 \OR \x=21}
				{\node(\ind) at (0.1*\x-0.1*15+\xx, 12.25)[draw,circle]{};
				}{
				\node(\ind) at (\xx, 12.25)[draw, circle]{};
				}
				}
				}
			}
		{	\ifthenelse{\x=1 \OR \x=\y}
		{
		\node(\ind) at (\xx, 21-3.5*\d)[draw]{};
		}
		{
		\node(\ind) at (\xx, 21-3.5*\d)[draw, circle]{};
		}
		}
	}
	\xdef\c{\c+\y}
}
\foreach \y [count=\d from 249] in {22.75,19.25,15.167,16.333,24.5,1.75,-1.75,8.75}{
\node(\d) at (50, \y)[draw, circle]{};
}
\drawpath{1, 22, 61, 120, 165, 194, 213}{dashed,thin}
\drawpath{220,61}{dashed,thin}
\drawpath{219, 212, 193, 164, 119, 60, 21}{dashed,thin}
\drawpath{119,248}{dashed, thin}
\xdef\cc{1}
\foreach \y in {21,60,119,164,193,212,219,248}{
	\drawpath{\cc,...,\y}	{}
	\pgfmathtruncatemacro{\ind}{\y+1}
	\xdef\cc{\ind}
}

\foreach \y[count=\c from 1] in {10,19,29,22,14,9,3}{
\node at (-2.2, 24.5-3.5*\c){\tiny$f_\c^{-\y}$};
\node at (102, 24.5-3.5*\c){\tiny$f_\c^{\y}$};
}
\node at (-6, 12.5){\tiny$f_0^{-14}$};
\node at (106, 12.5){\tiny$f_0^{14}$};
\foreach \y[count=\c from 0] in {8.5,23,19.50,15.167,16.333,24.75,2.5,-1.25}{
\node at (51, \y){\tiny$e_\c^0$};
}

\drawpath{8,253,14}{thin}
\drawpath{10,249,12}{thin}
\drawpath{15, 48, 101, 149, 181, 203, 177, 135, 79, 34, 7}{thin}
\drawpath{13, 45, 97, 146, 179, 138, 83, 37, 9}{thin}
\drawpath{11,39,40,250,42,43,11}{thin}
\drawpath{11, 39, 40, 250, 42, 43, 11}{thin}
\drawpath{234, 88, 141, 256, 143, 92, 234}{thin}
\drawpath{142, 89, 252, 91, 142}{thin}
\drawpath{214,255,218}{thin}
\drawpath{201,254,205,206,216,200}{thin}
\drawpath{144,93,41,87,140}{thin}
\drawpath{212, 218, 209, 217, 206, 254, 200, 215, 197, 214, 194}{thin}
\drawpath{21, 59, 117, 162, 191, 210, 189, 208, 187, 156, 109, 53, 17, 51, 105, 152, 183, 204, 181, 148, 99, 46, 13, 249, 9, 36, 81, 136, 177, 202, 175, 132, 75, 31, 5, 29, 71, 128, 171, 198, 169, 196, 167, 122, 63, 23, 1}{thin}
\drawpath{60, 118, 247, 116, 161, 190, 159, 188, 157, 110, 243, 108, 52, 106, 241, 104, 151, 182, 149, 100, 238, 98, 45, 12, 43, 250, 39, 10, 37, 82, 230, 80, 135, 176, 133, 76, 227, 74, 30, 72, 225, 70, 127, 170, 125, 168, 123, 64, 221, 62, 22}{thin}
\drawpath{119, 163, 117, 58, 115, 160, 113, 158, 111, 54, 109, 155, 107, 153, 105, 50, 103, 150, 101, 47, 99, 147, 97, 44, 95, 42, 93, 143, 91, 251, 89,141, 87, 40, 85, 38, 83, 137, 81, 35, 79, 134, 77, 32, 75, 131, 73, 129, 71, 28, 69, 126, 67, 124, 65, 24, 63, 121, 61}{thin}
\drawpath{164, 192, 162, 116, 246, 114, 56, 112, 244, 110, 156, 186, 154, 184, 152, 104, 240, 102, 48, 14, 46, 98, 237, 96, 145, 94, 235, 92, 252, 88, 233, 86, 139, 84, 231, 82, 36, 8, 34, 78, 228, 76, 132, 174, 130, 172, 128, 70, 224, 68, 26, 66, 222, 64, 122, 166, 120}{thin}
\drawpath{193, 211, 191, 161, 115, 57, 19, 55, 111, 157, 187, 207, 185, 205, 183, 151, 103, 49, 15, 253, 7, 33, 77, 133, 175, 201, 173, 199, 171, 127, 69, 27, 3, 25, 65, 123, 167, 195, 165}{thin}
\drawpath{219,255,213}{thin}
\drawpath{248, 118, 59, 20, 57, 114, 245, 112, 55, 18, 53, 108, 242, 106, 51,16, 49, 102, 239, 100, 148, 180, 146, 96, 236, 94, 144, 256, 140, 86, 232, 84, 138, 178, 136, 80, 229, 78, 33, 6, 31, 74, 226, 72, 29, 4, 27, 68, 223, 66, 25, 2, 23, 62, 220}{thin}
\drawpath{212, 218, 209, 210, 189, 190, 159, 160, 113, 114, 245, 112, 113, 158, 159, 188, 189, 208, 209, 217, 206, 254, 200, 215, 197, 198, 169, 170, 125, 126, 67, 68, 223, 66, 67, 124, 125, 168, 169, 196, 197, 214, 194}{red,thick}
\drawpath{114,56,112}{red,thick}
\drawpath{68,26,66}{red,thick}
\end{tikzpicture}
\caption{$E_8$-quiver, with the $E_6$-paths colored in red.}
\label{fig-E86}
\end{figure}
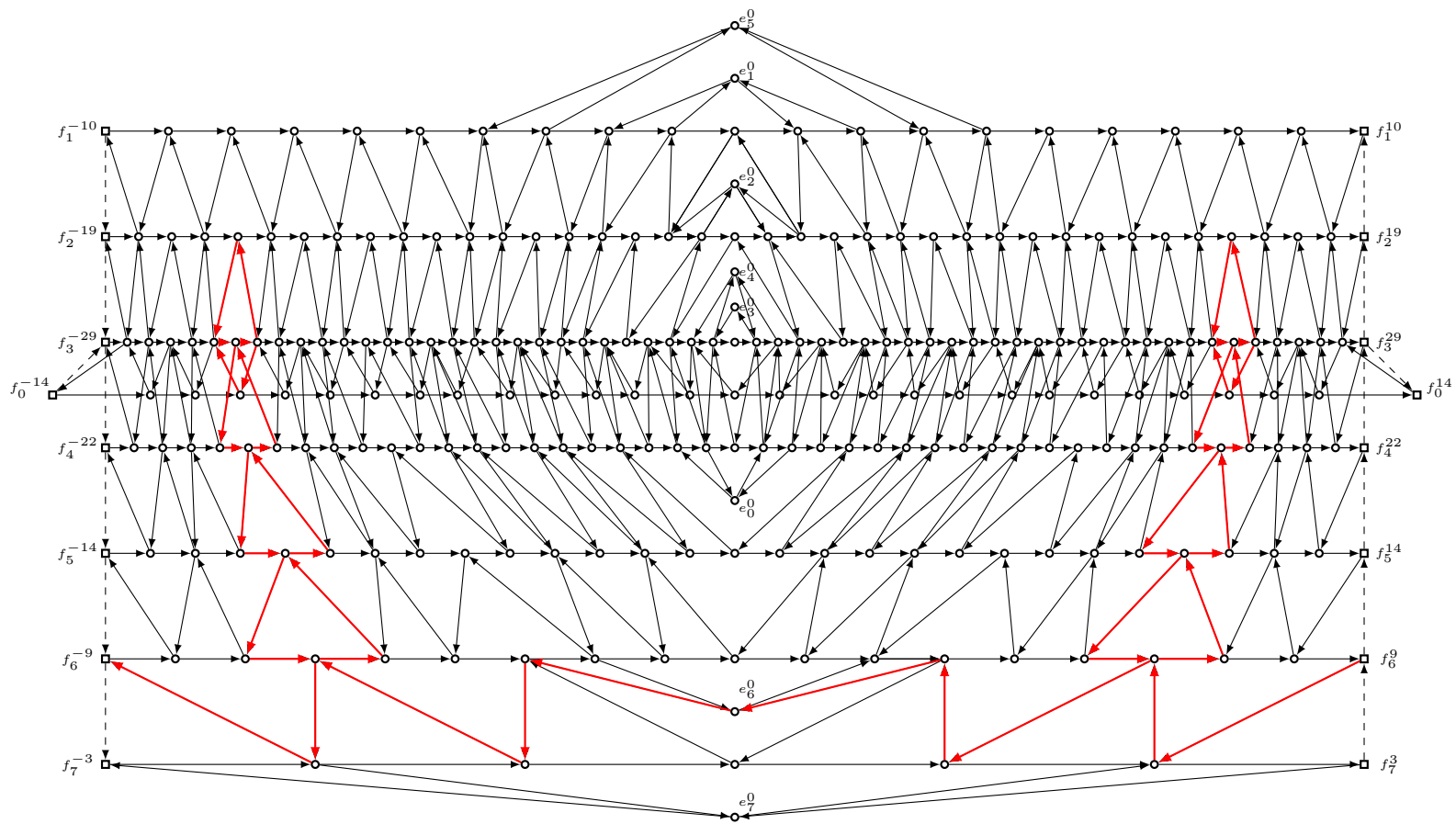
\end{subfigures}
\end{landscape}
\subsection{Type $F_4$}\label{sec:embedding:F4}
The explicit expression for type $F_4$ positive representations can be found in \cite{Ip3}, where we choose
$$\bi=(3232\;12321\;432312343213234),$$
where 1,2 is long, 3,4 is short, corresponding to the embedding of the Dynkin diagram:
$$B_2\subset B_3\subset F_4.$$

Then the $\bf_i$ variables are expressed as
\Eqn{
\bf_1&=X(f_1^{-4},..., f_1^3),\\
\bf_2&=X(f_2^{-8},..., f_2^7),\\
\bf_3&=X(f_3^{-9},..., f_3^8),\\
\bf_4&=X(f_4^{-3},..., f_4^2),
}
while the $\be_i$ variables are expressed as certain paths on the quiver:
\Eqn{
\be_1=&X(f_1^{4}, f_2^{7}, *f_3^{7}, f_2^{6}, *f_3^{5}, f_2^{5}, f_1^{2}, e_1^0, 
 f_1^{-2}, f_2^{-5}, *f_3^{-5}, f_2^{-6}, *f_3^{-7}, f_2^{-7}),\\
\be_2=&X(f_2^{8}, *f_3^{8}, f_2^{7}, f_1^{3}, f_2^{5}, *f_3^{4}, f_2^{4}, 
 f_1^{1}, f_2^{2}, e_2^0, f_2^{-2}, f_1^{-1}, f_2^{-4}, *f_3^{-4}, 
 f_2^{-5}, f_1^{-3}, f_2^{-7}, *f_3^{-8}),\\
\be_3=&X(f_3^{9}, f_4^{2}, f_3^{6}, f_3^{7}, f_2^{6}, f_3^{5}, f_3^{6}, 
 f_4^{1}, f_3^{3}, f_2^{3}, f_3^{2}, f_2^{1}, f_3^{1}, e_3^0, f_3^{-1},
  f_2^{-1}, f_3^{-2}, f_2^{-3}, f_3^{-3}, f_4^{-1}, f_3^{-6}, 
 f_3^{-5}, f_2^{-6},\\
 &\tab f_3^{-7}, f_3^{-6}, f_4^{-2}),\\
\be_4=&X(f_4^3, e_4^0),
}
where we recall from type $C_n$ that $X(...,a, *b, ...)$ corresponds to the extra factors as follows:
\Eqn{
&...+X_{...}+X_{...,a}+[2]_{q_s} X_{...,a,b}+X_{...,a,b^2}+X_{...,a,b^2,...}+...\\
&=...+X_{...}+(X_{...,a}^{\half}+X_{...,a,b^2}^{\half})^2+X_{...,a,b^2,...}+...\;.
}

The initial terms are given by
\Eqn{
&X_{e_1^0}=e(-2u_1^2),&&X_{e_2^0}=e(-2u_2^2),&& X_{e_3^0}=e(-2u_3^1),&& X_{e_4^0}=e(-2u_4^3).
}
The quiver is shown in Figure \ref{fig-F4}, where the repeated nodes $*$ are highlighted. We note that the $E_1$ and $E_3$ paths overlapped a little bit.

\begin{landscape}
\begin{figure}[!htb]
\centering
\begin{tikzpicture}[every node/.style={inner sep=0, minimum size=0.5cm, thick}, x=0.18cm, y=0.47cm]
\xdef\c{0}
\foreach \y[count=\d from 1] in {9,17,19,7}{
	\foreach \x in {1,..., \y}{
		\pgfmathtruncatemacro{\ex}{\x-(\y+1)/2}
		\pgfmathtruncatemacro{\ind}{\x+\c}
		\pgfmathsetmacro{\xx}{100*(\x-1)/(\y-1)}
		\ifthenelse{\x=1 \OR \x=\y}
		{
		\node(\ind) at (\xx, 20-5*\d)[draw]{\tiny $f_\d^\ex$};
		}
		{
		\ifthenelse{\ind=29 \OR \ind=31 \OR \ind=41 \OR \ind=43}{
			\node(\ind) at (\xx, 20-5*\d)[draw, circle, red]{\tiny $f_\d^{\ex}$};
			}{
			\ifthenelse{\ind=28 \OR \ind=32 \OR \ind=40 \OR \ind=44}{
				\node(\ind) at (\xx, 20-5*\d)[draw, circle, orange]{\tiny $f_\d^{\ex}$};
			}{
				\node(\ind) at (\xx, 20-5*\d)[draw, circle]{\tiny $f_\d^{\ex}$};
			}
		}
	}
	}
	\xdef\c{\c+\y}
}
\foreach \y [count=\d from 1] in {17, 12.5, 2.5, -2}{
\pgfmathtruncatemacro{\ind}{\d+52}
\node(\ind) at (50, \y)[draw, circle]{\tiny $e_\d^0$};
}
\drawpath{1,10,27}{dashed, vthick}
\drawpath{27,46}{dashed,thin}
\drawpath{45,26,9}{dashed,vthick}
\drawpath{52,45}{dashed, thin}
\drawpath{1,...,9}{vthick}
\drawpath{10,...,26}{vthick}
\drawpath{27,...,45}{}
\drawpath{46,...,52}{}

\drawpath{33,14,3,4,53,6,7,22,39}{vthick}
\drawpath{5,16,17,54,19,20,5}{vthick}
\drawpath{37,18,35}{vthick}
\drawpath{39,49,33}{thin}
\drawpath{47,56,51}{thin}
\drawpath{9, 25, 43}{red, vthick}
\drawpath{41, 23, 7, 53, 3, 13, 31}{red,vthick}
\drawpath{29,11,1}{red, vthick}
\drawpath{26, 44, 25, 8, 23, 40, 22, 6, 20, 54, 16, 4, 14, 32, 13, 2, 11, 28, 10}{orange, vthick}
\drawpath{45, 51, 42, 43}{green, thin}
\drawpath{41, 42, 50, 39}{green, thin}
\drawpath{37,55,35}{green, thin}
\drawpath{33, 48, 30, 31}{green,thin}
\drawpath{29, 30, 47, 27}{green,thin}
\path ([xshift=-0.03cm]43.north)  edge[->, green, vthick]  ([xshift=-0.03cm]24.south);
\path ([xshift=0.03cm]24.south west)  edge[->, green, vthick]  ([xshift=0.03cm]41.north east);
\path ([xshift=-0.03cm]31.north west)  edge[->, green, vthick]  ([xshift=-0.03cm]12.south east);
\path ([xshift=0.03cm]12.south)  edge[->, green, vthick]  ([xshift=0.03cm]29.north);
\drawpath{39,21,38,19,37}{green, vthick}
\drawpath{35,17,34,15,33}{green, vthick}
\drawpath{52, 56, 46}{blue,thin}
\path ([xshift=0.03cm]43.north)  edge[->, red, vthick]  ([xshift=0.03cm]24.south);
\path ([xshift=-0.03cm]24.south west)  edge[->, red, vthick]  ([xshift=-0.03cm]41.north east);
\path ([xshift=0.03cm]31.north west)  edge[->, red, vthick]  ([xshift=0.03cm]12.south east);
\path ([xshift=-0.03cm]12.south)  edge[->, red, vthick]  ([xshift=-0.03cm]29.north);

\end{tikzpicture}
\caption{$F_4$-quiver, with the $E_i$-paths colored in different colors.}
\label{fig-F4}
\end{figure}
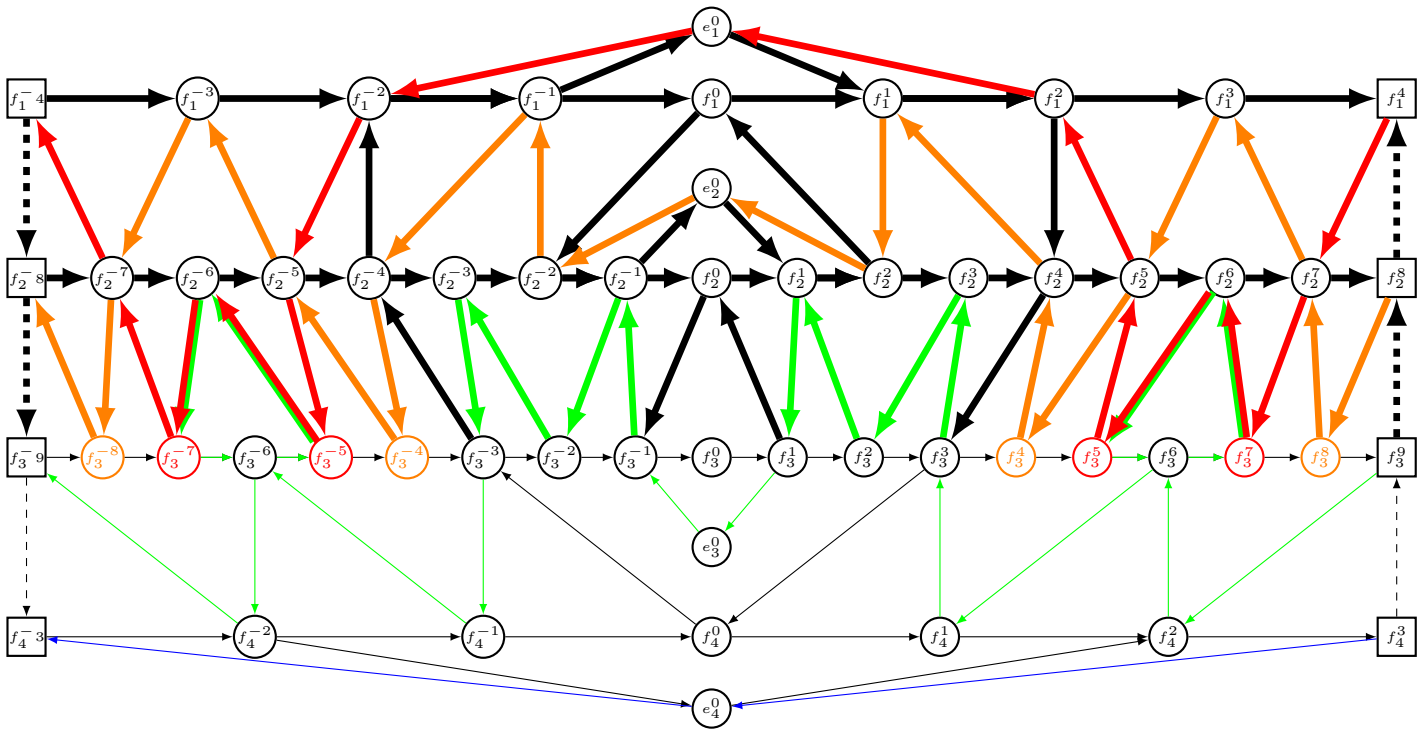
\end{landscape}

\subsection{Type $G_2$}\label{sec:embedding:G2}
The explicit expression for type $G_2$ positive representations can be found in \cite{Ip3}. We choose $\bi=(2,1,2,1,2,1)$. Then we have
\Eqn{
\bf_1&=X(f_1^{-3},..., f_1^2),\\
\bf_2&=X(f_2^{-3},...,f_2^2),\\
\be_1&=X(f_1^3, e_1^0),\\
\be_2&=X(f_2^3, f_1^2, *f_2^2, f_1^1,f_2^1,e_2^0, f_2^{-1},f_1^{-1},*f_2^{-2}, f_1^{-2}),
}
where again $X(...,a, *b, ...)$ corresponds to the extra factors:
\Eqn{
&...+X_{...}+X_{...,a}+[2]_{q_s} X_{...,a,b}+X_{...,a,b^2}+X_{...,a,b^2,...}+...\;.
}

The inital terms are given by
$$X_{e_1^0}=e(-2u_1^3), \tab X_{e_2^0}=e(-2u_2^1).$$

The quiver is shown in Figure \ref{fig-G2}.
\begin{figure}[!htb]
\centering
\begin{tikzpicture}[every node/.style={inner sep=0, minimum size=0.5cm, thick}, x=0.10cm, y=0.47cm]
\foreach \d in {1,2}{
	\foreach \x in {1,..., 7}{
		\pgfmathtruncatemacro{\ex}{\x-4}
		\pgfmathtruncatemacro{\ind}{\x+(\d-1)*7}
		\pgfmathsetmacro{\xx}{20*(\x-1)}
		\ifthenelse{\x=1 \OR \x=7}
		{
		\node(\ind) at (\xx, 15-5*\d)[draw]{\tiny $f_\d^\ex$};
		}
		{
		\ifthenelse{\ind=13 \OR \ind=9}{
			\node(\ind) at (\xx, 15-5*\d)[draw, circle, red]{\tiny $f_\d^{\ex}$};
			}{
				\node(\ind) at (\xx, 15-5*\d)[draw, circle]{\tiny $f_\d^{\ex}$};
			}
		}
	}
}
\node(15) at (60, 15)[draw,circle]{\tiny $e_1^0$};
\node(16) at (60, 0)[draw,circle]{\tiny $e_2^0$};
\drawpath{8,1}{vthick, dashed}
\drawpath{7,14}{vthick, dashed}
\drawpath{1,...,7}{vthick}
\drawpath{8,...,14}{thin}
\drawpath{2,15,6}{vthick}
\drawpath{12,4,10}{vthick}
\drawpath{7,15,1}{vthick, red}
\drawpath{14,6,13,5,12}{vthick, red}
\drawpath{12,16,10}{thin, red}
\drawpath{10,3,9,2,8}{vthick, red}
\end{tikzpicture}
\caption{$G_2$-quiver, with the $E_i$-paths colored in red.}
\label{fig-G2}
\end{figure}
\section{Quiver mutations for different choice of $w_0$}\label{sec:mutation}
Recall from the construction of the positive representations that a change of reduced expression of $w_0$ corresponds to a unitary transformation $\Phi$ (cf. \eqref{unitrans}). This is expressed in terms of conjugation by quantum dilogarithms, followed by a linear transformation. As we have seen in Section \ref{sec:tori}, conjugation by the quantum dilogarithms naturally correspond to mutations of the quiver diagram. In this section we will describe the corresponding mutation associated to a change of words. In particular, by extending the mutations below to the full quiver, we obtain an alternate proof of Theorem \ref{initial} for the rules of finding the initial term $X_{e_j^0}$ of the generators $\be_j$.

\subsection{Quiver mutation in simply-laced case}\label{sec:mutation:simply}
First we note that if $a_{ij}=0$, i.e. $s_is_j=s_js_i$, there is no mutation or change of variables occurring. That is, swapping the reflections does not affect the quiver diagram at all.

In the simply-laced case, the unitary transformation $\Phi$ corresponding to the change of words
$$w_0=...s_is_js_i... \corr ... s_j s_i s_j ...$$
is expressed in terms of conjugation by a single quantum dilogarithm. 

Consider the following amalgamation $Q$ of elementary quivers corresponding to $s_is_js_i$, where we exclude the nodes outside the root indices $i$ and $j$:
\begin{figure}[H]
\centering
\begin{tikzpicture}[every node/.style={inner sep=0, minimum size=0.8cm, thick}, x=0.5cm, y=0.5cm]
\node (1) at (0,3) [draw, circle] {$f_i^{k-1}$};
\node (2) at (6,3) [draw, circle] {$f_i^{k}$};
\node (3) at (12,3) [draw, circle] {$f_i^{k+1}$};
\node (4) at (3, 0) [draw, circle]{$f_j^{l-1}$};
\node (5) at (9,0) [draw,circle]{$f_j^l$};
\path (1) edge[->, thick] (2);
\node at (3,3.3) {$s_i$};
\path (2) edge[->, thick] (3);
\node at (9,3.3) {$s_i$};
\path (2) edge[->, thick] (4);
\path (5) edge[->, thick] (2);
\path (4) edge[->, thick] (5);
\node at (6,0.3) {$s_j$};
\path (4) edge[->, thick, dashed] (1);
\path (3) edge[->, thick, dashed] (5);
\end{tikzpicture}
\caption{The $s_is_js_i$ quiver.}
\end{figure}
This corresponds to the representation of the $\bf_i$ generators in the full quiver $\cD_\g$ as
\Eqn{
\bf_i &= .... + X_{...f_i^{k-1}}+ X_{... f_i^{k-1}, f_i^k} + X_{... f_i^{k-1}, f_i^k, f_i^{k+1}} + ...\\
\bf_j &= .... + X_{...f_j^{l-1}}+ X_{... f_j^{l-1}, f_j^l} +  ...
}
Then the mutation corresponding to the unitary transformation $\Phi$ giving the change of words $s_is_js_i\corr s_js_is_j$ is given by mutation at $f_i^k$, followed by a renaming of variables, where we have defined a new external labeling for the mutated quiver $\what{Q}$ by the rules:
\Eqn{
\hat{f}_i^t&:= f_i^{t+1}\tab t\geq k,\\
\hat{f}_j^t&:= f_j^{t-1} \tab t\geq l+1,\\
\hat{f}_j^l&:=f_i^k
}
and stays the same otherwise.
\begin{subfigures}
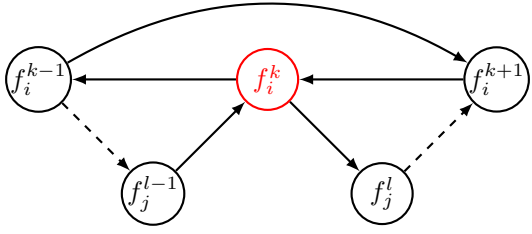
\begin{figure}[!htb]
\centering
\begin{tikzpicture}[every node/.style={inner sep=0, minimum size=0.8cm, thick}, x=0.5cm, y=0.5cm]
\node (1) at (0,3) [draw, circle] {$f_i^{k-1}$};
\node (2) at (6,3) [draw, circle, red, thick] {$f_i^{k}$};
\node (3) at (12,3) [draw, circle] {$f_i^{k+1}$};
\node (4) at (3, 0) [draw, circle]{$f_j^{l-1}$};
\node (5) at (9,0) [draw,circle]{$f_j^l$};
\path (2) edge[->, thick] (1);
\path (3) edge[->, thick] (2);
\path (4) edge[->, thick] (2);
\path (2) edge[->, thick] (5);
\path (1) edge[->, thick, dashed] (4);
\path (5) edge[->, thick, dashed] (3);
\path (1) edge[->, bend left, thick] (3);
\end{tikzpicture}
\caption{After mutation at $f_i^k$.}
\end{figure}
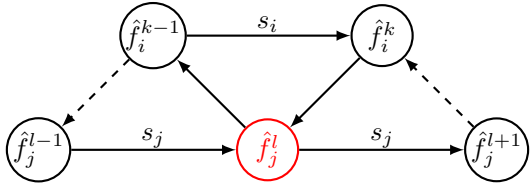
\begin{figure}[!htb]
\centering
\begin{tikzpicture}[every node/.style={inner sep=0, minimum size=0.8cm, thick}, x=0.5cm, y=0.5cm]
\node (1) at (0,0) [draw, circle] {$\hat{f}_{j}^{l-1}$};
\node (2) at (6,0) [draw, circle, red, thick] {$\hat{f}_{j}^{l}$};
\node (3) at (12,0) [draw, circle] {$\hat{f}_{j}^{l+1}$};
\node (4) at (3, 3) [draw, circle]{$\hat{f}_{i}^{k-1}$};
\node (5) at (9,3) [draw,circle]{$\hat{f}_{i}^{k}$};
\path (1) edge[->, thick] (2);
\node at (3,0.3) {$s_j$};
\path (2) edge[->, thick] (3);
\node at (9,0.3) {$s_j$};
\path (2) edge[->, thick] (4);
\path (5) edge[->, thick] (2);
\path (4) edge[->, thick] (5);
\node at (6,3.3) {$s_i$};
\path (4) edge[->, thick, dashed] (1);
\path (3) edge[->, thick, dashed] (5);
\end{tikzpicture}
\caption{Rearranging and renaming the quiver.}
\end{figure}
\end{subfigures}

In the representation level, a change of words corresponds to a unitary transformation $\Phi$ by the conjugation of the quantum dilogarithm $g_b(X_{f_i^k})$:
\Eqn{
Ad_{g_b(X_{f_i^k})}\cdot \bf_i &= ... + X_{...f_i^{k-1}}+  X_{... f_i^{k-1}, f_i^k, f_i^{k+1}} + ...\\
&=\mu_{f_i^k}'(... +X_{...f_{i}^{k-1}}+ X_{... f_{i}^{k-1}, f_{i}^{k+1}} + ...)\\
&=\mu_{f_i^k}'(... +X_{...\hat{f}_{i}^{k-1}}+  X_{... \hat{f}_{i}^{k-1}, \hat{f}_i^k} + ...),\\
Ad_{g_b(X_{f_i^k})}\cdot \bf_j &= ... + X_{...f_j^{l-1}, f_i^k}+ X_{... f_j^{l-1}} + X_{... f_j^{l-1}, f_j^l} +  ...\\
&=\mu_{f_i^k}'(...+X_{...f_j^{l-1}} + X_{...f_j^{l-1}, f_i^k}+X_{... f_j^{l-1}, f_i^k, f_j^l} +...)\\
&=\mu_{f_i^k}'(...+X_{...\hat{f}_j^{l-1}} +X_{...\hat{f}_j^{l-1}, \hat{f}_j^l}+X_{... \hat{f}_j^{l-1}, \hat{f}_j^l, \hat{f}_j^{l+1}} +...).
}
Hence using $\mu_k^q=Ad_{g_b^*(X_j)}\circ \mu_k'$, we have
\Eqn{
\bf_i &= ...+\what{X}_{...\hat{f}_{i}^{k-1}}+ \what{X}_{... \hat{f}_{i}^{k-1}, \hat{f}_i^k}+...,\\
\bf_j &= ...+\what{X}_{...\hat{f}_j^{l-1}} + \what{X}_{...\hat{f}_j^{l-1}, \hat{f}_j^l}+\what{X}_{... \hat{f}_j^{l-1}, \hat{f}_j^l, \hat{f}_j^{l+1}} +...,
}
where we denote the mutated cluster variables by $\what{X}_j:=\mu_{f_i^k}^q(X_j)$ associated to the mutated quiver $\what{Q}$, and we see that the representation of the $\bf_i$ generators are invariant under the quiver mutation.

When we take into account the whole quiver $\cD_\g$, we see that the nodes precisely come in pair. Hence we have
\begin{Cor} The cluster embedding $\iota: \fD_\g\to \cD_\g$ corresponding to $\bi=(...iji...)$ and $\bi'=(...jij...)$ is related by quiver mutations at the pair of nodes $\{f_i^k, f_i^{-k}\}$ (the order does not matter).
\end{Cor}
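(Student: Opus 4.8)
The plan is to reduce the statement to the positive-node computation just carried out together with its mirror image on the negative nodes. First I would invoke the amalgamation description of the $\cD_\g$-quiver: the subquiver on the nodes $f_i^n$ with $n\geq 0$ is the amalgamation of the elementary quivers of $\bi$, while the subquiver on the nodes $f_i^n$ with $n\leq 0$ is the amalgamation of the elementary quivers of the reversed word $(i_N,\dots,i_1)$; the two halves share only the columns $f_i^0$ and communicate through the nodes $e_i^0$. The computation preceding the corollary already shows that the change of word $(\dots iji\dots)\leftrightarrow(\dots jij\dots)$ transforms the positive half by the single mutation $\mu^q_{f_i^k}$, under which the positive parts of $\bf_i$ and $\bf_j$ are invariant after the relabeling $\hat{f}_j^l:=f_i^k$. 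It then remains to handle the negative half and to check that the two halves are mutated independently.

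For the negative half I would use $w_0^{-1}=w_0$, so that the reversed word again lies in $\fR$ and the braid move $\bi\to\bi'$ induces the same type of braid move on the reversed words, now at the mirrored position. Consider the reflection $\theta:f_i^n\mapsto f_i^{-n}$; since the variables $X_{f_i^k},X_{f_j^l}$ with $k,l\leq 0$ obey exactly the commutation relations opposite to those with $k,l\geq 0$, the map $\theta$ reverses every arrow and identifies the positive subquiver with the negative subquiver. Transporting the positive-half computation through $\theta$, which intertwines $\mu_{f_i^k}$ with $\mu_{\theta(f_i^k)}=\mu_{f_i^{-k}}$, shows that the negative half is transformed by $\mu^q_{f_i^{-k}}$, leaving the negative parts of $\bf_i,\bf_j$ invariant; this is the quiver counterpart of the Cartan involution $\iota^w$ of Remark \ref{iw}.

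Finally I would assemble the two halves. For $k\geq 1$ the nodes $f_i^k$ and $f_i^{-k}$ carry opposite signs, hence $X_{f_i^k}$ and $X_{f_i^{-k}}$ commute, the nodes are non-adjacent in the $\cD_\g$-quiver, and the mutations $\mu^q_{f_i^k}$ and $\mu^q_{f_i^{-k}}$ commute, so their order is immaterial. Using the splitting of $\bf_i$ into its $+$ and $-$ halves from Proposition \ref{Fi}, the composite $\mu^q_{f_i^k}\mu^q_{f_i^{-k}}$ fixes the images of $\bf_i,\bf_j,K_i,K_i'$, while its conjugation part, a product of quantum dilogarithms in $X_{f_i^k}$ and $X_{f_i^{-k}}$, realizes the change-of-word intertwiner $\Phi$ on the generators $\be_j$. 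Hence this composite carries the embedding for $\bi'$ to the embedding for $\bi$, which is the claim.

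The step I expect to be the main obstacle is the bookkeeping at the amalgamation locus. One must verify that, after recounting occurrences from the opposite end, the mirrored braid move lands precisely on the node $f_i^{-k}$ rather than an adjacent negative node, and that the shared columns $f_i^0$ and the nodes $e_i^0$, which are incident to both mutations, transform consistently and are not affected twice. Equally delicate is confirming that the honest intertwiner $\Phi$ of the full positive representation acquires, on passing from the positive subalgebra to all of $\cD_\g$, the matching negative factor in $X_{f_i^{-k}}$, so that the conjugation part of $\mu^q_{f_i^k}\mu^q_{f_i^{-k}}$ agrees with $\Phi$ on the $\be_j$ (up to the inverse convention relating $g_b$ and $g_b^*$); this is precisely where the $\pm$ splitting of $\bf_i$ carries the argument.
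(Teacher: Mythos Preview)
Your proposal is correct and follows the same underlying idea as the paper, namely the mirror symmetry between the positive and negative halves of the $\cD_\g$-quiver. The paper, however, treats the corollary as an immediate consequence: it simply remarks that ``when we take into account the whole quiver $\cD_\g$, we see that the nodes precisely come in pair,'' and states the corollary without further argument. What you have done is unpack that sentence carefully---invoking the reflection $\theta:f_i^n\mapsto f_i^{-n}$ that reverses arrows, checking non-adjacency of $f_i^k$ and $f_i^{-k}$ so the mutations commute, and verifying that the conjugation part matches the intertwiner $\Phi$ on both halves. Your concerns in the final paragraph about bookkeeping at the amalgamation locus are legitimate in principle, but the paper regards them as absorbed in the observation that the construction of $\cD_\g$ is manifestly symmetric under $\theta$ (the subquiver on $\{f_i^n\}_{n\leq 0}$ being built from the reversed word with opposite commutation relations), so no separate verification is carried out.
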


\subsection{Quiver mutation in doubly-laced case}\label{sec:mutation:doubly}
Following the notation above, we consider the following amalgamation of quiver corresponding to $s_is_js_is_j$ where the root $i$ is long and $j$ is short. All the arrows are thick except the two corresponding to $s_j$.
\begin{figure}[H]
\centering
\begin{tikzpicture}[every node/.style={inner sep=0, minimum size=0.8cm, thick}, x=0.5cm, y=0.5cm]
\node (1) at (0,3) [draw, circle] {$f_i^{k-1}$};
\node (2) at (6,3) [draw, circle] {$f_i^{k}$};
\node (3) at (12,3) [draw, circle] {$f_i^{k+1}$};
\node (4) at (3, 0) [draw, circle]{$f_j^{l-1}$};
\node (5) at (9,0) [draw,circle]{$f_j^l$};
\node (6) at (15,0) [draw,circle]{$f_j^{l+1}$};
\path (1) edge[->, vthick] (2);
\path (2) edge[->, vthick] (3);
\node at (3,3.3) {$s_i$};
\node at (9,3.3) {$s_i$};
\path (4) edge[->, thin] (5);
\path (5) edge[->, thin] (6);
\node at (6,0.3) {$s_j$};
\node at (12,0.3) {$s_j$};
\path (2) edge[->, vthick] (4);
\path (5) edge[->, vthick] (2);
\path (3) edge[->, vthick] (5);
\path (4) edge[->, vthick, dashed] (1);
\path (6) edge[->, vthick, dashed] (3);
\end{tikzpicture}
\caption{The $s_is_js_is_j$ quiver.}
\end{figure}

The unitary transformation $\Phi$ of the positive representations corresponding to the change of words $$s_is_js_is_j \corr s_js_is_js_i$$ is expressed as 3 pairs of quantum dilogarithm transformations \cite{Ip2}. The mutation corresponding to $\Phi$ is then given by mutation at $f_j^l, f_i^k, f_j^l$, with the weights $d_i$ of each nodes taken into account.
\begin{subfigures}
\begin{figure}[!htb]
\centering
\begin{tikzpicture}[every node/.style={inner sep=0, minimum size=0.8cm, thick}, x=0.5cm, y=0.5cm]
\node (1) at (0,3) [draw, circle] {$f_i^{k-1}$};
\node (2) at (6,3) [draw, circle] {$f_i^{k}$};
\node (3) at (12,3) [draw, circle] {$f_i^{k+1}$};
\node (4) at (3, 0) [draw, circle]{$f_j^{l-1}$};
\node (5) at (9,0) [draw,circle, red, thick]{$f_j^l$};
\node (6) at (15,0) [draw,circle]{$f_j^{l+1}$};
\path (1) edge[->, vthick] (2);
\path (5) edge[->, thin] (4);
\path (6) edge[->, thin] (5);
\path (2) edge[->, vthick] (5);
\path (5) edge[->, vthick] (3);
\path (3) edge[->, vthick] (2);
\path (4) edge[->, vthick, dashed] (1);
\path (3) edge[->, vthick, dashed] (6);
\path (4) edge[->, thin, bend right] (6);
\end{tikzpicture}
\caption{After mutation at $f_j^l$.}
\end{figure}
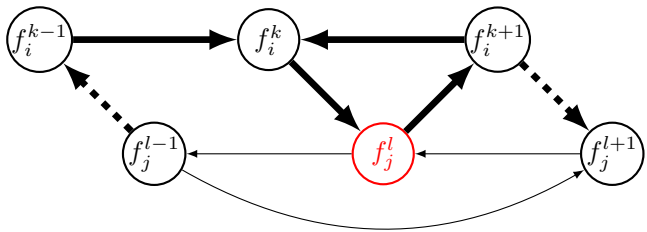
\begin{figure}[H]
\centering
\begin{tikzpicture}[every node/.style={inner sep=0, minimum size=0.8cm, thick}, x=0.5cm, y=0.5cm]
\node (1) at (0,3) [draw, circle] {$f_i^{k-1}$};
\node (2) at (6,3) [draw, circle, red, vthick] {$f_i^{k}$};
\node (3) at (12,3) [draw, circle] {$f_i^{k+1}$};
\node (4) at (3, 0) [draw, circle]{$f_j^{l-1}$};
\node (5) at (9,0) [draw,circle]{$f_j^l$};
\node (6) at (15,0) [draw,circle]{$f_j^{l+1}$};
\path (2) edge[->, vthick] (1);
\path (5) edge[->, thin] (4);
\path (6) edge[->, thin] (5);
\path (5) edge[->, vthick] (2);
\path (1) edge[->, vthick] (5);
\path (2) edge[->, vthick] (3);
\path (4) edge[->, vthick, dashed] (1);
\path (3) edge[->, vthick, dashed] (6);
\path (4) edge[->, thin, bend right] (6);
\end{tikzpicture}
\caption{After mutation at $f_i^k$.}
\end{figure}
\begin{figure}[H]
\centering
\begin{tikzpicture}[every node/.style={inner sep=0, minimum size=0.8cm, thick}, x=0.5cm, y=0.5cm]
\node (1) at (3,3) [draw, circle] {$f_i^{k-1}$};
\node (2) at (9,3) [draw, circle] {$f_i^{k}$};
\node (3) at (15,3) [draw, circle] {$f_i^{k+1}$};
\node (4) at (0, 0) [draw, circle]{$f_j^{l-1}$};
\node (5) at (6,0) [draw,circle, red, thick]{$f_j^l$};
\node (6) at (12,0) [draw,circle]{$f_j^{l+1}$};
\path (1) edge[->, vthick] (2);
\path (4) edge[->, thin] (5);
\path (5) edge[->, thin] (6);
\path (6) edge[->, vthick] (2);
\path (2) edge[->, vthick] (5);
\path (5) edge[->, vthick] (1);
\path (2) edge[->, vthick] (3);
\path (1) edge[->, vthick, dashed] (4);
\path (3) edge[->, vthick, dashed] (6);
\node at (6,3.3) {$s_i$};
\node at (12,3.3) {$s_i$};
\node at (3,0.3) {$s_j$};
\node at (9,0.3) {$s_j$};
\end{tikzpicture}
\caption{After second mutation at $f_j^l$.}
\end{figure}
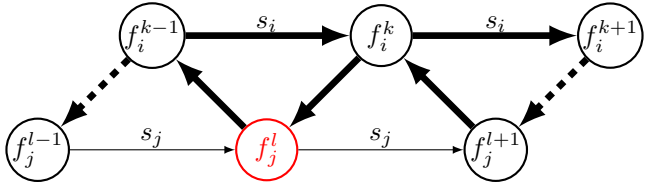
\end{subfigures}
No renaming of the variables is necessary after the last step, and again we have expressed the generators $\bf_i$ in terms of the mutated cluster variables $\what{X}_j$ associated to the mutated quiver. Similarly as before, for the full quiver we have
\begin{Cor} The cluster embedding $\iota: \fD_\g\to \cD_\g$ corresponding to $\bi=(...ijij...)$ and $\bi'=(...jiji...)$ is related by quiver mutations at the pair of nodes $\{f_j^l,f_j^{-l}\}$, $\{f_i^k,f_i^{-k}\}$ and $\{f_j^l,f_j^{-l}\}$.
\end{Cor}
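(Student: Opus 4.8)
The plan is to mirror, on the two halves of the $\cD_\g$-quiver, the local computation just carried out for the order-four braid move, exactly as was done in the simply-laced Corollary. Recall from the Proposition of Section~\ref{sec:quiver:elementary} that the full $\cD_\g$-quiver is the amalgamation of a ``positive'' half, built from the elementary quivers $Q_{s_i}^{k}$ with $k\geq 0$ for the word $\bi$, and a ``negative'' half, built from the elementary quivers $Q_{s_i}^{-k}$ for the reversed word $(i_N,\dots,i_1)$; the two halves meet only along the columns $f_i^0$ and $e_i^0$, and every interior node $f_i^k$ of one half is paired with its reflection $f_i^{-k}$ in the other. So the task reduces to transforming each half and checking that the two transformations assemble correctly.

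First I would record that the three figures above already establish the claim on the positive half: the composite $\mu_{f_j^l}^q\,\mu_{f_i^k}^q\,\mu_{f_j^l}^q$ carries the positive half-quiver of $\bi=(\dots ijij\dots)$ to that of $\bi'=(\dots jiji\dots)$, with the multipliers $d_i=1$ (long) and $d_j=\tfrac12$ (short) and the half-weighted dashed arrows taken into account. By Lemma~\ref{useful} the dilogarithm part $\Phi_3$ of this composite is precisely conjugation by three pairs of quantum dilogarithms, with arguments $X_{f_j^l}$, $\mu_{f_j^l}'(X_{f_i^k})$, $\mu_{f_i^k}'\mu_{f_j^l}'(X_{f_j^l})$, which is exactly the unitary transformation $\Phi$ realizing the braid move in \cite{Ip2}; the relevant portions of $\bf_i,\bf_j$ are then invariant when rewritten in the mutated variables $\what{X}_j$.

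Next I would invoke the reflection symmetry of the amalgamation. Since the negative half is the amalgamation of the $Q_{s_i}^{-k}$ for the reversed word, and the move $\dots ijij\dots\corr\dots jiji\dots$ on $\bi$ induces the mirror move on the reversed word, the identical local computation read on the $f_i^{-k}$ nodes shows the negative half transforms under $\mu_{f_j^{-l}}^q\,\mu_{f_i^{-k}}^q\,\mu_{f_j^{-l}}^q$. It then remains to check that the positive nodes $\{f_j^l,f_i^k\}$ and the negative nodes $\{f_j^{-l},f_i^{-k}\}$ are mutually non-adjacent, so the two sequences commute and interleave into the paired sequence at $\{f_j^l,f_j^{-l}\}$, $\{f_i^k,f_i^{-k}\}$, $\{f_j^l,f_j^{-l}\}$; this is immediate from Corollary~\ref{FikCor} and the remark after it, since cluster variables with opposite-sign superscripts commute and hence their nodes share no arrows. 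Within each pair the order is thus irrelevant, while the order among the three pairs must be kept.

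The main obstacle is the bookkeeping of the \emph{arguments} of the three dilogarithm pairs: one must confirm that the operator factorization of $\Phi$ for the order-four braid move in \cite{Ip2} matches, term by term, the monomial maps $\mu'$ and the dilogarithm conjugations produced by $\mu_{f_j^l}\mu_{f_i^k}\mu_{f_j^l}$ with the correct $q_i$-powers, and then that the paired six-step sequence simultaneously fixes both $\be_i$ and $\bf_i$ (the $F_i$- and $E_i$-paths each cross both halves). Once the arguments are matched on the positive half, the negative half introduces no genuinely new computation, so the remaining content is the verification that the six mutations reproduce the amalgamated quiver of $\bi'$.
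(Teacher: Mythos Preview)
Your proposal is correct and follows the same route as the paper: the local three-step mutation $\mu_{f_j^l}\mu_{f_i^k}\mu_{f_j^l}$ on the positive half is precisely what the three figures establish, and the paper then passes to the full $\cD_\g$-quiver by the single remark that ``the nodes precisely come in pair'' (as in the simply-laced Corollary just above). Your argument is in fact more explicit than the paper's: the paper does not spell out the non-adjacency of the positive and negative mutation nodes via Corollary~\ref{FikCor}, nor the identification of the dilogarithm arguments through Lemma~\ref{useful}, but simply writes ``Similarly as before, for the full quiver we have'' and states the Corollary.
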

\subsection{Quiver mutation in type $G_2$}\label{sec:mutation:G2}
We consider the following amalgamation of quiver corresponding to $s_2s_1s_2s_1s_2s_1$ where the root $1$ is long and $2$ is short. All the arrows are thick except the three corresponding to $s_2$.

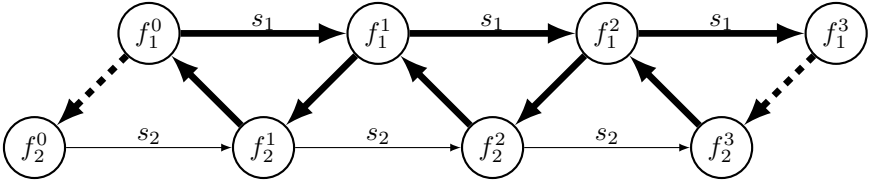
\begin{figure}[H]
\centering
\begin{tikzpicture}[every node/.style={inner sep=0, minimum size=0.8cm, thick}, x=0.5cm, y=0.5cm]
\node (1) at (0,3) [draw, circle] {$f_1^{0}$};
\node (2) at (6,3) [draw, circle] {$f_1^{1}$};
\node (3) at (12,3) [draw, circle] {$f_1^{2}$};
\node (4) at (18, 3) [draw, circle]{$f_1^{3}$};
\node (5) at (-3,0) [draw, circle] {$f_2^{0}$};
\node (6) at (3,0) [draw, circle] {$f_2^{1}$};
\node (7) at (9,0) [draw, circle] {$f_2^{2}$};
\node (8) at (15, 0) [draw, circle]{$f_2^{3}$};
\drawpath{1,...,4}{vthick}
\drawpath{5,...,8}{thin}
\drawpath{8,3,7,2,6,1}{vthick}
\path (1) edge[->, vthick, dashed] (5);
\path (4) edge[->, vthick, dashed] (8);
\node at (3,3.3) {$s_1$};
\node at (9,3.3) {$s_1$};
\node at (15,3.3) {$s_1$};
\node at (0,0.3) {$s_2$};
\node at (6,0.3) {$s_2$};
\node at (12,0.3) {$s_2$};
\end{tikzpicture}
\caption{The $s_2s_1s_2s_1s_2s_1$ quiver.}
\end{figure}

In \cite{Ip2}, we found that the unitary transformation $\Phi$ changing the words
$$s_2s_1s_2s_1s_2s_1 \corr s_1s_2s_1s_2s_1s_2$$
is given by conjugations by 11 quantum dilogarithms. This corresponds to the following sequence of mutations (starting from the left):
$$f_1^2, f_1^1,f_2^2,f_1^2, f_2^2,f_2^1,f_2^2, f_1^2, f_1^1, f_2^2, f_1^2$$

\begin{subfigures}
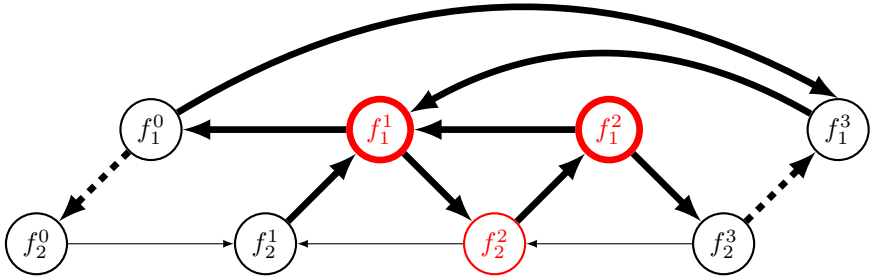
\begin{figure}[!htb]
\centering
\begin{tikzpicture}[every node/.style={inner sep=0, minimum size=0.8cm, thick}, x=0.5cm, y=0.5cm]
\node (1) at (0,3) [draw, circle] {$f_1^{0}$};
\node (2) at (6,3) [draw, circle, red, vthick] {$f_1^{1}$};
\node (3) at (12,3) [draw, circle, red, vthick] {$f_1^{2}$};
\node (4) at (18, 3) [draw, circle]{$f_1^{3}$};
\node (5) at (-3,0) [draw, circle] {$f_2^{0}$};
\node (6) at (3,0) [draw, circle] {$f_2^{1}$};
\node (7) at (9,0) [draw, circle, red, thick] {$f_2^{2}$};
\node (8) at (15, 0) [draw, circle]{$f_2^{3}$};
\path (1) edge[->, vthick, bend left] (4.north);
\path (4) edge[->, vthick, bend right] (2);
\drawpath{6,2,7,3,2,1}{vthick}
\drawpath{3,8}{vthick}
\drawpath{8,7,6}{thin}
\drawpath{5,6}{thin}
\drawpath{1,5}{vthick, dashed}
\drawpath{8,4}{vthick, dashed}
\end{tikzpicture}
\caption{After mutation at $f_1^2, f_1^1,f_2^2,f_1^2$.}
\end{figure}

\begin{figure}[H]
\centering
\begin{tikzpicture}[every node/.style={inner sep=0, minimum size=0.8cm, thick}, x=0.5cm, y=0.5cm]
\node (1) at (0,3) [draw, circle] {$f_1^{0}$};
\node (2) at (6,3) [draw, circle] {$f_1^{1}$};
\node (3) at (12,3) [draw, circle] {$f_1^{2}$};
\node (4) at (18, 3) [draw, circle]{$f_1^{3}$};
\node (5) at (-3,0) [draw, circle] {$f_2^{0}$};
\node (6) at (3,0) [draw, circle,red, thick] {$f_2^{1}$};
\node (7) at (9,0) [draw, circle,red, thick] {$f_2^{2}$};
\node (8) at (15, 0) [draw, circle]{$f_2^{3}$};
\path (1) edge[->, vthick, dashed] (5);
\path (8) edge[->, vthick, dashed] (4);
\path (1) edge[->, vthick, bend left] (4.north);
\path (4) edge[->, vthick, bend right] (2);
\drawpath{3,2,1}{vthick}
\drawpath{5,2,7,3}{vthick}
\path (7) edge[->, thin, bend left] (5);
\path (6) edge[->, thin] (7);
\path (6) edge[->, thin, bend right] (8);
\drawpath{3,6}{vthick}

\end{tikzpicture}
\caption{After mutation at $f_2^2,f_2^1,f_2^2$.}
\end{figure}

\begin{figure}[H]
\centering
\begin{tikzpicture}[every node/.style={inner sep=0, minimum size=0.8cm, thick}, x=0.5cm, y=0.5cm]
\node (1) at (0,3) [draw, circle] {$f_1^{0}$};
\node (2) at (6,3) [draw, circle,red, vthick] {$f_1^{1}$};
\node (3) at (12,3) [draw, circle,red, vthick] {$f_1^{2}$};
\node (4) at (18, 3) [draw, circle]{$f_1^{3}$};
\node (5) at (-3,0) [draw, circle] {$f_2^{0}$};
\node (6) at (3,0) [draw, circle] {$f_2^{1}$};
\node (7) at (9,0) [draw, circle,red, thick] {$f_2^{2}$};
\node (8) at (15, 0) [draw, circle]{$f_2^{3}$};
\drawpath{1,...,4}{vthick}
\path (5) edge[->, thin, bend right] (7);
\path (7) edge[->, thin] (6);
\path (6) edge[->, thin, bend right] (8);
\drawpath{4,6,3,7,2,5}{vthick}
\path (5) edge[->, vthick, dashed] (1);
\path (8) edge[->, vthick, dashed] (4);
\end{tikzpicture}
\caption{After mutation again at $f_1^2, f_1^1, f_2^2, f_1^2$.}
\end{figure}
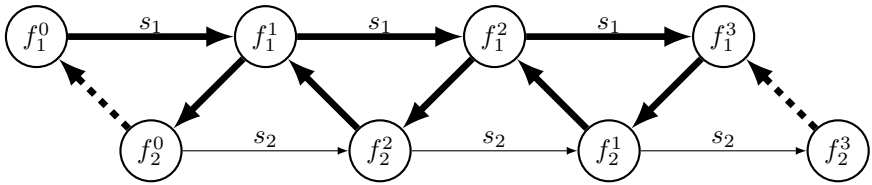
\begin{figure}[H]
\centering
\begin{tikzpicture}[every node/.style={inner sep=0, minimum size=0.8cm, thick}, x=0.5cm, y=0.5cm]
\node (1) at (-3,3) [draw, circle] {$f_1^{0}$};
\node (2) at (3,3) [draw, circle] {$f_1^{1}$};
\node (3) at (9,3) [draw, circle] {$f_1^{2}$};
\node (4) at (15, 3) [draw, circle]{$f_1^{3}$};
\node (5) at (0,0) [draw, circle] {$f_2^{0}$};
\node (6) at (6,0) [draw, circle] {$f_2^{2}$};
\node (7) at (12,0) [draw, circle] {$f_2^{1}$};
\node (8) at (18, 0) [draw, circle]{$f_2^{3}$};
\drawpath{1,...,4}{vthick}
\drawpath{5,...,8}{thin}
\drawpath{4,7,3,6,2,5}{vthick}
\path (5) edge[->, vthick, dashed] (1);
\path (8) edge[->, vthick, dashed] (4);
\node at (0,3.3) {$s_1$};
\node at (6,3.3) {$s_1$};
\node at (12,3.3) {$s_1$};
\node at (3,0.3) {$s_2$};
\node at (9,0.3) {$s_2$};
\node at (15,0.3) {$s_2$};
\end{tikzpicture}
\caption{Rearranging the quiver.}
\end{figure}
\end{subfigures}

We see that we have to permute the index:
$$\hat{f}_2^1:=f_2^2,\tab \hat{f}_2^2:=f_2^1$$ at the end. Similarly as before, this expresses the generators $\bf_i$ in terms of the mutated cluster variables $\what{X}_j$, and for the full quiver we have
\begin{Cor} The cluster embedding $\iota: \fD_\g\to \cD_\g$ corresponding to $\bi=(2,1,2,1,2,1)$ and $\bi'=(1,2,1,2,1,2)$ is related by quiver mutations at the pair of nodes 
\Eqn{
&\{f_1^2,f_1^{-2}\},\{f_1^1,f_1^{-1}\},\{f_2^2,f_2^{-2}\},\{f_1^2,f_1^{-2}\},\{f_2^2,f_2^{-2}\}, \{f_2^1,f_2^{-1}\}\\
& \{f_2^2,f_2^{-2}\},\{f_1^2, f_1^{-2}\}, \{f_1^1,f_1^{-1}\},\{f_2^2, f_2^{-2}\},\{f_1^2, f_1^{-2}\}.
}
\end{Cor}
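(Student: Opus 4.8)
The plan is to follow the same two-step strategy used for the simply-laced and doubly-laced braid moves in the previous two subsections: first realize the word change on the \emph{half}-quiver spanned by the non-negative nodes $f_i^{k}$, $k\geq 0$, and then double the resulting mutation sequence to the full $\cD_\g$-quiver by the pairing $f_i^k \leftrightarrow f_i^{-k}$. The figures of this subsection already carry out the first step, so the content of the Corollary is essentially the faithful passage to the full doubled quiver.

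First I would invoke the explicit computation of \cite{Ip2}, where the unitary intertwiner $\Phi$ relating the positive representation $\cP_\l$ for $\bi=(2,1,2,1,2,1)$ to the one for $\bi'=(1,2,1,2,1,2)$ is written as a composition of $11$ quantum dilogarithm conjugations whose arguments are successive cluster monomials. Translating each conjugation $Ad_{g_b^*(\cdot)}$ into a quantum cluster mutation via Lemma \ref{useful} (that is, $\mu_k^q=\mu_k^\#\circ\mu_k'$ with $\mu_k^\#=Ad_{\Psi^{q_k}(X_k)}$), one reads off the arguments as the nodes $f_1^2, f_1^1, f_2^2, f_1^2, f_2^2, f_2^1, f_2^2, f_1^2, f_1^1, f_2^2, f_1^2$, which is exactly the sequence of half-quiver mutations performed in the figures above, ending with the relabeling $\hat{f}_2^1:=f_2^2$, $\hat{f}_2^2:=f_2^1$. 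At this stage the generators $\bf_i$ are seen to be invariant, being expressed in the mutated variables $\what{X}_j$, precisely as in the two earlier cases.

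Second I would upgrade this to the full quiver. By the Proposition of Section \ref{sec:quiver:elementary}, $\cD_\g$ is the amalgamation of the half-quiver on $\{f_i^{k}\}_{k\geq 0}$ built from $\bi$ and its mirror on $\{f_i^{-k}\}_{k\geq 0}$ built from the opposite word $(i_N,\dots,i_1)$, the two halves being glued only along the $f_i^0$ and $e_i^0$ nodes and carrying opposite commutation relations (Lemma \ref{FikLem}, Corollary \ref{FikCor}). Since the construction of $\cD_\g$ is a \emph{double} of Feigin's homomorphism (cf. the Remark after Proposition \ref{Fi}), the single intertwiner $\Phi$ acts on the mirror half exactly as the analogous word change does on the opposite word, and the corresponding mutations land on the mirror nodes $f_i^{-k}$. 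None of the mutated nodes has $k=0$, so no mutation touches the gluing locus; moreover $f_i^k$ and $f_i^{-k}$ are non-adjacent in every intermediate seed, so the two mutations commute and may be grouped as the unordered pair $\{f_i^k,f_i^{-k}\}$. Reading the half-sequence above under this doubling yields precisely the stated list of eleven pairs.

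The main obstacle I expect is the index bookkeeping of the doubling in the second step: one must verify that the mirror word change sends the occurrence used on the positive side to exactly the mirror index $-k$ on the negative side (rather than to some permuted index), and that the terminal relabeling $\hat{f}_2^1\leftrightarrow \hat{f}_2^2$ is applied compatibly on both halves so that the resulting embedding $\iota':\fD_\g\to\cD_\g$ for $\bi'$ coincides with the one obtained directly from $\bi'$. Establishing the non-adjacency of $f_i^k$ and $f_i^{-k}$ throughout the eleven intermediate seeds, which justifies the clause ``the order does not matter'' within each pair, is the remaining routine but necessary check.
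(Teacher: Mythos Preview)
Your proposal is correct and follows essentially the same approach as the paper. The paper in fact gives no independent proof of this Corollary at all: after displaying the eleven-step half-quiver mutation sequence in the figures and noting the terminal relabeling $\hat{f}_2^1\leftrightarrow\hat{f}_2^2$, it simply writes ``Similarly as before, this expresses the generators $\bf_i$ in terms of the mutated cluster variables $\what{X}_j$, and for the full quiver we have'' and then states the Corollary, exactly paralleling the simply-laced and doubly-laced cases; your two-step outline (half-quiver computation from \cite{Ip2}, then doubling via the $f_i^k\leftrightarrow f_i^{-k}$ symmetry) is precisely what that phrase ``Similarly as before'' is meant to invoke.
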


\begin{Rem} In \cite{Le2}, it is also found that the above change of words can be realized by 12 mutations coming from a more natural geometric consideration:
$$(f_1^1,f_1^2,f_2^2,f_1^2),(f_2^2,f_2^1,f_1^1,f_2^2),(f_1^1,f_1^2,f_2^2,f_1^2),$$
where the groups correspond to the permutations (12)(23)(12) of the vertices of the triangle where the quiver is attached in the framed $G$-local system. The end result differs from the above quiver by an additional permutation of $f_1^1$ and $f_1^2$, but such difference will not play a role in this paper.
\end{Rem}
\section{Basic quivers}\label{sec:basicquiver}\label{sec:basicquiver}
In Section \ref{sec:embedding}, we obtain explicitly the $\cD_\g$-quiver corresponding to the embedding of the quantum group $\cU_q(\g)$ associated to the reduced expression of the longest element $w_0=s_{i_1}...s_{i_N}$. By their symmetric presentations, we observe that the $\cD_\g$-quiver is given by amalgamation of some quivers $Q$ and $\til{Q}$ where $\til{Q}$ is obtained by a mirror image of $Q$ along the vertical axis, together with flipping all the arrows. 

More precisely, let us arrange the quiver $Q$ so that its frozen vertices $\{e_i^0, f_i^0, f_i^{n_i}\}_{i\in I}$ are fixed on a triangle $ABC$ as shown in Figure \ref{quiverQ}. Let $\til{Q}$ be the mirror image of $Q$ with frozen vertices $\{e_i^0, f_i^0, f_i^{-n_i}\}_{i\in I}$ fixed on a triangle $A'B'C'$, but such that all arrows are flipped (i.e. with the same indexing, it has the exchange matrix $-B$ instead). Then the $\cD_\g$-quiver is obtained by amalgamating $Q$ and $\til{Q}$ along the frozen vertices at $\{e_i^0, f_i^0\}_{i\in I}$. We will call such external labeling of the basic quivers $Q$ and $\til{Q}$ the \emph{standard form}.

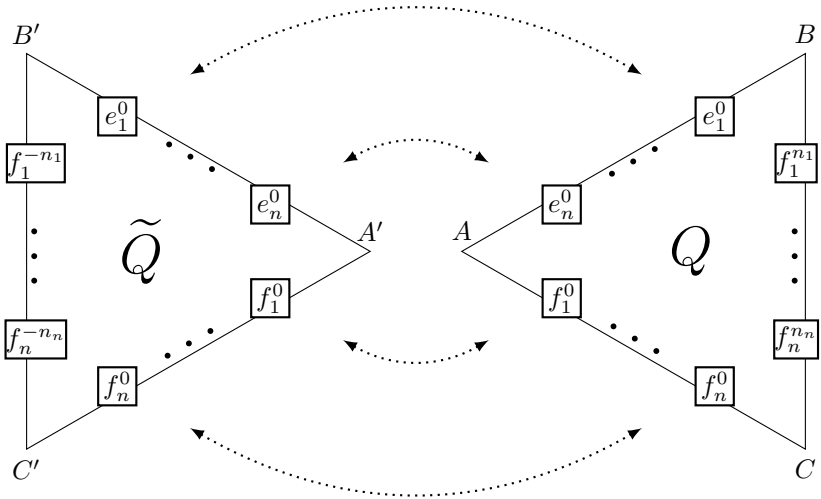
\begin{figure}[H]
\centering
\begin{tikzpicture}[every node/.style={inner sep=0, minimum size=0.5cm, thick},x=0.6cm,y=0.6cm]
\begin{scope}[shift={(-6,0)}]
\draw (0:5) node[anchor=south]{$A'$} 
-- (120:5) node[anchor=south]{$B'$}
 -- (240: 5) node[anchor=north]{$C'$}
 --cycle;
\node (een0) at (20: 3) [draw, fill=white, thick, outer sep=20] {$e_n^0$};
\node (ee10) at (100: 3) [draw, fill=white, thick, outer sep=20] {$e_1^0$};
\node (ff1n) at (140: 3) [draw, fill=white, thick] {$f_1^{-n_1}$};
\node (ffnn) at (220: 3) [draw, fill=white, thick] {$f_n^{-n_n}$};
\node (ffn0) at (260: 3) [draw, fill=white, thick, outer sep=20] {$f_n^0$};
\node (ff10) at (340: 3) [draw, fill=white, thick, outer sep=20] {$f_1^0$};
\node[rotate=90] at (180:2.3) {\huge$\cdots$};
\node[rotate=30] at (300:2.3) {\huge$\cdots$};
\node[rotate=-30] at (60:2.3) {\huge$\cdots$};
\node (QQ) at (0,0) [minimum size=2cm]{\huge $\til{Q}$};
\end{scope}
\begin{scope}[shift={(6,0)}]
\draw (60:5)node[anchor=south]{$B$}  
-- (180:5) node[anchor=south]{$A$} 
-- (300: 5) node[anchor=north]{$C$} 
--cycle;
\node (fnn) at (-40: 3) [draw, fill=white, thick] {$f_n^{n_n}$};
\node (f1n) at (40: 3) [draw, fill=white, thick] {$f_1^{n_1}$};
\node (e10) at (80: 3) [draw, fill=white, thick, outer sep=20] {$e_1^0$};
\node (en0) at (160: 3) [draw, fill=white, thick, outer sep=20] {$e_n^0$};
\node (f10) at (200: 3) [draw, fill=white, thick, outer sep=20] {$f_1^0$};
\node (fn0) at (280: 3) [draw, fill=white, thick, outer sep=20] {$f_n^0$};
\node[rotate=30] at (120:2.3) {\huge$\cdots$};
\node[rotate=-30] at (240:2.3) {\huge$\cdots$};
\node[rotate=90] at (0:2.3) {\huge$\cdots$};
\node (Q) at (0,0) [minimum size=2cm]{\huge $Q$};
\end{scope}
\path (ee10) edge[<->, dotted, thick, bend left] (e10);
\path (een0) edge[<->, dotted, thick, bend left] (en0);
\path (ff10) edge[<->, dotted, thick, bend right] (f10);
\path (ffn0) edge[<->, dotted, thick, bend right] (fn0);
\end{tikzpicture}
\caption{Amalgamating the quivers $Q$ and $\til{Q}$ in standard form.}
\label{quiverQ}
\end{figure}

We note that there are some freedom of choices of the quivers $Q$, namely, we can choose arbitrary arrows among the nodes $e_i^0$ and $f_i^0$. In order to fix the ambiguity, we first note that such amalgamation of two triangles give a triangulation of the disk with one puncture and two marked points:
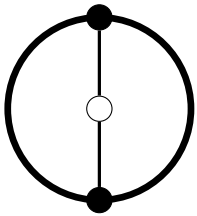
\begin{figure}[H]
\centering
\begin{tikzpicture}[x=0.6cm,y=0.6cm]
\draw [vthick] (0,0) circle (2);
\node (a) at (0,0) [draw, circle, minimum size=0.2]{};
\node (b) at (0,2) [draw, circle, minimum size=0.2, fill=black]{};
\node (c) at (0,-2) [draw, circle, minimum size=0.2, fill=black]{};
\path (a) edge[-, very thick] (b);
\path (a) edge[-, very thick] (c);
\end{tikzpicture}
\caption{Triangulation of a disk with one puncture and two marked points.}
\end{figure}

Therefore in order to realize the embedding naturally as associated to triangulations of such surface, the quiver $Q$ associated to the triangle $ABC$, should be mutation equivalent to the quiver $\til{Q}$ associated to triangle $B'A'C'$ in this clockwise order.

Let $\cM$ be the mutation sequence reversing the reduced word $(i_1,...,i_N)\to (i_N,...,i_1)$. If $Q'$ is the subquiver of $Q$ with nodes $\{f_i^k\}$, then $\cM(Q')$ is naturally given by a mirror image of $Q'$ with all the arrows flipped, or in terms of Figure \ref{quiverQ}, the triangle is flipped from $ABC$ to $B'A'C'$. It turns out that we have to identify the frozen nodes with its \emph{Dynkin involution} $$\h:I\to I,$$ where by definition, the longest element acts on simple roots as \Eq{
w_0\cdot\a_i=-\a_{\h(i)}.
}
Hence if we let $\cM_\h$ be the mutation sequence changing the reduced word $$\cM_\h:(i_1,...,i_N)\to (\h(i_1),...,\h(i_N)),$$ then we naturally also want to identify $Q$ with $\cM_\h(Q)$.

With these observations, we made the following definition.

\begin{Def}\label{basicquiver} A \emph{basic quiver} $Q$ for $\cD_\g$ corresponding to the word $w_0=s_{i_1}...s_{i_N}$ is a quiver associated to the triangle $ABC$ such that 
\begin{itemize}
\item the amalgamation of $Q$ and $\til{Q}$ gives the $\cD_\g$-quiver,
\item $\cM(Q)$ is identical to the quiver $\til{Q}$, where the frozen nodes $\{f_i^0, f_i^{n_i}, e_i^0\} $ of $\cM(Q)$ is identified with the frozen nodes $\{f_i^{-n_i}, f_i^0, e_{\h(i)}^0\}$ of $\til{Q}$.
\item $Q$ is identical to the quiver $\cM_\h(Q)$, where the frozen nodes $\{f_i^0, f_i^{n_i},e_i^0\}$ of $Q$ is identified with the frozen nodes $\{f_{\h(i)}^0, f_{\h(i)}^{n_i}, e_{\h(i)}^0\}$ of $\cM_\h(Q)$.
\end{itemize}
\end{Def}
Note that when $\h=id$, the third condition is trivial.

\begin{Thm}For each $\g$ of simple Lie type, there exists a unique basic quiver $Q$.
\end{Thm}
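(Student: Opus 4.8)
The plan is to reduce the statement to a finite linear problem about the arrows among the ``central'' nodes $\{e_i^0,f_i^0\}_{i\in I}$, and then solve it. First I would observe that the amalgamation requirement together with the explicit description of Section~\ref{sec:quiver} already pins down every arrow of $Q$ except those joining two central nodes: the subquiver on $\{f_i^k\}_{k\ge 0}$ is forced to be the amalgamation of the elementary quivers $Q_i^k$ by the description of Section~\ref{sec:quiver:elementary}, and every arrow from an $e_i^0$ to an $f_j^k$ with $k\neq 0$ is fixed by the commutation relations of Section~\ref{sec:quiver:rel}. Moreover, since all $X_{e_i^0}$ and $X_{f_i^0}$ are momentum-free and hence mutually commute, the full $\cD_\g$-quiver carries \emph{no} arrows between central nodes; therefore for \emph{any} skew-symmetric choice of central arrows in $Q$, its mirror $\til{Q}$ (which negates them, the central nodes being fixed by the reflection) cancels them in the amalgamation. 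Thus the first bullet of Definition~\ref{basicquiver} imposes no constraint on the central arrows, and the entire content lies in bullets two and three.

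For uniqueness I would isolate the central block $B^{\mathrm{cen}}$ of the exchange matrix. The mutation sequence $\cM$ reversing $(i_1,\dots,i_N)$ consists solely of mutations at $f$-nodes, and by Section~\ref{sec:mutation} it carries the $\{f_i^k\}$-subquiver to its mirror-flip, matching $\til{Q}$ there. The crucial point is that each elementary mutation at an $f$-node alters a central--central arrow by a quantity built only from the (already fixed) arrows joining central nodes to $f$-nodes, and that these central-to-$f$ arrows evolve along $\cM$ \emph{independently} of the central--central arrows, since a mutation never feeds a central--central arrow back into a central-to-$f$ arrow. Hence the total change $\Delta := B^{\mathrm{cen}}_{\cM(Q)}-B^{\mathrm{cen}}_{Q}$ is a fixed datum, independent of the unknown $B^{\mathrm{cen}}_Q$. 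Writing $\sigma$ for the involution on central arrows induced by the mirror-flip together with the Dynkin identification $e_i^0\leftrightarrow e_{\h(i)}^0$, the second bullet reads $B^{\mathrm{cen}}_Q+\Delta=\sigma(B^{\mathrm{cen}}_Q)$, a linear equation whose solution set is an affine translate of the $\sigma$-invariant subspace. The third bullet, $\cM_\h(Q)=Q$, then removes the residual freedom in those $\sigma$-invariant directions, leaving at most one admissible $B^{\mathrm{cen}}_Q$.

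Existence is then the verification that this forced value of $B^{\mathrm{cen}}_Q$ is genuinely realized by the quivers already drawn in Section~\ref{sec:embedding}. Concretely I would place each of the quivers in Figures~\ref{fig-An}--\ref{fig-G2} into the standard form of Figure~\ref{quiverQ}, read off its central arrows, and check directly that applying the mutation sequences recorded in Section~\ref{sec:mutation} (word reversal for $\cM$, and the $\h$-twist for $\cM_\h$) returns exactly $\til{Q}$ and $Q$ respectively, with the prescribed relabelling of frozen nodes. Since weights are allowed to be $\Q$-valued by Definition~\ref{quiver}, there is no integrality obstruction; one only needs the computed $\Delta$ to be compatible with $\sigma$, and this is where the explicit data of each type enters.

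The main obstacle I expect is the case-by-case bookkeeping in the non-simply-laced and exceptional types. In types $B_n,C_n,F_4,G_2$ the differing multipliers $d_i$ make the mutation increments in $\Delta$ carry the factors $d_k\inv$ of the mutation rule, so one must track short/long weightings carefully; and in $G_2$ the sequence $\cM$ already requires eleven mutations (Section~\ref{sec:mutation:G2}), so computing $\Delta$ is genuinely lengthy. The other delicate point is the Dynkin involution: for $A_n$, $D_{\mathrm{odd}}$ and $E_6$ the twist $\h\neq\mathrm{id}$ enters both $\sigma$ and the third condition, and it is precisely the compatibility of $\cM$ with the $e_i^0\leftrightarrow e_{\h(i)}^0$ identification that must be checked to guarantee that the linear system is consistent rather than merely over-determined.
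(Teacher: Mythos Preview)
Your approach is sound and runs parallel to the paper's, though you package the uniqueness argument more abstractly. Both proofs reduce to determining the arrows among the central nodes $\{e_i^0, f_i^0\}$, using your correct observation that amalgamation imposes no constraint there. The paper, however, does not treat all central--central arrows uniformly via a single linear system: it first pins down the $f_i^0$--$f_j^0$ arrows from the elementary-quiver convention of Section~\ref{sec:quiver:elementary}, then argues that the $e_i^0$--$f_j^0$ arrows are forced by requiring the ``initial-term triangle'' of Theorem~\ref{initial} to persist down to $a=1$, and only afterwards carries out a type-by-type computation (with explicit mutation counts) for the remaining $e_i^0$--$e_j^0$ arrows. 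Your linear framing is cleaner in principle, but be aware that your equation $B^{\mathrm{cen}}+\Delta=\sigma(B^{\mathrm{cen}})$ is slightly misleading: under the identification in bullet~2 the nodes $f_i^0$ of $\cM(Q)$ match with $f_i^{-n_i}$ of $\til{Q}$, so the right-hand side of the $e$--$f$ and $f$--$f$ constraints involves only already-known arrows at $f_i^{n_i}$ and is therefore a fixed constant rather than $\sigma$ applied to an unknown; it is only on the $e$--$e$ subblock that one gets a genuine equation $M+\Delta=-M_\h$, whose $\h$-antisymmetric freedom bullet~3 then kills (and which is already fully determined when $\h=\mathrm{id}$, making bullet~3 vacuous there, as the paper notes). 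One small slip: you cannot ``read off'' the central arrows of $Q$ from Figures~\ref{fig-An}--\ref{fig-G2}, since those depict the $\cD_\g$-quiver, which carries no such arrows; the basic-quiver central arrows are new data that the paper displays in separate figures inside the proof. So your existence step is exactly the paper's type-by-type computation, not a verification of pre-drawn pictures.
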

\begin{proof}
We need to solve for the required relations among the nodes $\{e_i^0, f_i^0\}$ that satisfies the definition of a basic quiver. First of all, by the construction of the elementary quivers in Section \ref{sec:quiver:elementary}, we can naturally determine the arrows between $\{f_i^0\}$ already by reading $w_0$ from the left.

Furthermore, Theorem \ref{initial} implies that any quiver mutations preserve the relations below whenever the initial term $X_{e_i^0}=e(-2u_j^a)$  for $a>1$ in the quantum group embedding:
\begin{figure}[H]
\centering
\begin{tikzpicture}[every node/.style={inner sep=0, minimum size=0.5cm, thick, fill=white},x=2cm,y=2cm]
\node (1) at (0,1)[draw, thick] {$e_i^0$};
\node (2) at (1,0)[draw, thick] {$f_j^{a-1}$};
\node (3) at (2,0)[draw, thick] {$f_j^a$};
\drawpath{1,2,3,1}{thick}
\end{tikzpicture}
\end{figure}
Hence for the basic quiver, we see that in order for $\cM(Q)$ to be identical to $\til{Q}$, we must also have the above quiver for $a=1$, and this establishes the arrows between $\{e_i^0\}$ and $\{f_i^0\}$. Therefore it remains to determine the arrows among the nodes $\{e_i^0\}$.

Since quiver mutation is a bijection, it suffices to construct the basic quiver for some reduced word $\bi\in \fR$. Hence throughout the proof, we will use the same reduced word for $\bi$ in Section \ref{sec:embedding} for each type of $\g$ in the construction of the $\cD_\g$-quiver. 

Let $Q_+$ denote the subquiver of $\cD_\g$ containing the nodes $\{f_i^n\}_{n\geq 0, i\in I}$ and $\{e_i^0\}_{i\in I}$. The mutation sequences $\cM$ and $\cM_\h$ are obtained by recursively bringing the required index to the right of $\bi$ using the \emph{swapping} $s_is_j=s_js_i$, or the \emph{Coxeter moves}.

\textbf{Type $A_n$.} This is a very special case as the change of words 
$$\bi=(121321...n,(n-1),...1)\corr (123... n, 123... (n-1),... 123121)$$
does not require any Coxeter moves, but only swapping between commuting reflections. Therefore $\cM=id$, and the definition of basic quiver requires that $Q$ associated to $ABC$ is the same as $\til{Q}$ associated to $B'A'C'$, where the order of $\{e_i^0\}$ is reversed. In particular it says that the sides $\ve{AC}$ and $\ve{BC}$ of $Q$ is the same as the sides $\ve{B'C'}$ and $\ve{A'C'}$ of $\til{Q}$, which by definition is just the sides $\ve{CB}$ and $\ve{CA}$ of $Q$. This uniquely determines the arrows among the nodes $\{f_i^0\}$, and between the nodes $\{e_i^0\}$ and $\{f_i^0\}$.

The Dynkin involution is given by 
\Eq{
\h(i):=n+1-i.} By considering the mutation $\cM_\h$ of the Dynkin involution
$$(121321... n,(n-1),... 1) \to (n,(n-1),n,(n-2),(n-1),n,.... 123...n)$$
which in a sense is just flipping the diagram upside-down, we observe that the arrows between consecutive $\{e_i^0\}$ are mutated once whenever there is a change of word $(...121...)\corr (...212...)$. The arrows among $\{e_i^0\}$ are chosen such that $Q=\cM_\h(Q)$. The end result forces $Q$ to have a magical $\Z_3$ symmetry, and we recover the well-known basic quiver for type $A_n$ associated to $n$-triangulation first studied by \cite{FG1}.

More precisely, the basic quiver $Q$ is obtained by attaching to $Q_+$ the additional arrows:
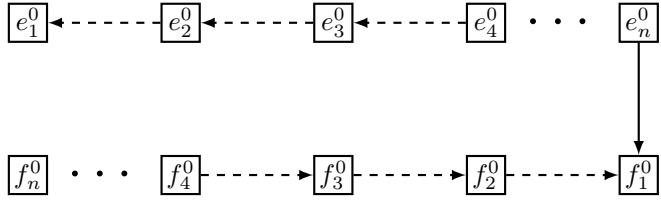
\begin{figure}[H]
\centering
\begin{tikzpicture}[every node/.style={inner sep=0, minimum size=0.5cm, thick, fill=white},x=2cm,y=2cm]
\node (1) at (0,1)[draw] {$e_1^0$};
\node (2) at (1,1)[draw] {$e_2^0$};
\node (3) at (2,1)[draw] {$e_3^0$};
\node (4) at (3,1)[draw] {$e_4^0$};
\node (en) at (4,1)[draw] {$e_n^0$};
\node (fn) at (0,0)[draw] {$f_n^0$};
\node (5) at (1,0)[draw] {$f_4^0$};
\node (6) at (2,0)[draw] {$f_3^0$};
\node (7) at (3,0)[draw] {$f_2^0$};
\node (8) at (4,0)[draw] {$f_1^0$};
\node at (0.5,0) {\huge$\cdots$};
\node at (3.5,1) {\huge$\cdots$};
\drawpath{4,3,2,1}{thick, dashed}
\drawpath{5,6,7,8}{thick, dashed}
\drawpath{en,8}{thick}
\end{tikzpicture}
\caption{Additional arrows attaching to $Q_+$ to give $Q$ in type $A_n$.}
\end{figure}

\begin{figure}[!htb]
\centering
\begin{tikzpicture}[every node/.style={inner sep=0, minimum size=0.5cm, thick, fill=white},x=1.732cm,y=1cm]
\draw (-0.3,0) node[anchor=south, fill=none]{$A$} 
-- (5.1,5.5) node[anchor=south]{$B$}
 -- (5.1,-5.5) node[anchor=north]{$C$}
 --cycle;
\node (1) at (1,1)[draw] {$e_n^0$};
\node (2) at (2,2)[draw] {$e_3^0$};
\node (3) at (3,3)[draw] {$e_2^0$};
\node (4) at (4,4)[draw] {$e_1^0$};
\node (5) at (1,-1)[draw] {$f_1^0$};
\node (6) at (2,0)[draw, circle] {};
\node (7) at (3,1)[draw, circle] {};
\node (8) at (4,2)[draw, circle] {};
\node (9) at (5,3)[draw] {$f_1^n$};
\node (10) at (2,-2)[draw] {$f_2^0$};
\node (11) at (3,-1)[draw,circle] {};
\node (12) at (4,0)[draw,circle] {};
\node (13) at (5,1)[draw] {$f_2^{n-1}$};
\node (14) at (3,-3)[draw] {$f_3^0$};
\node (15) at (4,-2)[draw,circle] {};
\node (16) at (5,-1)[draw] {$f_3^{n-2}$};
\node (17) at (4,-4)[draw] {$f_n^0$};
\node (18) at (5,-3)[draw] {$f_n^1$};
\drawpath{2,...,4}{thick, dashed}
\node[rotate=30, fill=none] at (1.5,1.5){\huge$\cdots$};
\drawpath{9,13,16}{thick, dashed}
\node[rotate=90, fill=none] at (5,-2){\huge$\cdots$};
\drawpath{14,10,5}{thick, dashed}
\node[rotate=-30, fill=none] at (3.5,-3.5){\huge$\cdots$};
\node[rotate=-30, fill=none] at (4.5,-2.5){\huge$\cdots$};
\node[rotate=90, fill=none] at (4,-3){\huge$\cdots$};
\drawpath{1,5}{thick}
\drawpath{2,6,10}{thick}
\drawpath{3,7,11,14}{thick}
\drawpath{4,8,12,15}{thick}
\drawpath{9,4}{thick}
\drawpath{13,8,3}{thick}
\drawpath{16,12,7,2}{thick}
\drawpath{15,11,6,1}{thick}
\drawpath{17,18}{thick}
\drawpath{14,15,16}{thick}
\drawpath{10,11,12,13}{thick}
\drawpath{5,6,7,8,9}{thick}
\end{tikzpicture}
\caption{Basic quiver in type $A_n$ with $\Z_3$ symmetry.}
\label{basicAn}
\end{figure}

\textbf{Type $B_n$ and $C_n$.} The change of words $\cM$ for
$$\bi=(1212\; 32123\;...n(n-1)...1...(n-1)n)$$
consists of $\frac23n(n-1)(n-2)$ simply-laced mutations, and $\frac12n(n-1)$ doubly-laced mutations. Recall from Section \ref{sec:mutation:doubly} that each doubly-laced mutation corresponds to 3 quiver mutations. Hence the change of words $\cM$ corresponds to $$\frac23n(n-1)(n-2)+\frac32n(n-1)=\frac16n(n-1)(4n+1)$$ quiver mutations.

By comparing $\cM(Q)$ with $\til{Q}$, we found that the basic quiver is obtained by adjoining $Q_+$ the following arrows in type $B_n$:
\begin{figure}[H]
\centering
\begin{tikzpicture}[every node/.style={inner sep=0, minimum size=0.5cm, thick, fill=white},x=2cm,y=2cm]
\node (1) at (0,1)[draw] {$e_1^0$};
\node (2) at (1,1)[draw] {$e_2^0$};
\node (3) at (2,1)[draw] {$e_3^0$};
\node (4) at (3,1)[draw] {$e_4^0$};
\node (en) at (4,1)[draw] {$e_n^0$};
\node (5) at (0,0)[draw] {$f_1^0$};
\node (6) at (1,0)[draw] {$f_2^0$};
\node (7) at (2,0)[draw] {$f_3^0$};
\node (8) at (3,0)[draw] {$f_4^0$};
\node (fn) at (4,0)[draw] {$f_n^0$};
\node at (3.5,0) {\huge$\cdots$};
\node at (3.5,1) {\huge$\cdots$};
\drawpath{1,2,3,4}{vthick, dashed}
\drawpath{8,7,6,5}{vthick, dashed}
\drawpath{1,5}{thin}
\end{tikzpicture}
\caption{Additional arrows attaching to $Q_+$ to give $Q$ in type $B_n$.}
\end{figure}
and the following arrows in type $C_n$:
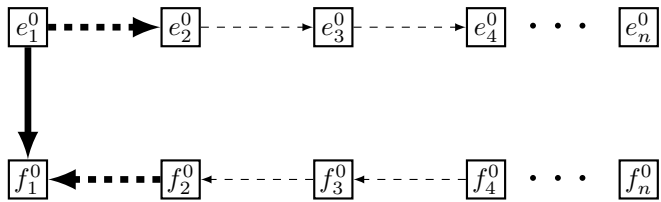
\begin{figure}[H]
\centering
\begin{tikzpicture}[every node/.style={inner sep=0, minimum size=0.5cm, thick, fill=white},x=2cm,y=2cm]
\node (1) at (0,1)[draw] {$e_1^0$};
\node (2) at (1,1)[draw] {$e_2^0$};
\node (3) at (2,1)[draw] {$e_3^0$};
\node (4) at (3,1)[draw] {$e_4^0$};
\node (en) at (4,1)[draw] {$e_n^0$};
\node (5) at (0,0)[draw] {$f_1^0$};
\node (6) at (1,0)[draw] {$f_2^0$};
\node (7) at (2,0)[draw] {$f_3^0$};
\node (8) at (3,0)[draw] {$f_4^0$};
\node (fn) at (4,0)[draw] {$f_n^0$};
\node at (3.5,0) {\huge$\cdots$};
\node at (3.5,1) {\huge$\cdots$};
\drawpath{1,2}{vthick, dashed}
\drawpath{2,3,4}{thin, dashed}
\drawpath{8,7,6}{thin, dashed}
\drawpath{6,5}{vthick, dashed}
\drawpath{1,5}{vthick}
\end{tikzpicture}
\caption{Additional arrows attaching to $Q_+$ to give $Q$ in type $C_n$.}
\end{figure}

\textbf{Type $D_n$.} The change of words $\cM$ for
$$\bi=(012012\; 320123\;...(n-1)...2012...(n-1))$$
consists of $\frac23n(n-1)(n-2)$ mutations. When $n$ is even, we have $\h=id$, and the condition $\cM(Q)=\til{Q}$ uniquely determines the arrows among the frozen nodes. Otherwise when $n$ is odd, the condition $\cM(Q)=\til{Q}$ uniquely determines the arrows among the frozen nodes except $e_0^0$ and $e_1^0$. In this case the Dynkin involution is given by 
\Eq{
\h(i)=\case{1-i&i=0,1,\\i&otherwise,}} hence we see that from our choice of $w_0$, $\cM_\h$ is trivial. This means that we cannot have arrows between $e_0^0$ and $e_1^0$.

The resulting basic quiver $Q$ is then obtained by taking $Q_+$ and adjoining the following arrows:
\begin{figure}[H]
\centering
\begin{tikzpicture}[every node/.style={inner sep=0, minimum size=0.5cm, thick, fill=white},x=3cm,y=1cm]
\node (0) at (0,4.7)[draw] {$e_0^0$};
\node (1) at (0,3.3)[draw] {$e_1^0$};
\node (2) at (1,4)[draw] {$e_2^0$};
\node (3) at (2,4)[draw] {$e_3^0$};
\node (4) at (3,4)[draw] {$e_4^0$};
\node (en) at (4,4)[draw] {$e_n^0$};
\node (5) at (0,1.7)[draw] {$f_0^0$};
\node (6) at (0,0.3)[draw] {$f_1^0$};
\node (7) at (1,1)[draw] {$f_2^0$};
\node (8) at (2,1)[draw] {$f_3^0$};
\node (9) at (3,1)[draw] {$f_4^0$};
\node (fn) at (4,1)[draw] {$f_n^0$};
\node at (3.5,4) {\huge$\cdots$};
\node at (3.5,1) {\huge$\cdots$};
\drawpath{1,2,3,4}{thick, dashed}
\drawpath{0,2}{thick, dashed}
\drawpath{9,8,7,6}{thick, dashed}
\drawpath{7,5}{thick, dashed}
\drawpath{0,5}{thick, bend right}
\drawpath{1,6}{thick, bend right}
\end{tikzpicture}
\caption{Additional arrows attaching to $Q_+$ to give $Q$ in type $D_n$.}
\end{figure}
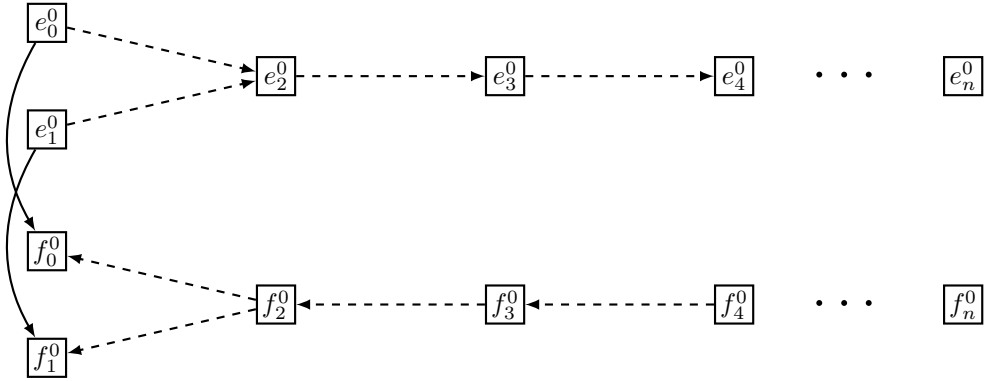

\textbf{Type $E_6$.}  The change of words $\cM$ for $$\bi=(3\;43\;034\; 230432\;12340321\;5432103243054321)$$ consists of 78 mutations. The Dynkin involution is given by 
\Eq{
\h(i)=\case{6-i&i>0,\\0&i=0,}} whence the change of words $\cM_\h$ consists of 42 mutations. After comparing the quiver, we found that the basic quiver $Q$ is obtained by taking $Q_+$ and adjoining the following arrows:
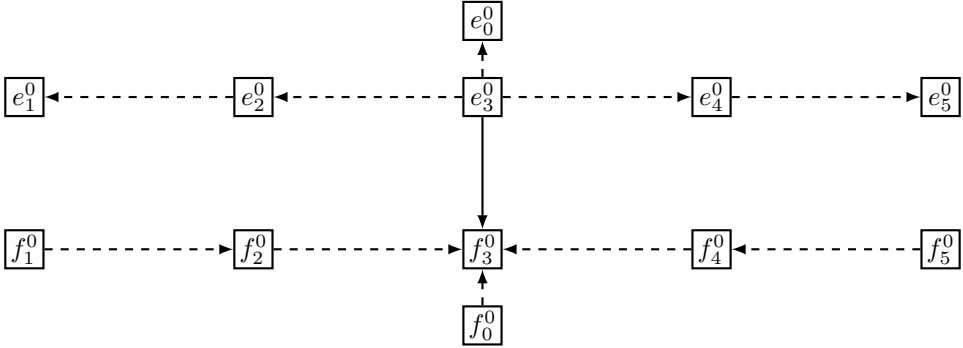
\begin{figure}[H]
\centering
\begin{tikzpicture}[every node/.style={inner sep=0, minimum size=0.5cm, thick, fill=white},x=3cm,y=1cm]
\node (0) at (2,3)[draw] {$e_0^0$};
\node (1) at (0,2)[draw] {$e_1^0$};
\node (2) at (1,2)[draw] {$e_2^0$};
\node (3) at (2,2)[draw] {$e_3^0$};
\node (4) at (3,2)[draw] {$e_4^0$};
\node (5) at (4,2)[draw] {$e_5^0$};
\node (6) at (2,-1)[draw] {$f_0^0$};
\node (7) at (0,0)[draw] {$f_1^0$};
\node (8) at (1,0)[draw] {$f_2^0$};
\node (9) at (2,0)[draw] {$f_3^0$};
\node (10) at (3,0)[draw] {$f_4^0$};
\node (11) at (4,0)[draw] {$f_5^0$};
\drawpath{7,8,9}{thick, dashed}
\drawpath{6,9}{thick, dashed}
\drawpath{11,10,9}{thick,dashed}
\drawpath{3,9}{thick}
\drawpath{3,2,1}{thick, dashed}
\drawpath{3,4,5}{thick, dashed}
\drawpath{3,0}{thick, dashed}
\end{tikzpicture}
\caption{Additional arrows attaching to $Q_+$ to give $Q$ in type $E_6$.}
\end{figure}

\textbf{Type $E_7$.}  The change of words $\cM$ for
$$\bi=(3\;43\;034\;230432\;12340321\;5432103243054321\;654320345612345034230123456)$$
consists of $336$ mutations. By comparing $\cM(Q)$ and $\til{Q}$, the basic quiver $Q$ is found to be obtained by taking $Q_+$ and adjoining the following arrows:
\begin{figure}[H]
\centering
\begin{tikzpicture}[every node/.style={inner sep=0, minimum size=0.5cm, thick, fill=white},x=2cm,y=1cm]
\node (0) at (2,3)[draw] {$e_0^0$};
\node (1) at (0,2)[draw] {$e_1^0$};
\node (2) at (1,2)[draw] {$e_2^0$};
\node (3) at (2,2)[draw] {$e_3^0$};
\node (4) at (3,2)[draw] {$e_4^0$};
\node (5) at (4,2)[draw] {$e_5^0$};
\node (6) at (5,2)[draw] {$e_6^0$};
\node (7) at (2,-1)[draw] {$f_0^0$};
\node (8) at (0,0)[draw] {$f_1^0$};
\node (9) at (1,0)[draw] {$f_2^0$};
\node (10) at (2,0)[draw] {$f_3^0$};
\node (11) at (3,0)[draw] {$f_4^0$};
\node (12) at (4,0)[draw] {$f_5^0$};
\node (13) at (5,0)[draw] {$f_6^0$};
\drawpath{8,9,10}{thick, dashed}
\drawpath{7,10}{thick, dashed}
\drawpath{13,12,11,10}{thick,dashed}
\drawpath{3,10}{thick}
\drawpath{3,2,1}{thick, dashed}
\drawpath{3,0}{thick, dashed}
\drawpath{3,4,5,6}{thick,dashed}
\end{tikzpicture}
\caption{Additional arrows attaching to $Q_+$ to give $Q$ in type $E_7$.}
\end{figure}
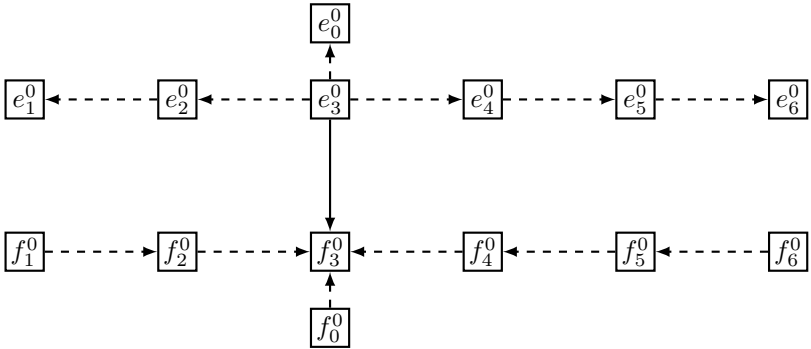
\textbf{Type $E_8$.}  The change of words $\cM$ for
\Eqn{
\bi=&(3\;43\;034\;230432\;12340321\;5432103243054321\;654320345612345034230123456\\
&765432103243546503423012345676543203456123450342301234567)
}
consists of $1120$ mutations. By comparing $\cM(Q)$ and $\til{Q}$, the basic quiver $Q$ is found to be obtained by taking $Q_+$ and adjoining the following arrows:
\begin{figure}[H]
\centering
\begin{tikzpicture}[every node/.style={inner sep=0, minimum size=0.5cm, thick, fill=white},x=2cm,y=1cm]
\node (0) at (2,3)[draw] {$e_0^0$};
\node (1) at (0,2)[draw] {$e_1^0$};
\node (2) at (1,2)[draw] {$e_2^0$};
\node (3) at (2,2)[draw] {$e_3^0$};
\node (4) at (3,2)[draw] {$e_4^0$};
\node (5) at (4,2)[draw] {$e_5^0$};
\node (6) at (5,2)[draw] {$e_6^0$};
\node (7) at (6,2)[draw] {$e_7^0$};
\node (8) at (2,-1)[draw] {$f_0^0$};
\node (9) at (0,0)[draw] {$f_1^0$};
\node (10) at (1,0)[draw] {$f_2^0$};
\node (11) at (2,0)[draw] {$f_3^0$};
\node (12) at (3,0)[draw] {$f_4^0$};
\node (13) at (4,0)[draw] {$f_5^0$};
\node (14) at (5,0)[draw] {$f_6^0$};
\node (15) at (6,0)[draw] {$f_7^0$};
\drawpath{9,10,11}{thick, dashed}
\drawpath{8,11}{thick, dashed}
\drawpath{15,14,13,12,11}{thick,dashed}
\drawpath{3,2,1}{thick, dashed}
\drawpath{3,4,5,6,7}{thick, dashed}
\drawpath{3,0}{thick, dashed}
\drawpath{3,11}{thick}
\end{tikzpicture}
\caption{Additional arrows attaching to $Q_+$ to give $Q$ in type $E_8$.}
\end{figure}
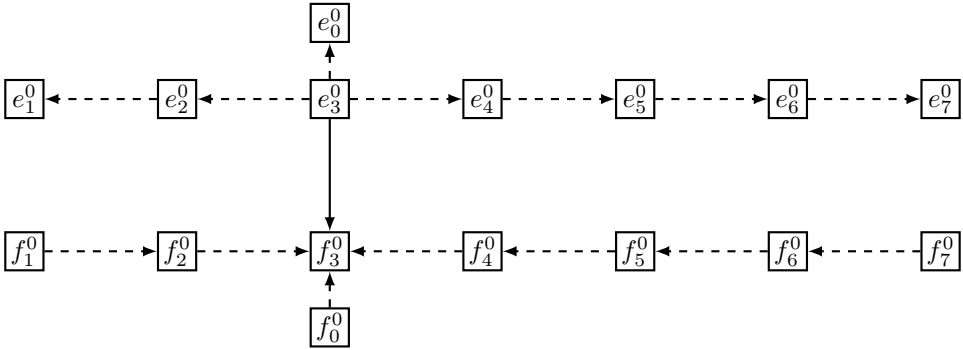
\textbf{Type $F_4$.} The change of words $\cM$ for
$$\bi=(3232\;12321\;432312343213234)$$
consists of 32 simply-laced mutations and 18 doubly-laced mutations, hence it corresponds to $32+3\x 18=86$ quiver mutations. The resulting basic quiver is obtained by adjoining to $Q_+$ the following arrows:
\begin{figure}[H]
\centering
\begin{tikzpicture}[every node/.style={inner sep=0, minimum size=0.5cm, thick, fill=white},x=2cm,y=2cm]
\node (1) at (0,1)[draw] {$e_1^0$};
\node (2) at (1,1)[draw] {$e_2^0$};
\node (3) at (2,1)[draw] {$e_3^0$};
\node (4) at (3,1)[draw] {$e_4^0$};
\node (5) at (0,0)[draw] {$f_1^0$};
\node (6) at (1,0)[draw] {$f_2^0$};
\node (7) at (2,0)[draw] {$f_3^0$};
\node (8) at (3,0)[draw] {$f_4^0$};
\drawpath{3,2,1}{vthick, dashed}
\drawpath{3,4}{thin, dashed}
\drawpath{5,6,7}{vthick, dashed}
\drawpath{8,7}{thin, dashed}
\drawpath{3,7}{thin}
\end{tikzpicture}
\caption{Additional arrows attaching to $Q_+$ to give $Q$ in type $F_4$.}
\end{figure}
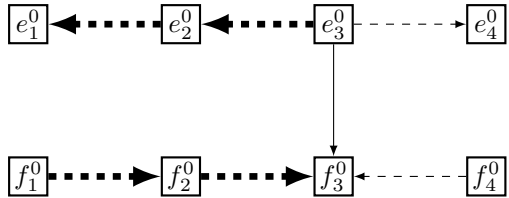

\textbf{Type $G_2$.}
Finally, the change of words $\cM$ for $$\bi=(2,1,2,1,2,1)\to (1,2,1,2,1,2)$$
is described in Section \ref{sec:mutation:G2}, which consists of 11 or 12 quiver mutations. The basic quiver $Q$ can be presented as follows for the two cases in Figure \ref{G2quiver}.
\begin{subfigures}
\begin{figure}[!htb]
\centering
\begin{tikzpicture}[every node/.style={inner sep=0, minimum size=0.5cm, thick, fill=white},x=2cm,y=2cm]
\node (1) at (0,1)[draw] {$f_1^0$};
\node (2) at (1,1)[draw] {$f_1^1$};
\node (3) at (2,1)[draw] {$f_1^2$};
\node (4) at (3,1)[draw] {$f_1^3$};
\node (5) at (0,0)[draw] {$f_2^0$};
\node (6) at (1,0)[draw] {$f_2^1$};
\node (7) at (2,0)[draw] {$f_2^2$};
\node (8) at (3,0)[draw] {$f_2^3$};
\node (9) at (2,2)[draw] {$e_1^0$};
\node (10) at (1,-1)[draw] {$e_2^0$};
\node (empty) at (4,0.45)[minimum size=0cm]{};
\drawpath{8,3,7,2,6,1,2,3,4,9,3}{vthick}
\drawpath{6,10,5,6,7,8}{thin}
\drawpath{4,8}{vthick, dashed}
\drawpath{1,5}{vthick, dashed}
\path (10) edge[-, vthick, dashed, out=0, in=270] (empty);
\path (empty) edge[->, vthick, dashed, out=90, in=0] (9);
\end{tikzpicture}
\caption{Basic quiver $Q$ for $w_0=s_2s_1s_2s_1s_2s_1$.}\label{G2basicquiver}
\end{figure}
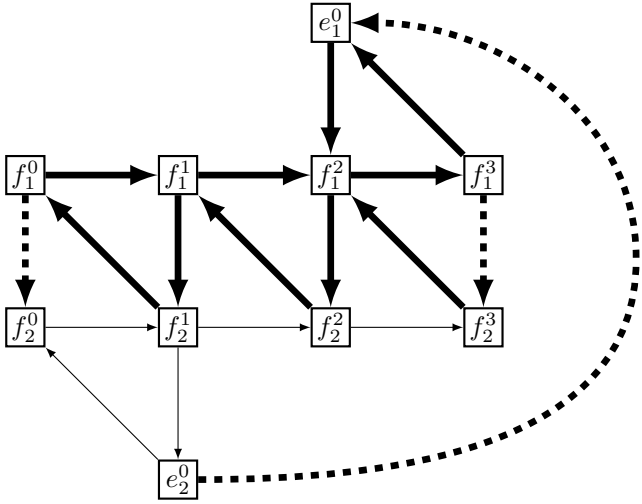
\begin{figure}[!htb]
\centering
\begin{tikzpicture}[every node/.style={inner sep=0, minimum size=0.5cm, thick, fill=white},x=2cm,y=2cm]
\node (1) at (0,1)[draw] {$f_2^0$};
\node (2) at (1,1)[draw] {$f_2^1$};
\node (3) at (2,1)[draw] {$f_2^2$};
\node (4) at (3,1)[draw] {$f_2^3$};
\node (5) at (0,0)[draw] {$f_1^0$};
\node (6) at (1,0)[draw] {$f_1^1$};
\node (7) at (2,0)[draw] {$f_1^2$};
\node (8) at (3,0)[draw] {$f_1^3$};
\node (9) at (2,2)[draw] {$e_2^0$};
\node (10) at (1,-1)[draw] {$e_1^0$};
\node (empty) at (4,0.45)[minimum size=0cm]{};
\drawpath{1,2,3,4,9,3}{thin}
\drawpath{6,10,5,6,7,8,3,7,2,6,1}{vthick}
\drawpath{4,8}{vthick, dashed}
\drawpath{1,5}{vthick, dashed}
\path (10) edge[-, vthick, dashed, out=0, in=270] (empty);
\path (empty) edge[->, vthick, dashed, out=90, in=0] (9);
\end{tikzpicture}
\caption{Basic quiver for $w_0=s_1s_2s_1s_2s_1s_2$, i.e. $\cM(Q)$.}
\end{figure}
\label{G2quiver}
\end{subfigures}
This is identical to the $G_2$ quiver found in \cite{Le2}. In particular, it demonstrates the Langland's duality of the change of short and long roots as a change of weights of the quivers in the diagram.　Also as mentioned before, $\cM(Q)$ is a mirror image of $Q$ with all arrows flipped as desired.

This completes the proof of the Theorem.
\end{proof}

\begin{Cor} The basic quiver is mutation equivalent to the $Conf_3 \cA_G$ quiver for $G$ of type $A_n,B_n,C_n,D_n,G_2$ described in \cite{Le1,Le2}.
\end{Cor}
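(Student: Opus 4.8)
The plan is to argue type by type, exploiting the fact that Le's $Conf_3\cA_G$ quiver and our basic quiver $Q$ are built from the \emph{same} combinatorial data. Recall from the introduction that both constructions amalgamate elementary quivers attached to the simple reflections appearing in a reduced word of $w_0$; the only difference is that Le distributes these elementary pieces along the three sides of a triangle while respecting the $S_3$-action permuting the three principal flags, whereas we obtained $Q$ by fixing the frozen data on the triangle $ABC$ as in Figure \ref{quiverQ}. My first step would therefore be to recall Le's quiver explicitly in each relevant type and to record that its mutable (``interior'') subquiver is precisely an amalgamation of the elementary quivers $Q_i^k$ of Section \ref{sec:quiver:elementary} along the chosen reduced word.

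The conceptual core is then to compare the two quivers in two stages. First, the mutable subquiver $Q_+$ on the nodes $\{f_i^k\}$ agrees with the interior of Le's quiver, since by Corollary \ref{FikCor} both are governed by the same rule for amalgamating elementary quivers; any discrepancy in the reduced word used by Le is absorbed into mutation equivalence, because a change of reduced expression of $w_0$ is realized by an explicit sequence of quiver mutations (Section \ref{sec:mutation}). Second, the arrows among the frozen nodes $\{e_i^0, f_i^0\}$ are, by the proof of the preceding uniqueness theorem, forced by the conditions $\cM(Q) = \til{Q}$ and $Q = \cM_\h(Q)$ of Definition \ref{basicquiver}; these are exactly the constraints imposed on Le's quiver by the $S_3$-symmetry of the triangle together with the Dynkin involution $\h$. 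Matching these symmetry constraints term by term identifies the frozen parts as well.

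Carrying this out, the individual cases are of uneven difficulty. In type $G_2$ the identification is immediate: the basic quiver computed in Figure \ref{G2quiver} is literally the quiver of \cite{Le2}, as already noted in the remark of Section \ref{sec:mutation:G2}. In type $A_n$ the basic quiver is the Fock--Goncharov $n$-triangulation quiver with its $\Z_3$ symmetry (Figure \ref{basicAn}), which is exactly Le's $Conf_3\cA_{SL_{n+1}}$ quiver. For types $B_n, C_n, D_n$ I would exhibit the explicit isomorphism between the frozen-arrow figures drawn in the proof of the uniqueness theorem and Le's published quivers, using the folding that relates the $D_n$-quiver to the $B_{n-1}$-quiver to treat the three cases uniformly; where the conventions do not match on the nose, a short mutation sequence supplies the equivalence.

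The main obstacle is the bookkeeping for $B_n, C_n, D_n$: Le's coordinates on $Conf_3\cA_G$ and our cluster variables $X_{e_i^0}, X_{f_i^0}$ are indexed by different conventions, and the long/short-root weights (dashed versus thick arrows) must be tracked through any mutation used to reconcile them. In particular one must verify that Le's quiver is genuinely an amalgamation of the \emph{same} elementary quivers --- including the doubly-laced pieces of Section \ref{sec:mutation:doubly} --- rather than merely a mutation-equivalent cousin, so that the comparison of mutable subquivers is an honest isomorphism and only the frozen arrows require the symmetry argument.
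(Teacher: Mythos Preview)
Your proposal contains the essential ingredient --- that a change of reduced word for $w_0$ is realized by an explicit sequence of quiver mutations (Section \ref{sec:mutation}) --- but it is considerably more elaborate than what the paper actually does, and the extra structure you build is not needed.

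The paper's argument is a single line: Le's $Conf_3\cA_G$ quivers are attached to the reduced word $w_0 = w_c^{h/2}$ with $w_c$ the Coxeter element and $h$ the Coxeter number, while our basic quiver $Q$ is attached to the reduced words $\bi$ chosen in Section \ref{sec:embedding}. One applies the change-of-word mutation sequences of Section \ref{sec:mutation} to pass from $\bi$ to $w_c^{h/2}$ and then compares the resulting quiver with Le's published one directly. That is the whole proof.

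By contrast, you attempt a structural matching in two layers: first the mutable subquiver via the elementary-quiver description, then the frozen arrows via the symmetry constraints $\cM(Q)=\til Q$ and $Q=\cM_\h(Q)$. The second layer is where your argument becomes circuitous. You assert that ``these are exactly the constraints imposed on Le's quiver by the $S_3$-symmetry,'' but that is not something you can read off: Le's $S_3$-action is formulated for \emph{his} reduced word, not ours, and translating his symmetry into the conditions of Definition \ref{basicquiver} already requires passing through the change-of-word mutations. Once you have done that, you are back at the paper's direct comparison and the uniqueness theorem is no longer doing any work. So your route is not wrong, but it loops through machinery that the one-step argument bypasses; what it would buy is a more conceptual explanation of \emph{why} the two quivers must agree, at the cost of having to justify that Le's construction genuinely satisfies the abstract axioms of Definition \ref{basicquiver}, which is not immediate.
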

\begin{proof} The $Conf_3 \cA_G$ quivers described in \cite{Le1, Le2} correspond to the words $w_0=w_c^\frac{h}{2}$ where $w_c$ is the Coxeter element and $h$ the Coxeter number. Hence one can check directly the mutation sequence from the change of words of $w_0$ and compare the quivers.
\end{proof}

\begin{Rem} As we can see, the arrows joining the nodes $\{e_i^0\}$ and the arrows joining the nodes $\{f_{\h(i)}^0\}$ turns out to be opposite to each other. This should reflect some internal symmetries of the moduli spaces $Conf_3 \cA_G$ of the configurations of triples of principal flags, and one should be able to find a more conceptual way to fix the basic quiver. We believe that this uniqueness theorem can be used to solve the series of conjectures regarding the uniqueness of the cluster structure proposed in Section 3 of \cite{Le2}.
\end{Rem}

The mutation $\cM$ corresponds to the transposition interchanging the sides $AC$ and $BC$ of the triangle where $\{f_i^0\}$ and $\{f_i^{n_i}\}$ are attached respectively (cf. Figure \ref{quiverQ}). On the other hand, in order to realize $S_3$ symmetry, we also want a mutation sequence corresponding to transposition of sides $AB$ and $AC$, where $\{e_{\h(i)}^0\}$ and $\{f_i^0\}$ are attached respectively. Also note that $f_i^{n_i}=:e_i^{-m_i}$ in the quantum group embedding. Hence such mutation should correspond to the longest Lusztig's transformation (see Definition \ref{LusIso}):
\Eqn{
T_{i_1}T_{i_2}...T_{i_N}(\be_i)&= q_i\bf_{\h(i)}K_{\h(i)}\inv\\
T_{i_1}T_{i_2}...T_{i_N}(\bf_i)&= q_i\be_{i}K_{i}\\
}
In \cite{Ip4}, we showed that these transformations $T_i$ are represented by certain unitary transformation given by conjugation of the Weyl elements. Hence we conjecture that 
\begin{Con}\label{EFflip} The Lusztig's isomorphisms $T_i$ are represented by a sequence of quiver mutations.
\end{Con}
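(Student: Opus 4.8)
The plan is to upgrade the statement, already established in \cite{Ip4}, that each Lusztig isomorphism $T_i$ is realized on the positive representation $\cP_\l$ by conjugation by a unitary ``Weyl element'' operator $\mathbf{w}_i$, into the sharper assertion that this conjugation is an honest quantum cluster transformation of $\cD_\g$. Concretely, I would first recall the explicit form of $\mathbf{w}_i$ from \cite{Ip4}: it is a finite product of quantum dilogarithm functions $g_b(\cdot)$ whose arguments are positive self-adjoint monomials in the operators $e(\pm u), e(\pm 2p)$, hence monomials in the cluster variables $X_j\in\cD_\g$. Passing to the formal algebraic setting via the correspondence $g_b(x)\sim\Psi^q(x)^{-1}$ of Remark \ref{qd}, conjugation by $\mathbf{w}_i$ becomes an automorphism of the noncommutative fraction field $\bT_\bi$ that is manifestly of the shape ``monomial change of variables, intertwined with a sequence of $\mathrm{Ad}_{g_b^*(\cdot)}$ conjugations.''

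The second, and decisive, step is to recognize this shape as the output of Lemma \ref{useful}. That lemma says precisely that a mutation sequence $\mu_{i_1}^q\cdots\mu_{i_k}^q$ factors as $\Phi_k\circ M_k$, where $M_k$ is a monomial (tropical) transformation assembled from the $\mu'$-maps and $\Phi_k$ is a product of conjugations $\mathrm{Ad}_{g_b^*(\cdot)}$ whose arguments are the cluster monomials encountered along the mutation path. The conjecture is therefore equivalent to exhibiting, for each $i$ and each reduced word $\bi$, a specific mutation path whose associated $\Phi_k$ reproduces the dilogarithm factors of $\mathbf{w}_i$ and whose $M_k$, composed with a seed permutation, reproduces the monomial part. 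Because $T_i$ acts locally around the node $i$, I expect to reduce to the case where $\bi$ begins with $s_i$ by first applying the change-of-word mutations of Section \ref{sec:mutation} (whose realization as mutations is already the content of the Corollaries there), and then to read off the remaining mutation directions from the geometric picture: $T_i$ should implement an elementary ``rotation'' of the first elementary quiver $Q_i^k$ across the triangle $ABC$ of Figure \ref{quiverQ}, so that the full product $T_{i_1}\cdots T_{i_N}$ becomes the transposition of the sides $AB$ and $AC$ realizing the $S_3$-symmetry discussed in the Remark above.

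A complementary route, useful both as a consistency check and as an inductive scaffold, is the factorization $T_{w_0}=T_i\,T_{s_iw_0}$ coming from a reduced expression of $w_0$ beginning with $s_i$. Granting that the longest transformation $T_{w_0}=T_{i_1}\cdots T_{i_N}$ is a mutation sequence (this is the analytic content of the $\be\leftrightarrow\bf$ swap $T_{i_1}\cdots T_{i_N}(\be_i)=q_i\bf_{\h(i)}K_{\h(i)}^{-1}$ recorded before the conjecture, which one would first establish directly at the level of $\cD_\g$), one peels off a single $T_i$ at a time by induction on length, provided the partial product $T_{s_iw_0}$ can likewise be realized by mutations. I would test the whole scheme explicitly in rank two, where the $G_2$ mutation list of Section \ref{sec:mutation:G2} already furnishes a worked instance of such a product.

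The main obstacle I anticipate is the matching in the second step: although conjugation by $\mathbf{w}_i$ visibly preserves the ambient quantum torus $\cD_\g$, showing that it preserves the \emph{cluster} structure — i.e. that its monomial part is a genuine composition of $\mu'$-maps followed by a seed permutation, and not merely some automorphism of $\cD_\g$ unrelated to any mutation sequence — is exactly the delicate point. This amounts to verifying that every dilogarithm argument occurring in $\mathbf{w}_i$ is a cluster monomial of the appropriate intermediate seed, and that the mutation directions are globally consistent across the whole quiver. In the exceptional types the $E_i$-paths are intricate (cf. Figures~\ref{fig-E6}--\ref{fig-F4}), so a uniform, type-free argument organized around the elementary quivers and their amalgamation — rather than a case-by-case verification — will ultimately be required.
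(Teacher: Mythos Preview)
The statement you are attempting to prove is labeled in the paper as a \emph{Conjecture}, not a theorem; the paper offers no proof whatsoever. The surrounding text explicitly reads ``Hence we conjecture that'' before the statement, and the subsequent paragraph explains only what a proof \emph{would} yield (a representation-theoretic meaning for the mutation sequences in \cite{Le1,Le2} and a resolution of the $S_3$-symmetry conjecture for $E_n$ and $F_4$). There is therefore nothing in the paper to compare your proposal against.

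As for the proposal itself: it is a reasonable research outline, but it is not a proof, and you correctly identify this yourself in the final paragraph. The observation that $T_i$ is realized by conjugation by a Weyl element $\mathbf{w}_i$ built from quantum dilogarithms is already in \cite{Ip4} and is precisely the evidence the paper cites \emph{for making} the conjecture. The entire content of the conjecture is the step you flag as ``the main obstacle'': showing that the dilogarithm arguments in $\mathbf{w}_i$ are cluster monomials of intermediate seeds and that the monomial part is a genuine composition of $\mu'$-maps followed by a seed permutation. Your proposal does not supply any mechanism for this step beyond the hope that locality near node $i$ and reduction to rank two will suffice; but the exceptional-type complications you mention (and the fact that even in classical types the paper defers to \cite{Le1,Le2} for the explicit sequences realizing the $AB\leftrightarrow AC$ transposition) indicate that this is exactly where the real work lies. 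In short, you have accurately restated the problem and sketched what a solution would have to accomplish, but the decisive idea is still missing --- which is consistent with the paper's own assessment that this remains open.
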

This will also give a representation theoretic meaning of the mutation sequences found explicitly for type $A_n,B_n,C_n,D_n$ \cite{Le1} and $G_2$ \cite{Le2}, as well as proving the conjecture of $S_3$ symmetry regarding type $E_n$ and $F_4$ proposed in \cite{Le2}. 

\begin{Rem}In the product formula of $R$ described in the next section, the transformations $T_i$ generate the split-real version of the so-called quantum Weyl group introduced in \cite{LS1}, which is a byproduct of the representation theory of the quantized algebra of functions on $G$, and is based on a choice of ``good generators" for certain representations of the quantized enveloping algebra. Through this conjecture, it will be interesting to recast the concept of quantum Weyl group into the language of cluster transformations. We thank Yan Soibelman for the remarks.
\end{Rem}
\section{Factorization of the $R$-matrix}\label{sec:R}
In this section, we will prove a factorization formula for the universal $R$ matrix such that it is expressed in terms of products of quantum dilogarithms, with the arguments given by monomials of the quantum cluster variables. This generalizes the factorization in type $A_n$ found in \cite{SS2}, which in turn is a generalization of the factorization given in \cite{Fa2} for $\cU_q(\sl_2)$, where it has been used to construct new continuous braided tensor category of representations of $\cU_q(\sl(2,\R))$ \cite{BT, PT1, PT2}.

\subsection{Positive Lusztig's isomorphism}\label{sec:R:Lusztig}
First we recall the positive version of Lusztig's isomorphism giving the expression of non-simple root generators:
\begin{Def} \cite{Ip4}\label{LusIso} We define the ``positive version" of Lusztig's isomorphism on the simple generators by:
\Eqn{
T_i(K_j)&:=K_jK_i^{-a_{ij}},\\
T_i(\be_i)&:=q_i f_iK_i\inv,\\
T_i(\be_j)&:=(-1)^{a_{ij}}\left[\left[\be_i,...[\be_i,\be_j]_{q_i^{\frac{a_{ij}}{2}}}\right]_{q_i^{\frac{a_{ij}+2}{2}}}...\right]_{q_i^{\frac{-a_{ij}-2}{2}}}\prod_{k=1}^{-a_{ij}}(q_i^k-q_i^{-k})\inv,\\
T_i(\bf_i)&:=q_i \be_i K_i,\\
T_i(\bf_j)&:=(-1)^{a_{ij}}\left[\left[\bf_i,...[\bf_i,\bf_j]_{q_i^{\frac{a_{ij}}{2}}}\right]_{q_i^{\frac{a_{ij}+2}{2}}}...\right]_{q_i^{\frac{-a_{ij}-2}{2}}}\prod_{k=1}^{-a_{ij}}(q_i^k-q_i^{-k})\inv,
}
where
$$[X,Y]_q:=qXY-q\inv YX.$$
\end{Def}
\begin{Prop} \cite{Ip4}
Let $\bi=(i_1,...,i_N)\in\fR$ be a reduced word. Let $$\be_{\a_k}:=T_{i_1}T_{i_2}...T_{i_{k-1}}(\be_{i_k})$$ and similarly for $\bf_{\a_k}$. Then $\be_{\a_k}$ and $\bf_{\a_k}$ are  positive self-adjoint operators under the positive representation $\cP_\l$ for every $k=1,...,N$.
\end{Prop}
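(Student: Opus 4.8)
The plan is to induct on $k$, reducing everything to the single structural fact that each Lusztig isomorphism $T_i$ acts on $\cP_\l$ by conjugation by a unitary operator. First I would dispose of the base case $k=1$: here $\be_{\a_1}=\be_{i_1}$ and $\bf_{\a_1}=\bf_{i_1}$ are positive essentially self-adjoint by the Positive Representation Theorem. As a sanity check that the ``positive version'' of $T_i$ in Definition \ref{LusIso} is calibrated correctly, I would record that the seed formula is already manifestly positive: from $K_i\bf_i=q_i^{-2}\bf_iK_i$ one gets
\Eq{
T_i(\be_i)=q_i\bf_iK_i\inv = K_i^{-\half}\bf_i K_i^{-\half},\nonumber
}
a conjugation of the positive self-adjoint operator $\bf_i$ by the positive self-adjoint operator $K_i^{-\half}$, and symmetrically $T_i(\bf_i)=q_i\be_iK_i=K_i^{\half}\be_iK_i^{\half}$.

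The key input I would invoke is the result of \cite{Ip4}: on $\cP_\l$ each $T_i$ is realized as $\cP_\l(T_i(X))=\bW_i\,\cP_\l(X)\,\bW_i^*$ for a unitary ``Weyl element'' $\bW_i$ built from quantum dilogarithms. Granting this, the inductive step is immediate, since conjugation by a unitary preserves both positivity of the spectrum and essential self-adjointness: writing
\Eq{
\cP_\l(\be_{\a_k}) = \bW_{i_1}\cdots \bW_{i_{k-1}}\,\cP_\l(\be_{i_k})\,\bW_{i_{k-1}}^*\cdots \bW_{i_1}^*,\nonumber
}
the right-hand side is positive self-adjoint because $\cP_\l(\be_{i_k})$ is, and the identical computation handles $\bf_{\a_k}$.

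The hard part, and where I expect the real work to sit, is the key input itself. One must produce the operators $\bW_i$ explicitly as unitaries on $L^2(\R^N)$ and check that $Ad_{\bW_i}$ reproduces the algebraic automorphism $T_i$ on \emph{all} generators — most delicately on the non-simple cases, where $T_i(\be_j)$ is given by the iterated $q$-bracket $(-1)^{a_{ij}}\big[\big[\be_i,\dots,[\be_i,\be_j]\big]\dots\big]\prod_{k}(q_i^k-q_i^{-k})\inv$, and it is not at all obvious a priori that the signs and $q$-number denominators conspire to yield a positive self-adjoint operator. Since the generators are unbounded, I would need to control these products and commutators on a common dense core (for instance the analytic vectors spanned by the monomials in $e^{\pm\pi b_ix_i}$ and $e^{\pm2\pi b_ip_i}$), and I would also need the parameter shift $\l\mapsto s_i\cdot\l$ to preserve the unitary equivalence class of $\cP_\l$, so that $T_i$ is genuinely inner rather than merely an intertwiner between inequivalent representations. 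All of these analytic points are settled in \cite{Ip4}; with them in hand, positivity simply propagates along the reduced word and the induction closes.
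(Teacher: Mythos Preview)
The paper does not supply its own proof of this Proposition: it is stated with the citation \cite{Ip4} and no argument follows. Your outline is correct and is in fact the approach of \cite{Ip4} that the paper relies on --- as confirmed later in Section~\ref{sec:basicquiver}, where the author remarks that ``these transformations $T_i$ are represented by certain unitary transformation given by conjugation of the Weyl elements'' --- so there is nothing to compare against beyond noting that you have correctly identified both the reduction (conjugation by unitaries preserves positive self-adjointness) and the location of the real work (the construction of the unitary $\bW_i$ and the verification that $Ad_{\bW_i}$ reproduces the algebraic $T_i$, carried out in \cite{Ip4}).
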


\subsection{Coproduct of $\fD_\g$ and the $\cZ_\g$-quiver}\label{sec:R:coproduct}
The coalgebra structure of $\cU_q(\g)$ can naturally be represented by amalgamation of two $\cD_\g$ quivers, associated to triangulations of a disk with two punctures and two marked points on the boundary:
\begin{figure}[H]
\centering
\begin{tikzpicture}[every node/.style={draw,thick,circle,inner sep=0pt}, x=0.6cm,y=0.6cm]
\draw [vthick] (0,0) circle (2);
\node (a) at (1,0) [minimum size=8]{};
\node (d) at (-1,0) [minimum size=8]{};
\node (b) at (0,2) [minimum size=8, fill=black]{};
\node (c) at (0,-2) [minimum size=8, fill=black]{};
\path (a) edge[-, very thick] (b);
\path (a) edge[-, very thick] (c);
\path (d) edge[-, very thick] (b);
\path (d) edge[-, very thick] (c);
\path (b) edge[-, very thick] (c);
\end{tikzpicture}
\caption{Triangulation of a disk with two punctures and two marked points.}
\end{figure}
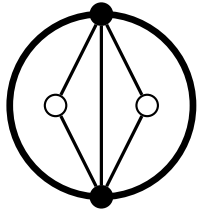

\begin{Def} The $\cZ_\g$ quiver is obtained by amalgamating two $\cD_\g$-quivers, where the frozen nodes $f_i^{n_i}$ of the first quiver is identified with $f_i^{-n_i}$ of the second quiver. For simplicity, we will denote the vertices of the second $\cD_\g$-quiver by $\{{f'}_i^{-n_i}... {f'}_i^{n_i}, {e'}_i^0\}_{i\in I}$ such that $f_i^{n_i}={f'}_i^{-n_i}$ in $Z_\g$. We will also denote by $\cZ_\g$ the corresponding quantum torus algebra.
\end{Def}
Then one can easily observe the following
\begin{Prop} We have an embedding
\Eq{
(\iota\ox\iota) \circ \D: \fD_\g\to \cZ_\g\subset \cD_\g\ox \cD_\g}
where the coproduct $\D(\be_i)$ (resp. $\D(\bf_i)$) can be represented in the $Z_\g$-quiver by concatenating the $E_i$-path (resp. $F_i$-path) of the two $\cD_\g$ quivers and ignoring the last vertex. The coproduct $\D(K_i)$ (resp. $\D(K_i')$) is given by the product of the monomials along the $E_i$-paths of $\D(\be_i)$ (resp. $F_i$-paths of $\D(\bf_i)$).

The iterated coproduct $\D^n(X), X\in \fD_\g$ can be obtained by amalgamating $n+1$ copies of $\cD_\g$ in the same way.
\end{Prop}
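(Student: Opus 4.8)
The plan is to reduce the statement to a computation on the algebra generators, exploiting that $\iota$ (Theorem \ref{mainThm}), the coproduct $\D$, and the amalgamation map $\cX\to\cX_1\ox\cX_2$ are all algebra homomorphisms; hence so is $(\iota\ox\iota)\circ\D$, and no defining relation of $\fD_\g$ needs to be re-checked. It therefore suffices to show that the images of $\be_i,\bf_i,K_i,K_i'$ lie in $\cZ_\g$ and agree with the asserted path descriptions, and then to note injectivity. Injectivity is immediate: $\D$ is injective because the counit axiom gives $(\mathrm{id}\ox\e)\circ\D=\mathrm{id}$, while $\iota\ox\iota$ is injective since $\iota$ is and we are over a field, and the amalgamation embedding $\cZ_\g\hookrightarrow\cD_\g\ox\cD_\g$ is injective by construction. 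Once the generators are seen to map into the path-sums of the $\cZ_\g$-quiver, the whole image lands in the subalgebra $\cZ_\g$. So the only real content is to match the two summands of each coproduct with the two halves of a concatenated path.

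The key is the bookkeeping at the glued nodes. In $\cZ_\g$ the node $f_i^{n_i}$ of the first copy is identified with ${f'}_i^{-n_i}$ of the second, and the amalgamation embedding sends this single variable to $X_{f_i^{n_i}}\ox X_{{f'}_i^{-n_i}}$, while every unglued first-copy (resp. second-copy) variable goes to $X_\bullet\ox 1$ (resp. $1\ox X_\bullet$). Using $\D(\be_i)=1\ox\be_i+\be_i\ox K_i$ and Theorem \ref{mainThm} one computes
\Eqn{
(\iota\ox\iota)\circ\D(\be_i)=1\ox\iota(\be_i)+\iota(\be_i)\ox\iota(K_i).
}
In the second copy $\iota(\be_i)=X({f'}_i^{n_i},\dots)$ is the $E_i$-path sum truncated just before its last vertex ${f'}_i^{-n_i}$, so the first summand supplies exactly the partial path-monomials that remain inside the second copy. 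Crucially, $\iota(K_i)$ in the second copy is the \emph{full} $E_i$-path monomial $X_{{f'}_i^{n_i},\dots,{f'}_i^{-n_i}}$ including the glued vertex, so each term $X_{f_i^{n_i},\dots,f_i^a}\ox\iota(K_i)$ of the second summand is the amalgamation image of the concatenated monomial running from ${f'}_i^{n_i}$ through the glued node into the first copy up to $f_i^a$; since the glued variable splits as $X_{f_i^{n_i}}\ox X_{{f'}_i^{-n_i}}$, the boundary vertex is visited exactly once. Summing, the two terms assemble into $X$ of the concatenated $E_i$-path with the final vertex $f_i^{-n_i}$ omitted, as claimed. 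The case of $\bf_i$ is identical using $\D(\bf_i)=\bf_i\ox 1+K_i'\ox\bf_i$, with the roles of the two copies interchanged and $\iota(K_i')=X_{f_i^{-n_i},\dots,f_i^{n_i}}$ providing the $F_i$-path through the glued node. For the Cartan generators $\D(K_i)=K_i\ox K_i$ gives $\iota(K_i)\ox\iota(K_i)$, which by the same glued-node rule is the single monomial along the \emph{entire} concatenated $E_i$-path (now including its last vertex), and likewise $K_i'$ with the $F_i$-path.

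The step I expect to require the most care is checking that the central $q$-power in the notation $X_{i_1,\dots,i_n}$ of Definition \ref{Xnotation} is respected by this splitting, i.e. that the normalization of a concatenated monomial in $\cZ_\g$ factors as the product of the normalizations of its first- and second-copy portions. Here the precise definition of amalgamation does the work: unglued variables of different copies commute after the embedding, so the only cross contribution to the $q$-ordering passes through the glued node, and that contribution is exactly the one absorbed into $\iota(K_i)$ (resp. $\iota(K_i')$) by including the glued vertex. Because the amalgamation map is a $*$-algebra homomorphism, it automatically carries the self-adjoint normalized monomial $X_{i_1,\dots,i_n}$ of $\cZ_\g$ to the product of the corresponding normalized monomials in $\cD_\g\ox\cD_\g$, so the matching holds on the nose once the boundary identification is set up correctly. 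With the one-step statement in hand, the iterated case follows by induction using coassociativity: writing $\D^{n}=(\D\ox\mathrm{id}^{\ox(n-1)})\circ\D^{n-1}$ and applying the single-step result glues one further copy of $\cD_\g$ onto the chain at the shared frozen nodes, so that $\D^{n}(\be_i)$ and $\D^{n}(\bf_i)$ are represented by the $E_i$- and $F_i$-paths concatenated across all $n+1$ amalgamated copies.
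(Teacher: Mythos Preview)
Your proposal is correct and follows essentially the same approach as the paper's proof: both reduce to checking that the two summands of $\D(\bf_i)$ (or $\D(\be_i)$) correspond to the two halves of the concatenated path, with the Cartan factor $K_i'$ (or $K_i$) supplying the full path-monomial through the glued node. You carry out the $\be_i$ case while the paper sketches only $\bf_i$, and you add careful remarks on injectivity, the $q$-normalization at the glued vertex, and the inductive step for iterated coproducts---all of which the paper leaves implicit---but the core argument is the same.
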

\begin{proof} We will consider $\D(\bf_i)$, where the other statements are similar. Recall that 
$$\D(\bf_i)=\bf_i\ox 1+K_i'\ox \bf_i.$$
Then the first half of the $F_i$-path in $\cZ_\g$ is the $F_i$-path in the first $\cD_\g$ quiver, which gives the polynomial $\bf_i\ox 1$. On the other hand, the second half of the $F_i$-path in $\cZ_\g$ is obtained by multiplying the $F_i$-path in the second copy of $\cD_\g$ quiver, and the product of the first half of the $F_i$-path, which by definition represents $K_i'$. Hence combining it gives $K_i'\ox \bf_i$, and hence the concatenation of the $F_i$-path represents $\D(\bf_i)$ as desired.
\end{proof}


\subsection{Standard description of the universal $R$-matrix}\label{sec:R:standard}
Recall that the universal $R$-matrix of the quantum group $\cU_q(\g)$ is an element in certain completion of the tensor square 
\Eq{
\cR\in \cU_q(\g)\what{\ox} \cU_q(\g)
}
and it gives the braiding relation:
\Eq{\label{braiding}
\cR\D(X)=\D^{op}(X)\cR,\tab X\in \cU_q(\g)
}
In \cite{Ip4}, a natural expression of $R$ in the split real case is constructed. Given a reduced word $\bi=(i_1,...,i_N)\in\fR$, We have the well-known decomposition
\Eq{
\cR=\cK\ov{R}
}
Here the \emph{Cartan part} is given by
\Eq{
\cK=\prod_{ij} q_i^{(A\inv)_{ij} H_i\ox H_j}
}
where $A$ is the Cartan matrix, and formally we write $K_i=:q_i^{H_i}$. The \emph{reduced $R$-matrix} is given by
\Eq{
\ov{\cR}=\prod_{k=1}^N{}^{op} g_{b_{i_k}}(\be_{\a_k}\ox \bf_{\a_k})\label{Rmat}
}
where $\be_{\a_k}=T_{i_1}T_{i_2}...T_{i_{k-1}}\be_{i_k}$ and similarly for $\bf_{\a_k}$. The product $\Pi^{op}$ is taken with $k=1$ from the right.

Note that (see Remark \ref{qd}) if we write $g_{b}(x)=Exp_{q^{-2}}(-\frac{x}{q-q\inv})$, then \eqref{Rmat} coincides with the well-known formula \cite{KR, KT, LS1, LS2}. Also $\cR$ naturally extends to $\fD_\g$ by replacing $H_i\ox H_j$ in $\cK$ with $-H_i\ox H_j'$ instead, where $K_j'=:q_j^{H_j'}$.

The action of the Cartan part on $\fD_\g$ is easy to describe (see Section \ref{sec:Thm}). In particular, it describes a monomial transformation on the quantum torus algebra $\cX_\bi$, where $X_{f_i^{-n_i}}$ and $X_{f_i^{n_i}}$ on both $\cD_\g$ components of the $Z_\g$-quiver is modified, and this does not change the underlying quiver. Hence we will focus on studying the reduced $R$-matrix, which corresponds to certain quiver mutations.

\subsection{First factorization of the reduced $R$-matrix}\label{sec:R:firstfac}
Now we can state our second main result of the paper. Under the embedding $\iota\ox \iota:\fD_\g\ox\fD_\g\to \cD_\g\ox \cD_\g$, we have the following factorization of the reduced $R$-matrix, which generalizes the case of $\cU_q(\sl_2)$ first described by Faddeev \cite{Fa2}, as well as the type $A_n$ case by \cite{SS2}.

\begin{Thm}\label{main2} Let $\bi=(i_1,...,i_N)\in\fR$ be a reduced word. Let us rewrite the embedding of $\bf_i$ from Proposition \ref{Fi} as
\Eqn{
\bf_i&=F_i^{n_i,-}+...+F_i^{1,-}+F_i^{1,+}+...F_i^{n_i,+}\\
&=\sum_{i_k=i} X_k^- + \sum_{i_k=i} X_k^+,\tab k=1,...,N,
}
where 
\Eq{
X_{v(i,k)}^\pm := F_i^{k,\pm}.
}

Then under the embedding $\iota\ox\iota$, the reduced $R$ matrix factorization is given by
\Eq{\label{Rfac}
\ov{R}=&g_{b_{i_N}}(\be_{i_N}\ox X_N^+)...g_{b_{i_2}}(\be_{i_2}\ox X_2^+)g_{b_{i_1}}(\be_{i_1}\ox X_1^+)\cdot\nonumber\\
&g_{b_{i_1}}(\be_{i_1}\ox X_1^-)g_{b_{i_2}}(\be_{i_2}\ox X_2^-)... g_{b_{i_N}}(\be_{i_N}\ox X_N^-).
}
\end{Thm}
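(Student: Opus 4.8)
The plan is to begin from the standard product expression for the reduced $R$-matrix recalled in Section~\ref{sec:R:standard}, $\ov{\cR}=\prod_{k=1}^N{}^{op}g_{b_{i_k}}(\be_{\a_k}\ox\bf_{\a_k})$, and to convert it into the asserted form in three movements: splitting each factor into a positive and a negative half, reorganizing the $2N$ resulting factors into two blocks, and simultaneously replacing each Lusztig root vector $\be_{\a_k}$ on the first leg by the simple generator $\be_{i_k}$. The starting observation, which I would isolate as a lemma, is that under $\iota$ the Lusztig root vector attached to position $k$ embeds as precisely the two-term monomial $X_k^++X_k^-$ associated to the $k$-th letter of $\bi$; this is exactly the way the positive representation of Theorem~\ref{pos2} is assembled, each $\bf_i$ being the sum of its root-vector monomials.

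First I would split each factor. Since $\bf_{\a_k}$ embeds as $X_k^++X_k^-$, the argument of the $k$-th dilogarithm is $(\be_{\a_k}\ox X_k^+)+(\be_{\a_k}\ox X_k^-)$, and Proposition~\ref{Fi} gives $X_k^+X_k^-=q_{i_k}^2X_k^-X_k^+$, so that after tensoring $U:=\be_{\a_k}\ox X_k^+$ and $V:=\be_{\a_k}\ox X_k^-$ satisfy $UV=q_{i_k}^2VU$ (the first leg contributes the same factor $\be_{\a_k}^2$ on both sides). The $q$-exponential addition identity $g_b(U)g_b(V)=g_b(U+V)$ valid under $UV=q^2VU$ (Appendix~\ref{sec:dilog}) then yields $g_{b_{i_k}}(\be_{\a_k}\ox\bf_{\a_k})=g_{b_{i_k}}(\be_{\a_k}\ox X_k^+)\,g_{b_{i_k}}(\be_{\a_k}\ox X_k^-)$. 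For $\g=\sl_2$ there is a single factor and no Lusztig twist, so this already reproduces Faddeev's factorization \cite{Fa2}; in general it reduces the theorem to a statement about the ordered product of $2N$ half-factors.

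The heart of the argument is the reordering. After splitting, the $\prod^{op}$ produces the interleaved word $[+_N][-_N][+_{N-1}][-_{N-1}]\cdots[+_1][-_1]$, whereas the target separates the positive halves (in decreasing order of $k$) from the negative halves (in increasing order). I would move all negative half-factors to the right past the positive ones; the transpositions whose two arguments commute are free, and the remaining ones are resolved by the pentagon identity for $g_b$ together with the explicit commutation relations of Section~\ref{sec:quiver:rel}. The key point I would have to establish is that the cumulative effect of these pentagon moves on the first tensor leg is exactly to convert the Lusztig root vectors $\be_{\a_k}$ into the simple generators $\be_{i_k}$ of the $E_i$-paths, equivalently that sorting the $+$ and $-$ halves into the Feigin positive-Borel and the negative-Borel blocks intertwines with the realization of the $T_i$ by conjugations in $\cP_\l$ (cf. Definition~\ref{LusIso} and Lemma~\ref{useful}). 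This is the main obstacle: the factor-by-factor identities are false, since $\be_{\a_k}\neq\be_{i_k}$ for $k\ge2$, so the replacement of the first leg must be controlled globally through the braid-group conjugations, and the reordering and the untwisting must be shown to be compatible across all $k$ at once.

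Finally I would dispatch two remaining points. The Cartan part $\cK$ is a monomial transformation on $\cZ_\g$ that does not alter the underlying quiver, so it may be treated separately and does not interfere with the factorization of $\ov{\cR}$. The non-simply-laced types require additional bookkeeping, since there each Coxeter move is realized by several quantum dilogarithms (Sections~\ref{sec:mutation:doubly} and \ref{sec:mutation:G2}) and the $E_i$-paths carry the doubled ``starred'' terms; however, the commutation structure of the $\bf_i$ used in the splitting step is uniform across all types by Proposition~\ref{Fi}, so the only type-dependent work is confined to the reordering, which I would verify following the pattern of \cite{SS2} in type $A_n$.
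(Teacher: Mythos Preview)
Your approach is genuinely different from the paper's, and it has a real gap.

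The paper does \emph{not} start from the standard product $\ov{\cR}=\prod_k^{op} g_{b_{i_k}}(\be_{\a_k}\ox\bf_{\a_k})$ and manipulate it. Instead it denotes the right-hand side of \eqref{Rfac} by $\til R$ and verifies directly that $\cK\til R$ satisfies the braiding relations $\cK\til R\,\D(X)=\D^{op}(X)\,\cK\til R$ for $X=\be_i,\bf_i,K_i$. The $\bf_i$ case is an elementary telescoping computation (Lemmas \ref{sl2com} and \ref{Fbraid}): conjugating successively by the factors of $\til R$ walks $K_i'\ox Y_i^k$ to $K_i\ox Y_i^k$ one term at a time. The $\be_i$ case is handled first for $i=i_N$ by direct calculation (Lemma \ref{Ebraid}), and then extended to all $i$ by proving that the factorization $\til R$ is \emph{invariant under change of reduced word} (Lemma \ref{Rchangewords}), so one may always move the desired index to the far right. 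Equality with the universal $R$-matrix then follows because both satisfy the same braiding on every finite-dimensional representation (via the specialization \eqref{discreteL}) and have constant term $1$.

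Your route attempts to deduce \eqref{Rfac} by algebraic manipulation of the standard formula, and this runs into two obstacles you have not resolved. First, your ``starting observation'' that $\iota(\bf_{\a_k})=X_k^++X_k^-$ is not established anywhere in the paper and is not obvious: the Lusztig isomorphisms $T_i$ of Definition \ref{LusIso} are iterated $q$-commutators, and showing they collapse to a two-term monomial under $\iota$ is itself a nontrivial claim requiring proof. Second, and more seriously, you yourself identify the reordering-and-untwisting step as ``the main obstacle'' but offer no mechanism for it beyond invoking pentagon identities; you would need to show that the pentagon moves performed on the second leg to separate the $+$ and $-$ blocks are \emph{exactly} mirrored on the first leg by the conjugations realizing $T_{i_1}\cdots T_{i_{k-1}}$. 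That is essentially the content of Lemma \ref{Rchangewords}, but used in the opposite direction and simultaneously for all $k$, and you have not indicated how to control it. The paper's strategy sidesteps both difficulties: it never needs to know what $\iota(\bf_{\a_k})$ or $\iota(\be_{\a_k})$ look like, only the explicit embeddings of the \emph{simple} generators already computed in Section \ref{sec:embedding}.
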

We will prove the Theorem in Section \ref{sec:Thm}.

Since $\be_i=\Phi [u]e(-2p)\Phi^*$ for some unitary transformation by \eqref{unitrans}, and
$$[u]e(-2p)=e^{\pi b_i(u-2p)}+e^{\pi b_i(-u-2p)},$$
each $\be_i$ can also be split into
$$\be_i=\be_i^-+\be_i^+$$
such that $$\be_i^-\be_i^+=q_i^{-2} \be_i^+\be_i^-,$$ where $$\be_i^\pm := \Phi e^{\pi b_i(\mp u-2p)} \Phi^*.$$ Then we have
\begin{Cor}\label{CorR1234} Under the embedding $\iota\ox\iota$, the reduced $R$ matrix can also be factorized as
\Eq{\label{R1234}
\ov{R}=R_4\cdot R_3\cdot R_2\cdot R_1
}
where
\Eqn{
R_4=&g_{b_{i_N}}(\be_{i_N}^+\ox X_N^+)... g_{b_{i_2}}(\be_{i_2}^+\ox X_2^+)g_{b_{i_1}}(\be_{i_1}^+\ox X_1^+),\\
R_3=&g_{b_{i_N}}(\be_{i_N}^-\ox X_N^+)... g_{b_{i_2}}(\be_{i_2}^-\ox X_2^+)g_{b_{i_1}}(\be_{i_1}^-\ox X_1^+),\\
R_2=&g_{b_{i_1}}(\be_{i_1}^+\ox X_1^-)g_{b_{i_2}}(\be_{i_2}^+\ox X_2^-)... g_{b_{i_N}}(\be_{i_N}^+\ox X_N^-),\\
R_1=&g_{b_{i_1}}(\be_{i_1}^-\ox X_1^-)g_{b_{i_2}}(\be_{i_2}^-\ox X_2^-)... g_{b_{i_N}}(\be_{i_N}^-\ox X_N^-).
}
\end{Cor}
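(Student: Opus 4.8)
The plan is to deduce \eqref{R1234} directly from the factorization \eqref{Rfac} of Theorem \ref{main2} by splitting each dilogarithm factor into two and then regrouping. The only analytic input is the quantum-dilogarithm addition formula recorded in Appendix \ref{sec:dilog}: if $U,V$ satisfy $UV=q^2VU$ then $g_b(U)g_b(V)=g_b(U+V)$, which is the non-compact counterpart of the $q$-exponential identity $Exp_{q^{-2}}(A)Exp_{q^{-2}}(B)=Exp_{q^{-2}}(A+B)$ valid when $BA=q^{-2}AB$ (cf. Remark \ref{qd}). Applying it to a single factor, the split $\be_{i_k}=\be_{i_k}^-+\be_{i_k}^+$ together with $\be_{i_k}^-\be_{i_k}^+=q_{i_k}^{-2}\be_{i_k}^+\be_{i_k}^-$ gives, for $U=\be_{i_k}^+\ox X_k^{\pm}$ and $V=\be_{i_k}^-\ox X_k^{\pm}$, the relation $UV=q_{i_k}^2VU$, hence
\Eq{
g_{b_{i_k}}(\be_{i_k}\ox X_k^{\pm})=g_{b_{i_k}}(\be_{i_k}^+\ox X_k^{\pm})\,g_{b_{i_k}}(\be_{i_k}^-\ox X_k^{\pm}),
}
with the $+$ half always placed to the left of the $-$ half.

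Substituting this into both halves of \eqref{Rfac}, the first main step is purely formal: the $X^+$-block becomes the interleaved product $\prod_{k=N}^{1}\bigl(g_{b_{i_k}}(\be_{i_k}^+\ox X_k^+)g_{b_{i_k}}(\be_{i_k}^-\ox X_k^+)\bigr)$ and the $X^-$-block becomes $\prod_{k=1}^{N}\bigl(g_{b_{i_k}}(\be_{i_k}^+\ox X_k^-)g_{b_{i_k}}(\be_{i_k}^-\ox X_k^-)\bigr)$. Note the two blocks never interact, since \eqref{R1234} keeps all $X^+$-material to the left of all $X^-$-material exactly as \eqref{Rfac} does, so it suffices to work inside each block separately. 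Next I would move every $-$ factor to the right of all the $+$ factors within a block; because the relative order of the $+$ factors and of the $-$ factors is preserved, this produces precisely $R_4R_3$ from the $X^+$-block and $R_2R_1$ from the $X^-$-block. The regrouping therefore reduces to the two commutativity statements $\bigl[g_{b_{i_k}}(\be_{i_k}^-\ox X_k^+),\,g_{b_{i_l}}(\be_{i_l}^+\ox X_l^+)\bigr]=0$ for $l<k$, and $\bigl[g_{b_{i_k}}(\be_{i_k}^-\ox X_k^-),\,g_{b_{i_l}}(\be_{i_l}^+\ox X_l^-)\bigr]=0$ for $l>k$.

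The hard part, where I expect the real work to lie, is verifying these commutations. Since every argument is a simple tensor of monomials, two factors $g_b$ commute as soon as their arguments do, and $\be_{i_k}^-\ox X_k^{\pm}$ commutes with $\be_{i_l}^+\ox X_l^{\pm}$ exactly when the $q$-power produced on the $\be$-leg cancels the one produced on the $X$-leg, i.e. when $c+c'=0$ for the two $q$-commutation exponents. On the $X$-leg these exponents are controlled by Lemma \ref{FikLem} and Proposition \ref{Fi} (recall $X_k^{\pm}=F_{i_k}^{\,\cdot,\pm}$), while on the $\be$-leg they are controlled by the Cartan-involution-symmetric relations obtained from the embedding $\iota^w$ of Remark \ref{iw}. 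The task is to check that, under the ordering constraints $l<k$ (respectively $l>k$) imposed by the regrouping, the relevant positions $v(i,\cdot)$ always fall into the cancelling case of these Lemmas, so that the two exponents are exactly opposite and the commutator vanishes. This sign and order bookkeeping is the genuine obstacle; once it is in place, the corollary follows from the addition formula and the formal rearrangement above.
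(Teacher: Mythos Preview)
Your approach is essentially the same as the paper's: split each factor via $g_{b_{i_k}}(\be_{i_k}\ox X_k^{\pm})=g_{b_{i_k}}(\be_{i_k}^+\ox X_k^{\pm})g_{b_{i_k}}(\be_{i_k}^-\ox X_k^{\pm})$, work in each $X^{\pm}$-block separately, and commute all $\be^-$-factors past the $\be^+$-factors. The paper's proof is more direct on the one step you flag as the obstacle: rather than invoking $\iota^w$, it states the single relation $\be_i^+\be_j^-=q_i^{a_{ij}}\be_j^-\be_i^+$ (justified by conjugating via $\Phi$ to the rank~2 case), which together with Lemma~\ref{FikLem} on the $X$-leg makes the cancellation of $q$-powers automatic, so no further case analysis on $v(i,\cdot)$ is needed. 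Your appeal to $\iota^w$ for the $\be$-leg is a bit off --- the halves $\be_i^{\pm}$ are defined through the $\iota$-embedding and the unitary $\Phi$, not through $\iota^w$ --- but once you replace that by the rank~2 reduction the argument goes through exactly as you outlined.
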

\begin{proof}
Note that from the remark above, 
$$g_{b_{i_n}}(\be_{i_n}\ox X_n) = g_{b_{i_n}}(\be_{i_n}^+\ox X_n)g_{b_{i_n}}(\be_{i_n}^-\ox X_n).$$
Hence it suffices to show that we can arrange all the $\be_i^+$ to the left hand side of $\be_j^-$ in $R_1$ and $R_2$ (similarly for $R_3$ and $R_4$ to the right). This is equivalent to the statement:
$$\left[\be_{i_n}^-\ox X_n^-, \be_{i_m}^+\ox X_m^+\right] = 0, \tab n>m.$$
T2his follows from Lemma \ref{FikLem} and
$$\be_i^+\be_j^- = q_i^{a_{ij}} \be_j^-\be_i^+$$
by conjugating it to the rank 2 case.
\end{proof}
This simplies the proof of \cite[Theorem 7.4]{SS2} as well as generalizing it to arbitrary type.

\subsection{Full factorization of the reduced $R$ matrix}\label{sec:R:fullfac}
In order to realize the $R$ matrix factorization as certain quiver mutation sequences, we have to decompose the terms $g_{b_i}(\be_i^\pm\ox X_N^\pm)$ in the decomposition in Corollary \ref{CorR1234}. In other words, we have to decompose $g_{b_i}(\be_i)$. Then for the monomial terms that we obtain after the decomposition, we compare it with Lemma \ref{useful} in order to obtain the mutation sequence.

\begin{Prop}\label{Endecomp} For every generators $\be_i\in \cU_q(\g)$, consider the explicit embedding given in Section \ref{sec:embedding} for the chosen reduced word $\bi$. Then we can decompose $g_{b_i}(\be_i^\pm)$ into products of the form 
\Eq{
g_{b_i}(\be_i^\pm) = \prod g_b(X_{...}),
} (in type $G_2$ we also need $g_b^*$) where each argument is given by certain cluster monomials $X_{...}$.
\end{Prop}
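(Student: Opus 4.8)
The plan is to reduce the whole statement to a single quantum dilogarithm factorization identity applied to the monomial terms of the $E_i$-path. The basic input from Appendix \ref{sec:dilog} is the two-term identity: if $U,V$ are monomials with $UV=q_i^{-2}VU$, then $g_{b_i}(U)g_{b_i}(V)=g_{b_i}(U+V)$. Iterating it gives the factorization lemma I will rely on: if $S_1,\dots,S_m$ are monomials with $S_aS_b=q_i^{-2}S_bS_a$ for every $a<b$, then
$$g_{b_i}(S_1+\cdots+S_m)=g_{b_i}(S_1)g_{b_i}(S_2)\cdots g_{b_i}(S_m),$$
since at each stage $S_1$ commutes past the remaining partial sum $S_2+\cdots+S_m$ with exactly the factor $q_i^{-2}$ needed to peel off $g_{b_i}(S_1)$, after which one induces on $m$.

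The substance is to verify the hypothesis for $\be_i^\pm$. From Section \ref{sec:embedding}, $\be_i=X(m_1,\dots,m_k)=\sum_{j=1}^k S_j$ with $S_j:=X_{m_1,\dots,m_j}$ a cluster monomial. The analogue of Proposition \ref{Fi} for $\be_i$ (proved by the same commutation computation, or transported from $\bf_i$ through the Cartan involution of Remark \ref{iw}) shows that these partial sums satisfy $S_aS_b=q_i^{-2}S_bS_a$ for all $a<b$. I then locate the splitting inside this chain. Writing the initial term as $[u_i^k]e(-2p_i^k+\cdots)=E_i^-+E_i^+$, the insertion of the single variable $X_{e_i^0}=q_iE_i^+(E_i^-)^{-1}$ at one step of the path forces $S_l=E_i^-$ and $S_{l+1}=S_l X_{e_i^0}=E_i^+$ to be consecutive. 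Since $\be_i^-$ contains $E_i^-$ and $\be_i^+$ contains $E_i^+$, and since the relation $\be_i^-\be_i^+=q_i^{-2}\be_i^+\be_i^-$ of Corollary \ref{CorR1234} forces any such decomposition to be a prefix/suffix cut of the chain (a cross pair $S_aS_b$ $q_i^{-2}$-commutes only when $a<b$), the splitting must be
$$\be_i^-=S_1+\cdots+S_l,\qquad \be_i^+=S_{l+1}+\cdots+S_k.$$
Each half is a sub-chain, so the factorization lemma applies and yields $g_{b_i}(\be_i^-)=\prod_{j\le l}g_{b_i}(S_j)$ and $g_{b_i}(\be_i^+)=\prod_{j>l}g_{b_i}(S_j)$, every argument a genuine cluster monomial. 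This settles types $A_n$, $B_n$, $D_n$, $E_6$, $E_7$, and every generator without a ``stop''.

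It then remains to treat the blocks where the $E_i$-path fails to be a plain chain. These are of two kinds: the doublings $X(\dots,a,*b,\dots)$ carrying a coefficient $[2]_{q_s}$ (types $C_n$, $F_4$, $G_2$), and the node-revisits of the form $[A,B,A]$ (types $E_8$, $F_4$, $G_2$). For a $[2]_{q_s}$-block the three consecutive contributions combine as $X_{\dots,a}+[2]_{q_s}X_{\dots,a,b}+X_{\dots,a,b^2}=(P+Q)^2$ with $P:=X_{\dots,a}^{\half}$, $Q:=X_{\dots,a,b^2}^{\half}$ and $QP=q_s^2PQ$, so that the genuine cluster monomials $P^2=X_{\dots,a}$ and $Q^2=X_{\dots,a,b^2}$ obey a doubled commutation relation rather than the $q_i^2$-relation of a chain. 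For such a block I would invoke the corresponding doubled dilogarithm identity of Appendix \ref{sec:dilog}, which factors $g_{b_i}$ of the block into $g_{b_i}(X_{\dots,a})$ and $g_{b_i}(X_{\dots,a,b^2})$, the middle $[2]_{q_s}$-term being reproduced by the cross terms; a revisit block is handled by the analogous identity adapted to its expansion $\dots+X_{\dots,A}+X_{\dots,B}+X_{\dots,A,B}+\cdots$. After factoring each block I splice it back into the chain factorization above. In type $G_2$, where $q_s=q^{\frac13}$ and $\be_2$ runs through both a doubling and a revisit, the block identity must be expressed with a $g_b^*$ factor alongside the $g_b$ factors, which is the source of the $g_b^*$ in the statement.

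The routine part is the simply-laced and long-root chain, which collapses to the factorization lemma once the analogue of Proposition \ref{Fi} is in hand. The main obstacle I anticipate is the stop blocks: pinning down the precise doubled dilogarithm identities, checking the $q_s$-commutation of the half-monomials $P,Q$ both with each other and with their chain neighbours so that a block can be inserted without disturbing the rest of the product, and in the $G_2$ case determining exactly which factor becomes a $g_b^*$. These are the steps where the non-simply-laced combinatorics of Section \ref{sec:embedding} enter essentially.
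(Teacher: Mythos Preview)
Your broad strategy is the same as the paper's: peel off monomial terms from $\be_i^-$ (resp.\ $\be_i^+$) one at a time using the quantum-exponential identity \eqref{guv}, and for the anomalous blocks invoke the doubly-laced identity \eqref{gb2}. However, there are two genuine gaps.

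First, the blanket claim that the monomial summands $S_j$ of $\be_i$ always satisfy $S_aS_b=q_i^{-2}S_bS_a$ for $a<b$ is false, and the Cartan-involution argument you offer cannot establish it: $\iota^w$ of Remark~\ref{iw} is a \emph{different} embedding of $\fD_\g$ into $\cD_\g$, so transporting Proposition~\ref{Fi} through it tells you nothing about the commutation of the $E_i$-path monomials in the original embedding $\iota$. Concretely, your list of ``settled'' types is wrong for $B_n$: the generator $\be_1$ is short ($q_1=q_s$), but its $E_1$-path passes through the long-root nodes $f_2^{2k-1}$, and at those steps the consecutive monomials $q_l^2$-commute rather than $q_s^2$-commute. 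The paper handles this by inserting the extra factors coming from \eqref{gb2}, obtaining
\[
g_{b_s}(\be_1^-)=\cdots g_{b_s}(E_1^{2})g_{b}(qE_1^{1}E_1^{2})g_{b_s}(E_1^{1})g_{b_s}(E_1^{0})g_{b}(qE_1^{0}E_1^{1})g_{b_s}(E_1^{0})\cdots,
\]
so extra $g_{b_l}$ factors appear even though there is no ``stop'' in the notation of Section~\ref{sec:embedding:Bn}. Similarly, $\be_6$ in $E_8$ and $\be_3$ in $F_4$ do not form simple $q_i^{-2}$-chains; the paper first groups the monomials into blocks which $q_i^{-2}$-commute \emph{as blocks}, then factors within each block.

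Second, your description of the $*b$ block factorization is incorrect. Writing $E_i^k=X_{\dots,a}$ and $E_i^{k+1}=X_{\dots,a,b^2}$, one has $E_i^{k+1}E_i^k=q_l^4E_i^kE_i^{k+1}$, and \eqref{gb2} gives three factors with mixed $b$-parameters,
\[
g_{b_l}\bigl((E_i^k)^{\half}+(E_i^{k+1})^{\half}\bigr)^2=g_{b_l}(E_i^{k+1})\,g_{b_s}\bigl((qE_i^kE_i^{k+1})^{\half}\bigr)\,g_{b_l}(E_i^k),
\]
not two. The middle factor is a genuine $g_{b_s}$ with argument the cluster monomial $X_{\dots,a,b}$; it is not ``reproduced by cross terms''. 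The $G_2$ case is still more involved: the paper obtains the factorization of $g_{b_s}(\be_2^-)$ by a sequence of explicit conjugations $g_b^*(X)\,(\cdot)\,g_b(X)$ (this is where $g_b^*$ enters), not by a single block identity, and this must be carried out by hand.
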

\begin{proof} It suffices to consider $g_{b_i}(\be_i^-)$, while the decomposition for $g_{b_i}(\be_i^+)$ is just a reflection.
For the generators 
\Eqn{
\case{\be_i& A_n,B_n, D_n,E_n,\\
\be_i,i\neq 1& C_n,\\
\be_4&F_4,\\
\be_2&G_2,
}
}
let us write the embedding of the generators of $\be_i^-$ as a sum of monomials in the form
$$\be_i^-=E_i^0+E_i^1+...+E_i^{m_i},$$
where $E_i^0=X_{f_i^{n_i}}$ and ends before the next term $E_i^{m_i+1} = X_{f_i^{n_i},..., e_i^0}$.

Note that $\be_4^-$ in type $F_4$ and $\be_2^-$ in type $G_2$ is just a monomial, hence the statement is trivial.

For all generators except $\be_1$ in type $B_n$ and $\be_6$ in type $E_8$, we have $E_i^kE_i^l = q^2 E_i^l E_i^k$ for $k>l$, hence we can apply \eqref{guv} to obtain
\Eq{
g_{b_i}(\be_i^-)=g_{b_i}(E_i^{m_i})...g_{b_i}(E_i^1)g_{b_i}(E_i^0).
}
For $\be_1$ in type $B_n$, since $E_1^{2n+1}E_1^{2n}=q^2 E_1^{2n}E_1^{2n+1}$, using \eqref{gb2} we obtain
\Eqn{
g_{b_s}(\be_1^-)=g_{b_s}(E_i^{m_i})...g_{b_s}(E_i^3)g_b(qE_i^2E_i^3)g_{b_s}(E_i^2)g_{b_s}(E_i^1)g_{b}(qE_i^0E_i^1)g_{b_s}(E_i^0).
}
For $\be_6$ in type $E_8$, the path comes in blocks as follows
\Eqn{
\be_6^-=&X_{f_6^9}+(X_{f_6^9,f_7^2}+X_{f_6^9,f_7^2,f_6^6}) + ... (X_{...f_4^{18}}+X_{...f_3^{23}})\\
&+(X_{...f_3^{24}}+X_{...,f_3^{24},f_0^{11}})+(X_{...,f_3^{24},f_2^{15}}+X_{...,f_3^{24},f_2^{15},f_0^{11}})\\
&++(X_{... f_3^{22}}+X_{...f_3^{23}})+... + (X_{...f_6^5}+X_{...f_6^6})+X_{...f_7^1}+X_{...f_6^3}.
}
One can check that each block $q^{-2}$-commutes with all the blocks to the right of it, and within each block the two terms also $q^{-2}$-commute with each other. Hence apply repeatedly \eqref{guv} we arrive at the decomposition of the same form as others.

For the long generators
\Eqn{
\case{\be_1&C_n,\\\be_1,\be_2&F_4,}
}
let us write the embedding of the generators $\be_i^-$ as
\Eqn{
\be_i^-&=E_i^0+E_i^1+...+E_i^k+[2]_{q_s}(q^2E_i^kE_i^{k+1})^{\half}+E_i^{k+1}+...\\
&=E_i^0+E_i^1+...+((E_i^k)^{1/2}+(E_i^{k+1})^{1/2})^2+...
}
whenever we have the double term $(...a,*b,...)$ appear in the $E_i$ path such that 
$$E_i^k:=X_{...,a}, \tab (qE_i^kE_i^{k+1})^{1/2}=X_{...a,b}, \tab E_i^{k+1}:=X_{...,a,b^2}.$$

Then each block $q^{-2}$-commutes with the terms on the right, and since $E_i^{k+1}E_i^k=q^4 E_i^kE_i^{k+1}$, by \eqref{gb2}, we have
$$g_b(\be_i^-)=...g_b(E_i^{k+1})g_{b_s}((qE_i^kE_i^{k+1})^{\half})g_b(E_i^k)...g_b(E_i^1)g_b(E_i^0)$$
whenever the double term appears in the $E_i$-path.

The remaining two special cases are as follows:
The generator $\be_3$ of type $F_4$ is given by:
\Eqn{
\be_3^-&=X_{f_3^9}+(X_{f_3^9,f_4^2}+X_{f_3^9,f_4^2,f_3^6})+(X_{...f_3^7}+X_{...f_2^6})\\
&+(X_{...f_3^5}+X_{...f_3^6})+((X_{...f_4^1}+X_{...f_3^3}))+((X_{...f_2^3}+X_{...f_3^2}))+X_{...f_2^1}+X_{...f_3^1},
}
where each blocks $q_s^{-2}$ commute with all the blocks to the right of it. Within each block, the terms inside the single brackets $q_s^{-2}$ commute, while the terms inside double brackets $q^{-2}$ commute. Hence by \eqref{guv} and \eqref{gb2} we can decompose $g_{b_s}(\be_3^-)$.

For the generator $\be_2$ in type $G_2$, we have to involve conjugations, which gives
\begin{tiny}
\Eqn{
&g_{b_s}(\be_2^-)\\
&=g_{b_s}(X_{f_2^2,f_1^2,(f_2^2)^2,f_1^1,f_2^1}+X_{f_2^2,f_1^2,(f_2^2)^2,f_1^1}+X_{f_2^3,f_1^2,(f_2^2)^2}+[2]_{q_s}X_{f_2^3,f_1^2,f_2^2}+X_{f_2^3,f_1^2}+X_{f_2^3})\\
&=g_{b_s}(X_{f_2^2,f_1^2,(f_2^2)^2,f_1^1,f_2^1})g_{b_s}(X_{f_2^2,f_1^2,(f_2^2)^2,f_1^1}+X_{f_2^3,f_1^2,(f_2^2)^2}+[2]_{q_s}X_{f_2^3,f_1^2,f_2^2}+X_{f_2^3,f_1^2}+X_{f_2^3})\\
&=g_{b_s}(X_{f_2^2,f_1^2,(f_2^2)^2,f_1^1,f_2^1})g_b^*(X_{f_1^1})g_{b_s}(X_{f_2^3,f_1^2,(f_2^2)^2}+[2]_{q_s}X_{f_2^3,f_1^2,f_2^2}+X_{f_2^3,f_1^2}+X_{f_2^3,f_1^2,f_1^1}+X_{f_2^3})g_b(X_{f_1^1})\\
&=g_{b_s}(X_{f_2^2,f_1^2,(f_2^2)^2,f_1^1,f_2^1})g_b^*(X_{f_1^1})g_b^*(X_{f_1^1,f_1^2})g_{b_s}(X_{f_2^3,f_1^2,(f_2^2)^2}+[2]_{q_s}X_{f_2^3,f_1^2,f_2^2}+X_{f_2^3,f_1^2}+X_{f_2^3})g_b(X_{f_1^1,f_1^2})g_b(X_{f_1^1})\\
&=g_{b_s}(X_{f_2^2,f_1^2,(f_2^2)^2,f_1^1,f_2^1})g_b^*(X_{f_1^1})g_b^*(X_{f_1^1,f_1^2})g_{b_s}^*(X_{f_2^2})g_{b_s}(X_{f_2^3,f_1^2}+X_{f_2^3}+X_{f_2^3,f_2^2})g_{b_s}(X_{f_2^2})g_b(X_{f_1^1,f_1^2})g_b(X_{f_1^1})\\
&=g_{b_s}(X_{f_2^2,f_1^2,(f_2^2)^2,f_1^1,f_2^1})g_b^*(X_{f_1^1})g_b^*(X_{f_1^1,f_1^2})g_{b_s}^*(X_{f_2^2})g_b^*(X_{f_1^2})g_{b_s}(X_{f_2^3}+X_{f_2^3,f_2^2})g_b(X_{f_1^2})g_{b_s}(X_{f_2^2})g_b(X_{f_1^1,f_1^2})g_b(X_{f_1^1})\\
&=g_{b_s}(X_{f_2^2,f_1^2,(f_2^2)^2,f_1^1,f_2^1})g_b^*(X_{f_1^1})g_b^*(X_{f_1^1,f_1^2})g_{b_s}^*(X_{f_2^2})g_b^*(X_{f_1^2})g_{b_s}(X_{f_2^3})g_{b_s}(X_{f_2^3,f_2^2})g_b(X_{f_1^2})g_{b_s}(X_{f_2^2})g_b(X_{f_1^1,f_1^2})g_b(X_{f_1^1}).
}\end{tiny}\end{proof}

\section{Universal $R$-matrix as half-Dehn twist}\label{sec:Dehn}

Finally we state the final main result of the paper. Consider the $\cZ_\g$-quiver associated to the triangulation of the disk with two marked points $A,C$ and two punctures $B,D$ as before, where the basic quiver $Q$ and its mirror image $\til{Q}$ are put onto the triangles according to Section \ref{sec:basicquiver}, and we label the nodes using the standard form. Let $P$ be the permutation 
\Eq{
P(X\ox Y):=Y\ox X.} 
Note that $P\circ Ad_\cR$ acts as identity on the coalgebra structure, hence it naturally corresponds to an automorphism of seed $\bi\to \bi$. 

\begin{Thm}\label{main3}We have
\Eq{
P\circ Ad_{\cR}=(\mu_{i_1}^q...\mu_{i_T}^q\circ \s^*)\inv = (\s^*)\inv\circ \mu_{i_T}^q...\mu_{i_1}^q
}
for some mutation sequence $\mu_{i_T}...\mu_{i_1}:\bi\to \bi'$ realizing the half-Dehn twist, and $\s:\bi'\simeq \bi$ is a permutation of the quiver returning to the original seed.

More precisely, recall from Lemma \ref{useful} that
$$\mu_{i_1}^q...\mu_{i_T}^q=\Phi_T \circ M_T.$$ 
Then we have
\Eq{
Ad_{\ov{R}}&=\Phi_T\inv,\\
P\circ Ad_{\cK}&=(\s^*)\inv\circ M_T\inv.
}
The factors $R_1,R_2,R_3,R_4$ in \eqref{R1234} correspond to the sequences of quiver mutations realizing the 4 flips of triangulations giving the half-Dehn twist as follows:

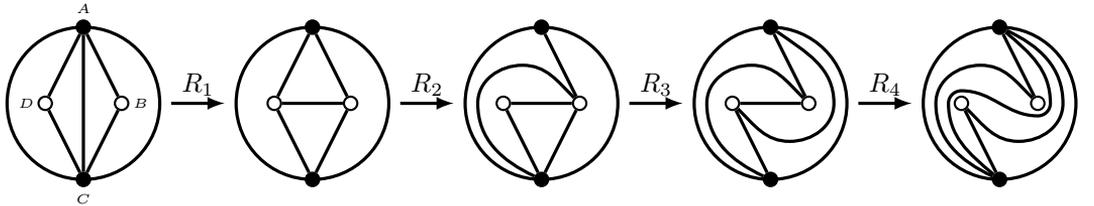
\begin{figure}[H]
\centering
\begin{tikzpicture}[every node/.style={thick,circle,inner sep=0pt}, x=0.5cm,y=0.5cm]
\foreach \y in {0,...,4}{
	\draw [very thick] (6*\y,0) circle (2);
	\node (a-\y) at (6*\y,2) [draw,minimum size=5, fill=black]{};
	\node (b-\y) at (6*\y-1,0) [draw,minimum size=5]{};
	\node (c-\y) at (6*\y+1,0) [draw,minimum size=5]{};
	\node (d-\y) at (6*\y,-2) [draw,minimum size=5, fill=black]{};
	\path (a-\y) edge[-,very thick] (c-\y);
	\path (b-\y) edge[-,very thick] (d-\y);
	\ifthenelse{\NOT \y=0}{
	\path (6*\y-3.7,0) edge[->, very thick] (6*\y-2.3, 0);
	\node at (6*\y-3, 0.5) {$R_\y$};
	}{};
}
\node at (0,2.5) {\tiny $A$};
\node at (1.5,0) {\tiny $B$};
\node at (0,-2.5) {\tiny $C$};
\node at (-1.5,0) {\tiny $D$};
\path (a-0) edge[-, very thick] (b-0);
\path (d-0) edge[-, very thick] (c-0);
\path (a-0) edge[-, very thick] (d-0);
\path (a-1) edge[-, very thick] (b-1);
\path (d-1) edge[-, very thick] (c-1);
\path (b-1) edge[-, very thick] (c-1);
\draw[very thick] (c-2) to [out=135, in=0] (11.5,1) to [out=180, in=90] (10.33,0) to [out=-90, in=155] (d-2);
\path (d-2) edge[-, very thick] (c-2);
\path (b-2) edge[-, very thick] (c-2);
\draw[very thick] (c-3) to [out=135, in=0] (17.5,1) to [out=180, in=90] (16.33,0) to [out=-90, in=155] (d-3);
\draw[very thick] (b-3) to [out=-45, in=180] (18.5,-1) to [out=0, in=-90] (19.66,0) to [out=90, in=-35] (a-3);
\path (b-3) edge[-, very thick] (c-3);
\draw[very thick] (c-4) to [out=135, in=0] (23.5,1) to [out=180, in=90] (22.33,0) to [out=-90, in=155] (d-4);
\draw[very thick] (b-4) to [out=-45, in=180] (24.5,-1) to [out=0, in=-90] (25.66,0) to [out=90, in=-25] (a-4);
\draw[very thick] (a-4) to [out=-40, in=90] (25.33,0) to [out=-90, in=0] (25,-0.33) to [out=180, in=0] (23,0.33) to [out=180, in=90] (22.66,0) to [out=-90, in=140] (d-4);
\end{tikzpicture}
\caption{Half-Dehn twist.}
\end{figure}
\end{Thm}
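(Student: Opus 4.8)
The plan is to read off the half-Dehn twist mutation sequence directly from the factorization $\ov{R}=R_4R_3R_2R_1$ of Corollary \ref{CorR1234}, and then to certify this identification by comparison with Lemma \ref{useful}. First I would combine Corollary \ref{CorR1234} with the dilogarithm decomposition of Proposition \ref{Endecomp}: each factor $g_{b_i}(\be_i^\pm\ox X_k^\pm)$ is rewritten as a product of terms $g_b(X_\bullet)$ (together with a few $g_b^*$ in type $G_2$) whose arguments $X_\bullet$ are cluster monomials of the $\cZ_\g$-quiver. Consequently $Ad_{\ov{R}}$ is a finite composition of conjugations $Ad_{g_b(X_\bullet)}$, i.e. a composition of the conjugation parts $\mu_k^\#$ of quantum cluster mutations. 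Reading the list of arguments $X_\bullet$ in the order they appear yields a candidate mutation sequence $\mu_{i_1},\dots,\mu_{i_T}:\bi\to\bi'$, and by Lemma \ref{useful} the associated $\Phi_T$ is exactly the product of $Ad_{g_{b}^*}$ over the transported variables $\mu_{i_1}'\cdots\mu_{i_{j-1}}'(X_{i_j}^{\bi_{j-1}})$. Since $g_b=(g_b^*)\inv$ and inversion reverses the product order, I would verify term by term that these transported variables coincide with the arguments extracted above, giving $Ad_{\ov{R}}=\Phi_T\inv$.

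Next I would treat the monomial part. The Cartan factor $\cK=\prod q_i^{(A\inv)_{ij}H_i\ox H_j}$ acts, as noted in Section \ref{sec:R:standard}, only by rescaling the boundary variables $X_{f_i^{\pm n_i}}$ of the two $\cD_\g$ blocks amalgamated in $\cZ_\g$, so $Ad_\cK$ is a Laurent-monomial automorphism of the quantum torus; composing with the swap $P$ of the two $\cD_\g$ copies, which is a permutation of the underlying seed, gives another monomial map. I would match this with $M_T$ from Lemma \ref{useful}, the residual discrepancy being absorbed into a seed permutation $\s:\bi'\simeq\bi$, yielding $P\circ Ad_\cK=(\s^*)\inv\circ M_T\inv$. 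Combined with the previous step and $Ad_\cR=Ad_\cK\circ Ad_{\ov R}$, this gives $P\circ Ad_\cR=(\s^*)\inv M_T\inv\Phi_T\inv=(\Phi_TM_T\s^*)\inv=(\mu_{i_1}^q\cdots\mu_{i_T}^q\circ\s^*)\inv$, as claimed. That the composite $P\circ Ad_\cR$ is genuinely a seed automorphism $\bi\to\bi$ follows from the braiding relation \eqref{braiding}: since $\D^{op}=P\circ\D$, we have $P\circ Ad_\cR\circ\D=\D$, so $P\circ Ad_\cR$ fixes the embedded copy $(\iota\ox\iota)\D(\fD_\g)$, i.e. it preserves the quantum-group structure carried by the twice-punctured disk. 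This is the algebraic shadow of a mapping class, and I would use it as the consistency check that the mutation sequence returns to the original cluster structure.

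Finally I would establish the geometric content, grouping the mutation directions into the four blocks dictated by $R_1,R_2,R_3,R_4$ in \eqref{R1234} and showing, via the basic-quiver formalism of Section \ref{sec:basicquiver}, that each block realizes a single flip of the triangulation of the disk with punctures $B,D$ and marked points $A,C$: each flip sends the basic quiver $Q$ on the affected triangle to a mutation-equivalent configuration, and the four flips compose to the half-Dehn twist of the figure. I expect the main obstacle to be precisely this last step together with the term-by-term matching in the first: verifying uniformly across all simple types --- including the doubly-laced corrections and the repeated-node $[2]_{q_s}$ blocks of Proposition \ref{Endecomp} --- that the arguments produced by the $R$-matrix factorization are exactly the transported cluster variables of the flip sequence, and that the four flips compose to the half-Dehn twist rather than another element of the mapping class group. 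Tracking the renaming of amalgamated frozen nodes at each flip, so that the final seed is identified with the initial one through $\s$ in the standard form of Section \ref{sec:basicquiver}, is the delicate bookkeeping that carries the proof.
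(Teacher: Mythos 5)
Your proposal follows essentially the same route as the paper: the paper likewise extracts the mutation sequence by reading the factors of $\ov{R}=R_4R_3R_2R_1$ (via Corollary \ref{CorR1234} and Proposition \ref{Endecomp}) from right to left, matches each argument term by term against the transported variables $\mu_{i_1}'\cdots\mu_{i_{j-1}}'(X_{i_j}^{\bi_{j-1}})$ of Lemma \ref{useful} to get $Ad_{\ov R}=\Phi_T\inv$, treats $P\circ Ad_\cK$ as the monomial part $(\s^*)\inv\circ M_T\inv$, and then verifies case by case (with the explicit sequences of Section \ref{sec:Dehn:explicit}, including the doubly-laced and $G_2$ corrections and the frozen-node relabelings $\s_i$) that the four blocks realize the four flips composing to the half-Dehn twist. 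Your plan, including the identification of the delicate bookkeeping as the main burden, is a faithful outline of the paper's argument.
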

\begin{Rem} We note that the mutation sequence is not unique, for example, using \eqref{gvu} one can replace 2 $g_b$'s with 3 $g_b$'s, thus giving the same mutation (with different permutation index at the end) but with a longer sequence.
\end{Rem}

In terms of the quiver associated to the triangulations, the 4 flips are realized as follows. Let us write $\mu_{R_i}$ for the sequence of quiver mutations (starting from a standard form) corresponding to $R_i$, and $\s_i$ the permutation bringing the labeling of the basic quivers back to the standard form. Then we have the following configuration:

\begin{subfigures}
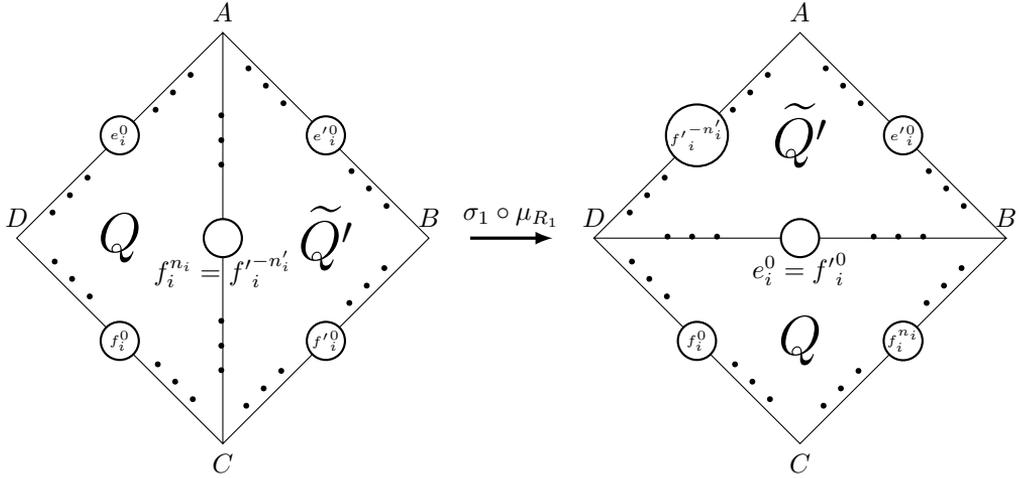
\begin{figure}[H]
\centering
\begin{tikzpicture}[every node/.style={inner sep=0, minimum size=0.5cm, thick},x=0.27cm,y=0.27cm]
\begin{scope}[shift={(-14,0)}]
\draw (0,10)node[anchor=south]{$A$}  
-- (-10,0) node[anchor=south]{$D$} 
-- (0,-10) node[anchor=north]{$C$} 
-- (10,0) node[anchor=south]{$B$}
--(0,10)--(0,-10);
\node at (0,0) [draw, circle, fill=white, thick] {};
\node at (0,-1.5) {$f_i^{n_i}={f'}_i^{-n_i'}$};
\node at (-5,5) [draw, circle, fill=white, thick] {\tiny$e_i^0$};
\node at (-5,-5) [draw, circle, fill=white, thick] {\tiny$f_i^0$};
\node at (5,5) [draw, circle, fill=white, thick] {\tiny${e'}_i^0$};
\node at (5,-5) [draw, circle, fill=white, thick] {\tiny${f'}_i^0$};
\node[rotate=45] at (-2.2,7.2) {\huge$\cdots$};
\node[rotate=45] at (-7.2, 2.2) {\huge$\cdots$};
\node[rotate=-45] at (-2.2,-7.2) {\huge$\cdots$};
\node[rotate=-45] at (-7.2,-2.2) {\huge$\cdots$};
\node[rotate=90] at (0,5) {\huge$\cdots$};
\node[rotate=90] at (0,-5) {\huge$\cdots$};
\node[rotate=45] at (2.2,-7.2) {\huge$\cdots$};
\node[rotate=45] at (7.2,-2.2) {\huge$\cdots$};
\node[rotate=-45]  at (2.2,7.2) {\huge$\cdots$};
\node[rotate=-45]  at (7.2,2.2) {\huge$\cdots$};
\node (Q) at (-5,0) [minimum size=2cm]{\huge $Q$};
\node (Q) at (5,0) [minimum size=2cm]{\huge $\til{Q'}$};
\end{scope}
\begin{scope}[shift={(14,0)}]
\draw (10,0) node[anchor=south]{$B$}
-- (0,-10) node[anchor=north]{$C$} 
-- (-10,0) node[anchor=south]{$D$} 
--(0,10) node[anchor=south]{$A$}  
--(10,0)--(-10, 0);
\node at (0,0) [draw, circle, fill=white, thick] {};
\node at (0,-1.5) {$e_i^0={f'}_i^0$};
\node at (-5,5) [draw, circle, fill=white, thick] {\tiny${f'}_i^{-n_i'}$};
\node at (-5,-5) [draw, circle, fill=white, thick] {\tiny$f_i^0$};
\node at (5,5) [draw, circle, fill=white, thick] {\tiny${e'}_i^0$};
\node at (5,-5) [draw, circle, fill=white, thick] {\tiny$f_i^{n_i}$};
\node[rotate=45] at (-2.2,7.2) {\huge$\cdots$};
\node[rotate=45] at (-7.2, 2.2) {\huge$\cdots$};
\node[rotate=-45] at (-2.2,-7.2) {\huge$\cdots$};
\node[rotate=-45] at (-7.2,-2.2) {\huge$\cdots$};
\node  at (5,0) {\huge$\cdots$};
\node  at (-5,0) {\huge$\cdots$};
\node[rotate=45] at (2.2,-7.2) {\huge$\cdots$};
\node[rotate=45] at (7.2,-2.2) {\huge$\cdots$};
\node[rotate=-45]  at (2.2,7.2) {\huge$\cdots$};
\node[rotate=-45]  at (7.2,2.2) {\huge$\cdots$};
\node (Q) at (0,-5) [minimum size=2cm]{\huge $Q$};
\node (Q) at (0,5) [minimum size=2cm]{\huge $\til{Q'}$};
\end{scope}
\path (-2,0) edge[->, very thick] (2,0);
\node at (0,1) {$\s_1\circ \mu_{R_1}$};
\end{tikzpicture}
\caption{Flip of triangulations corresponding to $R_1$.}
\label{mutateR1}
\end{figure}
\begin{figure}[H]
\centering
\begin{tikzpicture}[every node/.style={inner sep=0, minimum size=0.5cm, thick},x=0.27cm,y=0.27cm]
\begin{scope}[shift={(-14,0)}]
\draw (0,10)node[anchor=south]{$A$}  
-- (-10,0) node[anchor=south]{$C$} 
-- (0,-10) node[anchor=north]{$D$} 
-- (10,0) node[anchor=south]{$B$}
--(0,10)--(0,-10);
\node at (0,0) [draw, circle, fill=white, thick] {};
\node at (0,-1.5) {$e_i^0={f'}_i^{-n_i'}$};
\node at (-5,5) [draw, fill=white, thick] {\tiny$f_i^{-n_i}$};
\node at (-5,-5) [draw, circle, fill=white, thick] {\tiny$f_i^0$};
\node at (5,5) [draw, circle, fill=white, thick] {\tiny${e'}_i^0$};
\node at (5,-5) [draw, circle, fill=white, thick] {\tiny${f'}_i^0$};
\node[rotate=45] at (-2.2,7.2) {\huge$\cdots$};
\node[rotate=45] at (-7.2, 2.2) {\huge$\cdots$};
\node[rotate=-45] at (-2.2,-7.2) {\huge$\cdots$};
\node[rotate=-45] at (-7.2,-2.2) {\huge$\cdots$};
\node[rotate=90] at (0,5) {\huge$\cdots$};
\node[rotate=90] at (0,-5) {\huge$\cdots$};
\node[rotate=45] at (2.2,-7.2) {\huge$\cdots$};
\node[rotate=45] at (7.2,-2.2) {\huge$\cdots$};
\node[rotate=-45]  at (2.2,7.2) {\huge$\cdots$};
\node[rotate=-45]  at (7.2,2.2) {\huge$\cdots$};
\node (Q) at (-5,0) [minimum size=2cm]{\huge $\til{Q}$};
\node (Q) at (5,0) [minimum size=2cm]{\huge $\til{Q'}$};
\end{scope}
\begin{scope}[shift={(14,0)}]
\draw (10,0) node[anchor=south]{$B$}
-- (0,-10) node[anchor=north]{$D$} 
-- (-10,0) node[anchor=south]{$C$} 
--(0,10) node[anchor=south]{$A$}  
--(10,0)--(-10, 0);
\node at (0,0) [draw, circle, fill=white, thick] {};
\node at (0,-1.5) {$f_i^{-n_i}={f'}_i^0$};
\node at (-5,5) [draw, fill=white, thick] {\tiny${f'}_i^{-n_i'}$};
\node at (-5,-5) [draw, circle, fill=white, thick] {\tiny$f_i^0$};
\node at (5,5) [draw, circle, fill=white, thick] {\tiny${e'}_i^0$};
\node at (5,-5) [draw, circle, fill=white, thick] {\tiny$e_i^0$};
\node[rotate=45] at (-2.2,7.2) {\huge$\cdots$};
\node[rotate=45] at (-7.2, 2.2) {\huge$\cdots$};
\node[rotate=-45] at (-2.2,-7.2) {\huge$\cdots$};
\node[rotate=-45] at (-7.2,-2.2) {\huge$\cdots$};
\node  at (5,0) {\huge$\cdots$};
\node  at (-5,0) {\huge$\cdots$};
\node[rotate=45] at (2.2,-7.2) {\huge$\cdots$};
\node[rotate=45] at (7.2,-2.2) {\huge$\cdots$};
\node[rotate=-45]  at (2.2,7.2) {\huge$\cdots$};
\node[rotate=-45]  at (7.2,2.2) {\huge$\cdots$};
\node (Q) at (0,-5) [minimum size=2cm]{\huge $\til{Q}$};
\node (Q) at (0,5) [minimum size=2cm]{\huge $\til{Q'}$};
\end{scope}
\path (-2,0) edge[->, very thick] (2,0);
\node at (0,1) {$\s_2\circ \mu_{R_2}$};
\end{tikzpicture}
\caption{Flip of triangulations corresponding to $R_2$.}
\label{mutateR2}
\end{figure}
\begin{figure}[H]
\centering
\begin{tikzpicture}[every node/.style={inner sep=0, minimum size=0.5cm, thick},x=0.27cm,y=0.27cm]
\begin{scope}[shift={(-14,0)}]
\draw (0,10)node[anchor=south]{$B$}  
-- (-10,0) node[anchor=south]{$D$} 
-- (0,-10) node[anchor=north]{$C$} 
-- (10,0) node[anchor=south]{$A$}
--(0,10)--(0,-10);
\node at (0,0) [draw, circle, fill=white, thick] {};
\node at (0,-1.5) {$f_i^{n_i}={f'}_i^0$};
\node at (-5,5) [draw, circle, fill=white, thick] {\tiny$e_i^0$};
\node at (-5,-5) [draw, circle, fill=white, thick] {\tiny$f_i^0$};
\node at (5,5) [draw, circle, fill=white, thick] {\tiny${e'}_i^0$};
\node at (5,-5) [draw, fill=white, thick] {\tiny${f'}_i^{n_i'}$};
\node[rotate=45] at (-2.2,7.2) {\huge$\cdots$};
\node[rotate=45] at (-7.2, 2.2) {\huge$\cdots$};
\node[rotate=-45] at (-2.2,-7.2) {\huge$\cdots$};
\node[rotate=-45] at (-7.2,-2.2) {\huge$\cdots$};
\node[rotate=90] at (0,5) {\huge$\cdots$};
\node[rotate=90] at (0,-5) {\huge$\cdots$};
\node[rotate=45] at (2.2,-7.2) {\huge$\cdots$};
\node[rotate=45] at (7.2,-2.2) {\huge$\cdots$};
\node[rotate=-45]  at (2.2,7.2) {\huge$\cdots$};
\node[rotate=-45]  at (7.2,2.2) {\huge$\cdots$};
\node (Q) at (-5,0) [minimum size=2cm]{\huge $Q$};
\node (Q) at (5,0) [minimum size=2cm]{\huge $Q'$};
\end{scope}
\begin{scope}[shift={(14,0)}]
\draw (10,0) node[anchor=south]{$A$}
-- (0,-10) node[anchor=north]{$C$} 
-- (-10,0) node[anchor=south]{$D$} 
--(0,10) node[anchor=south]{$B$}  
--(10,0)--(-10, 0);
\node at (0,0) [draw, circle, fill=white, thick] {};
\node at (0,-1.5) {$e_i^0={f'}_i^{n_i'}$};
\node at (-5,5) [draw, circle, fill=white, thick] {\tiny${f'}_i^0$};
\node at (-5,-5) [draw, circle, fill=white, thick] {\tiny$f_i^0$};
\node at (5,5) [draw, circle, fill=white, thick] {\tiny${e'}_i^0$};
\node at (5,-5) [draw, fill=white, thick] {\tiny$f_i^{n_i}$};
\node[rotate=45] at (-2.2,7.2) {\huge$\cdots$};
\node[rotate=45] at (-7.2, 2.2) {\huge$\cdots$};
\node[rotate=-45] at (-2.2,-7.2) {\huge$\cdots$};
\node[rotate=-45] at (-7.2,-2.2) {\huge$\cdots$};
\node  at (5,0) {\huge$\cdots$};
\node  at (-5,0) {\huge$\cdots$};
\node[rotate=45] at (2.2,-7.2) {\huge$\cdots$};
\node[rotate=45] at (7.2,-2.2) {\huge$\cdots$};
\node[rotate=-45]  at (2.2,7.2) {\huge$\cdots$};
\node[rotate=-45]  at (7.2,2.2) {\huge$\cdots$};
\node (Q) at (0,-5) [minimum size=2cm]{\huge $Q$};
\node (Q) at (0,5) [minimum size=2cm]{\huge $Q'$};
\end{scope}
\path (-2,0) edge[->, very thick] (2,0);
\node at (0,1) {$\s_3\circ \mu_{R_3}$};
\end{tikzpicture}
\caption{Flip of triangulations corresponding to $R_3$.}
\label{mutateR3}
\end{figure}
\begin{figure}[H]
\centering
\begin{tikzpicture}[every node/.style={inner sep=0, minimum size=0.5cm, thick},x=0.27cm,y=0.27cm]
\begin{scope}[shift={(-14,0)}]
\draw (0,10)node[anchor=south]{$B$}  
-- (-10,0) node[anchor=south]{$C$} 
-- (0,-10) node[anchor=north]{$D$} 
-- (10,0) node[anchor=south]{$A$}
--(0,10)--(0,-10);
\node at (0,0) [draw, circle, fill=white, thick] {};
\node at (0,-1.5) {$e_i^0={f'}_i^0$};
\node at (-5,5) [draw, circle, fill=white, thick] {\tiny$f_i^{-n_i}$};
\node at (-5,-5) [draw, circle, fill=white, thick] {\tiny$f_i^0$};
\node at (5,5) [draw, circle, fill=white, thick] {\tiny${e'}_i^0$};
\node at (5,-5) [draw, circle, fill=white, thick] {\tiny${f'}_i^{n_i'}$};
\node[rotate=45] at (-2.2,7.2) {\huge$\cdots$};
\node[rotate=45] at (-7.2, 2.2) {\huge$\cdots$};
\node[rotate=-45] at (-2.2,-7.2) {\huge$\cdots$};
\node[rotate=-45] at (-7.2,-2.2) {\huge$\cdots$};
\node[rotate=90] at (0,5) {\huge$\cdots$};
\node[rotate=90] at (0,-5) {\huge$\cdots$};
\node[rotate=45] at (2.2,-7.2) {\huge$\cdots$};
\node[rotate=45] at (7.2,-2.2) {\huge$\cdots$};
\node[rotate=-45]  at (2.2,7.2) {\huge$\cdots$};
\node[rotate=-45]  at (7.2,2.2) {\huge$\cdots$};
\node (Q) at (-5,0) [minimum size=2cm]{\huge $\til{Q}$};
\node (Q) at (5,0) [minimum size=2cm]{\huge $Q'$};
\end{scope}
\begin{scope}[shift={(14,0)}]
\draw (10,0) node[anchor=south]{$A$}
-- (0,-10) node[anchor=north]{$D$} 
-- (-10,0) node[anchor=south]{$C$} 
--(0,10) node[anchor=south]{$B$}  
--(10,0)--(-10, 0);
\node at (0,0) [draw, circle, fill=white, thick] {};
\node at (0,-1.5) {$f_i^{-n_i}={f'}_i^{n_i'}$};
\node at (-5,5) [draw, circle, fill=white, thick] {\tiny${f'}_i^0$};
\node at (-5,-5) [draw, circle, fill=white, thick] {\tiny$f_i^0$};
\node at (5,5) [draw, circle, fill=white, thick] {\tiny${e'}_i^0$};
\node at (5,-5) [draw, circle, fill=white, thick] {\tiny$e_i^0$};
\node[rotate=45] at (-2.2,7.2) {\huge$\cdots$};
\node[rotate=45] at (-7.2, 2.2) {\huge$\cdots$};
\node[rotate=-45] at (-2.2,-7.2) {\huge$\cdots$};
\node[rotate=-45] at (-7.2,-2.2) {\huge$\cdots$};
\node  at (5,0) {\huge$\cdots$};
\node  at (-5,0) {\huge$\cdots$};
\node[rotate=45] at (2.2,-7.2) {\huge$\cdots$};
\node[rotate=45] at (7.2,-2.2) {\huge$\cdots$};
\node[rotate=-45]  at (2.2,7.2) {\huge$\cdots$};
\node[rotate=-45]  at (7.2,2.2) {\huge$\cdots$};
\node (Q) at (0,-5) [minimum size=2cm]{\huge $\til{Q}$};
\node (Q) at (0,5) [minimum size=2cm]{\huge $Q'$};
\end{scope}
\path (-2,0) edge[->, very thick] (2,0);
\node at (0,1) {$\s_4\circ \mu_{R_4}$};
\end{tikzpicture}
\caption{Flip of triangulations corresponding to $R_4$.}
\label{mutateR4}
\end{figure}
\end{subfigures}

After the 4 flips, the quiver comes back to the original configuration with $B\corr D$, $Q\corr Q', \til{Q}\corr \til{Q'}$, and we have 
\Eq{
\s=\s_4\circ \s_3\circ \s_2\circ \s_1.
}
\begin{figure}[!htb]
\centering
\begin{tikzpicture}[every node/.style={thick,circle,inner sep=0pt}, x=1cm,y=1cm]
\foreach \y in {0,1}{
	\draw [very thick] (6*\y,0) circle (2);
	\node (a-\y) at (6*\y,2) [draw,minimum size=5, fill=black]{};
	\node (b-\y) at (6*\y+1,0) [draw,minimum size=5]{};
	\node (c-\y) at (6*\y,-2) [draw,minimum size=5, fill=black]{};
	\node (d-\y) at (6*\y-1,0) [draw,minimum size=5]{};
	\path (a-\y) edge[-, very thick] (b-\y);
	\path (b-\y) edge[-, very thick] (c-\y);
	\path (c-\y) edge[-, very thick] (d-\y);
	\path (d-\y) edge[-, very thick] (a-\y);
	\path (a-\y) edge[-, very thick] (c-\y);					
}
\node at (0,2.3){\tiny$A$};
\node at (1,0.3){\tiny$B$};
\node at (0,-2.3){\tiny$C$};
\node at (-1,0.3){\tiny$D$};
\node at (-1.5,0){$\til{Q}$};
\node at (-0.5,0){$Q$};
\node at (0.5,0){$\til{Q'}$};
\node at (1.5,0){$Q'$};
\node at (6,2.3){\tiny$A$};
\node at (7,0.3){\tiny$D$};
\node at (6,-2.3){\tiny$C$};
\node at (5,0.3){\tiny$B$};
\node at (4.5,0){$\til{Q'}$};
\node at (5.5,0){$Q'$};
\node at (6.5,0){$\til{Q}$};
\node at (7.5,0){$Q$};
\path (2.5,0) edge[->, very thick] (3.5,0);
\end{tikzpicture}
\caption{Half-Dehn twist of the quiver $\cZ_\g$.}
\end{figure}
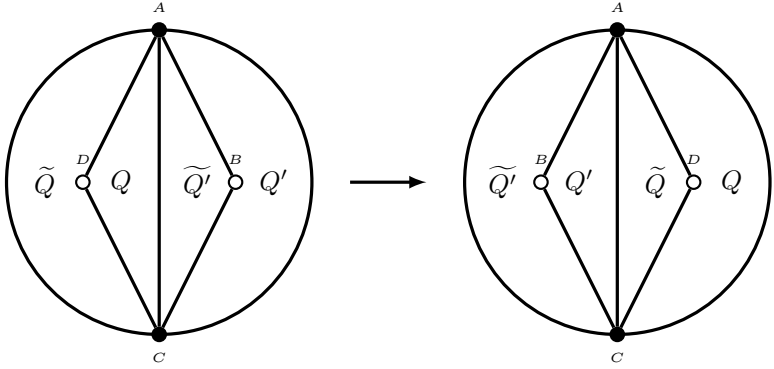
We observe that for each flip, one can think of the quiver mutation as rotating both basic quivers clockwise by 30 degree, and then stack the right quiver on top of the left one. In the next subsection, we will show how to obtain such mutation sequence.

\subsection{Explicit mutation sequence for the Half-Dehn twist}\label{sec:Dehn:explicit}
By the symmetry of the decomposition \eqref{R1234} as well as the mutation configurations, we can see that $R_2$ and $R_3$ commute, and the mutation sequence corresponding to $R_4$ and $R_3$ in some sense are just ``mirror images" to those of $R_1$ and $R_2$ respectively. Using the explicit decomposition from Proposition \ref{Endecomp}, we arrive at the following more precise description of the quiver mutation giving the half-Dehn twist:
\begin{Prop}\label{mirrorpalin} The mutation sequence is a mirrored palindrome:
\Eqn{
\mu_{R_4}&=(\rho\s_1)_* (\mu_{R_1}\inv),& \s_4&=\rho_*(\s_1\inv),\\
\mu_{R_3}&=(\rho\s_2)_* (\mu_{R_2}\inv),& \s_3&=\rho_*(\s_2\inv),
}
where $\rho$ is the permutation given by the reflection $f_i^k\corr f_i^{-k}$:
\Eq{
\rho: \{f_i^k, e_i^0, {f'}_i^k, {e'}_i^0\} \corr \{ f_i^{-k}, e_i^0, {f'}_i^{-k}, {e'}_i^0\},
}
and for a permutation $\pi$ we denote by $\pi_*(X):=\pi \circ X \circ \pi\inv$.
\end{Prop}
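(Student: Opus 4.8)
The plan is to exploit the manifest symmetry of the four-block factorization \eqref{R1234} under simultaneously exchanging the $+$ and $-$ halves and reversing the order of the product, and to show that this exchange is realized geometrically by the reflection $\rho$. The whole proposition then reduces to a clean dictionary: $\pm$-swap corresponds to conjugation by $\rho$, and order reversal corresponds to inverting a mutation sequence.

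First I would verify that $\rho$ implements the exchange $\pm\corr\mp$ on the relevant operators. Since $X_n^\pm=F_i^{k,\pm}$ and, by Definition \ref{clusterX}, the cluster variables $X_{f_i^k}$ with $k>0$ are built from the $F_i^{\cdot,+}$ while those with $k<0$ are built from the $F_i^{\cdot,-}$, the node-reflection $f_i^k\corr f_i^{-k}$ sends the monomial $X_n^+$ to $X_n^-$ and $\be_i^+$ to $\be_i^-$: indeed the two halves of \eqref{Fik} differ exactly by the sign flip of the variable part, which is precisely what $\rho$ enacts. In particular $\rho$ carries the list of arguments appearing in $R_1$ to the list appearing in $R_4$, and those of $R_2$ to those of $R_3$.

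Next, comparing the two blocks directly from Corollary \ref{CorR1234}, $R_4$ is obtained from $R_1$ by replacing every factor $g_{b_{i_k}}(\be_{i_k}^-\ox X_k^-)$ with $g_{b_{i_k}}(\be_{i_k}^+\ox X_k^+)$ and reversing the order of the product (from increasing $k$ to decreasing $k$); the same holds for $R_3$ versus $R_2$. By Proposition \ref{Endecomp} each block decomposes into a product of conjugations by quantum dilogarithms whose arguments are cluster monomials, so matching against Lemma \ref{useful} identifies each block with an honest mutation sequence. Reversing the order of such a product of conjugations corresponds to running that mutation sequence backwards, i.e. to $\mu_{R_1}\inv$, while the $\pm$-exchange relabels every mutation direction by $\rho$. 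This yields $\mu_{R_4}=\rho_*(\mu_{R_1}\inv)$ and $\mu_{R_3}=\rho_*(\mu_{R_2}\inv)$ up to the choice of base seed.

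The remaining and most delicate point is the base-seed bookkeeping. The sequence $\mu_{R_4}$ is performed starting from the standard form reached after $R_1,R_2,R_3$, whereas $\mu_{R_1}\inv$ is read relative to the standard form at the very beginning; the permutations $\s_i$ returning each intermediate quiver to standard form interpolate between these reference frames. Tracking the labellings through the flips in Figures \ref{mutateR1}--\ref{mutateR4}, the extra relabelling needed beyond $\rho$ is exactly $\s_1$ (resp. $\s_2$), producing $\mu_{R_4}=(\rho\s_1)_*(\mu_{R_1}\inv)$, $\s_4=\rho_*(\s_1\inv)$ and the analogous identities for the $R_2$/$R_3$ pair. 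The main obstacle I anticipate is precisely this bookkeeping — confirming that conjugation by $\rho\s_1$ rather than $\rho$ alone is forced — together with checking that the $\rho\leftrightarrow\pm$ correspondence survives the non-monomial decompositions of Proposition \ref{Endecomp}, in particular the type $G_2$ case where factors $g_b^*$ appear and must be matched against the corresponding inverse mutations.
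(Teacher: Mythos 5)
The paper itself offers no detailed proof of this proposition: it is stated as an observation drawn from the symmetry of the factorization \eqref{R1234} and the explicit decompositions of Proposition \ref{Endecomp}, and its precise form (including the extra conjugation by $\s_1,\s_2$) is what the case-by-case computations of $\mu_{R_1},\mu_{R_2}$ in Sections \ref{sec:Dehn:A2}--\ref{sec:Dehn:Ex} confirm. Your skeleton follows the same lines, and two of its ingredients are sound: by Corollary \ref{CorR1234}, $R_4$ (resp.\ $R_3$) is indeed obtained from $R_1$ (resp.\ $R_2$) by exchanging the $\pm$ labels in both tensor slots and reversing the order of the product, and, since each $\mu_k$ is an involution, inverting a mutation sequence is the same as running its list of nodes backwards, which is compatible with reversing the corresponding product of dilogarithm conjugations in Lemma \ref{useful}.

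The gap is in your central ``dictionary'' item: the claim that the reflection $\rho$ sends $X_n^+$ to $X_n^-$ and $\be_i^+$ to $\be_i^-$ is false, and everything else rests on it. By \eqref{XfromF} and Proposition \ref{Fi}, \emph{all} the monomials $F_i^{k,\pm}$ are prefix products along the $F_i$-path starting at the same node $f_i^{-n_i}$, namely $F_i^{k,-}=X_{f_i^{-n_i},\ldots,f_i^{-k}}$ and $F_i^{k,+}=X_{f_i^{-n_i},\ldots,f_i^{k-1}}$; in particular $F_i^{k,+}$ involves the variables at the negative nodes. Hence the node permutation $\rho$ sends $F_i^{k,-}$ to the \emph{suffix} product $X_{f_i^{k},\ldots,f_i^{n_i}}$, not to $F_i^{k,+}$. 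Concretely, in type $A_2$ (Section \ref{sec:embedding:A2}) one has $X_1^-=F_1^{1,-}=X_{f_1^{-2},f_1^{-1}}=e(-u-2\l_2+2p_u)$ and $X_1^+=F_1^{1,+}=X_{f_1^{-2},f_1^{-1},f_1^0}=e(u+2\l_2+2p_u)$, whereas $\rho(X_1^-)=X_{f_1^2,f_1^1}=e(u-v+2w-2p_u)$; similarly $\rho(\be_2^-)=X_{f_2^{-1}}+X_{f_2^{-1},f_1^{-1}}\neq \be_2^+$. The sign flip of the position variables relating the two halves of \eqref{Fik} is a nontrivial monomial transformation of the cluster variables, not a permutation of them, so conjugation by $\rho$ does \emph{not} carry the argument list of $R_1$ to that of $R_4$, and your conclusion that ``the $\pm$-exchange relabels every mutation direction by $\rho$'' does not follow. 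The identity $\mu_{R_4}=(\rho\s_1)_*(\mu_{R_1}\inv)$ only emerges after the arguments of $R_4$ are rewritten, through the monomial transformation accumulated along $\mu_{R_1},\mu_{R_2},\mu_{R_3}$, as mutated cluster variables in the sense of Lemma \ref{useful}, and after the re-glued identifications of the flipped triangulations (e.g.\ $e_i^0={f'}_i^0$ before the fourth flip, Figure \ref{mutateR4}) are used to identify nodes; this node-by-node matching is precisely the content of the proposition, and deferring it to ``tracking the figures'' while supporting the key step with the false $\rho$-dictionary leaves the argument without a proof.
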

 
Hence below we will only study the mutation sequence corresponding to $R_1$ and $R_2$. 

To describe the mutation sequence, let us define the following notation:
\begin{Def}\label{concatseq}
Let $S=(s_0,...,s_n),T=(t_0,...,t_m)$ be two sequences (of the nodes of some quiver). If $s_n$ and $t_1$ denote the same node in the quiver, then we define a new sequence of length $n+m+1$:
$$\<S- T\>:=(s_0,...,s_n=t_0,...,t_m)$$
to be the concatenation of the two sequences, and it is indexed from $-n$ to $m$ such that 
$$\<S-T\>_0= s_n = t_0.$$
If $\cP$ is a sequence constructed in this way, then we define its flip to be
$$\cF(\cP):=\<T-S\>$$
whenever $t_m=s_0$ in some other quiver in which this sequence is indexing.
\end{Def}
\begin{Def}
If $$\mu_T:=\mu_{j_M}...\mu_{j_1}$$ is a mutation sequence, we alternatively write it as
$$\mu_T=:\{j_1\to j_2\to...\to j_M\}.$$
Then given a sequence $\cP$, we denote the \emph{$k$-shifted mutation subsequence} of length $m$ by
\Eqn{
\cP[k,m]:=\{\cP_{1-k}\to \cP_{2-k}\to...\to\cP_{m-k}\}.
}
\end{Def}
\begin{Def}
We define the sequences
\Eqn{
\cP_{E_i}^{Q}&:=(f_i^{n_i},..., e_i^0),\\
\cP_{E_i}^{\til{Q}}&:=(e_i^0,..., f_i^{-n_i}),\\
\cP_{F_i}^{Q}&:=(f_i^{n_i},..., f_i^0),\\
\cP_{F_i}^{\til{Q}}&:=(f_i^0,..., f_i^{-n_i})
}
to be the $E_i$ and (reverse of) $F_i$ paths of the quiver $Q$ and $\til{Q}$ respectively. Similarly we use ${}'$ to denote the corresponding paths in the second quiver $1\ox \cD_\g\subset\cZ_\g$.
\end{Def}

Finally, given a reduced word $\bi$, we denote by $\bi'$ the reversed word, and recall that (cf. Definition \ref{variable}) \Eq{v'(i,k)=m} if $i_m$ is the $k$-th appearance of the root index $i$ from the left of $\bi'$, i.e. right of $\bi$.

\subsubsection{Toy Example: Type $A_2$}\label{sec:Dehn:A2}

To demonstrate the procedure, let us first look at the toy example in type $A_2$ in detail using the notation of our paper. This has also been worked out in detail in \cite{SS2} with slightly different notations.
\begin{figure}[!htb]
\centering
\begin{tikzpicture}[every node/.style={inner sep=0, minimum size=0.5cm, thick}, x=1.5cm, y=1cm]
\node (1) at (-2,4)[draw]{\tiny $f_1^{-2}$};
\node (2) at (-1,3)[draw,circle]{\tiny $f_1^{-1}$};
\node (3) at (0,2)[draw,circle]{\tiny $f_1^0$};
\node (4) at (1,3)[draw,circle]{\tiny $f_1^1$};
\node (5) at (2,4)[draw, circle]{};
\node (6) at (-2,2)[draw]{\tiny $f_2^{-1}$};
\node (7) at (0,0)[draw,circle]{\tiny $f_2^0$};
\node (8) at (2,2)[draw, circle]{};
\node (9) at (0,6)[draw,circle]{\tiny $e_1^0$};
\node (10) at (0,4)[draw,circle]{\tiny $e_2^0$};
\node (1') at (2,3.5){\tiny $f_1^2={f'}_1^{-2}$};
\node (2') at (3,3)[draw,circle]{\tiny ${f'}_1^{-1}$};
\node (3') at (4,2)[draw,circle]{\tiny ${f'}_1^0$};
\node (4') at (5,3)[draw,circle]{\tiny ${f'}_1^1$};
\node (5') at (6,4)[draw]{\tiny ${f'}_1^2$};
\node (6') at (2,1.5){\tiny $f_2^1={f'}_2^{-1}$};
\node (7') at (4,0)[draw,circle]{\tiny ${f'}_2^0$};
\node (8') at (6,2)[draw]{\tiny ${f'}_2^1$};
\node (9') at (4,6)[draw,circle]{\tiny ${e'}_1^0$};
\node (10') at (4,4)[draw,circle]{\tiny ${e'}_2^0$};
\drawpath{1,...,5,9,1}{thick}
\drawpath{7,8,4,10,2,6,7,2,9,4,7}{thick}
\drawpath{5,2',3',4',5',9',5}{thick}
\drawpath{7',8',4',10',2',8,7',2',9',4',7'}{thick}
 \path(6) edge[->, dashed, thick] (1);
 \path(5') edge[->, dashed, thick] (8');
\end{tikzpicture}
\caption{$\cZ_{A_2}$-quiver.}
\end{figure}

Recall the embedding of type $A_2$ from Section \ref{sec:embedding:A2}. First of all, the $g_b(\be_i)$ can be easily decomposed using \eqref{guv} with $g_b(\be_i)=g_b(\be_i^+)g_b(\be_i^-)$ as
\Eqn{
g_b(\be_1^+)&=g_b(X_{f_1^2,e_1^0}),\\
g_b(\be_1^-)&=g_b(X_{f_1^2}),\\
g_b(\be_2^+)&=g_b(X_{f_2^1,f_1^1,e_2^0,f_1^{-1}})g_b(X_{f_2^1,f_1^1,e_2^0}),\\
g_b(\be_2^-)&=g_b(X_{f_2^1,f_1^1})g_b(X_{f_2^1}).
}
Hence by Corollary \ref{CorR1234}, the reduced $R$ matrix decomposed as:
\Eqn{
R_4&=g_b(X_{f_1^2,e_1^0}\ox X_{f_1^{-2},f_1^{-1},f_1^0,f_1^2})g_b(X_{f_2^1,f_1^2,e_2^0,f_1^{-1}}\ox X_{f_2^{-1},f_2^0})\\
&\tab g_b(X_{f_2^1,f_1^2,e_2^0}\ox X_{f_2^{-1},f_2^0})g_b(X_{f_1^2,e_1^0}\ox X_{f_1^{-2},f_1^{-1},f_1^0}).\\
R_3&=g_b(X_{f_1^2}\ox X_{f_1^{-2},f_1^{-1}})g_b(X_{f_2^1,f_1^1}\ox X_{f_2^{-1}})g_b(X_{f_2^1}\ox X_{f_2^{-1}})g_b(X_{f_1^2}\ox X_{f_1^{-2}}).\\
R_2&=g_b(X_{f_1^2,e_1^0}\ox X_{f_1^{-2},f_1^{-1}})g_b(X_{f_2^1,f_1^2,e_2^0,f_1^{-1}}\ox X_{f_2^{-1}})g_b(X_{f_2^1,f_1^2,e_2^0}\ox X_{f_2^{-1}})g_b(X_{f_1^2,e_1^0}\ox X_{f_1^{-2}}).\\
R_1&=g_b(X_{f_1^2}\ox X_{f_1^{-2},f_1^{-1}})g_b(X_{f_2^1,f_1^1}\ox X_{f_2^{-1}})g_b(X_{f_2^1}\ox X_{f_2^{-1}})g_b(X_{f_1^2}\ox X_{f_1^{-2}}).
}
Then we calculate term by term the corresponding mutation sequence (recall that $f_i^{n_i}$ is glued to ${f'}_i^{-n_i}$):
\Eqn{
X_{f_1^2}\ox X_{f_1^{-2}}&\sim \mu_{f_1^2}\\
X_{f_2^1}\ox X_{f_2^{-1}} = \mu'_{f_1^2}(X^{\mu}_{f_2^1}\ox X^{\mu}_{f_2^{-1}}))&\sim \mu_{f_2^1}\\
X_{f_2^1,f_1^1}\ox X_{f_2^{-1}}=\mu'_{f_1^2}\mu'_{f_{2^1}}(X^{\mu^2}_{f_1^1}\ox 1)&\sim \mu_{f_1^1}\\
X_{f_1^2}\ox X_{f_1^{-2},f_1^{-1}}=\mu'_{f_1^2}\mu'_{f_2^1}\mu'_{f_1^1}(1\ox X^{\mu^3}_{f_1^{-1}})&\sim \mu_{{f'}_1^{-1}}\\
\cdots&\sim\cdots
}
and so on, where we denoted by $X^{\mu^n}$ the corresponding mutated quantum cluster variables after $n$ mutations (but we do not change the labels). Then we obtain:
\Eqn{
\mu_{R_1}&=\mu_{{f'}_1^{-1}}\mu_{f_1^1}\mu_{{f'}_2^{-1}}\mu_{{f'}_1^{-2}}, &\s_1&=({f'}_2^0, {f'}_2^{-1}, f_1^1, e_2^0)(e_1^0, {f'}_1^{-2}, {f'}_1^{-1}, {f'}_1^0),\\
\mu_{R_2}&=\mu_{{f'}_1^{-1}}\mu_{f_1^{-1}}\mu_{{f'}_2^{-1}}\mu_{{f'}_1^{-2}}, &\s_2&=({f'}_2^0,{f'}_2^{-1}, {f}_1^{-1}, f_2^{-1})(f_1^{-2},{f'}_1^{-2},{f'}_1^{-1},{f'}_1^0),\\
\mu_{R_3}&=\mu_{{f'}_1^1}\mu_{f_1^1}\mu_{{f'}_2^0}\mu_{{f'}_1^0},&\s_3&=({f'}_2^1,{f'}_2^0,f_1^1,e_2^0)(e_1^0,{f'}_1^0,{f'}_1^1,{f'}_1^2),\\
\mu_{R_4}&=\mu_{{f'}_1^1}\mu_{f_1^{-1}}\mu_{{f'}_2^0}\mu_{{f'}_1^0},&\s_4&=({f'}_2^1,{f'}_2^0,f_1^{-1},f_2^{-1})(f_1^{-2},{f'}_1^0,{f'}_1^1,{f'}_1^2).
}
Note that $\s_i$ are given by shifting along the concatenation of the $F_i$ path in the right quiver and $E_i$ path in the left quiver, and that the mutation corresponding to $R_3$ and $R_4$ are the mirror reflections of $R_2$ and $R_1$ satisfying Proposition \ref{mirrorpalin}. We display the configurations in Figure \ref{muRi}, omitting $R_3$ and $R_4$. Also recall that $Q=\til{Q}$ in type $A_n$ due to the $S_3$ symmetry, hence in fact under this identification all 4 flips are identical.

\begin{subfigures}
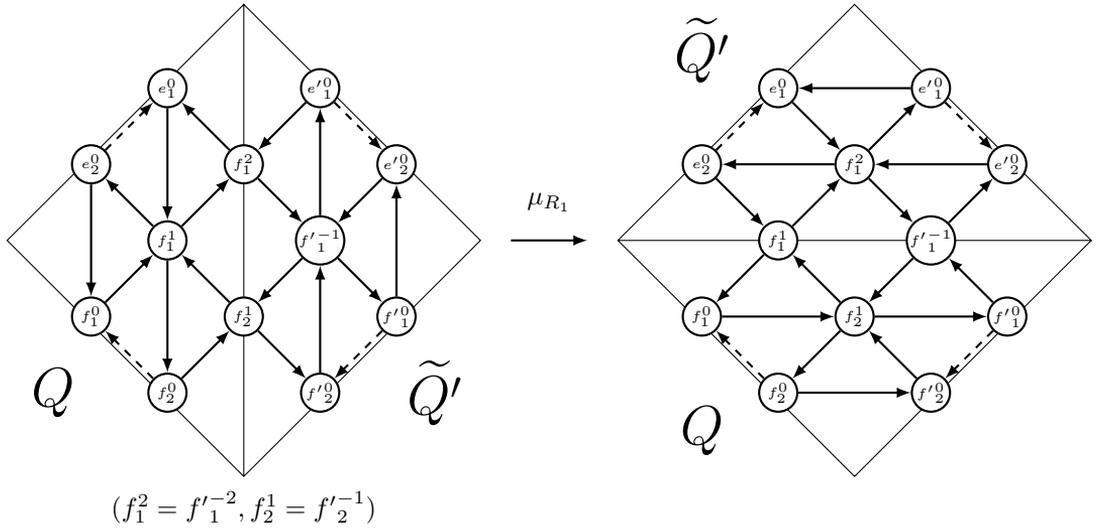
\begin{figure}[H]
\centering
\begin{tikzpicture}[every node/.style={circle, fill=white,inner sep=0, minimum size=0.5cm, thick}, x=1cm, y=1cm]
\begin{scope}[shift={(-4,0)}]
\draw (0,3.1)-- (-3.1,0)-- (0,-3.1)-- (3.1,0)--(0,3.1)--(0,-3.1);
\node (1) at (-2,1)[draw]{\tiny$e_2^0$};
\node (2) at (-2,-1)[draw]{\tiny$f_1^0$};
\node (3) at (-1,2)[draw]{\tiny$e_1^0$};
\node (4) at (-1,0)[draw]{\tiny$f_1^1$};
\node (5) at (-1,-2)[draw]{\tiny$f_2^0$};
\node (6) at (0,1)[draw]{\tiny$f_1^2$};
\node (7) at (0,-1)[draw]{\tiny $f_2^1$};
\node (8) at (1,2)[draw]{\tiny${e'}_1^0$};
\node (9) at (1,0)[draw]{\tiny${f'}_1^{-1}$};
\node (10) at (1,-2)[draw]{\tiny${f'}_2^0$};
\node (11) at (2,1)[draw]{\tiny${e'}_2^0$};
\node (12) at (2,-1)[draw]{\tiny${f'}_1^0$};
\drawpath{2,4,6,9,12,11,9,7,4,1,2}{thick}
\drawpath{6,3,4,5,7,10,9,8,6}{thick}
\drawpath{1,3}{dashed,thick}
\drawpath{5,2}{dashed,thick}
\drawpath{8,11}{dashed,thick}
\drawpath{12,10}{dashed,thick}
\node at (-2.5,-2) {\huge $Q$};
\node at (2.5,-2) {\huge $\til{Q'}$};
\node at (0,-3.5) [fill=none, rectangle]{$(f_1^2={f'}_1^{-2}, f_2^1={f'}_2^{-1})$};
\end{scope}
\begin{scope}[shift={(4,0)}]
\draw (3.1,0)-- (0,-3.1)-- (-3.1,0)--(0,3.1)--(3.1,0)--(-3.1, 0);
\node (1) at (-2,1)[draw]{\tiny$e_2^0$};
\node (2) at (-2,-1)[draw]{\tiny$f_1^0$};
\node (3) at (-1,2)[draw]{\tiny$e_1^0$};
\node (4) at (-1,0)[draw]{\tiny$f_1^1$};
\node (5) at (-1,-2)[draw]{\tiny$f_2^0$};
\node (6) at (0,1)[draw]{\tiny $f_1^2$};
\node (7) at (0,-1)[draw]{\tiny $f_2^1$};
\node (8) at (1,2)[draw]{\tiny${e'}_1^0$};
\node (9) at (1,0)[draw]{\tiny${f'}_1^{-1}$};
\node (10) at (1,-2)[draw]{\tiny${f'}_2^0$};
\node (11) at (2,1)[draw]{\tiny${e'}_2^0$};
\node (12) at (2,-1)[draw]{\tiny${f'}_1^0$};
\drawpath{8,3,6,9,7,5,10,7,4,6,8}{thick}
\drawpath{1,4,2,7,12,9,11,6,1}{thick}
\drawpath{1,3}{dashed,thick}
\drawpath{5,2}{dashed,thick}
\drawpath{8,11}{dashed,thick}
\drawpath{12,10}{dashed,thick}
\node at (-2,-2.5) {\huge $Q$};
\node at (-2,2.5) {\huge $\til{Q'}$};
\end{scope}
\path (-0.5,0) edge[->,thick](0.5,0);
\node at (0,0.5) {$\mu_{R_1}$};
\end{tikzpicture}
\caption{The flipping of triangle $\mu_{R_1}$ of the basic quivers, before changing the index back to standard form.}
\end{figure}
\begin{figure}[H]
\centering
\begin{tikzpicture}[every node/.style={circle, fill=white,inner sep=0, minimum size=0.5cm, thick}, x=1cm, y=1cm]
\begin{scope}[shift={(-4,0)}]
\draw (0,3.1)-- (-3.1,0)-- (0,-3.1)-- (3.1,0)--(0,3.1)--(0,-3.1);
\node (1) at (-2,1)[draw]{\tiny$f_2^{-1}$};
\node (2) at (-2,-1)[draw]{\tiny$f_2^0$};
\node (3) at (-1,2)[draw]{\tiny$f_1^{-2}$};
\node (4) at (-1,0)[draw]{\tiny$f_1^{-1}$};
\node (5) at (-1,-2)[draw]{\tiny$f_1^0$};
\node (6) at (0,1)[draw]{\tiny$e_1^0$};
\node (7) at (0,-1)[draw]{\tiny $e_2^0$};
\node (8) at (1,2)[draw]{\tiny${e'}_1^0$};
\node (9) at (1,0)[draw]{\tiny${f'}_1^{-1}$};
\node (10) at (1,-2)[draw]{\tiny${f'}_2^0$};
\node (11) at (2,1)[draw]{\tiny${e'}_2^0$};
\node (12) at (2,-1)[draw]{\tiny${f'}_1^0$};
\drawpath{2,4,6,9,12,11,9,7,4,1,2}{thick}
\drawpath{6,3,4,5,7,10,9,8,6}{thick}
\drawpath{1,3}{dashed,thick}
\drawpath{5,2}{dashed,thick}
\drawpath{8,11}{dashed,thick}
\drawpath{12,10}{dashed,thick}
\node at (-2.5,-2) {\huge $\til{Q}$};
\node at (2.5,-2) {\huge $\til{Q'}$};
\node at (0,-3.5) [fill=none, rectangle]{$(e_1^0={f'}_1^{-2}, e_2^0={f'}_2^{-1})$};
\end{scope}
\begin{scope}[shift={(4,0)}]
\draw (3.1,0)-- (0,-3.1)-- (-3.1,0)--(0,3.1)--(3.1,0)--(-3.1, 0);
\node (1) at (-2,1)[draw]{\tiny$e_2^0$};
\node (2) at (-2,-1)[draw]{\tiny$f_1^0$};
\node (3) at (-1,2)[draw]{\tiny$e_1^0$};
\node (4) at (-1,0)[draw]{\tiny$f_1^1$};
\node (5) at (-1,-2)[draw]{\tiny$f_2^0$};
\node (6) at (0,1)[draw]{\tiny $f_1^2$};
\node (7) at (0,-1)[draw]{\tiny $f_2^1$};
\node (8) at (1,2)[draw]{\tiny${e'}_1^0$};
\node (9) at (1,0)[draw]{\tiny${f'}_1^{-1}$};
\node (10) at (1,-2)[draw]{\tiny${f'}_2^0$};
\node (11) at (2,1)[draw]{\tiny${e'}_2^0$};
\node (12) at (2,-1)[draw]{\tiny${f'}_1^0$};
\drawpath{8,3,6,9,7,5,10,7,4,6,8}{thick}
\drawpath{1,4,2,7,12,9,11,6,1}{thick}
\drawpath{1,3}{dashed,thick}
\drawpath{5,2}{dashed,thick}
\drawpath{8,11}{dashed,thick}
\drawpath{12,10}{dashed,thick}
\node at (-2,-2.5) {\huge $\til{Q}$};
\node at (-2,2.5) {\huge $\til{Q'}$};
\end{scope}
\path (-0.5,0) edge[->,thick](0.5,0);
\node at (0,0.5) {$\mu_{R_2}$};
\end{tikzpicture}
\caption{The flipping of triangle $\mu_{R_2}$ of the basic quivers, before changing the index back to standard form.}
\end{figure}\label{muRi}
\end{subfigures}
\subsubsection{Type $A_n$}\label{sec:Dehn:An}
Let $\bi=(1213214321...n...1)$ be the usual reduced word. To study $R_1$ and $R_2$, let
\Eq{
\cP_i^{Q\til{Q}'}:= \<\cP_{F_i}^{\til{Q}'}-\cP_{E_i}^Q\>,\tab \cP_i^{\til{Q}\til{Q}'}:=\<\cP_{F_i}^{\til{Q}'}-\cP_{E_i}^{\til{Q}}\>
} be the concatenation of the $F_i, E_i$ path of the right and left quiver respectively. Then the mutation sequences $\mu_{R_j}$, $j=1,2$, are given by
$$\mu_{R_j}:=\{\cP_1^j\to \cP_2^j\to...\cP_N^j\}$$
where for $v'(i,k)=m$, $\cP_m^j$ are the $k$-shifted subsequences
\Eqn{
\cP_m^1=\cP_i^{Q\til{Q}'}[k,i],\tab \cP_m^2=\cP_i^{\til{Q}\til{Q}'}[k,i].
}

Let
\Eq{
\cP_i^{{}_Q^{\til{Q}'}}:=  \cF(\cP_i^{Q\til{Q}'})=\<\cP_{E_i}^Q-\cP_{F_i}^{\til{Q}'}\>,\tab \cP_i^{{}_{\til{Q}}^{\til{Q}'}}:=\cF(\cP_i^{\til{Q}\til{Q}'})=\<\cP_{E_i}^{\til{Q}}-\cP_{F_i}^{\til{Q}'}\>
}
be the concatenation of the $E_i, F_i$ path of the bottom and top quiver respectively after the flip of triangulation. The permutations $\s_1,\s_2$ are then defined by renaming the corresponding sequence:
\Eqn{
\s_1:\cP_i^{Q\til{Q}'}\mapsto \cP_i^{{}_Q^{\til{Q}'}},\tab \s_2:\cP_i^{\til{Q}\til{Q}'}\mapsto \cP_i^{{}_{\til{Q}}^{\til{Q}'}}
}

For example, in type $A_3$, we have $\bi=(1,2,1,3,2,1)$ and
\Eqn{
\cP_1^{Q\til{Q}'}&: ({f'}_1^0,{f'}_1^{-1},{f'}_1^{-2},{f'}_1^{-3}=f_1^3,e_1^0),\\
\cP_2^{Q\til{Q}'}&: ({f'}_2^0,{f'}_2^{-1},{f'}_2^{-2}=f_2^2,f_1^2,e_2^0),\\
\cP_3^{Q\til{Q}'}&: ({f'}_3^0,{f'}_3^{-1}=f_3^1,f_2^1,f_1^1,e_1^0),
}
and hence the mutation sequence giving the first flip of triangulations is
\Eqn{
\mu_{R_1}&=\{f_1^3\to f_1^2\to f_2^2\to f_1^1\to f_2^1\to f_3^1\to{f'}_1^{-2}\to{f'}_2^{-1}\to f_2^2\to{f'}_1^{-1}\}.
}
\subsubsection{Type $B_n$ and $C_n$}\label{sec:Dehn:Bn}
Let the reduced word $\bi$ be as in \eqref{Bni}. It turns out that type $B_n$ and type $C_n$ have identical mutation sequences.  Define the sequence
\Eqn{
\cS:&=(f_1^n,f_1^{n-1},f_1^{n-2},...,f_1^1,e_1^0),\\
\cS':&=(e_1^0,f_1^1,f_1^2,...,f_1^n),
}
and let
\Eqn{
\cP_i^{Q\til{Q}'}&:=\case{\<\cP_{F_i}^{\til{Q}'}-\cP_{E_i}^Q\>& i\neq 1,\\
(\cP_{F_1}^{\til{Q}'}-\cS\>& i=1,}\\
\cP_i^{\til{Q}\til{Q}'}&:= \case{\<\cP_{F_i}^{\til{Q}'}-\cP_{E_i}^{\til{Q}}\>&i\neq 1,\\
\<\cP_{F_1}^{\til{Q}'}-\cS'\>&i=1.}
}
Then the mutation sequences $\mu_{R_j}$, $j=1,2$, are given by
$$\mu_{R_j}:=\{\cP_1^j\to \cP_2^j\to...\cP_N^j\}$$

For $i\neq 1$ and $v'(i,k)=m$, $\cP_m^j$ are the $k$-shifted subsequences
\Eqn{
\cP_m^1=\cP_i^{Q\til{Q}'}[k,m_i],\tab \cP_m^2=\cP_i^{\til{Q}\til{Q}'}[k,m_i],
}where $m_i=2(n-i)+1$ is the length of the $E_i$ path.

For $i=1$ and $v'(1,k)=m$, let $$f_1^n\to f_2^{2n-3}\to f_1^{n-1}\to f_2^{2n-5}\to f_1^{n-2}\to...\to f_1^2\to f_2^1\to f_1^1$$ be the $E_i$ path of $\be_1$ in $Q$ (ignore the double count in type $C_n$). Then 
\Eqn{
\cP_m^1=&\cP_1^{Q\til{Q}'}(1-k)\to f_2^{2n-3}\to \cP_1^{Q\til{Q}'}(1-k)\to\cP_1^{Q\til{Q}'}(2-k)\to f_2^{2n-5}\to\cP_1^{Q\til{Q}'}(2-k) ...\\
&...\to\cP_1^{Q\til{Q}'}(n-k-1)\to f_2^1\to\cP_1^{Q\til{Q}'}(n-k-1)\to \cP_1^{Q\til{Q}'}(n-k).
}
Let $$e_1^0\to f_1^{-1}\to f_2^{-1}\to f_1^{-2}\to...\to f_1^{-(n-1)}\to f_2^{-(2n-3)}\to f_1^{-n}$$ be the $E_i$ path of $\be_1$ in $\til{Q}$. Then 
\Eqn{
\cP_m^2=&\cP_1^{\til{Q}\til{Q}'}(1-k)\to\cP_1^{\til{Q}\til{Q}'}(2-k)\to f_2^{-1}\to \cP_1^{\til{Q}\til{Q}'}(2-k)\to\cP_1^{\til{Q}\til{Q}'}(3-k)\to f_2^{-3}\to\cP_1^{\til{Q}\til{Q}'}(3-k) ...\\
&...\to\cP_1^{\til{Q}\til{Q}'}(n-k)\to f_2^{-(2n-3)}\to\cP_1^{\til{Q}\til{Q}'}(n-k).
}

Let
\Eqn{
\cP_i^{{}_Q^{\til{Q}'}}:= \cF(\cP_i^{Q\til{Q}'}),\tab \cP_i^{{}_{\til{Q}}^{\til{Q}'}}&:= \cF(\cP_i^{\til{Q}\til{Q}'})
} Then the permutations $\s_1,\s_2$ are again defined by renaming the corresponding sequence:
\Eqn{
\s_1:\cP_i^{Q\til{Q}'}\mapsto \cP_i^{{}_Q^{\til{Q}'}},\tab \s_2:\cP_i^{\til{Q}\til{Q}'}\mapsto \cP_i^{{}_{\til{Q}}^{\til{Q}'}}
}

For example, in type $B_3$, we have $\bi=(1,2,1,2,3,2,1,2,3)$ and
\Eqn{
\cP_1^{Q\til{Q}'}&: ({f'}_1^0,{f'}_1^{-1},{f'}_1^{-2},{f'}_1^{-3}=f_1^3,f_1^2,f_1^1,e_1^0),\\
\cP_2^{Q\til{Q}'}&: ({f'}_2^0,{f'}_2^{-1},{f'}_2^{-2},{f'}_2^{-3},{f'}_2^{-4}=f_2^4,f_3^1,f_2^2,e_2^0),\\
\cP_3^{Q\til{Q}'}&: ({f'}_3^0,{f'}_3^{-1},{f'}_3^{-2}=f_3^2,e_3^0),
}
and hence the mutation sequence giving the first flip of triangulations is (spacing according to $\bi'$):

\Eqn{
\mu_{R_1}=\{&f_3^2\to\\
&f_2^4\to f_3^1\to f_2^2\to \\
&f_1^3 \to f_2^3\to f_1^3\to f_1^2\to f_2^1\to f_1^2\to f_1^1\to \\
&{f'}_2^{-3}\to f_2^4\to f_3^1\to\\
& {f'}_3^{-1}\to\\
& {f'}_2^{-2}\to {f'}_2^{-3}\to f_2^4\to\\
& {f'}_1^{-2}\to f_2^3\to {f'}_1^{-2}\to f_1^3\to f_2^1\to f_1^3\to f_1^2\to\\
& {f'}_2^{-1}\to {f'}_2^{-2}\to{f'}_2^{-3}\to\\
& {f'}_1^{-1}\to f_2^3\to {f'}_1^{-1}\to {f'}_1^{-2}\to f_2^1\to {f'}_1^{-2}\to {f'}_1^{-3}\}.
}
\subsubsection{Type $D_n$}\label{sec:Dehn:Dn}
The description of the $D_n$ mutation sequences is a lot more complicated. Let the reduced word $\bi$ be as in \eqref{Dni}. For $i\neq 0,1$, define as before
$$\cP_i^{Q\til{Q}'}=\<\cP_{F_i}^{\til{Q}'}-\cP_{E_i}^{Q}\>.$$
Let
$$\ov{n}:=n\mbox{ (mod 3)}\in\{0,1,2\}$$
and define the following sequences, which are constructed by repeating in blocks of 4:
\Eqn{
\cS_1&=(X_1, ...,{f'}_0^{-3k+\ov{n}-2}, {f'}_0^{-3k-\ov{n}-1},{f'}_1^{-3k-\ov{n}-1}, {f'}_1^{-3k-\ov{n}},...,{f'}_1^{-n+1}),\\
\cS_2&=(X_2, ...,{f'}_0^{-3k-\ov{n}-1}, {f'}_0^{-3k-\ov{n}},{f'}_1^{-3k-\ov{n}}, {f'}_1^{-3k-\ov{n}+1},...,{f'}_0^{-n+1}),\\
\cS_0&=(X_0, ...,{f'}_0^{-3k-\ov{n}}, {f'}_0^{-3k-\ov{n}+1},{f'}_1^{-3k-\ov{n}+1}, {f'}_1^{-3k-\ov{n}+2},...,{f'}_0^{-n+1}),
}
where the starting terms are given by
\Eqn{
X_i=\case{{f'}_1^0&\ov{n}=i,\\{f'}_0^0&\mbox{otherwise}.}
}
Let \Eqn{
\cT_0=(f_0^{n-1}, f_1^{n-1},f_1^{n-2},f_0^{n-2},f_0^{n-3},f_1^{n-3},..., f_\e^1,f_{1-\e}^1,e_1^0)
}where $\e:=n\mbox{ (mod 2)}\in\{0,1\}$. Let $\cT_1=\cP_{E_1^\#}^{Q}$ denote the $E_1$ path in $Q$, but with the last term $e_1^0$ replaced by $e_0^0$, and let $\cT_2=\cP_{E_0}^{Q}$.

Finally, we define
\Eqn{
\cU_j:=\<\cS_j-\cT_j\>, \tab j=0,1,2.
}
Then the mutation sequence for $R_1$ is given by
\Eqn{
\mu_{R_1}:=\{\cP_1\to \cP_2\to...\to \cP_N\}.
}
For $i\neq 0,1$ and $v'(i,k)=m$, we have as before
\Eqn{
\cP_m=\cP_i^{Q\til{Q}'}[k,m_i]
}
where $m_i=2(n-i)-1$ is the length of the $E_i$ path.

For $i=0,1$ and $v'(i,k)=m$, we have
\Eqn{
\cP_m&=\cU_{\ov{k-1+i}}[K_k^i, 2n-3]
}
where $K_k^0=(0,2,3,4,6,7,8,10,11,12...)$ and $K_k^1=(0,1,2,4,5,6,8,9,10...)$.

Then the permutation is given by
$$\s_1: \cP_i^{Q\til{Q}'}\mapsto \cP_i^{{}_Q^{\til{Q}'}},\tab i\neq 0,1$$
and
$$\s_1: \cU_j\mapsto \<\cT_{\ov{j+1-n}}-\cS_{\ov{j+1}}\>,\tab j=0,1,2.$$

The second flip $R_2$ is described similarly, where all the sequences $\cT_j$ are reversed and the root indexes $0\corr 1$ interchanged, and $\cS_j$ are replaced by $\cS_{\ov{j-1}}$.

For example, in type $D_4$, we have $\bi=(0,1,2,0,1,2,3,2,0,1,2,3)$, and
\Eqn{
\cP_2^{Q\til{Q}'}&=({f'}_2^0,{f'}_2^{-1},{f'}_2^{-2},{f'}_2^{-3}, {f'}_2^{-4}=f_2^4,f_3^1,f_2^2,e_2^0),\\
\cP_3^{Q\til{Q}'}&=({f'}_3^0,{f'}_3^{-1},{f'}_3^{-2}=f_3^2, e_3^0),\\
\cU_1&=({f'}_1^0,{f'}_0^{-1},{f'}_0^{-2},{f'}_1^{-2},{f'}_1^{-3}=f_1^3,f_2^3,f_0^2,f_2^1,f_1^1,e_0^0),\\
\cU_2&=({f'}_0^0,{f'}_0^{-1},{f'}_1^{-1},{f'}_1^{-2},{f'}_0^{-3}=f_0^3,f_2^3,f_1^2,f_2^1,f_0^1,e_0^0),\\
\cU_0&=({f'}_0^0,{f'}_1^0,{f'}_1^{-1},{f'}_0^{-2},{f'}_0^{-3}=f_0^3,f_1^3,f_1^2,f_0^2,f_0^1,f_1^1,e_1^0),
}
and hence the mutation sequence giving the first flip of triangulations is (spacing according to $\bi'$):
\Eqn{
\mu_{R_1}=\{& f_3^2\to\\
&f_2^4\to f_3^1\to f_2^2\to\\
&f_1^3\to f_2^3\to f_0^2\to f_2^1\to f_1^1\to\\
&f_0^3\to f_1^3\to f_1^2\to f_0^2\to f_0^1\to\\
&{f'}_2^{-3}\to f_2^4\to f_3^1\to\\
&{f'}_3^{-1}\to\\
&{f'}_2^{-2}\to {f'}_2^{-3}\to f_2^4\to\\
&{f'}_1^{-2}\to f_0^3\to f_2^3\to f_1^2\to f_2^1\to\\
&{f'}_0^{-2}\to {f'}_1^{-2}\to f_1^3\to f_2^3\to f_0^2\to\\
&{f'}_2^{-1}\to {f'}_2^{-2}\to{f'}_2^{-3}\to\\
&{f'}_1^{-1}\to {f'}_0^{-2}\to f_0^3\to f_1^3\to f_1^2\to\\
&{f'}_0^{-1}\to {f'}_1^{-1}\to {f'}_1^{-2}\to f_0^3\to f_2^3
\}.
}
\subsubsection{Exceptional types}\label{sec:Dehn:Ex}
The mutation sequences can be worked out in the exception type, but there are no apparent patterns, so we will not present here. We know that the reduced $R$ matrix corresponds to 
$$T=4\prod_{i=1}^n n_i \cE_i$$
mutations, where $\cE_i$ is the number of factors in the $g_b(\be_i)$ decomposition as in Proposition \ref{Endecomp}. One check explicitly that indeed the mutation sequences give the half-Dehn twist. Combining with the classical types, we have
\begin{Prop} The half-Dehn twist can be represented by $T$ quiver mutations, where
$$T=\case{
\frac{2}{3}n(n+1)(n+2)&\mbox{Type $A_n$}\\
\frac{4}{3}n(4n^2-1)&\mbox{Type $B_n$ and $C_n$}\\
\frac{4}{3}n(n-1)(4n-5)&\mbox{Type $D_n$}\\
1196&\mbox{Type $E_6$}\\
3464&\mbox{Type $E_7$}\\
12064&\mbox{Type $E_8$}\\
976&\mbox{Type $F_4$}\\
144&\mbox{Type $G_2$}}$$
and each flip of triangulations are given by $\frac{T}{4}$ quiver mutations.
\end{Prop}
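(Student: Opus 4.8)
\textbf{The plan} is to reduce everything to the mutation count $T=4\sum_{i} n_i\cE_i$ and then evaluate it type by type. First I would justify this count. By Corollary \ref{CorR1234}, under $\iota\ox\iota$ the reduced $R$-matrix factors as $\ov{R}=R_4R_3R_2R_1$, and each block $R_j$ is an ordered product over the $N$ letters $i_1,\dots,i_N$ of $\bi$ of quantum dilogarithms $g_{b_{i_k}}(\be_{i_k}^{\pm}\ox X_k^{\pm})$. Proposition \ref{Endecomp} decomposes each $g_{b_i}(\be_i^{\pm})$ into exactly $\cE_i$ quantum dilogarithms with cluster-monomial arguments, and by Lemma \ref{useful} each such dilogarithm is the conjugation part $\mu^{\#}$ of a single quiver mutation; the monomial parts and the Cartan factor $\cK$ contribute no mutations (they are absorbed into $M_T$ and $\s^*$ in Theorem \ref{main3}). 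Grouping the $N$ factors of a fixed block by root index, the index $i$ occurs $n_i$ times, so each block realizes $\sum_i n_i\cE_i$ mutations. Since there are four blocks---one for each of the four flips of Theorem \ref{main3}---the total is $T=4\sum_i n_i\cE_i$ and each flip uses $T/4$; the equal split is also forced by the mirrored-palindrome symmetry of Proposition \ref{mirrorpalin}.

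\textbf{Simply-laced types.} Here Proposition \ref{Endecomp} decomposes $g_{b_i}(\be_i^{-})$ cleanly via \eqref{guv}, one factor per monomial, so $\cE_i$ equals the number of monomials of $\be_i^{-}$, i.e. the number of nodes on the $E_i$-path strictly before the initial node $e_i^0$ (with revisited nodes, as for $\be_6$ in $E_8$, counted with multiplicity). The multiplicities $n_i$ and these path lengths are read directly from Section \ref{sec:embedding}. For type $A_n$ one gets $n_i=n+1-i$, $\cE_i=i$, and $4\sum_{i=1}^n(n+1-i)\,i=\frac{2}{3}n(n+1)(n+2)$. For type $D_n$ one gets $n_0=n_1=n-1$, $n_i=2(n-i)$, $\cE_0=\cE_1=2n-3$, $\cE_i=2(n-i)-1$ for $i\geq 2$, and the resulting sum telescopes to $\frac{4}{3}n(n-1)(4n-5)$. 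For $E_6$ the tables $(n_0,\dots,n_5)=(5,4,7,11,7,2)$ and $(\cE_0,\dots,\cE_5)=(9,1,10,11,7,5)$ give $\sum_i n_i\cE_i=299$, hence $T=1196$; $E_7$ and $E_8$ are identical finite computations, yielding $3464$ and $12064$.

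\textbf{Doubly-laced and $G_2$.} For these the decomposition of Proposition \ref{Endecomp} uses the pentagon-type identity \eqref{gb2} (and, in $G_2$, genuine conjugations by $g_b,g_b^*$), so $\cE_i$ exceeds the naive path length by the number of extra middle factors produced. For $B_n$ and $C_n$ the generic roots are clean, $\cE_i=2(n-i)+1$ with $n_i=2(n+1-i)$ for $i\geq 2$, while the anomalous root $1$ (short in $B_n$, long in $C_n$) acquires the extra factors that make $\cE_1=3n-2$ with $n_1=n$; substituting gives $4\big(n(3n-2)+\sum_{i=2}^n 2(n+1-i)(2(n-i)+1)\big)=\frac{4}{3}n(4n^2-1)$ for both, explaining why they coincide. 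For $F_4$ one reads $(n_1,\dots,n_4)=(4,8,9,3)$ and $(\cE_1,\dots,\cE_4)=(9,11,13,1)$, whence $\sum n_i\cE_i=244$ and $T=976$; for $G_2$, $n_1=n_2=3$ with $\cE_1=1$ and $\cE_2=11$ (the eleven dilogarithms of the displayed $g_{b_s}(\be_2^-)$), whence $T=4(3+33)=144$. In every case all four blocks have equal length, so each flip uses $T/4$ mutations.

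\textbf{Main obstacle.} The delicate point is the exact value of $\cE_i$ for the anomalous roots---short/long root $1$ of $B_n/C_n$, the revisiting root $6$ of $E_8$, roots $1,2,3$ of $F_4$, and root $2$ of $G_2$---where Proposition \ref{Endecomp} introduces factors beyond the path length through \eqref{gb2} and through conjugations. Because the decomposition is not unique (the Remark after Theorem \ref{main3}: two $g_b$'s may be traded for three via \eqref{gvu}), one must pin down the canonical decomposition of Proposition \ref{Endecomp} for the count to be well-defined; with that fixed, the values above are forced and the arithmetic is routine. A secondary point, already handled by Theorem \ref{main3} and the explicit sequences of Section \ref{sec:Dehn:explicit}, is that this number of mutations is genuinely realized by a sequence effecting the half-Dehn twist; for the exceptional types this realization is confirmed by direct computation.
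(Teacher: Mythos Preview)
Your proposal is correct and follows the same approach as the paper: both rest on the count $T=4\sum_i n_i\cE_i$ (the paper's displayed formula has a typo, writing $\prod$ for $\sum$), where $\cE_i$ is the number of dilogarithm factors in the decomposition of $g_{b_i}(\be_i^\pm)$ from Proposition \ref{Endecomp}, and then evaluate it type by type from the explicit embeddings of Section \ref{sec:embedding}. Your write-up is in fact more detailed than the paper's, which only records the formula and remarks that one checks explicitly that the resulting mutation sequences realize the half-Dehn twist; your case-by-case extraction of the $\cE_i$ (including the extra factors from \eqref{gb2} for the anomalous roots in $B_n$, $C_n$, $F_4$, and the eleven-term $G_2$ decomposition) and the verification of the closed forms are exactly what is implicit there.
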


In type $G_2$, from the factorization of $g_{b_s}(\be_2)$, we see that it involves the factor $g_{b_s}^*(X_{...})$. We use the fact that
\Eqn{
\mu_k^q&=Ad_{g_b^*(X_k)}\circ \mu_k',\\
&=Ad_{g_b(X_k\inv)}\circ \mu_k'',
}
where $\mu_k''$ is the same as $\mu_k$ but with $b_{ij}\to b_{ji}$ inverted in the formula. With slight modification of Lemma \ref{useful}, we obtain a mutation sequence $\mu_{R_1}$ of length 36 given by
\begin{tiny}
\Eqn{
\mu_{R_1}=\{&{f'}_1^{-3}\to f_1^1\to f_1^2\to f_2^2\to f_1^1\to {f'}_2^{-3}\to f_2^2\to f_1^1\to f_2^3\to f_1^2\to f_1^1\to f_2^1\to\\
&{f'}_1^{-2}\to f_1^1\to f_1^2\to f_2^3\to f_1^1\to {f'}_2^{-2}\to f_2^3\to f_1^1\to {f'}_2^{-2}\to f_1^2\to f_1^1\to f_2^2\to\\
&{f'}_1^{-1}\to f_1^1\to f_1^2\to {f'}_2^2\to f_1^1\to {f'}_2^{-1}\to{f'}_2^{-2}\to f_1^1\to {f'}_2^{-1}\to f_1^2\to f_1^1\to f_2^3\},
}
\end{tiny}
where $f_i^{n_i}$ and ${f'}_i^{-n_i}$ are identified.

The basic quiver (cf. Figure \ref{G2basicquiver}) can be attached to a triangle as in Figure \ref{G2basicquiver2}.
Then the mutation $\mu_{R_1}$ appears as in Figure \ref{G2mutation}, and we can determine $\s_1$ to be:
\Eqn{
\s_1:\<\cS_i-\cT_i\>\mapsto \<\cT_i-\cS_i\>,\tab i=1,2,
}
where
\Eqn{
\cS_1=\cP_{F_1}^{\til{Q}'}&=({f'}_1^0,{f'}_1^{-1},{f'}_1^{-2},{f'}_1^{-3}),& \cT_1&=(f_1^3,e_1^0),\\
\cS_2=\cP_{F_2}^{\til{Q}'}&=({f'}_2^0,{f'}_2^{-1},{f'}_2^{-2},{f'}_2^{-3}),&\cT_2&=(f_2^3,f_2^2,f_2^1,e_2^0).
}

Similarly, the description for $\mu_{R_2}$ is given by
\begin{tiny}
\Eqn{
\mu_{R_2}=\{&{f'}_1^{-3}\to {f'}_2^{-3}\to f_1^{-2}\to f_1^{-1}\to f_2^{-2}\to f_1^{-2}\to f_2^{-1}\to f_2^{-2}\to f_1^{-2}\to f_2^{-1}\to f_1^{-1}\to f_1^{-2}\to\\
&{f'}_1^{-2}\to {f'}_2^{-2}\to f_1^{-2}\to f_1^{-1}\to f_2^{-1}\to f_1^{-2}\to e_2^0\to f_2^{-1}\to f_1^{-2}\to e_2^0\to f_1^{-1}\to f_1^{-2}\to\\
&{f'}_1^{-1}\to {f'}_2^{-1}\to f_1^{-2}\to f_1^{-1}\to e_2^0\to f_1^{-2}\to {f'}_2^{-2}\to e_2^0\to f_1^{-2}\to {f'}_2^{-2}\to f_1^{-1}\to f_1^{-2}\},
}
\end{tiny}
where $e_i^0$ and ${f'}_i^{-n_i}$ are identified. The permutation is then given by
\Eqn{
\s_2:\<\cS_i-\cT'_i\>\mapsto \<\cT'_i-\cS_i\>,\tab i=1,2,
}
where $\cS_i$ is the same as before, while
\Eqn{
\cT'_1&=(e_1^0,f_1^{-3}),\tab\cT'_2=(e_2^0,f_2^{-1},f_2^{-2},f_2^{-3}).
}
\begin{figure}[!htb]
\centering
\begin{tikzpicture}[every node/.style={inner sep=0, minimum size=0.5cm, thick, fill=white},x=1.3cm,y=0.75cm]
\draw (-0.3,0) node[anchor=south, fill=none]{} 
-- (6.1,6.5) node[anchor=south]{}
 -- (6.1,-6.5) node[anchor=north]{}
 --cycle;
\node (1) at (2,-2)[draw] {$f_1^0$};
\node (2) at (3.5,1.3)[draw, circle] {$f_1^1$};
\node (3) at (4.5,1.3)[draw, circle] {$f_1^2$};
\node (4) at (6,2)[draw] {$f_1^3$};
\node (5) at (4,-4)[draw] {$f_2^0$};
\node (6) at (3.5,-0.5)[draw, circle] {$f_2^1$};
\node (7) at (4.5,-0.5)[draw, circle] {$f_2^2$};
\node (8) at (6,-2)[draw] {$f_2^3$};
\node (9) at (4,4)[draw] {$e_1^0$};
\node (10) at (2,2)[draw] {$e_2^0$};
\drawpath{1,2,3,4}{vthick}
\drawpath{5,6,7,8}{thin}
\drawpath{6,10,5}{thin}
\drawpath{10,9}{vthick, dashed}
\drawpath{4,8}{vthick, dashed}
\drawpath{1,5}{vthick, dashed}
\drawpath{8,3,7,2,6,1}{vthick}
\drawpath{4,9,3}{vthick}
\end{tikzpicture}
\caption{Basic quiver in type $G_2$ attached to a triangle.}\label{G2basicquiver2}
\end{figure}
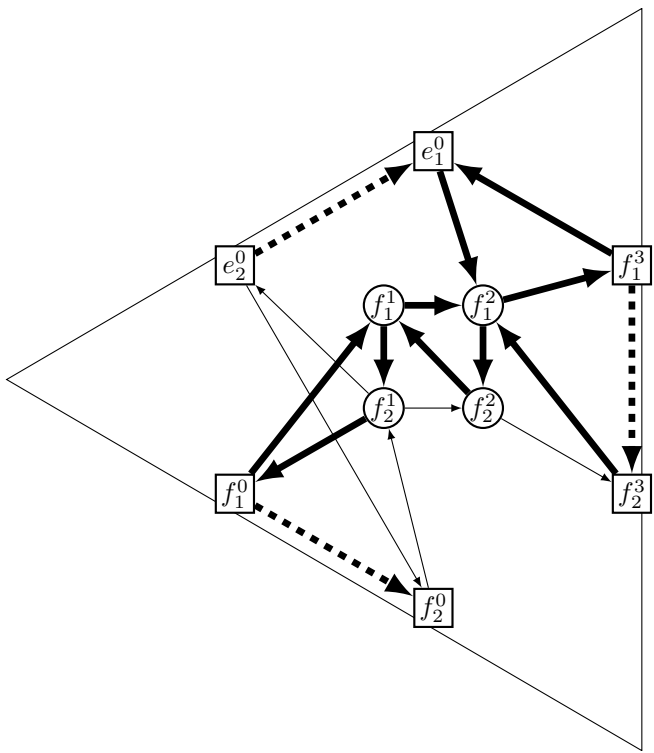
\begin{landscape}
\begin{subfigures}
\begin{figure}[H]
\centering
\begin{tikzpicture}[every node/.style={circle, fill=white,inner sep=0, minimum size=0.5cm, thick}, x=0.75cm, y=0.75cm]
\begin{scope}[shift={(-7,0)}]
\draw (0,6.1)node[anchor=south, fill=none]{$A$}
--(6.1,0)node[anchor=south, fill=none]{$B$}
--(0,-6.1)node[anchor=north, fill=none]{$C$}
--(-6.1,0)node[anchor=south, fill=none]{$D$}
--(0,6.1)--(0,-6.1);
\node (1) at (-4,-2)[draw] {\tiny$f_1^0$};
\node (2) at (-2.5,1)[draw] {\tiny$f_1^1$};
\node (3) at (-1,1)[draw] {\tiny$f_1^2$};
\node (4) at (0,2)[draw] {\tiny$f_1^3$};
\node (5) at (-2,-4)[draw] {\tiny$f_2^0$};
\node (6) at (-2.5,-0.5)[draw] {\tiny$f_2^1$};
\node (7) at (-1,-0.5)[draw] {\tiny$f_2^2$};
\node (8) at (0,-2)[draw] {\tiny$f_2^3$};
\node (9) at (-2,4)[draw] {\tiny$e_1^0$};
\node (10) at (-4,2)[draw] {\tiny$e_2^0$};
\node (11) at (4,-2)[draw] {\tiny${f'}_1^0$};
\node (12) at (2.5,1)[draw] {\tiny${f'}_1^{-1}$};
\node (13) at (1,1)[draw] {\tiny${f'}_1^{-2}$};
\node (15) at (2,-4)[draw] {\tiny${f'}_2^0$};
\node (16) at (2.5,-0.5)[draw] {\tiny${f'}_2^{-1}$};
\node (17) at (1,-0.5)[draw] {\tiny${f'}_2^{-2}$};
\node (19) at (2,4)[draw] {\tiny${e'}_1^0$};
\node (20) at (4,2)[draw] {\tiny${e'}_2^0$};
\drawpath{1,2,3,4}{vthick}
\drawpath{5,6,7,8}{thin}
\drawpath{6,10,5}{thin}
\drawpath{10,9}{vthick, dashed}
\drawpath{1,5}{vthick, dashed}
\drawpath{8,3,7,2,6,1}{vthick}
\drawpath{4,9,3}{vthick}
\drawpath{4,13,12,11}{vthick}
\drawpath{8,17,16,15}{thin}
\drawpath{15,20,16}{thin}
\drawpath{19,20}{vthick, dashed}
\drawpath{15,11}{vthick, dashed}
\drawpath{11,16,12,17,13,8}{vthick}
\drawpath{13,19,4}{vthick}
\node at (-5,-4) {\huge $Q$};
\node at (5,-4) {\huge $\til{Q'}$};
\node at (0,-7.5) [fill=none, rectangle]{$(f_1^3={f'}_1^{-3}, f_2^3={f'}_2^{-3})$};
\end{scope}
\begin{scope}[shift={(7,0)}]
\draw
(6.1,0)node[anchor=south, fill=none]{$B$}
--(0,-6.1)node[anchor=north, fill=none]{$C$}
--(-6.1,0)node[anchor=south, fill=none]{$D$} 
--(0,6.1)node[anchor=south, fill=none]{$A$}
--(6.1,0)--(-6.1,0);
\node (1) at (-4,-2)[draw] {\tiny$f_1^0$};
\node (2) at (0,-1)[draw] {\tiny$f_1^1$};
\node (3) at (1,-2)[draw] {\tiny$f_1^2$};
\node (4) at (4,-2)[draw] {\tiny${f'}_1^0$};
\node (5) at (-2,-4)[draw] {\tiny$f_2^0$};
\node (6) at (-1,-2)[draw] {\tiny${f'}_2^{-2}$};
\node (7) at (0,-3)[draw] {\tiny${f'}_2^{-1}$};
\node (8) at (2,-4)[draw] {\tiny${f'}_2^0$};
\node (9) at (2,0)[draw] {\tiny${f'}_1^{-1}$};
\node (10) at (-2,0)[draw] {\tiny$f_2^3$};
\node (12) at (1,2)[draw] {\tiny${f'}_1^{-2}$};
\node (13) at (0,3)[draw] {\tiny$f_1^3$};
\node (14) at (-2,4)[draw] {\tiny$e_1^0$};
\node (16) at (0,1)[draw] {\tiny$f_2^2$};
\node (17) at (-1,2)[draw] {\tiny$f_2^1$};
\node (18) at (-4,2)[draw] {\tiny$e_2^0$};
\node (19) at (2,4)[draw] {\tiny${e'}_1^0$};
\node (20) at (4,2)[draw] {\tiny${e'}_2^0$};
\drawpath{1,2,3,4}{vthick}
\drawpath{5,6,7,8}{thin}
\drawpath{6,10,5}{thin}
\drawpath{10,9}{vthick}
\drawpath{1,5}{vthick, dashed}
\drawpath{4,8}{vthick, dashed}
\drawpath{8,3,7,2,6,1}{vthick}
\drawpath{4,9,3}{vthick}
\drawpath{14,13,12,9}{vthick}
\drawpath{18,17,16,10}{thin}
\drawpath{10,20,16}{thin}
\drawpath{19,20}{vthick, dashed}
\drawpath{18,14}{vthick, dashed}
\drawpath{9,16,12,17,13,18}{vthick}
\drawpath{13,19,14}{vthick}
\node at (-4,-5) {\huge $Q$};
\node at (-4,5) {\huge $\til{Q'}$};
\end{scope}
\path (-0.5,0) edge[->,thick](0.5,0);
\node at (0,0.5) {$\mu_{R_1}$};
\end{tikzpicture}
\caption{The flipping of triangle $\mu_{R_1}$ of the basic quivers in type $G_2$, before changing the index back to standard form. The basic quivers are stacked according to Figure \ref{mutateR1}.}
\label{G2mutation}
\end{figure}
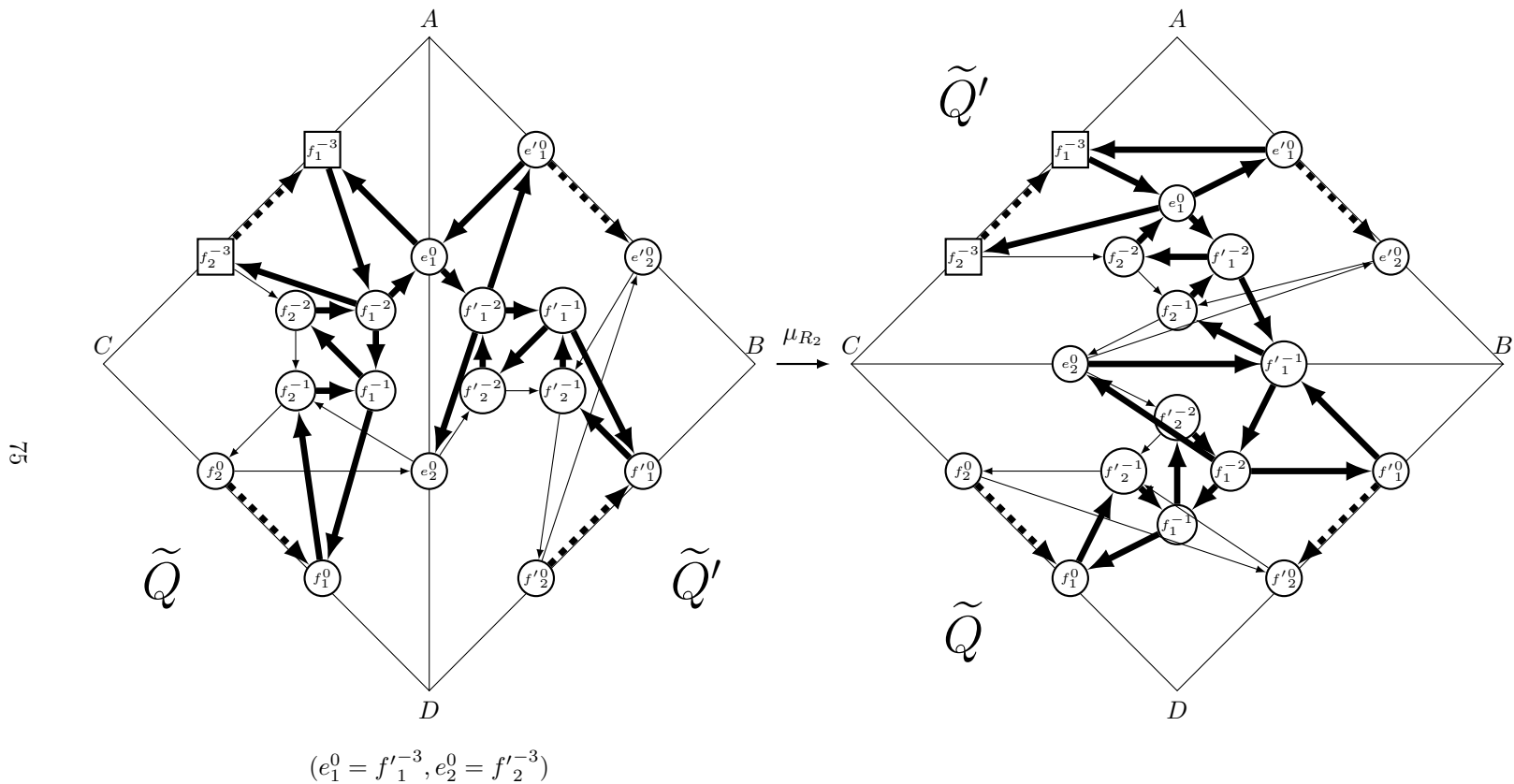
\begin{figure}[H]
\centering
\begin{tikzpicture}[every node/.style={circle, fill=white,inner sep=0, minimum size=0.5cm, thick}, x=0.75cm, y=0.75cm]
\begin{scope}[shift={(-7,0)}]
\draw (0,6.1)node[anchor=south, fill=none]{$A$}
--(6.1,0)node[anchor=south, fill=none]{$B$}
--(0,-6.1)node[anchor=north, fill=none]{$D$}
--(-6.1,0)node[anchor=south, fill=none]{$C$}
--(0,6.1)--(0,-6.1);
\node (1) at (-2,-4)[draw] {\tiny$f_1^0$};
\node (2) at (-1,-0.5)[draw] {\tiny$f_1^{-1}$};
\node (3) at (-1,1)[draw] {\tiny$f_1^{-2}$};
\node (4) at (-2,4)[draw, rectangle] {\tiny$f_1^{-3}$};
\node (5) at (-4,-2)[draw] {\tiny$f_2^0$};
\node (6) at (-2.5,-0.5)[draw] {\tiny$f_2^{-1}$};
\node (7) at (-2.5,1)[draw] {\tiny$f_2^{-2}$};
\node (8) at (-4,2)[draw, rectangle] {\tiny$f_2^{-3}$};
\node (9) at (0,2)[draw] {\tiny$e_1^0$};
\node (10) at (0,-2)[draw] {\tiny$e_2^0$};
\node (11) at (4,-2)[draw] {\tiny${f'}_1^0$};
\node (12) at (2.5,1)[draw] {\tiny${f'}_1^{-1}$};
\node (13) at (1,1)[draw] {\tiny${f'}_1^{-2}$};
\node (15) at (2,-4)[draw] {\tiny${f'}_2^0$};
\node (16) at (2.5,-0.5)[draw] {\tiny${f'}_2^{-1}$};
\node (17) at (1,-0.5)[draw] {\tiny${f'}_2^{-2}$};
\node (19) at (2,4)[draw] {\tiny${e'}_1^0$};
\node (20) at (4,2)[draw] {\tiny${e'}_2^0$};
\node at (-5,-4) {\huge $\til{Q}$};
\node at (5,-4) {\huge $\til{Q}'$};
\drawpath{4,3,2,1}{vthick}
\drawpath{8,7,6,5}{thin}
\drawpath{5,10,6}{thin}
\drawpath{8,4}{vthick, dashed}
\drawpath{5,1}{vthick, dashed}
\drawpath{1,6,2,7,3,8}{vthick}
\drawpath{3,9,4}{vthick}
\drawpath{9,13,12,11}{vthick}
\drawpath{10,17,16,15}{thin}
\drawpath{15,20,16}{thin}
\drawpath{19,20}{vthick, dashed}
\drawpath{15,11}{vthick, dashed}
\drawpath{11,16,12,17,13,10}{vthick}
\drawpath{13,19,9}{vthick}
\node at (0,-7.5) [fill=none, rectangle]{$(e_1^0={f'}_1^{-3}, e_2^0={f'}_2^{-3})$};
\end{scope}
\begin{scope}[shift={(7,0)}]
\draw
(6.1,0)node[anchor=south, fill=none]{$B$}
--(0,-6.1)node[anchor=north, fill=none]{$D$}
--(-6.1,0)node[anchor=south, fill=none]{$C$} 
--(0,6.1)node[anchor=south, fill=none]{$A$}
--(6.1,0)--(-6.1,0);
\node (1) at (-2,-4)[draw] {\tiny$f_1^0$};
\node (2) at (0,-3)[draw] {\tiny$f_1^{-1}$};
\node (3) at (1,-2)[draw] {\tiny$f_1^{-2}$};
\node (4) at (2,0)[draw] {\tiny${f'}_1^{-1}$};
\node (5) at (-4,-2)[draw] {\tiny$f_2^0$};
\node (6) at (-1,-2)[draw] {\tiny${f'}_2^{-1}$};
\node (7) at (0,-1)[draw] {\tiny${f'}_2^{-2}$};
\node (8) at (-2,0)[draw] {\tiny$e_2^0$};
\node (9) at (4,-2)[draw] {\tiny${f'}_1^0$};
\node (10) at (2,-4)[draw] {\tiny${f'}_2^0$};
\node (12) at (1,2)[draw] {\tiny${f'}_1^{-2}$};
\node (13) at (0,3)[draw] {\tiny$e_1^0$};
\node (14) at (-2,4)[draw, rectangle] {\tiny$f_1^{-3}$};
\node (16) at (0,1)[draw] {\tiny$f_2^{-1}$};
\node (17) at (-1,2)[draw] {\tiny$f_2^{-2}$};
\node (18) at (-4,2)[draw, rectangle] {\tiny$f_2^{-3}$};
\node (19) at (2,4)[draw] {\tiny${e'}_1^0$};
\node (20) at (4,2)[draw] {\tiny${e'}_2^0$};
\drawpath{4,3,2,1}{vthick}
\drawpath{8,7,6,5}{thin}
\drawpath{5,10,6}{thin}
\drawpath{9,10}{vthick, dashed}
\drawpath{5,1}{vthick, dashed}
\drawpath{1,6,2,7,3,8}{vthick}
\drawpath{3,9,4}{vthick}

\drawpath{14,13,12,4}{vthick}
\drawpath{18,17,16,8}{thin}
\drawpath{8,20,16}{thin}
\drawpath{19,20}{vthick, dashed}
\drawpath{18,14}{vthick, dashed}
\drawpath{8,4}{vthick}
\drawpath{4,16,12,17,13,18}{vthick}
\drawpath{13,19,14}{vthick}
\node at (-4,-5) {\huge $\til{Q}$};
\node at (-4,5) {\huge $\til{Q}'$};
\end{scope}
\path (-0.5,0) edge[->,thick](0.5,0);
\node at (0,0.5) {$\mu_{R_2}$};
\end{tikzpicture}
\caption{The flipping of triangle $\mu_{R_2}$ of the basic quivers in type $G_2$, before changing the index back to standard form. The basic quivers are stacked according to Figure \ref{mutateR2}.}
\end{figure}
\end{subfigures}
\end{landscape}
\subsection{Alternative factorization of the reduced $R$ matrix}\label{sec:Dehn:alternative}
From Remark \ref{iw}, one can use the Cartan involution and replace the first factor $\be_i\ox 1$ in the reduced $R$ matrix with the embedding by the $F_i$ paths. Then the embedding $\iota^w\ox \iota$ induces a very simple factorization of the reduced $R$ matrix, where
\Eqn{
g_b(\be_i^-) &= g_b(F_i^{1,-})...g_b(F_i^{n_i,-}),\\
g_b(\be_i^+) &= g_b(F_i^{n_i,+})...g_b(F_i^{1,+}),
}
and hence by Corollary \ref{CorR1234}, 
\begin{Cor}Under the embedding $\iota^w\ox \iota$, the reduced $R$ matrix factorizes as
\Eqn{
\ov{R}=R_4\cdot R_3\cdot R_2\cdot R_1,
}
where
\Eqn{
R_1&=\prod_{k=1}^{N}{}^{op}\prod_{j=1}^{n_{i_k}}{}^{op} g_b(F_{i_k}^{j,+}\ox X_k^+),\\
R_2&=\prod_{k=1}^{N}{}^{op}\prod_{j=1}^{n_{i_k}} g_b(F_{i_k}^{j,-}\ox X_k^+),\\
R_3&=\prod_{k=1}^{N}\prod_{j=1}^{n_{i_k}}{}^{op} g_b(F_{i_k}^{j,+}\ox X_k^-),\\
R_4&=\prod_{k=1}^{N}\prod_{j=1}^{n_{i_k}} g_b(F_{i_k}^{j,-}\ox X_k^-).
}
and recall that $\Pi^{op}$ means multiplying from the right.
\end{Cor}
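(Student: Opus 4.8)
The starting point is the factorization $\ov{R}=R_4 R_3 R_2 R_1$ of Corollary \ref{CorR1234}, in which each factor is a product of terms of the form $g_b(\be_{i_k}^\pm\ox X_k^\pm)$. The plan is to apply the alternative embedding $\iota^w$ on the first tensor factor and the standard embedding $\iota$ on the second, and then to decompose each such term into a product running over the monomials of the $F_i$-paths. By Remark \ref{iw} the Cartan involution $\iota^w$ sends $\be_i$ to $\iota(\bf_i)$, so under $\iota^w$ the generator $\be_i$ is represented by the sum $F_i^{n_i,-}+\dots+F_i^{1,-}+F_i^{1,+}+\dots+F_i^{n_i,+}$, and its two halves $\be_i^\mp$ (defined via the unitary $\Phi$ in the setup preceding Corollary \ref{CorR1234}) become the halves $\sum_j F_i^{j,-}$ and $\sum_j F_i^{j,+}$.

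First I would establish the scalar decompositions
\begin{align*}
g_b(\be_i^-)&=g_b(F_i^{1,-})\cdots g_b(F_i^{n_i,-}),\\
g_b(\be_i^+)&=g_b(F_i^{n_i,+})\cdots g_b(F_i^{1,+}).
\end{align*}
These follow from Proposition \ref{Fi}, which guarantees that, in the indicated order, each $F_i^{j,\pm}$ is $q_i^{-2}$-commuting with all terms to its right, together with repeated application of the quantum dilogarithm identity \eqref{guv} for $q$-commuting arguments. Note that this is exactly the simplification that makes the $\iota^w$-factorization cleaner than the generic one of Proposition \ref{Endecomp}: no conjugations or merged blocks are needed, since the $F_i^{j,\pm}$ are already pairwise $q$-commuting.

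Next I would promote these identities to the tensor setting of Corollary \ref{CorR1234}. Writing $\be_{i_k}^\pm\ox X_k^\pm=\big(\sum_j F_{i_k}^{j,\pm}\big)\ox X_k^\pm=\sum_j\big(F_{i_k}^{j,\pm}\ox X_k^\pm\big)$, the individual summands inherit the $q_i^{-2}$-commutation relations of the $F_{i_k}^{j,\pm}$, since tensoring on the right by the fixed monomial $X_k^\pm$, which lies in the commuting second copy of $\cD_\g$, does not disturb them. Hence \eqref{guv} applies again and yields $g_b(\be_{i_k}^\pm\ox X_k^\pm)=\prod_j g_b(F_{i_k}^{j,\pm}\ox X_k^\pm)$ in the appropriate order. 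Substituting these decompositions into the four factors $R_1,R_2,R_3,R_4$ of Corollary \ref{CorR1234} and collecting the resulting nested products gives the claimed formulas.

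The main obstacle is purely the bookkeeping of orientations. One must check that the Cartan involution interchanges the $+$ and $-$ halves consistently with the labelling in the statement, and that the nested product orders ($\prod$ versus $\prod^{op}$, in both the index $k$ and the index $j$) produced by iterating \eqref{guv} match those displayed; reversing a single order would spoil the equality. I expect this to be routine but delicate, and it is the place where care is required rather than in any conceptual step.
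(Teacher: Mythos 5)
Your proposal is correct and is essentially the paper's own argument: the paper's proof consists of nothing more than recording the two decompositions $g_b(\be_i^-)=g_b(F_i^{1,-})\cdots g_b(F_i^{n_i,-})$ and $g_b(\be_i^+)=g_b(F_i^{n_i,+})\cdots g_b(F_i^{1,+})$ (justified exactly as you say, by the $q_i^{-2}$-commutation of Proposition \ref{Fi} together with \eqref{guv}, with no conjugations or merged blocks needed) and then invoking Corollary \ref{CorR1234} with $\iota^w$ in the first tensor slot, which is legitimate because the proof of Theorem \ref{main2} uses only the abstract $\fD_\g$-relations on that slot. Your closing caution about product orders is well placed, but the delicacy it points to sits in the printed statement rather than in your derivation: carrying out the substitution, the displayed factors $R_1,R_2,R_3,R_4$ are the images of $R_4,R_3,R_2,R_1$ of Corollary \ref{CorR1234} respectively, so with the labels as printed the factorization reads $\ov{R}=R_1R_2R_3R_4$ (equivalently, the indices $i\leftrightarrow 5-i$ should be interchanged), a labeling slip of the paper that your bookkeeping, done carefully, would expose.
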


The embedding $\iota^w\ox \iota$ corresponds to a new quiver $\til{\cZ_\g}$, which is another amalgamation of the two quivers $\cD_\g$, where the nodes $\{f_{i}^{n_i}\}$ of the first quiver are glued to $\{{f'}_i^{n_i}\}$ of the second quiver instead (see Figure \ref{newQQ} below). Then one can describe for every type of $\g$ the mutation sequence giving the flip of triangulations on $\til{\cZ_\g}$ easily:
\begin{Prop} Let 
$$\cP_i^{\til{Q}\til{Q}'}:=\<\cP_{F_i}^{\til{Q}'}-\cP_{F_i}^{\til{Q}}\>$$
be the concatenation of the $F_i$-paths in the corresponding subquivers of $\til{\cZ_\g}$. Then the mutation sequence giving the flip of triangulation is
$$\mu_{R_1}=\{\cP_1\to\cP_2\to...\cP_N\},$$
where as before if $i_m$ is the $k$-th appearance of the root index $i$ from the right of $\bi$, then
$$\cP_m=\cP_i^{\til{Q}\til{Q}'}[k,n_i].$$
\end{Prop}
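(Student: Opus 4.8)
The plan is to match the factorization of $R_1$ given in the Corollary immediately preceding (the factorization of $\ov{R}$ under $\iota^w\ox\iota$) against the general translation of a product of quantum dilogarithms into a mutation sequence supplied by Lemma \ref{useful}. The decisive simplification afforded by the embedding $\iota^w\ox\iota$ (Remark \ref{iw}) is that \emph{both} tensor factors of every dilogarithm argument are now $F$-monomials: by construction of $\iota^w$ the left factor $F_{i_k}^{j,+}$ is a partial product $X_{f_{i_k}^{-n}}\cdots X_{f_{i_k}^{m}}$ along the $F_{i_k}$-path of the first copy of $\cD_\g$, and the right factor $X_k^+=F_{i_k}^{j,+}$ is likewise a partial product along the $F_{i_k}$-path of the second copy. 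Since the amalgamation defining $\til{\cZ_\g}$ is arranged so that these two $F_i$-paths join into the single concatenated path $\cP_i^{\til{Q}\til{Q}'}=\<\cP_{F_i}^{\til{Q}'}-\cP_{F_i}^{\til{Q}}\>$, each argument $F_{i_k}^{j,+}\ox X_k^+$ is a \emph{cluster monomial} supported on a connected subpath of $\cP_i^{\til{Q}\til{Q}'}$, which is exactly the shape needed to read it off as a mutated cluster variable.

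First I would fix the order of the factors of $R_1$ dictated by the op-products and induct on the number of factors already treated. At each stage Lemma \ref{useful} writes the accumulated conjugations as $\Phi_m\circ M_m$, with $M_m$ a monomial transformation and $\Phi_m$ a product of conjugations $Ad_{g_b^*}$; reading the identity $Ad_{\ov{R}}=\Phi_T^{-1}$ of Theorem \ref{main3} backwards turns each $g_b^*$ into $g_b$ and matches the $g_b$-factors of $\ov{R}$, so that the next factor is realized by a genuine mutation precisely when its argument, pulled back through $M_m$ (equivalently through $\mu'_{\text{prev}}$), is a single cluster variable $X_\nu$. The heart of the argument is the \emph{path-shifting} phenomenon already visible in the toy computation of Section \ref{sec:Dehn:A2}: mutating successively along the nodes of the concatenated path shifts the remaining cluster variables so that the partial product representing the next factor collapses to one generator. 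Iterating this, the $k$-th appearance of a root index $i$ counted from the right of $\bi$ (so that $v'(i,k)=m$ in the sense of Definition \ref{variable}) contributes exactly the $k$-shifted subsequence $\cP_i^{\til{Q}\til{Q}'}[k,n_i]$ of length $n_i$, and concatenating over $m=1,\dots,N$ yields the asserted $\mu_{R_1}=\{\cP_1\to\cP_2\to\cdots\to\cP_N\}$.

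The step I expect to be the main obstacle is verifying, uniformly in the step index, that $\mu'_{\text{prev}}$ sends the cluster variable at the designated node $\cP_i^{\til{Q}\til{Q}'}[k,n_i]$ to exactly $F_{i_k}^{j,+}\ox X_k^+$, rather than to a longer product that would fail to be a single generator. This is the same bookkeeping that occupied the type-by-type analyses of Sections \ref{sec:Dehn:An}--\ref{sec:Dehn:Dn}, but here it becomes \emph{type-independent}: because $\iota^w\ox\iota$ replaces the troublesome $E_i$-paths with their stops, revisited nodes and alternations (as in Sections \ref{sec:embedding:Cn}, \ref{sec:embedding:E8}, \ref{sec:embedding:F4}) by straight, strictly monotone $F_i$-paths, the concatenated path $\cP_i^{\til{Q}\til{Q}'}$ is a simple path and the shift pattern $[k,n_i]$ is forced by the monotone ordering of the exponents. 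I would make this precise by an induction tracking the image under $\mu'_{\text{prev}}$ of each $X_{f_i^t}$ as a product over an explicit window of consecutive path-nodes and showing the window shrinks to a single node at the moment it is mutated; the commutation relations of Lemma \ref{FikLem} and Corollary \ref{FikCor} ensure that the $q$-powers are consistent so that no spurious cross-terms appear.

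Finally, I would note that the full half-Dehn twist statement for $R_1,\dots,R_4$ follows from this computation together with the mirror-palindrome symmetry of Proposition \ref{mirrorpalin}, so that identifying $\mu_{R_1}$ suffices; and exactly as in Theorem \ref{main3} the permutation $\s_1$ returning the seed to standard form is obtained by reversing the concatenated paths via the flip $\cF$ of Definition \ref{concatseq}, completing the identification of the mutation sequence with the flip of triangulation.
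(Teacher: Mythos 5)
The paper never actually proves this Proposition --- it is asserted with ``one can describe \dots easily'' --- so the only question is whether your plan would go through, and as written it would not. Your induction is anchored on the claim that, reading the op-product $R_1=\prod_{k=1}^{N}{}^{op}\prod_{j=1}^{n_{i_k}}{}^{op}g_b(F_{i_k}^{j,+}\ox X_k^+)$ factor by factor, the extreme argument is a single cluster variable of $\til{\cZ_\g}$ and each later argument ``collapses to one generator'' after pulling back through the previous $\mu'$s. This fails at the base case. By \eqref{XfromF} one has $F_{i}^{j,+}=X_{f_i^{-n_i},\dots,f_i^{j-1}}$, so every first-slot factor contains the entire lower half $X_{f_i^{-n_i}}\cdots X_{f_i^{0}}$ of the $F_i$-path, and likewise every $X_k^+$ contains the entire lower half of the primed path; hence \emph{every} argument in this block is a product of at least $2n_{i_k}+1$ amalgamated cluster variables. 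Already for $\g=\sl_2$ the unique factor is $g_b(X_{f_1^{-1},f_1^0}\ox X_{{f'}_1^{-1},{f'}_1^0})$, which is not $g_b$ of the glued variable $X_{f_1^{-1}}\ox X_{{f'}_1^{-1}}$, and conjugation by it is not the quantum cluster mutation at the glued node. Since in Lemma \ref{useful} the first mutation's dilogarithm argument must literally be a cluster variable of the initial seed, no factor of this block can begin a mutation sequence out of the standard seed of $\til{\cZ_\g}$; contrast this with the original factorization, where the matching works precisely because the rightmost argument $\be_{i_N}^-\ox X_N^-=X_{f_{i_N}^{n_{i_N}}}\ox X_{{f'}_{i_N}^{-n_{i_N}}}$ \emph{is} the glued variable.

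What a correct proof must do first is sort out the ordering of the four blocks, which the alternative Corollary states inconsistently: applying \eqref{guv} to $g_b(\iota^w(\be_i)\ox\iota(\bf_i))$ always puts the $(+,+)$-factors on the left and the $(-,-)$-factors on the right, so the block realizing the \emph{first} flip is $\prod\prod g_b(F_{i_k}^{j,-}\ox X_k^-)$ (what that Corollary labels $R_4$), whose extreme factor is $g_b(X_{f_{i_N}^{-n_{i_N}}}\ox X_{{f'}_{i_N}^{-n_{i_N}}})$ --- exactly $g_b$ of a glued node, provided one also fixes the gluing to be $f_i^{-n_i}\sim{f'}_i^{-n_i}$ as forced by the coproduct and by Figure \ref{newQQ} (not $f_i^{n_i}\sim{f'}_i^{n_i}$ as the text says). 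Only for that block does your window-shrinking induction have a valid base case, and indeed the claimed sequence is supported exactly on the lower-half and glued nodes, which are the only variables occurring in the $(-,-)$ arguments. So the real content of the proof is this relabeling/reordering plus the bookkeeping you describe, not the type-independence you emphasize; executed literally on the stated $R_1$, your argument contradicts itself at the first step. (Separately, your closing appeal to Proposition \ref{mirrorpalin} is unavailable here: the paper itself notes that in the $\til{\cZ_\g}$ picture the four flips do not return to the original configuration but to its mirror image.)
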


When $\bi$ corresponds to the Coxeter element of the Weyl group, $w_0=w_c^{h/2}$, this coincides with the mutation sequence of the flip of triangulations (where two quivers mirrored to each other are glued) described in \cite{Le1} in the classical type. Hence this construction generalizes those of \cite{Le1}, and at the same time provides a representation theoretic meaning of the sequences giving the flip of triangulations described there.
\begin{figure}[!htb]
\centering
\begin{tikzpicture}[every node/.style={thick,circle,inner sep=0pt}, x=1cm,y=1cm]
\foreach \y in {0,1}{
	\draw [very thick] (6*\y,0) circle (2);
	\node (a-\y) at (6*\y,2) [draw,minimum size=5, fill=black]{};
	\node (b-\y) at (6*\y+1,0) [draw,minimum size=5]{};
	\node (c-\y) at (6*\y,-2) [draw,minimum size=5, fill=black]{};
	\node (d-\y) at (6*\y-1,0) [draw,minimum size=5]{};
	\path (a-\y) edge[-, very thick] (b-\y);
	\path (b-\y) edge[-, very thick] (c-\y);
	\path (c-\y) edge[-, very thick] (d-\y);
	\path (d-\y) edge[-, very thick] (a-\y);
	\path (a-\y) edge[-, very thick] (c-\y);					
}
\node at (0,2.3){\tiny$A$};
\node at (1,0.3){\tiny$B$};
\node at (0,-2.3){\tiny$C$};
\node at (-1,0.3){\tiny$D$};
\node at (-1.5,0){$Q$};
\node at (-0.5,0){$\til{Q}$};
\node at (0.5,0){$\til{Q'}$};
\node at (1.5,0){$Q'$};
\node at (6,2.3){\tiny$A$};
\node at (7,0.3){\tiny$D$};
\node at (6,-2.3){\tiny$C$};
\node at (5,0.3){\tiny$B$};
\node at (4.5,0){$\til{Q'}$};
\node at (5.5,0){$Q'$};
\node at (6.5,0){$Q$};
\node at (7.5,0){$\til{Q}$};
\path (2.5,0) edge[->, very thick] (3.5,0);
\end{tikzpicture}
\caption{Half-Dehn twist of the quiver $\til{\cZ_\g}$.}\label{newQQ}
\end{figure}
Although the description of the $R$ matrix factorization is very nice, we see that after 4 flips it does not return to the original quiver, but rather a mirror image with all the arrows flipped. A full Dehn twist, however, return us to the original configuration. If Conjecture \ref{EFflip} is true, which gives a quiver mutation equivalence between $\iota$ and $\iota^w$ (with Dynkin involution), this will relate such nice presentation of the $R$ matrix factorization to the canonical one found in the main theorem.
\section{Proof of Theorem \ref{main2}}\label{sec:Thm}
Let $\til{R}$ denote the right hand side of \eqref{Rfac}. The strategy is to show that $\cK\til{R}$ also gives the braiding relations \eqref{braiding} as well. 
First of all, we have
\Eq{
Ad_{\cK}(1\ox \be_i+\be_i\ox K_i') &= K_i\ox \be_i+\be_i\ox 1,\\
Ad_{\cK}(\bf_i\ox 1+K_i\ox \bf_i) &= \bf_i\ox K_i'+1\ox \bf_i,\\
Ad_{\cK} \D(K_i) &=K_i\ox K_i.
}
Hence in order to prove the braiding relations, it suffices to show
\Eq{
\til{R}\D(\be_i)&=(1\ox \be_i+\be_i\ox K_i'),\\
\til{R}\D(\bf_i)&=(\bf_i\ox 1+K_i\ox \bf_i),\\
\til{R}\D(K_i)&=K_i\ox K_i,
}
where the last one is trivial. We begin with several Lemmas:
\begin{Lem}\label{sl2com} For any $\sl_2$ triple $(\be, \bf, K, K')$, and any self-adjoint element $X$, we have
\Eq{
Ad_{g_b(\be\ox X)} (\bf\ox 1+K'\ox X) = \bf\ox 1+K\ox X.
}
\end{Lem}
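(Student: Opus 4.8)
The plan is to reduce everything to the rank-one algebra and compute the three relevant conjugations separately. Write $W := \be\ox X$, $A := K'\ox X$, $B := K\ox X$ and $f := \bf\ox 1$; since $X$ sits in the second tensor slot it commutes with the $\sl_2$-triple, so from the relations $K'\be = q^{-2}\be K'$, $K\be = q^2\be K$ and $[\be,\bf] = (q-q\inv)(K'-K)$ I obtain the closed system
\[
WA = q^2 AW,\qquad WB = q^{-2}BW,\qquad AB = BA,\qquad [W,f] = (q-q\inv)(A-B).
\]
The target identity then reads $Ad_{g_b(W)}(f+A) = f+B$.

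First I would dispatch the two monomial conjugations. From the difference equation for the quantum dilogarithm (Appendix \ref{sec:dilog}), $g_b(q^{2}x) = g_b(x)(1+qx)\inv$ and $g_b(q^{-2}x) = g_b(x)(1+q\inv x)$. Combined with $g_b(W)A = A\,g_b(q^2 W)$ and $g_b(W)B = B\,g_b(q^{-2}W)$, which follow from the $q^{\pm2}$-commutations (since $W^nA = A(q^2W)^n$ and $W^nB = B(q^{-2}W)^n$), this yields the eigen-type formulas
\[
Ad_{g_b(W)}(A) = (1+q\inv W)\inv A,\qquad Ad_{g_b(W)}(B) = (1+qW)\,B.
\]
Hence the whole lemma is reduced to the single claim
\[
Ad_{g_b(W)}(f) = f + B - (1+q\inv W)\inv A,
\]
for then adding back $Ad_{g_b(W)}(A)$ cancels the last term and leaves exactly $f+B$.

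The crux, and the step I expect to be the main obstacle, is this last conjugation of $f$, because $f$ does not $q$-commute with $W$; instead its commutator feeds the operators $A$ and $B$ back in. I would attack it through the factorized form $g_b(W) = \prod_{r\ge 0}(1+q^{2r+1}W)$, conjugating $f$ through one factor $1+cW$ at a time: using $[W,f]=(q-q\inv)(A-B)$ together with the relocation rules $A(1+cW)\inv = (1+cq^{-2}W)\inv A$ and $B(1+cW)\inv = (1+cq^{2}W)\inv B$, each factor contributes a correction supported on $A$ and $B$ but with a shifted argument of $W$. Telescoping the resulting product produces two geometric $q$-series in $W$: the $B$-series sums to the constant $1$ (giving the clean term $+B$) while the $A$-series sums to $-(1+q\inv W)\inv$. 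All of this is legitimate in the formal power-series sense stressed in Remark \ref{qd}, so convergence is not an issue.

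Should the bookkeeping of the non-commuting $W$-shifts become unwieldy, I would instead run a uniqueness argument: writing $Ad_{g_b(W)} = Ad_{1+qW}\circ Ad_{g_b(q^2 W)}$ and positing $Ad_{g_b(W)}(f) = f + \beta(W)\,B - \alpha(W)\,A$, this relation forces first-order functional equations on the scalar functions $\alpha,\beta$ whose unique solutions compatible with the normalization $g_b(0)=1$ are $\alpha(W) = (1+q\inv W)\inv$ and $\beta(W)=1$. Either route establishes the displayed formula for $Ad_{g_b(W)}(f)$; combining it with the two monomial conjugations gives $Ad_{g_b(W)}(f+A)=f+B$, and undoing the substitutions $W=\be\ox X$, $A=K'\ox X$, $B=K\ox X$, $f=\bf\ox 1$ is precisely the assertion of the lemma.
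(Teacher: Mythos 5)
Your proposal is correct, but it takes a genuinely different route from the paper. The paper's proof is a single direct computation: it expands $g_b(u)=Exp_{q^{-2}}\left(-\frac{u}{q-q\inv}\right)$ as a $q$-exponential series and uses the identity $\be^n\bf-\bf\be^n=(q^{n}-q^{-n})(q^{n-1}K'-q^{1-n}K)\be^{n-1}$ to show that $g_b(\be\ox X)(\bf\ox 1+K'\ox X)-(\bf\ox 1+K\ox X)g_b(\be\ox X)$ vanishes identically. You instead exploit the product form $g_b(x)=\prod_{r\geq 0}(1+q^{2r+1}x)$ and its difference equations $g_b(q^{\pm 2}x)=g_b(x)(1+q^{\pm 1}x)^{\mp 1}$, conjugate the three summands separately, and reduce the lemma to the single statement
\Eq{
Ad_{g_b(W)}(\bf\ox 1)=\bf\ox 1+B-(1+q\inv W)\inv A,\tab W=\be\ox X,\; A=K'\ox X,\; B=K\ox X.
}
I verified your commutation relations, both monomial conjugation formulas, and this reduction; all are correct (and the displayed formula is indeed equivalent to the lemma). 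Your factor-by-factor route does close: if $F_k$ denotes $\bf\ox 1$ conjugated by $(1+q^{2k+1}W)(1+q^{2k+3}W)\cdots$, then $F_k=\bf\ox 1-q^{2k}(1+q^{2k-1}W)\inv A+q^{2k}B$ solves the resulting recursion, and your ``telescoping series'' is precisely the partial-fraction telescoping $\sum_{k\geq 0}q^{2k}(1-q^{2})(1+q^{2k+1}W)\inv(1+q^{2k+3}W)\inv=(1+qW)\inv$. What your approach buys is structural: it isolates reusable conjugation formulas and stays inside the $g_b$-calculus used for quantum cluster mutations elsewhere in the paper, whereas the paper's argument is shorter but purely mechanical.

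One caveat on your fallback uniqueness argument: $Ad_{g_b(q^{2}W)}(\bf\ox 1)$ is \emph{not} obtained from $Ad_{g_b(W)}(\bf\ox 1)$ by naively substituting $W\mapsto q^{2}W$ into the ansatz, because the relation $[W,\bf\ox 1]=(q-q\inv)(A-B)$ is not homogeneous under this substitution (the right-hand side does not rescale). To make the functional equations for $\alpha,\beta$ close, you need either the rescaling automorphism $(W,A,B)\mapsto(q^{2}W,q^{2}A,q^{2}B)$ with $\bf\ox 1$ fixed, or to play the two factorizations $g_b(W)=(1+qW)g_b(q^{2}W)=g_b(q^{2}W)(1+qW)$ against each other so as to eliminate the unknown functions attached to $g_b(q^{2}W)$; either repair yields $\alpha=(1+q\inv W)\inv$, $\beta=1$ uniquely as formal power series. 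This is a presentational gap in the backup route only, not a flaw in the proof, since your primary telescoping argument is sound.
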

\begin{proof}
This is a well-known result by considering the formal power series expansion of $g_b$ (recall that we restrict ourselves to the compact case, but it holds for the non-compact case as well).
Recall 
$$g_b(u)=Exp_{q^{-2}}(-\frac{u}{q-q\inv})=\sum_{n\geq 0}\frac{(-1)^nq^{\frac12n(n-1)}u^n}{(q^n-q^{-n})...(q-q^{-1})},$$
and that we have
$$\be^n\bf - \bf\be^n =(q^{n}-q^{-n})(q^{n-1}K'-q^{1-n}K)\be^{n-1}..$$
Hence we can work out
\Eqn{
&g_b(\be\ox X)(\bf\ox 1+K'\ox X)- (\bf\ox 1+K\ox X)g_b(\be\ox X)\\
=&(\sum_{n\geq 0}\frac{(-1)^nq^{\frac12n(n-1)}\be^n}{(q^n-q^{-n})...(q-q^{-1})}\ox X^n) (\bf\ox 1+K'\ox X)\\
&-(\bf\ox 1+K\ox X)\sum_{n\geq 0}\frac{(-1)^nq^{\frac12n(n-1)}\be^n}{(q^n-q^{-n})...(q-q^{-1})}\ox X^n\\
=&(\bf\ox 1)\sum_{n\geq 0}\frac{(-1)^nq^{\frac12n(n-1)}\be^n}{(q^n-q^{-n})...(q-q^{-1})}\ox X^n \\
&+(K'\ox X)\sum_{n\geq 0}\frac{(-1)^nq^{\frac12n(3+n)}\be^n}{(q^n-q^{-n})...(q-q^{-1})}\ox X^n \\
&-\sum_{n\geq 0}(q^{n}K'-q^{-n}K)\frac{(-1)^nq^{\frac12n(n+1)}\be^{n}}{(q^{n}-q^{-n})...(q-q^{-1})}\ox X^{n+1} \\
&-(\bf\ox 1+K\ox X)\sum_{n\geq 0}\frac{(-1)^nq^{\frac12n(n-1)}\be^n}{(q^n-q^{-n})...(q-q^{-1})}\ox X^n\\
=&0.
}
\end{proof} 

For simplicity, let us define 
\Eq{
Y_i^k:=\case{F_i^{n_i+1-k,-}& k\leq n_i,\\ F_i^{k-n_i,+}&k>n_i,}
}
such that $\bf_i = Y_i^1+Y_i^2+...+Y_i^{2n_i}$.
\begin{Lem}\label{Fbraid}
We have for $1\leq k\leq 2n_i$,
\Eqn{
&Ad_{g_b(\be_i \ox  Y_i^k)}(\bf_i\ox 1+K_i\ox \sum_{l=1}^{k-1} Y_i^l+K_i'\ox \sum_{l=k}^{2n_i} Y_i^l)\\
&=\bf_i\ox 1+K_i\ox \sum_{l=1}^{k} Y_i^l+K_i'\ox \sum_{l=k+1}^{2n_i} Y_i^l
}
and invariant under $Ad_{g_b(\be_j\ox Y_j^l)}$ for $j\neq i$ if $Y_j^l$ comes after $Y_i^{k-1}$ and before $Y_i^k$ in the decomposition \eqref{Rfac}.
\end{Lem}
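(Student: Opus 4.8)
The plan is to exploit the linearity of conjugation and to split the argument of $Ad_{g_b(\be_i\ox Y_i^k)}$ into an \emph{active} piece $\bf_i\ox 1+K_i'\ox Y_i^k$ and an \emph{inert} remainder $K_i\ox\sum_{l<k}Y_i^l+K_i'\ox\sum_{l>k}Y_i^l$, handling each separately. For the active piece I would invoke Lemma \ref{sl2com} with the $\sl_2$-triple $(\be_i,\bf_i,K_i,K_i')$ sitting in the first tensor factor and the self-adjoint element $X:=Y_i^k$ sitting in the second. Since $Y_i^k$ occupies the second slot of $\cD_\g\ox\cD_\g$, it commutes with everything in the first slot, so the hypotheses of Lemma \ref{sl2com} hold verbatim and it yields $Ad_{g_b(\be_i\ox Y_i^k)}(\bf_i\ox 1+K_i'\ox Y_i^k)=\bf_i\ox 1+K_i\ox Y_i^k$, which is exactly the single replacement $K_i'\to K_i$ in the $k$-th slot demanded by the statement.

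The second step is to show that the inert remainder is fixed, for which it suffices to check that each summand commutes with $\be_i\ox Y_i^k$ (a quantity fixed by any element commuting with $\be_i\ox Y_i^k$ is fixed by conjugation by any power series in that element). Here I would combine the Cartan relations $K_i\be_i=q_i^{2}\be_i K_i$ and $K_i'\be_i=q_i^{-2}\be_i K_i'$ with the $q_i^{-2}$-commutation of the $Y$'s supplied by Proposition \ref{Fi}: for $l<k$ one has $Y_i^lY_i^k=q_i^{-2}Y_i^kY_i^l$, so the factor $q_i^{2}$ from moving $K_i$ past $\be_i$ cancels the factor $q_i^{-2}$ from moving $Y_i^l$ past $Y_i^k$, giving $[K_i\ox Y_i^l,\,\be_i\ox Y_i^k]=0$; symmetrically, for $l>k$ the relations $Y_i^kY_i^l=q_i^{-2}Y_i^lY_i^k$ and $K_i'\be_i=q_i^{-2}\be_i K_i'$ conspire to give $[K_i'\ox Y_i^l,\,\be_i\ox Y_i^k]=0$. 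Together with the first step this establishes the displayed identity.

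For the invariance under $Ad_{g_b(\be_j\ox Y_j^l)}$ with $j\neq i$, I would again check that the whole expression commutes with $\be_j\ox Y_j^l$. The term $\bf_i\ox 1$ is immediate, since $[\be_j,\bf_i]=0$ for $j\neq i$ and $Y_j^l$ lies in the other factor. For the terms $K_i\ox Y_i^{l'}$ and $K_i'\ox Y_i^{l'}$ I would use $K_i\be_j=q_i^{a_{ij}}\be_j K_i$ (resp. $K_i'\be_j=q_i^{-a_{ij}}\be_j K_i'$) together with the cross commutation of the $F$-terms from Lemma \ref{FikLem} and Corollary \ref{FikCor}. The hypothesis that $Y_j^l$ sits \emph{between} $Y_i^{k-1}$ and $Y_i^k$ in the factorization \eqref{Rfac} is precisely what pins down the relative order of the indices $v(j,l)$ and $v(i,l')$, and hence selects the sign $q_i^{\pm a_{ij}}$ that cancels the Cartan factor (the case $a_{ij}=0$ being trivial). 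As in the proof of Corollary \ref{CorR1234}, the opposite-sign and genuinely multi-node situations can be reduced to the rank-two case by conjugation.

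The main obstacle I anticipate is this last bookkeeping: translating the positional hypothesis ``$Y_j^l$ comes after $Y_i^{k-1}$ and before $Y_i^k$'' into the precise inequalities on $v(i,\cdot)$ and $v(j,\cdot)$ needed to apply Lemma \ref{FikLem} with the correct sign, and verifying that the $q_i$-powers coming from the $K$--$\be$ relations and from the $Y$--$Y$ relations cancel in every case (both signs $\pm$, and the boundary roots). The active-piece and inert-piece steps are essentially formal once Lemma \ref{sl2com} and Proposition \ref{Fi} are in hand; all the genuine content of the lemma sits in the cross-term ordering analysis.
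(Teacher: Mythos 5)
Your proposal is correct and follows essentially the same route as the paper's proof: the paper likewise isolates the active piece $\bf_i\ox 1+K_i'\ox Y_i^k$ and handles it with Lemma \ref{sl2com}, disposes of the inert terms by noting they commute with $\be_i\ox Y_i^k$ via the $q$-commutation relations (the paper cites Lemma \ref{FikLem} where you cite Proposition \ref{Fi} — equivalent inputs), and settles the $j\neq i$ invariance by exactly the Cartan-factor cancellation you outline. The case-by-case bookkeeping you flag as the main obstacle is left equally implicit in the paper (``it is easy to check''), so your level of rigor matches the original.
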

\begin{proof} We observe that by Lemma \ref{FikLem},
\Eqn{
Ad_{g_b(\be_i\ox Y_i^k)}(K_i\ox Y_i^l)&=K_i\ox Y_i^l\tab l<k,\\
Ad_{g_b(\be_i\ox Y_i^k)}(K_i'\ox Y_i^l)&=K_i'\ox Y_i^l\tab l>k.
}
Hence we only care about the term $(\bf_i\ox 1 + K_i'\ox Y_i^k)$. By Lemma \ref{sl2com}
$$Ad_{g_b(\be_i\ox Y_i^k)}(\bf_i\ox 1+K_i'\ox Y_i^k)= \bf_i\ox 1+K_i\ox Y_i^k$$
and we are done.

Finally, again by Lemma \ref{FikLem} it is easy to check that $\be_j\ox Y_j^l$ commute with $$K_i\ox \sum_{l=1}^{k} Y_i^l+K_i'\ox \sum_{l=k+1}^{2n_i} Y_i^l$$ whenever $Y_j^l$ comes after $Y_i^{k-1}$ and before $Y_i^k$ in the decomposition \eqref{Rfac}.
\end{proof}

\begin{Lem}\label{Ebraid} For the reduced word $\bi=(i_1,...,i_N)\in\fR$, if $i_N=i$, then
$$\cK\til{R}\D(\be_i) = \D^{op}(\be_i)\cK\til{R}.$$
\end{Lem}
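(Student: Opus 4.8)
The plan is to mirror the proof of Lemma \ref{Fbraid}, exploiting that the hypothesis $i_N=i$ forces $\be_i$ into the elementary two–term shape $\be_i=X(f_i^{n_i},e_i^0)=[v_N]e(-2p_N)$ recorded in the proof of Theorem \ref{mainThm}, so that $\be_i$ is supported entirely on the last variable pair $(v_N,p_N)$ and, together with the last components $F_i^{n_i,\pm}=X_N^{\pm}$ of $\bf_i$, forms a rank–one $\sl_2$–triple. First I would use the Cartan computation already displayed before the lemmas, namely $Ad_{\cK}(1\ox \be_i+\be_i\ox K_i')=K_i\ox \be_i+\be_i\ox 1=\D^{op}(\be_i)$, to rewrite the desired identity $\cK\til R\,\D(\be_i)=\D^{op}(\be_i)\,\cK\til R$ as
\[
Ad_{\til R}\big(1\ox \be_i+\be_i\ox K_i\big)=1\ox \be_i+\be_i\ox K_i',
\]
since $\D(\be_i)=1\ox\be_i+\be_i\ox K_i$. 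Thus the whole problem becomes one of transporting the single factor $K_i$ to $K_i'$ in the second tensor leg.

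Next I would isolate the factors of $\til R=\big(\prod_{k}{}^{op}g_{b_{i_k}}(\be_{i_k}\ox X_k^+)\big)\big(\prod_k g_{b_{i_k}}(\be_{i_k}\ox X_k^-)\big)$ that can act nontrivially on the first summand. Because $1\ox\be_i$ has trivial first leg, a factor $g_{b_{i_k}}(\be_{i_k}\ox X_k^{\pm})$ commutes with it precisely when $X_k^{\pm}=F_{i_k}^{\bullet,\pm}$ commutes with $\be_i$ in the second leg; by Lemma \ref{FikLem}, together with the fact that the last variable $v_N=u_i^{n_i}$ appears only in $F_i^{n_i,\pm}$, the sole non–commuting factors are the two outermost ones, $g_{b_i}(\be_i\ox F_i^{n_i,+})$ (leftmost) and $g_{b_i}(\be_i\ox F_i^{n_i,-})$ (rightmost). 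On these I would run the rank–one computation underlying Lemma \ref{sl2com}, transposed to the second tensor leg, where the pair $(\be_i,F_i^{n_i,\pm})$ plays the role of the $\sl_2$–triple; this produces the appropriate $K$–valued correction terms attached to $\be_i$.

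The genuinely delicate part is the second summand $\be_i\ox K_i$, which does \emph{not} commute with the interior factors: both the first–leg interaction of $\be_i$ with $\be_{i_k}$ and the second–leg interaction of $K_i$ with $X_k^{\pm}$ contribute, and since $\be_i$ and $\be_{i_k}$ obey Serre rather than $q$–commutation relations these contributions cannot cancel one factor at a time. Here I would process $\til R$ factor by factor exactly as in Lemma \ref{Fbraid}, maintaining an explicit intermediate expression of the form $1\ox\be_i+\be_i\ox(\text{partially transported }K)$ and showing that each conjugation effects a single transfer step, the $\be_i$–versus–$\be_{i_k}$ interactions being reduced to the rank–two situation precisely as in the proof of Corollary \ref{CorR1234} (using Lemma \ref{FikLem} and $\be_i^+\be_j^-=q_i^{a_{ij}}\be_j^-\be_i^+$). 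I expect the main obstacle to be exactly this bookkeeping: verifying that the cumulative action of the interior factors on $\be_i\ox K_i$ telescopes, with all unwanted cross terms cancelling against the correction terms generated when the two outermost factors act on $1\ox\be_i$, so that the net effect is the single replacement of $K_i$ by $K_i'$. Once the telescoping is confirmed, the two halves recombine into $1\ox\be_i+\be_i\ox K_i'$, and reversing the Cartan reduction of the first paragraph yields the braiding relation $\cK\til R\,\D(\be_i)=\D^{op}(\be_i)\,\cK\til R$.
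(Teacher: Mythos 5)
You set up the reduction via $\cK$ correctly, and your observation that (before any conjugation) $1\ox\be_i$ commutes with every factor of $\til{R}$ except the two outermost ones $g_{b_i}(\be_i\ox X_N^{\pm})$ is also right. The gap is in how you handle $\be_i\ox K_i$. You propose to push it through $\til{R}$ factor by factor ``exactly as in Lemma \ref{Fbraid}'', maintaining an intermediate expression $1\ox\be_i+\be_i\ox(\text{partially transported }K)$ in which each interior conjugation performs one transfer step. No such mechanism exists: in Lemma \ref{Fbraid} the telescoping runs over the sum decomposition $\bf_i=\sum_l Y_i^l$ of the second tensor leg, whereas here $K_i$ is a single cluster monomial with nothing to decompose, and in fact the interior factors act trivially. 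Worse, splitting the two summands of $\D(\be_i)$ invalidates your own commutation claim: conjugating $1\ox\be_i$ alone by the rightmost factor produces $1\ox X_{f_i^{n_i}}+(1\ox X_{f_i^{n_i},e_i^0})(1+q_i\be_i\ox X_{f_i^{-n_i}})^{-1}$, whose first tensor leg is no longer trivial, so it does interact with the interior $\be_{i_k}$'s through Serre relations --- exactly the bookkeeping you acknowledge you cannot control.

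The missing idea --- the actual content of the paper's proof --- is a collapse at the very first step, applied to the full coproduct rather than to its summands. Because $\be_i=X_{f_i^{n_i}}+X_{f_i^{n_i},e_i^0}$ and $K_i=X_{f_i^{n_i},e_i^0,f_i^{-n_i}}$, one has the factorization
\[
1\ox X_{f_i^{n_i},e_i^0}+\be_i\ox K_i=\bigl(1\ox X_{f_i^{n_i},e_i^0}\bigr)\bigl(1+q_i\,\be_i\ox X_{f_i^{-n_i}}\bigr),
\]
and the right-hand factor exactly cancels the geometric-series factor produced by conjugation with the rightmost dilogarithm, so that $Ad_{g_{b_i}(\be_i\ox X_N^-)}\bigl(1\ox\be_i+\be_i\ox K_i\bigr)=1\ox\be_i$. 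From there the lemma is immediate: $1\ox\be_i$ commutes with every interior factor of $\til{R}$, and the leftmost factor gives $Ad_{g_{b_i}(\be_i\ox X_N^+)}(1\ox\be_i)=1\ox\be_i+\be_i\ox K_i'$ by the reverse computation. In other words, the $K_i\to K_i'$ transfer is not gradual at all: the $K_i$-term is annihilated by the first factor and the $K_i'$-term is created by the last, with nothing happening in between. Your proposal never identifies this cancellation, which is the one computation the lemma actually requires.
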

\begin{proof}
Note that if $i_N=i$, then $X_N^\pm=X_{f_i^{\pm n_i}}$, $\be_i=X_{f_i^{n_i}}+X_{f_i^{n_i},e_i^0}$ and $K_i=X_{f_i^{n_i},e_i^0,f_i^{-n_i}}$. We have
\Eqn{
X_{f_i^{- n_i}}X_{e_i^0} = q_i^2 X_{e_i^0}X_{f_i^{- n_i}}
}
Hence
\Eqn{
&Ad_{g_b(\be_i\ox X_N^-)} (1\ox \be_i+\be_i\ox K_i)\\
&=Ad_{g_b(\be_i\ox X_{f_i^{-n_i}})} (1\ox X_{f_i^{n_i}}+1\ox X_{f_i^{n_i},e_i^0}+e_i\ox K_i)\\
&=1\ox X_{f_i^{n_i}}+(1\ox X_{f_i^{n_i},e_i^0}+\be_i\ox K_i)(1+q_i\be_i\ox X_{f_i^{-n_i}}X_{e_i^0})\inv\\
&=1\ox X_{f_i^{n_i}}+1\ox X_{f_i^{n_i},e_i^0}(1+q_ie\b_i\ox X_{f_i^{-n_i}})(1+q_i\be_i\ox X_{f_i^{-n_i}}X_{e_i^0})\inv\\
&=1\ox \be_i.
}
One then check directly that $1\ox \be_i$ commutes with all the factors $\be_j\ox X_k^\pm$ for every $j,k$, except the last term $\be_i\ox X_N^+$, where we have the reverse of the above:
\Eqn{
Ad_{g_b(\be_i\ox X_N^+)}(1\ox \be_i) = 1\ox \be_i+\be_i\ox K_i',
}
and hence $$\cK\til{R}\D(\be_i)=\cK(1\ox \be_i+\be_i\ox K_i')\til{R} = \D^{op}(\be_i)\cK\til{R}$$ as required.
\end{proof}

In general, we use the fact that the decomposition of $\til{R}$ is invariant under the change of words $\cM$. 
Let $\what{F}_i^k$ denote the representation of $\bf_i$ using the mutated cluster variables $\what{X_i}:=\cM(X_i)$ under the change of words $\cM$ (cf. Section \ref{sec:mutation})

\begin{Lem}\label{Rchangewords}　\begin{itemize}
\item[(1)]For the change of words $\cM: (... iji...)\corr (...jij...)$ we have
$$g_b(\be_1\ox F_1^{k+1,\pm})g_b(\be_2\ox F_2^{l,\pm})g_b(\be_1\ox F_1^{k,\pm})=g_b(\be_2\ox \what{F}_2^{l+1,\pm})g_b(\be_1\ox \what{F}_1^{k,\pm})g_b(\be_2\ox \what{F}_2^{l,\pm})$$
for $v(i,k)<v(j,l)<v(i,k+1)$.
\item[(2)]
For the change of words $\cM: (... ijij...)\corr (...jiji...)$ where $i$ is short and $j$ is long, we have
\Eqn{
&g_{b_s}(\be_i\ox F_i^{k+1,\pm})g_b(\be_j\ox F_j^{l+1,\pm})g_{b_s}(\be_i\ox F_i^{k,\pm})g_b(\be_j\ox F_j^{l,\pm})\\
&=g_b(\be_j\ox \what{F}_j^{l+1,\pm})g_{b_s}(\be_i\ox \what{F}_i^{k+1,\pm})g_b(\be_j\ox \what{F}_j^{l,\pm})g_{b_s}(\be_i\ox F_i^{k,\pm})
}
for $v(j,l)<v(i,k)<v(j,l+1)<v(i,k+1)$.
\end{itemize}
\end{Lem}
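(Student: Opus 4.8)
The plan is to prove both identities by localizing to the rank-two subalgebra determined by the two roots $i,j$ and then reducing everything to the pentagon identity for the quantum dilogarithm. Since the braid move only rearranges consecutive letters of $\bi$ involving the indices $i$ and $j$, every factor $g_b(\be_m\ox F_m^{\cdot,\pm})$ with $m\neq i,j$ sitting between the displayed factors commutes past the whole block: by Lemma \ref{FikLem} the monomials $F_m^{\cdot,\pm}$ $q$-commute with $F_i^{\cdot,\pm},F_j^{\cdot,\pm}$ with trivial scalar whenever $m$ is non-adjacent, and the interleaving hypotheses on $v(\cdot,\cdot)$ guarantee the relevant factors are genuinely consecutive. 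Thus I may assume $\g$ has rank $2$, of type $A_2$ for part (1) and of type $B_2$ for part (2), and work inside the explicit positive representation of Proposition \ref{typeA2} (resp. its $B_2$ analogue from \cite{Ip3}).

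Next I would record all $q$-commutation relations among the arguments. Using the mechanism of Corollary \ref{CorR1234} I split each factor $g_b(\be_m\ox F^\pm)=g_b(\be_m^+\ox F^\pm)g_b(\be_m^-\ox F^\pm)$ via \eqref{guv}, so that every argument becomes a monomial in the positive-representation generators; the constants are then read off from Lemma \ref{FikLem} in the second tensor slot and from $\be_i^+\be_j^-=q_i^{a_{ij}}\be_j^-\be_i^+$ (and its companions for the other sign choices) in the first slot. The only pairs failing to $q$-commute by a scalar are those in which the combination $\be_i\be_j$ appears, where the $q$-commutator is a multiple of the root vector for $\a_i+\a_j$; its cluster realization is precisely the extra argument produced by a pentagon move, so the obstruction is absorbed rather than lost.

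With the relations in hand, I would transform the left-hand side into the right-hand side by repeated application of the pentagon identity \eqref{gvu}, supplemented in the doubly-laced case by \eqref{gb2} to treat the square-root and $[2]_{q_s}$ terms coming from the $*$-monomials. The order of the pentagon steps is dictated by the mutation sequence attached to the braid move: a single mutation $\mu_{f_i^k}$ for part (1), and the three mutations at $\{f_j^l\},\{f_i^k\},\{f_j^l\}$ for part (2). After each step the newly created argument is matched against the mutated piece $\what{F}$ by comparison with the explicit change of variables $\mu^q$ recorded in Section \ref{sec:mutation:simply} (resp. Section \ref{sec:mutation:doubly}); the renaming rules there, such as $\hat f_j^l:=f_i^k$, are exactly what identifies the pentagon output with $\be_2\ox\what{F}_2^{l+1,\pm}$, and so on. Equivalently, one may invoke the reduced-word independence of the root-vector form $\ov{\cR}=\prod^{op}g_{b_{i_k}}(\be_{\a_k}\ox\bf_{\a_k})$ together with the decomposition of Proposition \ref{Endecomp}, recognizing both sides as the same localized sub-product written in two cluster charts.

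The main obstacle will be part (2): the short-root generator $\be_i$ carries the doubled $*$-terms, so its dilogarithm decomposition is longer and the pentagon moves must be interleaved with \eqref{gb2} and with conjugations of the type appearing in the $G_2$ computation within Proposition \ref{Endecomp}. Keeping the bookkeeping of these square-root arguments consistent, and verifying that the resulting sequence of pentagon moves reproduces term-by-term the three-mutation change of variables of Section \ref{sec:mutation:doubly}, is the delicate point; the simply-laced part (1) is then the same argument carried out with a single pentagon move matching the single mutation.
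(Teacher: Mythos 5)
Your overall skeleton---pentagon-type identities whose order tracks the braid-move mutation sequence, followed by matching the new arguments against the mutated variables $\what{F}$ computed from Section \ref{sec:mutation}---is indeed the strategy of the paper's proof. But two of your concrete steps fail. First, the preliminary ``monomialization'' rests on a false premise: splitting $g_b(\be_m\ox F)=g_b(\be_m^+\ox F)g_b(\be_m^-\ox F)$ does \emph{not} produce monomial arguments, because $\be_m^\pm=\Phi e^{\pi b_m(\mp u-2p)}\Phi^*$ are in general sums of several cluster monomials (in type $A_2$, $\be_2^-=X_{f_2^1}+X_{f_2^1,f_1^1}$). Correspondingly, the claimed ``companions for the other sign choices'' of $\be_i^+\be_j^-=q_i^{a_{ij}}\be_j^-\be_i^+$ do not exist: same-sign pairs such as $\be_i^-,\be_j^-$ do \emph{not} $q$-commute up to a scalar---this is exactly where non-simple root vectors enter, and it contradicts your own later sentence about pairs that fail to $q$-commute. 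The paper never splits the first tensor slot. It keeps $\be_i\ox F_i^{k,\pm}$ whole and applies the \emph{generalized} pentagon identities \eqref{g12} and, in the doubly-laced case, \eqref{g1212}, whose extra arguments are the Lusztig root vectors $\be_{ij}=T_i(\be_j)$, respectively $\be_X=T_i(\be_j)$ and $\be_Y=T_iT_j(\be_i)$, tensored with products of the $F$'s; verifying that $u,c,d,v$ satisfy the hypotheses of \eqref{g1212} (e.g.\ $[\be_j,\be_X]/(q-q\inv)=\be_Y^2$) is the algebraic heart of part (2) and cannot be replaced by \eqref{gvu} and \eqref{gb2} alone, as you propose. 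The endgame is then a direct verification of cluster identities such as $F_i^{k+1,+}+F_i^{k,+}=\what{F}_i^{k,+}$ and $F_j^{l,+}F_i^{k,+}=\what{F}_j^{l+1,+}\what{F}_i^{k,+}$ in the simply-laced case, and of the four analogous identities in the doubly-laced case using the three-step mutation formulas $\what{X}_1=D_2\inv X_{1,2,4}$, etc.; your proposal points at this matching but never carries out the step in which all cancellations actually happen.

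Second, your ``equivalently'' fallback is circular. Reduced-word independence is known a priori only for the standard form $\ov{\cR}=\prod^{op}g_{b_{i_k}}(\be_{\a_k}\ox\bf_{\a_k})$, whose first slots are root vectors; the assertion that the cluster factorization (whose first slots are the \emph{simple} generators $\be_i$ and whose second slots are the $F_i^{k,\pm}$) is independent of the word is precisely what Lemma \ref{Rchangewords} is for, and it is needed in the proof of Theorem \ref{main2} \emph{before} one knows $\til{R}=\ov{R}$. For the same reason the appeal to Proposition \ref{Endecomp} and to $G_2$-type conjugations is off target: the lemma excludes the triply-laced case (which the paper handles by a separate remark via a direct check), and its proof never expands the first tensor slot into cluster monomials at all, so no $*$-terms or square-root bookkeeping arise. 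Finally, your opening ``localization'' paragraph concerns how the lemma is \emph{used} inside Theorem \ref{main2}, not the lemma itself, which is a self-contained identity among the displayed factors depending only on the commutation relations of Lemma \ref{FikLem} and of the rank-two subalgebra generated by $\be_i,\be_j$.
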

\begin{proof} We will prove the $+$ case, while the $-$ case is similar. 

\textbf{Proof of (1).} In the simply-laced case, recall that we have
$$F_i^{k+1,+}F_i^{k,+}=q^2 F_i^{k,+}F_i^{k+1,+},$$
$$F_i^{k,+}F_j^{l,+}=q\inv F_j^{l,+}F_i^{k,+}.$$
Hence
\Eqn{
\frac{[\be_j\ox F_j^{l,+}, \be_i\ox F_i^{k,+}]}{q-q\inv} &= \be_j\be_i\ox F_j^{l,+}F_i^{k,+} -\be_i\be_j\ox F_i^{k,+}F_j^{l,+}\\
&=\frac{\be_j\be_i\ox  -q^{-1}\be_j\be_i}{q-q\inv}\ox F_j^{l,+}F_i^{k,+}\\
&=\be_{ij}\ox q^{-1/2}F_j^{l,+}F_i^{k,+}
}
where $\be_{ij}=T_i(\be_j)$ is given by the Lusztig's isomorphism.

Hence using \eqref{g12}, we have
\Eqn{&g_b(\be_i\ox F_i^{k+1,+})g_b(\be_j\ox F_j^{l,+})g_b(\be_i\ox F_i^{k,+})\\
&=g_b(\be_i\ox F_i^{k+1,+})g_b(\be_i\ox F_i^{k,+})g_b(\be_{ij}\ox q^{-1/2}F_j^{l,+}F_i^{k,+})g_b(\be_j\ox F_j^{l,+})\\
&=g_b(\be_i\ox (F_i^{k+1,+}+F_i^{k,+}))g_b(\be_{ij}\ox q^{-1/2}F_j^{l,+}F_i^{k,+})g_b(\be_j\ox F_j^{l,+}).
}
Similarly, we have
\Eqn{
&g_b(\be_j\ox \what{F}_j^{l+1,+})g_b(\be_i\ox \what{F}_i^{k,+})g_b(\be_j\ox \what{F}_j^{l,+})\\
&=g_b(\be_i\ox \what{F}_i^{k,+})g_b(\be_{ij}\ox q^{-1/2}\what{F}_j^{l+1,+}\what{F}_i^{k,+})g_b(\be_j\ox \what{F}_j^{l+1,+})g_b(\be_j\ox \what{F}_j^{l,+})\\
&=g_b(\be_i\ox \what{F}_i^{k,+})g_b(\be_{ij}\ox q^{-1/2}\what{F}_j^{l+1,+}\what{F}_i^{k,+})g_b(\be_j\ox (\what{F}_j^{l+1,+}+\what{F}_j^{l,+})).
}
If we write down the quantum cluster variables as
\Eqn{
F_i^k &= X_1, &F_i^{k+1}&=X_{1,2}, &F_j^l&=X_3,\\
\what{F}_i^k &= \what{X_1}, &\what{F_j}^l&=\what{X_3}, &\what{F_j}^{l+1}&=\what{X_{3,4}},
}
then we have 
\Eqn{
\what{X_1}&=X_1(1+qX_2),\\
\what{X_3}&=X_3(1+qX_2\inv)\inv,\\
\what{X_4}&=X_2\inv,
}
and one can see that 
\Eqn{
F_i^{k+1,+}+F_i^{k,+}&=\what{F}_i^{k,+},\\
F_j^{l,+}F_i^{k,+}&=\what{F}_j^{l+1,+}\what{F}_i^{k,+},\\
F_j^{l,+}&=\what{F}_j^{l+1,+}+\what{F}_j^{l,+}
}
as required.

\textbf{Proof of (2).} We have $F_i^{k,+}F_j^{l,+}=q\inv F_j^{l,+}F_i^{k,+}$ whenever $v(j,l)<v(i,k)$. Let
\Eqn{
v&=\be_i\ox F_i^{k,+},\\
u&=\be_j\ox F_j^{l,+},\\
\frac{c}{[2]_{q_s}}&=\frac{[u,v]}{q-q\inv}=\frac{q^{1/2}\be_j\be_i-q^{-1/2}\be_i\be_j}{q-q\inv}\ox q^{-1/2}F_j^{l,+}F_i^{k,+}=\be_Y\ox q^{-1/2}F_j^{l,+}F_i^{k,+},\\
d&=\frac{q_s\inv cv-q_s vc}{q-q\inv}=\frac{\be_Y\be_i-\be_i\be_Y}{q_s-q_s\inv}\ox q\inv F_j^{l,+}(F_i^{k,+})^2=\be_X\ox q\inv F_j^{l,+}(F_i^{k,+})^2,
}
where  $\be_X:=T_i(\be_j)$ and $\be_Y:=T_iT_j(\be_i)$ are given by the Lusztig's isomorphism. We have
\Eqn{
\be_Y\be_X&=q\be_X\be_Y,\\
\be_X\be_i&=q\be_i\be_X,\\
\be_j\be_Y&=q\be_Y\be_j, \\
\frac{[\be_j,\be_X]}{q-q\inv}&=\be_Y^2,
}
and hence $u,c,d,v$ satisfies the condition for \eqref{g1212}. Applying \eqref{g1212} repeatedly and rearranging, we have (we underline the terms to be transformed):
\begingroup
\allowdisplaybreaks
\Eqn{
&\underline{g_{b_s}(\be_i\ox F_i^{k+1,+})g_b(\be_j\ox F_j^{l+1,+})}\;\underline{g_{b_s}(\be_i\ox F_i^{k,+})g_b(\be_j\ox F_j^{l,+})}\\
=_\eqref{g1212}&g_b(\be_j\ox F_j^{l+1,+})g_{b_s}(\be_Y\ox q^{-1/2}F_j^{l+1,+}F_i^{k+1,+})g_b(\be_X\ox q\inv F_j^{l+1,+}(F_i^{k+1,+})^2)\underline{g_{b_s}(\be_i\ox F_i^{k+1,+})}\\
&\underline{g_b(\be_j\ox F_j^{l,+})}g_{b_s}(\be_Y\ox q^{-1/2}F_j^{l,+}F_i^{k,+})g_b(\be_X\ox q\inv F_j^{l,+}(F_i^{k,+})^2)g_{b_s}(\be_i\ox F_i^{k,+})\\
=_\eqref{g1212}&g_b(\be_j\ox F_j^{l+1,+})g_{b_s}(\be_Y\ox q^{-1/2}F_j^{l+1,+}F_i^{k+1,+})\underline{g_b(\be_X\ox q\inv F_j^{l+1,+}(F_i^{k+1,+})^2)g_b(\be_j\ox F_j^{l,+})}\\
&g_{b_s}(\be_Y\ox q^{-1/2}F_j^{l,+}F_i^{k+1,+})g_b(\be_X\ox q\inv F_j^{l,+}(F_i^{k+1,+})^2)\\
&\underline{g_{b_s}(\be_i\ox F_i^{k+1,+})g_{b_s}(\be_Y\ox q^{-1/2}F_j^{l,+}F_i^{k,+})}g_b(\be_X\ox q\inv F_j^{l,+}(F_i^{k,+})^2)g_{b_s}(\be_i\ox F_i^{k,+})\\
=_\eqref{gvu}&\underline{g_b(\be_j\ox F_j^{l+1,+})}g_{b_s}(\be_Y\ox q^{-1/2}F_j^{l+1,+}F_i^{k+1,+})\underline{g_b(\be_j\ox F_j^{l,+})}g_b(\be_Y^2\ox F_j^{l+1,+}F_j^{l,+}(F_i^{k+1,+})^2)\\
&g_b(\be_X\ox q\inv F_j^{l+1,+}(F_i^{k+1,+})^2)g_{b_s}(\be_Y\ox q^{-1/2}F_j^{l,+}F_i^{k+1,+})g_b(\be_X\ox q\inv F_j^{l,+}(F_i^{k+1,+})^2)\\
&g_{b_s}(\be_Y\ox q^{-1/2}F_j^{l,+}F_i^{k,+})g_{b_s}(\be_X\ox F_j^{l,+}F_i^{k,+}F_i^{k+1,+})\\
&\underline{g_{b_s}(\be_i\ox F_i^{k+1,+})}g_b(\be_X\ox q\inv F_j^{l,+}(F_i^{k,+})^2)\underline{g_{b_s}(\be_i\ox F_i^{k,+})}\\
=_\eqref{guv}&g_b(\be_j\ox (F_j^{l+1,+}+F_j^{l,+}))\\
&\underline{g_{b_s}(\be_Y\ox q^{-1/2}F_j^{l+1,+}F_i^{k+1,+})g_b(\be_Y^2\ox F_j^{l+1,+}F_j^{l,+}(F_i^{k+1,+})^2)g_{b_s}(\be_Y\ox q^{-1/2}F_j^{l,+}F_i^{k+1,+})}\\
&g_{b_s}(\be_Y\ox q^{-1/2}F_j^{l,+}F_i^{k,+})g_b(\be_X\ox q\inv F_j^{l+1,+}(F_i^{k+1,+})^2)\\
&\underline{g_b(\be_X\ox q\inv F_j^{l,+}(F_i^{k+1,+})^2)g_{b_s}(\be_X\ox F_j^{l,+}F_i^{k,+}F_i^{k+1,+})g_b(\be_X\ox q\inv F_j^{l,+}(F_i^{k,+})^2)}\\
&g_{b_s}(\be_i\ox(F_i^{k+1,+}+F_i^{k,+}))\\
=_\eqref{gb2}&g_b(\be_j\ox (F_j^{l+1,+}+F_j^{l,+}))\underline{g_{b_s}(\be_Y\ox q^{-1/2}(F_j^{l+1,+}+F_j^{l,+})F_i^{k+1,+})g_{b_s}(\be_Y\ox q^{-1/2}F_j^{l,+}F_i^{k,+})}\\
&\underline{g_b(\be_X\ox q\inv F_j^{l+1,+}(F_i^{k+1,+})^2)g_b(\be_X\ox q\inv F_j^{l,+}(F_i^{k+1,+}+F_i^{k,+})^2)}g_{b_s}(\be_i\ox(F_i^{k+1,+}+F_i^{k,+}))\\
=_\eqref{guv}&g_b(\be_j\ox (F_j^{l+1,+}+F_j^{l,+}))g_{b_s}(\be_Y\ox q^{-1/2}(F_j^{l+1,+}+F_j^{l,+})F_i^{k+1,+}+q^{-1/2}F_j^{l,+}F_i^{k,+})\\
&g_b(\be_X\ox q\inv  F_j^{l+1,+}(F_i^{k+1,+})^2+q\inv F_j^{l,+}(F_i^{k+1,+}+F_i^{k,+})^2)g_{b_s}(\be_i\ox(F_i^{k+1,+}+F_i^{k,+})),
}
where in the last line, we observe that the terms $q^2$ commute, hence we can apply \eqref{gb2}. 
\endgroup

On the other hand, by applying $\eqref{g1212}$ once, we have
\Eqn{
&g_b(\be_j\ox \what{F}_2^{l+1,+})g_{b_s}(\be_i\ox \what{F}_1^{k+1,+})g_b(\be_j\ox \what{F}_2^{l,+})g_{b_s}(\be_i\ox \what{F}_1^{k,+})\\
=&g_b(\be_j\ox (\what{F}_2^{l+1,+}+\what{F}_2^{l,+}))g_{b_s}(\be_Y\ox q^{-1/2}\what{F}_2^{l,+}\what{F}_1^{k+1,+})\\
&g_b(\be_X\ox q\inv \what{F}_2^{l,+}(\what{F}_1^{k+1,+})^2)g_{b_s}(\be_i\ox (\what{F}_1^{k+1,+}+\what{F}_1^{k,+})).
}
To compare, again we write out the quantum cluster variables as
\Eqn{
F_i^{k,+} &= X_1, &F_i^{k+1,+} &= X_{1,2},&F_j^{l,+} &= X_{3},&F_j^{l+1,+} &= X_{3,4},\\
\what{F}_i^{k,+} &= \what{X}_1, &\what{F}_i^{k+1,+} &= \what{X}_{1,2},&\what{F}_j^{l,+} &= \what{X}_{3},&\what{F}_j^{l+1,+} &= \what{X}_{3,4}.
}
Recall that we need to do mutation three times according to Section \ref{sec:mutation:doubly}, which gives at the end
\Eqn{
\what{X}_1&=D_2\inv X_{1,2,4},\\
\what{X}_2&=X_{2,4}\inv D_1,\\
\what{X}_3&=D_1\inv X_3 D_3,\\
\what{X}_4&=D_3\inv X_4,
}
where
\Eqn{
D_1&=(1+q_s X_2)(1+q_s^3 X_2)+q X_{2^2,4},\\
D_2&=(1+q_s X_2+q_s X_{2,4}),\\
D_3&=(1+q_s X_2+q_s X_{2,4})(1+q_s^3X_2+q_s^3X_{2,4}).
}
Now we can check directly that
\Eqn{
F_j^{l+1,+}+F_j^{l,+}&=\what{F}_2^{l+1,+}+\what{F}_2^{l,+},\\
F_j^{l+1,+}+F_j^{l,+})F_i^{k+1,+}+q^{-1/2}F_j^{l,+}F_i^{k,+}&=\what{F}_2^{l,+}\what{F}_1^{k+1,+},\\
F_j^{l+1,+}(F_i^{k+1,+})^2+ F_j^{l,+}(F_i^{k+1,+}+F_i^{k,+})^2&=\what{F}_2^{l,+}(\what{F}_1^{k+1,+})^2,\\
F_i^{k+1,+}+F_i^{k,+}&=\what{F}_1^{k+1,+}+\what{F}_1^{k,+}
}
and this completes the proof.
\end{proof}
\begin{Rem}
In type $G_2$, using the mutation sequence that gives the half-Dehn twist from Section \ref{sec:R:Ex}, one can conjugate the representation of $\D(\be_2)$ by \eqref{Rfac} and check the braiding relation directly. Using the fact that the standard form of the universal $R$ matrix is invariant under the change of words, we conclude that the analogue of Lemma \ref{Rchangewords} also holds in type $G_2$.
\end{Rem}
\begin{proof}[Proof of Theorem \ref{main2}] First it is obvious that $\cK$ and $\til{R}$ commute with both $\D(K_i)$ and $\D(K_i')$ by direct calculation.

As a consequence of Lemma \ref{Fbraid}, we have
\Eq{
\cK\til{R}\D(\bf_i) = \cK(\bf_i\ox 1+K'\ox \bf_i)\til{R} = \D^{op}(\bf_i)\cK\til{R}
}
as required.

As a consequence of Lemma \ref{Rchangewords}, we can choose freely the reduced word $\bi$ with any choice of index on the right of $\bi$, and by Lemma \ref{Ebraid}, we obtain
\Eqn{
\cK\til{R}\D(\be_i) = \D^{op}(\be_i)\cK\til{R}
} for every root index $i$, thus completing the proof of the braiding relations. 

Finally, recall that by the construction of the positive representations $\cP_\l$, one can choose appropriate discrete parameters $\l$ and restrict it to give any irreducible highest weight finite dimensional representations of $\cU_q(\g)$ \cite{Ip2}. Then $\cK\til{R}$ satisfies the braiding \eqref{braiding} on every finite dimensional representations of $\cU_q(\g)$, and as a formal power series it has constant term equals 1, hence we conclude that $\cK\til{R}$ equals the universal $R$ matrix.
\end{proof}

\section*{Acknowledgment}
\addcontentsline{toc}{section}{Acknowledgments}
I would like to thank Gus Schrader and Alexander Shapiro for stimulating discussions who inspired the constructions carried out in this work. I would also like to thank Masahito Yamazaki and Rei Inoue for valuable comments. This work is supported by JSPS KAKENHI Grant Numbers JP16K17571 and Top Global University Project, MEXT, Japan.
\appendix
\section{Quantum dilogarithm identities}\label{sec:dilog} 
The compact quantum dilogarithm function is defined to be the infinite product
\Eq{\Psi^q(x)=\prod_{r=0}^\oo (1+q^{2r+1}x) \inv,}
which is well defined for $0<q<1$. In the split real case, where $q=e^{\pi i b^2}$ with $0<b<1$, the infinite product is not so well-behaved. To treat this case, the non-compact quantum dilogarithm $g_b(x)$ is composed of two commuting copies, associated to the so-called \emph{Faddeev's modular double}, of the compact quantum dilogarithm $\Psi^q(x)$ \cite{Fa2, FKa}. It is a meromorphic function that can be represented as an integral expression:
\Eq{
g_b(x):=\exp\left(\frac{1}{4}\int_{\R+i0} \frac{x^{\frac{t}{ib}}}{\sinh(\pi bt)\sinh(\pi b\inv t)}\frac{dt}{t}\right),
}
such that by functional calculus, it is a unitary operator when $x$ is positive self-adjoint, and there is a $b$-duality:
\Eq{\label{bduality}
g_b(x)=g_{b\inv}(x^{\frac{1}{b^2}}).
}

In this paper however, we are only interested in the formal algebraic calculation, hence one may consider only the compact part and think about the correspondence in terms of formal power series
\Eq{
g_b(x)\sim \Psi^q(x)\inv=\prod_{r=0}^\oo (1+q^{2r+1}x) = Exp_{q^{-2}}\left(-\frac{u}{q-q\inv}\right),
}
where 
\Eq{
Exp_q(x)&:=\sum_{k\geq 0} \frac{x^k}{(k)_q!},\\
(k)_q&:=\frac{1-q^k}{1-q}.
}

In particular, we can rewrite the identities of $Exp_q(x)$ derived in \cite{KT} for the quantum dilogarithm function $g_b(x)$ that are needed in this paper. In particular, by writing in this way, the argument of $g_b(x)$ are all manifestly positive self-adjoint so that the identities are well-defined in the split real setting. 

We will be interested in two types of identities: the pentagon equation (PE) and  the quantum exponential relation (QE), 

\textbf{Simply-laced case.}
Let $u,v$ be self-adjoint variables. If $uv=q^2 vu$, then we have the pentagon equation and the quantum exponential relation:
\Eq{
(PE): \tab g_b(v)g_b(u)&=g_b(u)g_b(q\inv uv)g_b(v)\label{gvu},\\
(QE): \tab g_b(u+v)&=g_b(u)g_b(v)\label{guv}.
}
Let again $u,v$ be self-adjoint and $$c:=\frac{[u,v]}{q-q\inv},$$ such that $$uc=q^2cu,\tab cv=q^2vc.$$ Then we have the generalized pentagon equation:
\Eq{
(PE):\tab g_b(v)g_b(u)=g_b(u)g_b(c)g_b(v)\label{g12}.
}
in which \eqref{gvu} is a special case.

\textbf{Doubly-laced case.} In the doubly-laced case we have $q_s=q^{1/2}$. Let $u,v$ be self-adjoint variables, and let 
$$c:=\frac{[u,v]}{q_s-q_s\inv},\tab d:=\frac{q_s\inv cv-q_s vc}{q-q\inv},$$ such that 
$$uc=q^2 cu, \tab cd=q^2 dc, \tab dv=q^2 vd,\tab \frac{q\inv ud-qdu}{q-q\inv}=\frac{c^2}{[2]_{q_s}^2}.$$
We have
\Eq{\label{g1212}
(PE):\tab g_{b_s}(v)g_b(u)&=g_b(u)g_{b_s}(\frac{c}{[2]_{q_s}})g_b(d)g_{b_s}(v),\\
(QE):\tab g_{b_s}(c+v)&=g_{b_s}(c)g_b([2]_{q_s}d)g_{b_s}(v).
}
In particular if $uv=q^2vu$ and substitute $u\mapsto quv\inv/[2]_{q_s}$, we have:
\Eq{\label{gb2}
g_{b_s}(u+v)&=g_{b_s}(u)g_b(q\inv uv)g_{b_s}(v),\\
g_b((u+v)^2)&=g_b(u^2)g_{b_s}(q^{-1/2}uv)g_b(v^2).
}
These two relations are related by the $b$-duality \eqref{bduality}.

\textbf{Triply-laced case.} For completeness we also translate the type $G_2$ identity of \cite{KT} to $g_b(x)$, which becomes more natural looking. 

Let $q_s=q^{1/3}$, and let $u,v$ be self-adjoint. Define
\Eqn{
c&:=\frac{q_s^{-1}uv-q_svu}{q_s^2-q_s^{-2}},\\
d&:=\frac{q_s^{-2}cv-q_s^2vc}{q_s-q_s^{-1}},\\
d'&:=\frac{q_s^{-2}uc-q_s^2cu}{q_s-q_s^{-1}},
}
such that these relations are satisfied:
\Eqn{
ud'&=q^2d'u, & d'c&=q^2cd',& cd&=q^2dc,&dv&=q^2 vd,& \\
c^2&=\frac{q^{-1}ud-qdu}{q-q\inv},&c^2&=\frac{q^{-1}d'v-qvd'}{q-q\inv}, &c^3&=\frac{q^{-2}d'd-q^2 dd'}{q-q^{-1}}.
}

Then we have
\Eq{
(QE):\tab g_{b_s}(u+v)=&g_{b_s}(u)g_b(d')g_{b_s}(c)g_b(d)g_{b_s}(v).
}
In particular if $uv=q^2vu=q_s^6vu$, we have
\Eq{
g_{b_s}(u+v)&=g_{b_s}(u)g_b(q^{-2}u^2v)g_{b_s}(q\inv uv)g_b(q^{-2}uv^2)g_{b_s}(v),\\
g_b((u+v)^3)&=g_b(u^3)g_{b_s}(q^{-2}u^2v)g_b(q^{-3}u^3v^3)g_{b_s}(q^{-2}uv^2)g_b(v^3),
}
which are related by the $b$-duality \eqref{bduality}.

On the other hand, let $\be_1,\be_2$ be the generators of $\cU_q(\g_{G_2})$ with $\be_1$ long and $\be_2$ short, and $\ze_1\ze_2=q\inv \ze_2\ze_1$. Let the non-simple root generators be
\Eqn{
\be_W:=T_1(\be_2)&=\frac{[\be_2,\be_1]_{q_s^{3/2}}}{q_s^3-q_s^{-3}},\\
\be_X:=T_1T_2(\be_1)&=\frac{[\be_Y,\be_W]_{q_s^{-1/2}}}{q_s-q_s\inv},\\
\be_Y:=T_1T_2T_1(\be_2)&=\frac{[\be_2,\be_W]_{q_s^{1/2}}}{q_s^2-q_s^{-2}},\\
\be_Z:=T_1T_2T_1T_2(\be_1)&=\frac{[\be_2,\be_Y]_{q_s^{-1/2}}}{q_s-q_s\inv}.
}
Then we have (PE):
\Eq{
&g_{b_s}(\be_2\ox \ze_2)g_b(\be_1\ox \ze_1)\\
&=g_b(\be_1\ox \ze_1)g_{b_s}(\be_W\ox q^{1/2}\ze_1\ze_2)g_b(\be_X\ox q^3\ze_1^2 \ze_2^3)g_{b_s}(\be_Y\ox q\ze_1\ze_2^2)g_b(\be_Z\ox q^{3/2}\ze_1\ze_2^3)g_{b_s}(\be_2\ox \ze_2).\nonumber
}

\end{document}